\titleformat*{\section}{\scshape\Large\bfseries}
\titleformat*{\subsection}{\scshape\large\bfseries}
\newcommand\blue{\textcolor{blue}}
\tikzset{>=stealth}
\numberwithin{equation}{section}
\renewcommand{\@secnumfont}{\bfseries}
\renewcommand\section{\@startsection{section}{1}%
\z@{.7\linespacing\@plus\linespacing}{.5\linespacing}%
{\large\scshape\bfseries\centering}}
\renewcommand\subsection{\@startsection{subsection}{2}%
  \z@{.5\linespacing\@plus.7\linespacing}{-.5em}%
  {\bfseries\scshape}}
\newenvironment{myitemize}{%
\begin{list}{$\bullet$}%
 	{%
	\setlength{\itemsep}{0.4em}%
	\setlength{\topsep}{0.5em}%
	\setlength\leftmargin{2.45em}%
	\setlength\labelwidth{2.05em}%
	\setlength{\labelsep}{0.4em}%
%	\usecounter{enumi}%
	}%
	}%
{\end{list}}
\renewenvironment{itemize}{
\begin{myitemize}}%
{\end{myitemize}}
\newcommand*{\myfnsymbolsingle}[1]{%
  \ensuremath{%
    \ifcase#1% 0
    \or % 1
      \dagger
    \else % >= 2
      \@ctrerr  
    \fi
  }%   
}   
\newalphalph{\myfnsymbolmult}[mult]{\myfnsymbolsingle}{}
\newcommand \Dotfill {\leavevmode \leaders \hb@xt@ 6pt{\hss .\hss }\hfill \kern \z@}
\def\@tocline#1#2#3#4#5#6#7{\relax
  \ifnum #1>\c@tocdepth % then omit
  \else
    \par \addpenalty\@secpenalty\addvspace{#2}%
    \begingroup \hyphenpenalty\@M
    \@ifempty{#4}{%
      \@tempdima\csname r@tocindent\number#1\endcsname\relax
    }{%
      \@tempdima#4\relax
    }%
    \parindent\z@ \leftskip#3\relax \advance\leftskip\@tempdima\relax
    \rightskip\@pnumwidth plus4em \parfillskip-\@pnumwidth
    #5\leavevmode\hskip-\@tempdima
      \ifcase #1
       \or\or \hskip 1.65em \or \hskip 3.3em \else \hskip 4.95em \fi%
      #6\nobreak\relax
    \Dotfill
    \hbox to\@pnumwidth{\@tocpagenum{#7}}\par
    \nobreak
    \endgroup
  \fi}
\def\l@section{\@tocline{1}{0pt}{1pc}{}{\scshape}}
\renewcommand{\tocsection}[3]{%
\indentlabel{\@ifnotempty{#2}{\ignorespaces#1 #2.\hskip 0.7em}}#3}
\def\l@subsection{\@tocline{2}{0pt}{1pc}{5pc}{}}
\def\l@subsubsection{\@tocline{3}{0pt}{1pc}{7pc}{}}
\theoremstyle{plain}
\newtheorem{theorem}{Theorem}[section]
\newtheorem{proposition}[theorem]{Proposition}
\newtheorem{corollary}[theorem]{Corollary}
\theoremstyle{definition}
\newtheorem{definition}[theorem]{Definition}
\newtheorem{remark}[theorem]{Remark}
\DeclarePairedDelimiter\abs{\lvert}{\rvert} % absolute value
\let\oldabs\abs
\def\abs{\@ifstar{\oldabs}{\oldabs*}}
\DeclarePairedDelimiterX{\norm}[1]{\lVert}{\rVert}{#1} % norm
\let\oldnorm\norm
\def\norm{\@ifstar{\oldnorm}{\oldnorm*}}
\DeclarePairedDelimiterX{\ceil}[1]{\lceil}{\rceil}{#1} % ceiling
\let\oldceil\ceil
\def\ceil{\@ifstar{\oldceil}{\oldceil*}}
\DeclarePairedDelimiterX{\floor}[1]{\lfloor}{\rfloor}{#1} % floor
\let\oldfloor\floor
\def\floor{\@ifstar{\oldfloor}{\oldfloor*}}
\newcommand{\bbC}{{\ensuremath{\mathbb C}} }
\newcommand{\bbP}{{\ensuremath{\mathbb P}} }
\newcommand{\cA}{{\ensuremath{\mathcal A}} }
\newcommand{\cB}{{\ensuremath{\mathcal B}} }
\newcommand{\gl}{\lambda}
\newcommand\sff{\mathsf f}
\DeclareMathOperator{\Tr}{Tr} % trace
\DeclareMathOperator{\sign}{sgn} % sign function
\DeclareMathOperator{\shape}{sh} % shape
\DeclareMathOperator{\type}{type} % type
\newcommand{\diff}{\mathop{}\!\mathrm{d}}
\newcommand{\dd}{\mathop{}\!\mathrm{d}}
\renewcommand{\P}{\mathbb{P}} % probability
\newcommand{\1}{\mathbbm{1}} % indicator function
\renewcommand{\i}{\mathrm{i}} % imaginary unit
\DeclareMathOperator{\e}{\mathrm{e}} % Euler number
\newcommand{\RSK}{\mathsf{RSK}} % RSK
\DeclareMathOperator{\GL}{GL} % group GL
\DeclareMathOperator{\SO}{SO} % group SO
\DeclareMathOperator{\Sp}{Sp} % group Sp
\DeclareMathOperator{\schur}{s} % Schur fn
\renewcommand{\sp}{\operatorname{sp}} % sp Schur fn
\DeclareMathOperator{\so}{so} % so Schur fn
\DeclareMathOperator{\Pf}{Pf} % Pfaffian
\DeclareMathOperator{\Ai}{Ai} % Airy function
\newcommand{\CB}{\mathrm{CB}}
\newcommand{\BC}{\mathrm{BC}}
\newcommand{\trans}[1]{\mathrm{s}^{\scriptscriptstyle\CB}_{#1}}
\DeclareMathOperator{\atyp}{\mathfrak{a}}
\newcommand{\DB}{\mathrm{DB}}
\newcommand{\transDB}[1]{\mathrm{s}^{\scriptscriptstyle\DB}_{#1}}
\DeclareMathOperator{\oddrows}{oddrows}
\renewcommand{\emptyset}{\varnothing}
\newcommand{\N}{\mathbb{N}} % natural numbers
\newcommand{\Z}{\mathbb{Z}} % integer numbers
\newcommand{\R}{\mathbb{R}} % real numbers
\newcommand{\C}{\mathbb{C}} % complex numbers
\renewcommand{\epsilon}{\varepsilon}
\renewcommand{\rho}{\varrho}
\renewcommand{\phi}{\varphi}
\DeclareMathSymbol{\widehatsym}{\mathord}{largesymbols}{"62}
\renewcommand{\tilde}{\widetilde} % wider tilde
\newcommand{\doubletilde}[1]{{%
  \mathpalette\double@tilde{#1}%
}}
\newcommand{\double@tilde}[2]{%
  \sbox\z@{$\m@th#1\tilde{#2}$}%
  \ht\z@=.9\ht\z@
  \tilde{\box\z@}%
}
\newenvironment{myenumerate}{%
\renewcommand{\theenumi}{(\roman{enumi})}%
\renewcommand{\labelenumi}{\theenumi}%
\begin{list}{\labelenumi}
	{%
	\setlength{\itemsep}{0.4em}%
	\setlength{\topsep}{0.5em}%
	\setlength\leftmargin{2.45em}%
	\setlength\labelwidth{2.05em}%
	\setlength{\labelsep}{0.4em}%
	\usecounter{enumi}%
	}%
	}%
{\end{list}
}
\renewenvironment{enumerate}{
\begin{myenumerate}}%
{\end{myenumerate}}
\lbrace\begin{array}{@{}l@{}}}%
\newsavebox{\mybox}\newsavebox{\mysim}
\newcommand{\distras}[1]{%
  \savebox{\mybox}{\hbox{\kern3pt$\scriptstyle#1$\kern3pt}}%
  \savebox{\mysim}{\hbox{$\sim$}}%
  \mathbin{\overset{#1}{\kern\z@\resizebox{\wd\mybox}{\ht\mysim}{$\sim$}}}%
}
\newcommand\symSq{%
\begin{tikzpicture}[scale=0.17]
\draw (0,0) -- (1,0) -- (1,1) -- (0,1) -- cycle;
\draw (1,0) -- (0,1);
\end{tikzpicture}
}
\newcommand\antisymSq{%
\begin{tikzpicture}[scale=0.17]
\draw (0,0) -- (1,0) -- (1,1) -- (0,1) -- cycle;
\draw (0,0) -- (1,1);
\end{tikzpicture}
}
\newcommand\doubleSymSq{%
\begin{tikzpicture}[scale=0.17]
\draw (0,0) -- (1,0) -- (1,1) -- (0,1) -- cycle;
\draw (0,0) -- (1,1);
\draw (1,0) -- (0,1);
\end{tikzpicture}
}
\newcommand\pointToLine{%
\begin{tikzpicture}[scale=0.17]
\draw (0,0) -- (0,1) -- (1,1) -- cycle;
\end{tikzpicture}
}
\newcommand\pointToLineSym{%
\begin{tikzpicture}[scale=0.17]
\draw (0,0) -- (0,1) -- (1,1) -- cycle;
\draw (0,1) -- (0.5,0.5);
\end{tikzpicture}
}
\DeclareRobustCommand{\antisymSqProtect}{\texorpdfstring{\antisymSq}{replacement text}}
\DeclareRobustCommand{\symSqProtect}{\texorpdfstring{\symSq}{replacement text}}
\DeclareRobustCommand{\doubleSymSqProtect}{\texorpdfstring{\doubleSymSq}{replacement text}}
\newcommand{\GT}{\mathrm{GT}} % Gelfand-Tsetlin pattern
\newcommand{\spGT}{\mathrm{spP}} % sp Gelfand-Tsetlin pattern
\newcommand{\oGT}{\mathrm{oP}} % orth Gelfand-Tsetlin pattern
\newcommand{\soGT}{\mathrm{soP}} % split orth Gelfand-Tsetlin pattern
\begin{document}

\title[Transition between characters of classical groups]{Transition between characters of classical groups, \\ decomposition of Gelfand-Tsetlin patterns \\ and last passage percolation}

\author[E.~Bisi]{Elia Bisi}
\address{Elia Bisi\\
Institut f\"ur Stochastik und Wirtschaftsmathematik\\
Technische Universit\"at Wien\\
E 105-07\\
Wiedner Hauptstrasse 8-10\\
1040 Wien\\
Austria}
\email{elia.bisi@tuwien.ac.at}

\author[N.~Zygouras]{Nikos Zygouras}
\address{Nikos Zygouras \\
Mathematics Institute\\
Zeeman building\\
University of Warwick\\
Coventry CV4 7AL\\
UK}
\email{n.zygouras@warwick.ac.uk}

\thanks{Elia Bisi was supported by ERC Advanced Grant \emph{IntRanSt} - 669306.
Nikos Zygouras was supported by EPSRC via grant EP/R024456/1.}

\keywords{Symmetric functions, Schur polynomials, symplectic characters, orthogonal characters, interpolating Schur polynomials, Weyl character formula, last passage percolation, RSK correspondence, Tracy-Widom distributions}
\subjclass[2010]{Primary: 05E05, 60Cxx, 05E10, 82B23}

\begin{abstract}
We study the combinatorial structure of the irreducible characters of the classical groups $\GL_n(\bbC)$, $\SO_{2n+1}(\bbC)$, $\Sp_{2n}(\bbC)$, $\SO_{2n}(\bbC)$ and the ``non-classical'' odd symplectic 
group $\Sp_{2n+1}(\C)$, finding new connections to the probabilistic model of Last Passage Percolation (LPP).
Perturbing the expressions of these characters as generating functions of Gelfand-Tsetlin patterns, we produce two families of symmetric polynomials that interpolate between characters of $\Sp_{2n}(\bbC)$ and $\SO_{2n+1}(\bbC)$ and between characters of $\SO_{2n}(\bbC)$ and $\SO_{2n+1}(\bbC)$.
We identify the first family as a one-parameter specialization of Koornwinder polynomials, for which we thus provide a novel combinatorial structure; on the other hand, the second family appears to be new.
We next develop a method of Gelfand-Tsetlin pattern decomposition to establish identities between all these polynomials that, in the case of irreducible characters, can be viewed as branching rules.
Through these formulas we connect orthogonal and symplectic characters, and more generally the interpolating polynomials, to LPP models with various symmetries, thus going beyond the link with classical Schur polynomials originally found by Baik and Rains (Duke Math.\ J., 2001).
Taking the scaling limit of the LPP models, we finally provide an explanation of why the Tracy-Widom GOE and GSE distributions from random matrix theory admit formulations in terms of both Fredholm determinants and Fredholm Pfaffians.
\end{abstract}

\maketitle

\tableofcontents

\addtocounter{section}{0}

\section{Introduction}
\label{sec:intro}

Characters of irreducible polynomial representations of complex classical groups, also known as Schur polynomials, are symmetric (Laurent) polynomials in variables $x=(x_1,\dots,x_n)$ indexed by partitions, half-partitions, signed partitions or signed half-partitions\footnote{For the precise definitions of (signed) (half-)partitions, see the beginning of Section~\ref{sec:patterns}.}.
They are usually classified according to the \emph{type} of the associated Lie algebras and root systems~\cite{FH91}: \begin{itemize}
\item
type A: characters $\schur^{(n)}_{\lambda}(x)$ of the \emph{general linear group} $\GL_n(\C)$, i.e.\ standard Schur polynomials, indexed by a partition $\lambda = (\lambda_1,\dots,\lambda_n)$.
They are symmetric in their variables $x_1,\dots,x_n$.
\item
type B: characters $\so^{(2n+1)}_{\lambda}(x)$ of the \emph{odd orthogonal group} $\SO_{2n+1}(\C)$, indexed by a partition or half-partition $\lambda$.
They are invariant under permutation of their variables and inversion of any of them, i.e.\ $x_i\mapsto x_i^{-1}$.
\item
type C: characters $\sp^{(2n)}_{\lambda}(x)$ of the \emph{symplectic group} $\Sp_{2n}(\C)$, indexed by a partition $\lambda$.
They have the same invariance properties as the characters of type B.
\item
type D: characters $\so^{(2n)}_{\lambda_\epsilon}(x)$ of the \emph{even orthogonal group} $\SO_{2n}(\C)$, indexed by a signed partition or signed half-partition $\lambda_{\epsilon} = (\lambda_1,\dots,\lambda_{n-1}, \epsilon\lambda_n)$ with sign $\epsilon\in\{+,-\}$.
They are symmetric and invariant under inversion of an \emph{even number} of their variables.
\end{itemize}
All these polynomials are often expressed in terms of their \emph{Weyl character formula}~\cite{FH91}, which reads as a ratio of determinants.
Their determinantal structure also emerges through the so-called \emph{Jacobi-Trudi identities} and \emph{Giambelli identities}~\cite{Mac95, FK97}.
However, we will mainly work with the combinatorial interpretation of the characters as generating functions of \emph{Gelfand-Tsetlin patterns} and other similar patterns composed of interlacing partitions~\cite{Pro94,Mac95}.
Further commonly used combinatorial definitions involve Young tableaux instead of patterns~\cite{Sun90b}.
We will be also interested in the characters $\sp^{(2n+1)}_{\lambda}(x)$ of the \emph{odd symplectic group} $\Sp_{2n+1}(\C)$; such a group, introduced by Proctor~\cite{Pro88}, is not counted among the classical groups, but its characters are also given as generating functions of patterns and naturally fit our framework.
For a review of all the aforementioned characters and details about our notation, see Section~\ref{sec:patterns}.

\vskip 1.5mm

Perturbing the pattern representation of the symplectic and orthogonal characters, we will derive two families of interpolating symmetric polynomials, indexed by a partition or half-partition $\lambda$ (see Section~\ref{sec:transition}).
This has been motivated by the study of the last passage percolation model, as will be discussed later in the introduction.

The polynomials of the first family, which we will here refer to as \emph{$\CB$-interpolating Schur polynomials} and denote by $\trans{\lambda}(x; \beta)$, will be defined as weighted generating functions of \emph{split orthogonal patterns} via a tuning parameter $\beta$.
They interpolate between characters of type C and B, in the sense that $\trans{\lambda}(x; 0)=\sp^{(2n)}_{\lambda}(x)$ and $\trans{\lambda}(x; 1) = \so^{(2n+1)}_{\lambda}(x)$.
Via a combinatorial bijection between certain classes of split orthogonal and symplectic patterns (see Subsection~\ref{subsec:bijectionPatterns}), we will establish the ``Weyl character formula''
\begin{equation}
\label{eq:BCWeyl_intro}
\trans{\lambda}(x;\beta)
= \frac{\underset{1\leq i,j\leq n}{\det} \left(
x_j^{\lambda_i + n-i+1} - x_j^{-(\lambda_i + n-i+1)}
+ \beta \left[x_j^{\lambda_i + n-i} - x_j^{-(\lambda_i + n-i)}\right]
\right)}
{\underset{1\leq i,j\leq n}{\det} \left( x_j^{n-i+1} - x_j^{-(n-i+1)} \right)} \, ,
\end{equation}
valid for any partition $\lambda$ (a similar expression holds when $\lambda$ is a half-partition, see Theorem~\ref{thm:transitionWeyl}).
A consequence of such a determinantal expression is that this family can be identified with a one-parameter specialization of Koornwinder polynomials (see Subsection \ref{subsec:CBtransition}), thus providing a previously unknown combinatorial structure for the latter.

We will also introduce a family of \emph{$\DB$-interpolating Schur polynomials} $\transDB{\lambda}(x; \alpha)$, defining them as weighted generating functions of \emph{orthogonal patterns} via a tuning parameter $\alpha$.
These symmetric polynomials interpolate between characters of type D and B, in the sense that $\transDB{\lambda}(x; 0)=\so^{(2n)}_{\lambda}(x)$ and $\transDB{\lambda}(x; 1) = \so^{(2n+1)}_{\lambda}(x)$.
To the best of our knowledge this
family of polynomials is new.

\vskip 1.5mm

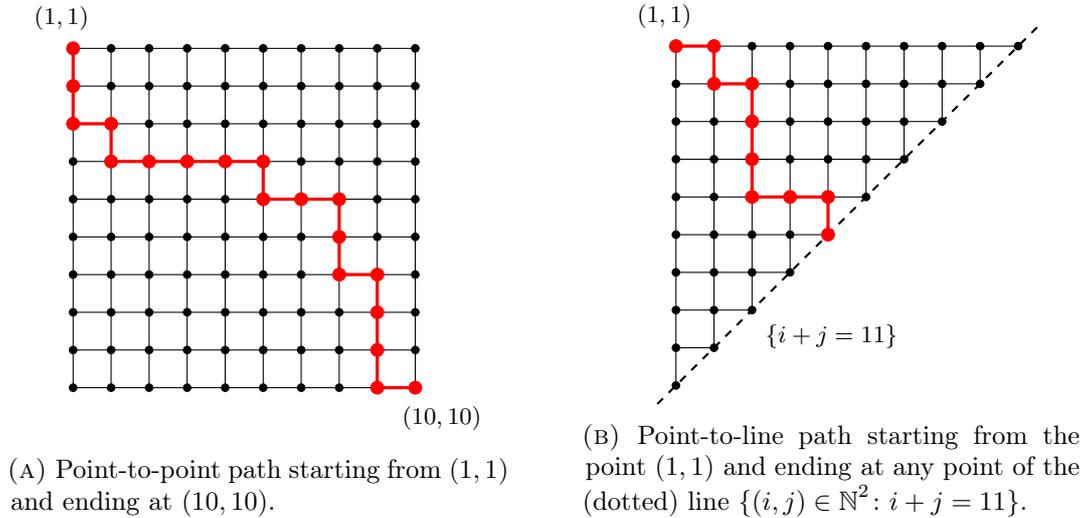
\begin{figure}
\centering
\begin{subfigure}[b]{.5\linewidth}
\centering
\captionsetup{width=.86\textwidth}%
\begin{tikzpicture}[scale=0.5]
\draw (1,-1) grid (10,-10);
\foreach \i in {1,...,10}{
\foreach \j in {1,...,10}{
		\node[draw,circle,inner sep=1pt,fill] at (\j,-\i) {};
	}
}

\draw[very thick,color=red,-] (1,-1) -- (1,-2) -- (1,-3) -- (2,-3) -- (2,-4) -- (3,-4) -- (4,-4) -- (5,-4) -- (6,-4) -- (6,-5) -- (7,-5) -- (8,-5) -- (8,-6) -- (8,-7) -- (9,-7) -- (9,-8) -- (9,-9) -- (9,-10) -- (10,-10);
\foreach \x in {(1,-1), (1,-2), (1,-3), (2,-3), (2,-4), (3,-4), (4,-4), (5,-4), (6,-4), (6,-5), (7,-5), (8,-5), (8,-6), (8,-7), (9,-7), (9,-8), (9,-9), (9,-10), (10,-10)}{
	\node[draw,circle,inner sep=1.7pt,fill,red] at \x {};
	}

\node at (0.7,-0.2) {\footnotesize $(1,1)$};
\node at (10.7,-10.8) {\footnotesize $(10,10)$};
\end{tikzpicture}
\subcaption{Point-to-point path starting from $(1,1)$ and ending at $(10,10)$.}
\label{subfig:pointToPointPath}
\end{subfigure}%
\begin{subfigure}[b]{.5\linewidth}
\centering
\captionsetup{width=.86\textwidth}%
\begin{tikzpicture}[scale=0.5]
\draw[thick,dashed] (10.5,-.5) -- (.5,-10.5);
\foreach \i in {1,...,10}{
	\draw (1,-\i) grid (11-\i,-\i);
	\draw (\i,-1) grid (\i,-11+\i);
	
		\foreach \j in {\i,...,10}{
		\node[draw,circle,inner sep=1pt,fill] at (11-\j,-\i) {};
	}
}

\draw[very thick,color=red,-] (1,-1) -- (2,-1) -- (2,-2) -- (3,-2) -- (3,-3) -- (3,-4) -- (3,-5) -- (4,-5) -- (5,-5) -- (5,-6);
\foreach \x in {(1,-1),(2,-1),(2,-2),(3,-2),(3,-3),(3,-4),(3,-5),(4,-5),(5,-5),(5,-6)}{
	\node[draw,circle,inner sep=1.7pt,fill,red] at \x {};
	}
\node at (0.7,-0.2) {\footnotesize $(1,1)$};
\node at (5.1,-8.7) {\footnotesize $\{i+j=11\}$};
\end{tikzpicture}
\subcaption{Point-to-line path starting from the point $(1,1)$ and ending at any point of the (dotted) line $\{(i,j)\in\N^2\colon i+j=11\}$.}
\label{subfig:pointToLinePath}
\end{subfigure}%
\caption{Directed paths in $\N^2$, highlighted in red, corresponding to the two geometries specified.
The picture is rotated by $90^{\circ}$ clockwise with respect to the Cartesian coordinate system, to adapt it to the usual matrix/array indexing.}
\label{fig:directedPaths}
\end{figure}

The first purpose of the interpolating Schur polynomials is to provide a new and unifying perspective of the intensively studied probabilistic model of (directed) last passage percolation.
To briefly introduce the model, let us denote by $\N$ the set of strictly positive integers.
Given a field $\{W_{i,j}\}$ of non-negative random variables on $\N^2$, usually called \emph{weights} or \emph{waiting times}, the \emph{Last Passage Percolation} (LPP) time is defined as
\begin{equation}
\label{eq:LPP}
L := \max_{\pi\in\Pi} \sum_{(i,j)\in \pi} W_{i,j} \, ,
\end{equation}
where $\Pi$ is a given set of directed paths.
Here, by \emph{directed path} we mean any finite sequence $\pi=((i_1,j_1),(i_2,j_2),\dots)$ of points of $\N^2$ such that $(i_k,j_k) - (i_{k-1},j_{k-1})$ is either $(1,0)$ or $(0,1)$ for $k>1$, as shown in Figure~\ref{fig:directedPaths}.
In particular, the \emph{point-to-point} LPP time, which we denote by $L(m,n)$, is taken on the set of all directed paths starting from $(1,1)$ and ending at a given $(m,n)\in\N^2$.

It has been known since the late 1990s that certain LPP models can be studied using standard Schur polynomials (of type A).
The point-to-point model with geometrically distributed weights was the first one to be solved exactly~\cite{Joh00}, just after the related Ulam's problem of the longest increasing subsequence of random permutations~\cite{BDJ99}.
Considering an array $\{W_{i,j}\colon 1\leq i,j\leq n\}$ of independent non-negative integer weights distributed as
\begin{align*}
\bbP(W_{i,j}=w_{i,j}) = (1-p_i q_j) (p_i q_j)^{w_{i,j}} \, ,
\end{align*}
for parameters $p_i,q_i\in (0,1)$,
and applying the Robinson-Schensted-Knuth ($\RSK$) correspondence~\cite{Knu70} and its properties, one obtains
\begin{align*}
\bbP\left( L(n,n) \leq u \right)
= \prod_{1\leq i,j\leq n}(1-p_iq_j) \sum_{\lambda_1 \leq u}
\schur_\lambda(p_1,\dots,p_n) \cdot \schur_\lambda(q_1,\dots,q_n)
\end{align*}
for $u\in\Z_{\geq 0}$, where the sum is over partitions $\lambda$ bounded above by $u$.
Using this exactly solvable structure, Johansson~\cite{Joh00} established the celebrated $n^{1/3}$ fluctuation scaling and derived the Tracy-Widom GUE limiting distribution\footnote{Such a distribution has been introduced in~\cite{TW94} to describe the fluctuations of the maximum eigenvalue of an asymptotically large random matrix from the Gaussian Unitary Ensemble (GUE).} from random matrix theory.
See e.g.~\cite{BDS16} for more details.

At the same time, Baik and Rains~\cite{BR01a} (see also~\cite{Rai00, Fer04, FR07}) considered point-to-point LPP problems on the square lattice $\{(i,j) \colon 1\leq i,j\leq N\}$ with various symmetries: about the antidiagonal $\{(i,j)\colon i+j=N+1\}$, about the diagonal $\{(i,j)\colon i=j\}$ or both the diagonal and the antidiagonal.
In other words, in these models some of the weights are independent and geometrically distributed, whereas others are determined by the symmetry constraints.
We will denote the respective last passage times, for geometrically distributed weights with certain choice of parameters $p_i$'s, $\alpha$, and $\beta$ that will be specified\footnote{More precisely, the $p_i$'s are the parameters of the geometric distributions, whereas the parameters $\beta$ and $\alpha$ modulate the intensity of the weights on the antidiagonal and diagonal, respectively (in the presence of antidiagonal and diagonal symmetry, respectively).} in Section~\ref{sec:results}, as $L^{\antisymSq}_\beta(N,N)$, $L^{\symSq}_{\alpha}(N,N)$, and $L^{\doubleSymSq}_{\alpha,\beta}(N,N)$.
For the sake of simplicity, let us assume for the moment that $N=2n$.
Via the use of the classical $\RSK$ on square matrices with symmetries, it was shown~\cite{BR01a} that such symmetric LPP models also admit exact expressions in terms of Schur polynomials of type A:
\begin{align}
\label{eq:antisymSchur_intro}
\P\left(L^{\antisymSq}_{\beta}(2n,2n) \leq 2u\right) 
&\propto \sum_{\mu_1 \leq 2u} \beta^{\oddrows\mu} \cdot \schur^{(2n)}_{\mu} (p_1,\dots,p_{2n}) \, , \\
\label{eq:symSchur_intro}
\P\left(L^{\symSq}_{\alpha}(2n,2n) \leq 2u\right)
&\propto \sum_{\mu_1 \leq 2u}
\alpha^{\oddrows\mu'} \cdot
\schur^{(2n)}_{\mu}(p_1,\dots,p_{2n}) \, , \\
\label{eq:doublesymSchur_intro}
\P\left(L^{\doubleSymSq}_{\alpha, 0}(2n,2n) \leq 2u\right)
&\propto \sum_{\mu_1 \leq u} \schur^{(n)}_{\mu}(p_1,\dots,p_n) \cdot \schur^{(n+1)}_{\mu}(p_1,\dots,p_n,\alpha) \, ,
\end{align}
where $\oddrows\mu$ counts the number of odd rows of $\mu$, while $\mu'$ stands for the conjugate partition of $\mu$.
The symbol $\propto$ above denotes equality up to a multiplicative constant that does \emph{not} depend on $u$.

In this work we will give a new perspective for the above symmetric models and derive formulas that involve characters of types \emph{other} than A, including the interpolating Schur polynomials introduced earlier.
Our analysis will be based on a modified point of view: this time, in the presence of antidiagonal symmetry we will work with the alternative formulation of the LPP problem in terms of point-to-line paths (see Figure~\ref{subfig:pointToLinePath}) and apply the $\RSK$ correspondence as a bijection between \emph{triangular arrays}, instead of square matrices (see Section~\ref{sec:RSK}).
In the resulting arrays, we will recognize precisely the patterns that generate $\CB$-interpolating Schur polynomials, thus arriving at the following identities (see Theorems~\ref{thm:antisymLPP} and~\ref{thm:doubleSymLPP}):
\begin{align}
\label{eq:antisymTrans_intro}
\P\left(L^{\antisymSq}_\beta(2n,2n) \leq 2u\right)
&\propto \left[ \prod_{i=1}^{2n} p_i \right]^u \sum_{\lambda_1 \leq u} \trans{\lambda} (p_1,\dots, p_n;\beta) \cdot \trans{\lambda} (p_{n+1}, \dots, p_{2n};\beta) \, , \\
\label{eq:doublesymTrans_intro}
\P\left(L^{\doubleSymSq}_{\alpha, \beta}(2n,2n) \leq 2u\right)
&\propto \left[ \prod_{i=1}^n p_i \right]^u\sum_{\lambda_1 \leq u} \alpha^{\oddrows(u^n-\lambda)'} \cdot \trans{\lambda}(p_1,\dots,p_n ;\beta) \, .
\end{align}
Here, $u^{n}$ is the partition with $n$ parts equal to $u$, whereas $u^n-\lambda$ denotes the complement of $\lambda$ with respect to $u^n$ (see beginning of Section~\ref{sec:patterns} for more precise definitions).
In Theorem~\ref{thm:antisymLPP} we will also obtain other exact expressions for $L^{\antisymSq}_\beta$ in terms of $\CB$-interpolating Schur polynomials.
Observe that, in fact, \eqref{eq:doublesymTrans_intro} is more general than~\eqref{eq:doublesymSchur_intro} as it does not require $\beta=0$.
Furthermore, even though via a more indirect method, we will derive an identity for the diagonally symmetric LPP model in terms of even orthogonal characters:
\begin{align}
\label{eq:symSo_intro}
\begin{split}
&\P\left(L^{\symSq}_{\alpha}(2n,2n) \leq 2u\right)
\propto  \left[ \alpha \prod_{i=1}^{2n} p_i \right]^u \sum_{\lambda_1 \leq u}
 \so^{(2n+2)}_{(u , \lambda_{\delta})}(p_1^{-1},\dots,p_n^{-1},\alpha^{-1}) \cdot \so^{(2n)}_{\lambda_{\delta}}(p_{n+1}^{-1},\dots,p_{2n}^{-1}) \, .
 \end{split}
\end{align}
In Theorem~\ref{thm:symLPP} we will also obtain other exact expressions for this model in terms of even orthogonal characters as well as $\DB$-interpolating Schur polynomials.

\vskip 1.5mm

Besides the fact that our analysis leads to, apparently, unnoticed links between exactly solvable probabilistic models and fundamental algebraic structures, our identities have a significance in terms of asymptotic analysis.
Indeed, they structurally explain the \emph{duality} between Pfaffian and determinant formulations of certain universal random matrix distributions that appear as a scaling limit of LPP models.

To see this, notice first that~\eqref{eq:antisymSchur_intro}, \eqref{eq:symSchur_intro} and~\eqref{eq:doublesymTrans_intro} are \emph{bounded Littlewood identities}, i.e.\ (weighted) sums, over bounded partitions, of Schur polynomials indexed by the given partition.
On the other hand, \eqref{eq:doublesymSchur_intro}, \eqref{eq:antisymTrans_intro} and~\eqref{eq:symSo_intro} are \emph{bounded Cauchy identities}, i.e.\ sums, over bounded partitions, of products of \emph{two} Schur polynomials indexed by (essentially) the same partition.
Therefore, whenever Baik and Rains' formulas are of Littlewood type, ours are of Cauchy type, and vice versa.
Now, as mentioned earlier, all the characters of the classical groups, and remarkably also $\CB$-interpolating polynomials, can be expressed as determinantal functions via formulas of Weyl character type.
Thanks to the well-known Andr\'eief's identity and de Bruijn identity (see Section~\ref{sec:duality}) that express integrals/sums of determinantal functions as either determinants or Pfaffians, one can easily see that Littlewood identities lead to Pfaffian measures, whereas Cauchy identities lead to determinantal measures.
Therefore, whenever a formula of Baik and Rains leads to a Pfaffian measure, ours leads to a determinantal measure, and vice versa.

The duality between Pfaffian and determinantal measures that emerges from comparing Baik and Rains' formulas with ours \emph{at a finite $n$ level} also induces an analogous duality \emph{at the $n\to\infty$ asymptotic level}.
It is well known (see~\cite{Joh00, BR01b,BBCS18}) that certain LPP times, when suitably centred and normalized at the fluctuation scale $n^{1/3}$, converge to limiting distributions from random matrix theory, such as the fundamental Tracy-Widom GUE, GOE and GSE laws\footnote{Analogously to the GUE case, the GOE and GSE Tracy-Widom distributions have been introduced to describe the fluctuations of the maximum eigenvalue of an asymptotically large random matrix from the Gaussian Orthogonal Ensemble (GOE) and Gaussian Symplectic Ensemble (GSE), respectively~\cite{TW96}.}.
Therefore, from this standpoint, the finite-$n$ duality just described translates into a dual structure for such universal limiting distributions.
In Section~\ref{sec:duality} we will analyze two notable cases: the Tracy-Widom GOE (as a scaling limit of the antidiagonally symmetric LPP $L^{\antisymSq}_{0}(2n,2n)$) and the Tracy-Widom GSE (as a scaling limit of the diagonally symmetric LPP $L^{\symSq}_{0}(2n,2n)$)\footnote{The reason why in this context we consider the LPP models with $\beta=0$ and $\alpha=0$ is not only convenience: if the value of $\alpha$ is too high, the asymptotic behavior becomes diffusive, i.e.\ with fluctuation scale $n^{1/2}$ and Gaussian limiting distribution.
See~\cite{BBCS18} for more details.}.
Originally, these distributions were expressed in terms of infinite-dimensional Pfaffians, wider known as \emph{Fredholm Pfaffians}, as well as in terms of Painlev\'e functions~\cite{TW96,TW05}.
It was later found out~\cite{Sas05, FS05} that they also possess a representation in terms of infinite-dimensional determinants, i.e.\ \emph{Fredholm determinants}.
The equivalence between the two formulations was shown in~\cite{FS05} by means of sophisticated linear operator tricks.
On the other hand, the duality between Pfaffian and determinantal measures, that we establish, sheds light on the structural foundations of this duality.
Namely, Baik and Rains' formulas~\eqref{eq:antisymSchur_intro} and~\eqref{eq:symSchur_intro} for $L^{\antisymSq}_{0}(2n,2n)$ and $L^{\symSq}_{0}(2n,2n)$ lead to the Fredholm Pfaffian representations of the GOE and GSE Tracy-Widom distributions, respectively, while our dual formulas~\eqref{eq:antisymTrans_intro} and~\eqref{eq:symSo_intro} lead to the corresponding Fredholm determinant representations.
Notice that, even though we do not undertake this task here (as it would require a longer asymptotic analysis), it should be also possible to obtain a non-trivial Fredholm Pfaffian representation of the GUE Tracy-Widom distribution from our formula~\eqref{eq:doublesymTrans_intro} for $L^{\doubleSymSq}_{0,0}(2n,2n)$, dual to the Fredholm determinant representation that can be derived from~\eqref{eq:doublesymSchur_intro}. 

\vskip 1.5mm

Another purpose of this article is to generalize and unify identities between the characters of the classical groups through interpolating Schur polynomials.
The first kind of identities (see Section~\ref{sec:decomposition}) describe how a Schur polynomial of \emph{rectangular} or \emph{bi-rectangular} shape\footnote{A rectangular (signed) (half-)partition is of the form $u^{n}_{\epsilon} = (u,\dots,\epsilon u)$, where $\epsilon=\pm $.
A bi-rectangular partition (commonly known as a \emph{fat hook}, see e.g.~\cite{Ste01}) is of the form $(u^{n}, v^{m})=(u,\dots,u,v,\dots,v)$.}
can be expressed as a bounded Cauchy sum for Schur polynomials of the same type.
Given two sets of variables $x=(x_1,\dots,x_n)$ and $y=(y_1,\dots,y_m)$ and denoting $x^{-1}:=(x_1^{-1},\dots,x_n^{-1})$, we will prove:
\begin{align}
\label{eq:decompositionA_intro}
\schur^{(n+m)}_{(u^{n}, v^{m})}(x,y)
&= \left[\prod_{i=1}^n x_i\right]^{u}
\left[\prod_{i=1}^m y_i\right]^{v}
\sum_{\mu_1 \leq u-v}
\schur^{(n)}_{\mu} (x^{-1}) \cdot
\schur^{(m)}_{\mu} (y)
\intertext{and, assuming that $n\geq m$,}
\label{eq:decompositionCB_intro}
\trans{u^{n+m}}(x,y;\beta)
&= \sum_{\lambda_1 \leq u}
\trans{(u^{n-m}, \lambda)}(x;\beta) \cdot
\trans{\lambda}(y;\beta) \, ,
 \\
\label{eq:decompositionOddSp_intro}
 \sp^{(2n+2m+2)}_{u^{n+m+1}}(x,y,s)
&= s^{-u}
\sum_{\lambda_1 \leq u}
\sp^{(2n+1)}_{(u^{n-m}, \lambda)}(x;s) \cdot
\sp^{(2m+1)}_{\lambda}(y;s) \, , \\
\label{eq:decompositionD_intro}
\so^{(2n+2m)}_{u^{n+m}_{\epsilon}}(x,y)
&= \sum_{\lambda_1 \leq u}
\so^{(2n)}_{(u^{n-m} , \lambda_{\delta})}(x) \cdot
\so^{(2m)}_{\lambda_{\delta\epsilon}}(y) \, .
\end{align}
In representation theory, these are known as \emph{branching rules}.
In particular, \eqref{eq:decompositionA_intro} indicates how irreducible polynomial representations of $\GL_{n+m}(\C)$ associated to bi-rectangular partitions decompose when restricted to $\GL_n(\C)\otimes \GL_m(\C)$.
Notice that~\eqref{eq:decompositionCB_intro} specializes, for $\beta=0$ and $\beta=1$, to the corresponding identities for even symplectic characters and odd orthogonal characters, respectively: thus, for $\beta=0$ and $\beta=1$, \eqref{eq:decompositionCB_intro} describes how irreducible polynomial representations of $\Sp_{2(n+m)}(\C)$ and $\SO_{2(n+m)+1}(\C)$, associated to rectangular (half-)partitions, decompose when restricted to $\Sp_{2n}(\C)\otimes \Sp_{2m}(\C)$ and $\SO_{2n+1}(\C)\otimes \SO_{2m+1}(\C)$, respectively.
Finally, \eqref{eq:decompositionOddSp_intro} and~\eqref{eq:decompositionD_intro} describe how certain irreducible polynomial representations of $\Sp_{2(n+m+1)}(\C)$ and $\SO_{2(n+m)}(\C)$ decompose when restricted to $\Sp_{2n+1}(\C)\otimes \Sp_{2m+1}(\C)$ and $\SO_{2n}(\C) \otimes \SO_{2m}(\C)$, respectively.
The identities that involve characters of classical groups were first proved in~\cite{Oka98} using intricate determinantal calculus based on the Weyl character formulas and the so-called \emph{minor summation formulas} (a generalization of the Cauchy-Binet identity and the de Bruijn identity)\footnote{More specifically, Okada proved: \eqref{eq:decompositionA_intro} for $v=0$ and $n\geq m$; \eqref{eq:decompositionCB_intro} for $\beta=0,1$; and \eqref{eq:decompositionD_intro}.}.
Our contribution in this respect is to introduce a simple method of decomposition of Gelfand-Tsetlin (and analogous) patterns of rectangular or bi-rectangular shape and use it to provide new combinatorial bijective proofs of Okada's identities, thus sheding light on their structure.
Our method of pattern decomposition is also suitable to prove formula~\eqref{eq:decompositionCB_intro}, which is a generalization to $\CB$-interpolating Schur polynomials, and formula~\eqref{eq:decompositionOddSp_intro} for odd symplectic characters, which was not dealt with in~\cite{Oka98}\footnote{A recent preprint by Okada~\cite{Oka19}, which appeared on arXiv after the present work, contains also a proof of~\eqref{eq:decompositionOddSp_intro} that relies on the same techniques as~\cite{Oka98}.
Therein this odd symplectic character identity is attributed to previous unpublished work of Brent-Krattenthaler-Warnaar.}.
We also expect this method to be applicable in wider settings, for more general functions that possess a similar combinatorial -- but not necessarily determinantal -- structure.
As pointed out by an anonymous referee, \eqref{eq:decompositionA_intro} also admits an alternative, purely algebraic proof, which we outline in Subection~\ref{subsec:decompositionA}; in particular, it is essentially a version of a well-known decomposition formula for skew Schur functions.
It is not clear to us, but it would be interesting to investigate, if this type of argument extends to the other decomposition identities~\eqref{eq:decompositionCB_intro}-\eqref{eq:decompositionOddSp_intro}-\eqref{eq:decompositionD_intro}.

Another set of identities that we will prove express an interpolating Schur polynomial of rectangular shape as a Littlewood sum of standard Schur polynomials:
\begin{align}
\label{eq:CB=LittlewoodSchur}
\trans{u^{N}} (x_1,\dots,x_N;\beta)
&= \left[ \prod_{i=1}^N x_i \right]^{-u} \sum_{\mu_1 \leq 2u} \beta^{\oddrows\mu} \cdot \schur^{(N)}_{\mu} (x_1,\dots,x_N) \, , \\
\label{eq:DB=LittlewoodSchur}
\transDB{u^{N}}(x_1^{-1},\dots,x_N^{-1}; \alpha)
&= \left[\prod_{i=1}^{N} x_i \right]^{-u} 
\sum_{\mu_1 \leq 2u}
\alpha^{\oddrows\mu'} \cdot
\schur^{(N)}_{\mu}(x_1,\dots,x_N) \, ,
\end{align}
and, somewhat conversely, we will express a standard Schur polynomial of rectangular shape as a Littlewood sum of symplectic characters:
\begin{align}
\label{eq:schur=LittlewoodSp}
\schur^{(2n+1)}_{u^{n}}(x_1^{-1},\dots,x_n^{-1},x_1,\dots,x_n,\alpha)
&= \sum_{\lambda_1 \leq u}
\alpha^{\oddrows(u^n-\lambda)'} \cdot
\sp^{(2n)}_{\lambda}(x_1,\dots,x_n) \, .
\end{align}
The proofs we provide rely on certain identities established by Krattenthaler~\cite{Kra98} for characters of ``nearly rectangular'' shape (see Section~\ref{sec:nearlyRectangular}).
This set of identities generalizes and unifies, by means of interpolating Schur polynomials, scattered identities in the literature for characters of various types: the specializations to $\beta=0,1$ and $\alpha=0,1$ can be found in~\cite{Ste90a,Mac95,Oka98,Kra98}.
Notice also that the $\alpha=\beta=0$, $u\to\infty$ versions of~\eqref{eq:CB=LittlewoodSchur} and~\eqref{eq:DB=LittlewoodSchur} are classical (unbounded) Littlewood identities~\cite{Lit50}.

Let us mention that analogous identities at the level of elliptic functions and BC symmetric polynomials and in the form of Selberg type integrals (generalizing random matrix related integrals) were established by Rains~\cite{Rai05, Rai10, Rai12}.
However, it is not obvious whether \eqref{eq:CB=LittlewoodSchur}, \eqref{eq:DB=LittlewoodSchur} and \eqref{eq:schur=LittlewoodSp} specifically fall within Rains' theory.

\vskip 1.5mm

{\bf Organization of the article}.
In Section~\ref{sec:patterns} we review the characters of classical groups, expressed both as generating functions of Gelfand-Tsetlin (and analogous) patterns and as ratios of determinants via the Weyl character formulas.
In Section~\ref{sec:transition} we define $\CB$- and $\DB$-interpolating Schur polynomials and establish various properties, including a determinantal formula of Weyl character type for the $\CB$-interpolating polynomials.
In Section~\ref{sec:results} we present in detail and discuss our results that relate three symmetric last passage percolation models to the characters of various types and to the interpolating Schur polynomials; all these results are proved in the next sections.
In Section~\ref{sec:RSK} we show how the $\RSK$ correspondence applied to triangular arrays directly leads to new exact formulas for the aforementioned LPP models in terms of (interpolating) Schur polynomials of type other than A.
In Section~\ref{sec:decomposition} we develop a method of decomposition of Gelfand-Tsetlin and related patterns, that we use to prove decomposition formulas~\eqref{eq:decompositionA_intro}-\eqref{eq:decompositionD_intro} for (interpolating) Schur polynomials of rectangular shape.
In Section~\ref{sec:nearlyRectangular} we prove identities~\eqref{eq:CB=LittlewoodSchur}-\eqref{eq:schur=LittlewoodSp} for (interpolating) Schur polynomials of (bi-)rectangular shape.
Finally, in Section~\ref{sec:duality} we explain how our formulas, in the scaling limit, explain the duality between Fredholm determinant and Fredholm Pfaffian structures in certain universal random matrix distributions.

\section{Gelfand-Tsetlin patterns and characters}
\label{sec:patterns}
Let us start by recalling some terminology.
We call {\bf\emph{half-integer}} any number that is half of an \emph{odd} integer, or equivalently any number of the form $n + 1/2$ with $n\in\Z$.
We call {\bf\emph{(unsigned) real $n$-partition}} an $n$-tuple $\lambda=(\lambda_1,\dots,\lambda_n)$ of real numbers such that $\lambda_1 \geq \lambda_2 \geq \cdots \geq \lambda_n \geq 0$.
We call {\bf\emph{signed real $n$-partition}} an $n$-tuple $\lambda_{\epsilon}=(\lambda_1,\dots,\lambda_{n-1}, \epsilon \lambda_n)$, where $(\lambda_1,\dots,\lambda_n)$ is a real $n$-partition and $\epsilon$ is a sign.
Clearly, every real $n$-partition $\lambda$ is in particular a signed real $n$-partition with positive sign, i.e.\ $\lambda = \lambda_+$; on the other hand, we have $\lambda_+ = \lambda_-$ if and only if $\lambda_n =0$.
The {\bf\emph{parts}} of a signed real $n$-partition $\lambda_\epsilon$ are the $\lambda_i$'s and its {\bf\emph{length}} is the number of non-zero parts.
When the parts are taken to be all integers or all half-integers, we obtain a signed {\bf\emph{$n$-partition}} or {\bf\emph{$n$-half-partition}}, respectively; notice that a signed $n$-half-partition is always of length $n$, whereas a signed $n$-partition may have smaller length.
One can also view any signed real $n$-partition as an infinite real sequence by setting $\lambda_i := 0$ for all $i>n$.
Therefore, it makes sense to refer to a signed real partition without reference to its maximum length $n$; however, we remark that a signed real partition with negative sign and length $m\leq n$ is a signed real $n$-partition only when $m=n$ (e.g., $(3,3,-1,0,0,\dots)$ is a signed $3$-partition but not a signed $4$-partition).
Denoting by $\abs{\cdot}$ the $1$-norm of sequences, we have $\abs{\lambda_+} = \abs{\lambda_-} = \sum_{i\geq 1} \lambda_i$.
We denote by $\emptyset:=(0,0,\dots)$ the partition of length zero.

An integer partition $\lambda$ is usually depicted as a {\bf\emph{Young diagram}}, i.e.\ a collection of left-aligned square boxes containing $\lambda_i$ squares in row $i$ (counting from the top).

The {\bf\emph{conjugate partition}} of $\lambda$, denoted by $\lambda'$, is defined by setting $\lambda'_i$ to be the number of $j\geq 1$ such that $\lambda_j\geq i$.
For instance, the Young diagram of the partition $\lambda=(4,3)$ is ${\tiny\yng(4,3)}$ and its conjugate partition is $\lambda'=(2,2,2,1)$.

Given two signed real partitions $\mu_\delta$ and $\lambda_\epsilon$, we write $ \mu_{\delta} \subseteq \lambda_\epsilon$ if $\lambda_i \geq \mu_i$ for $i\geq 1$; for integer partitions, this graphically means inclusion of the corresponding Young diagrams.
If the stronger condition $\lambda_i \geq \mu_i \geq \lambda_{i+1}$ ($i\geq 1$) holds, then we say that $\mu_\delta$ {\bf\emph{upwards interlaces with} $\lambda_\epsilon$} and write $\mu_\delta \prec \lambda_\epsilon$.
Notice that, in these definitions, the signs $\epsilon$ and $\delta$ do not play any role.
A {\bf\emph{rectangular signed real $n$-partition}} $u^{n}_{\epsilon}$ is the $n$-tuple $(u,\dots,\epsilon u)$, where $u\in\R$, $u\geq 0$, and $\epsilon$ is a sign.
If $\lambda\subseteq u^{n}$, then we denote by
\begin{equation}
\label{eq:complementPartition}
u^n-\lambda:=(u-\lambda_n,\dots,u-\lambda_1)
\end{equation}
the {\bf\emph{complement partition}} of $\lambda$ with respect to $u^{n}$.
If $\lambda$ is an integer partition, then
\begin{equation}
\label{eq:oddRows}
\oddrows\lambda:=\sum_{i\geq 1} (\lambda_i \bmod 2)
\end{equation}
is the number of odd rows of the corresponding Young diagram (i.e.\ the number of odd parts of $\lambda$).
Notice that, if $\lambda\subseteq u^{n}$, then we have
\begin{equation}
\label{eq:oddPartsConjugate}
\oddrows\lambda' = \sum_{i= 1}^n (-1)^{i-1}\lambda_i \, ,
\qquad
\oddrows(u^n-\lambda)' = \sum_{i=1}^n (-1)^{n-i} (u-\lambda_i) \, .
\end{equation}

A {\bf\emph{Young tableau}} is a Young diagram filled with symbols from an ordered set.
A {\bf\emph{semi-standard Young tableau}} $T$ is a Young tableau with entries that strictly increase down columns and weakly increase along rows; the {\bf\emph{shape}} of $T$, denoted by $\shape(T)$, is the partition associated with the underlying Young diagram.

In the following subsections we review Schur polynomials of types A, B, C and D, including Proctor's ``odd symplectic'' ones, which we define both as generating functions of the corresponding Gelfand-Tsetlin (or analogous) patterns and via their Weyl character formulas.
For more details and for the equivalence of the two definitions, we refer to~\cite{Pro94}.
For the representation theoretic significance of these polynomials as irreducible characters of the associated groups, the reader may consult~\cite{FH91}.

\subsection{Gelfand-Tsetlin patterns and general linear characters}
\label{subsec:schur}

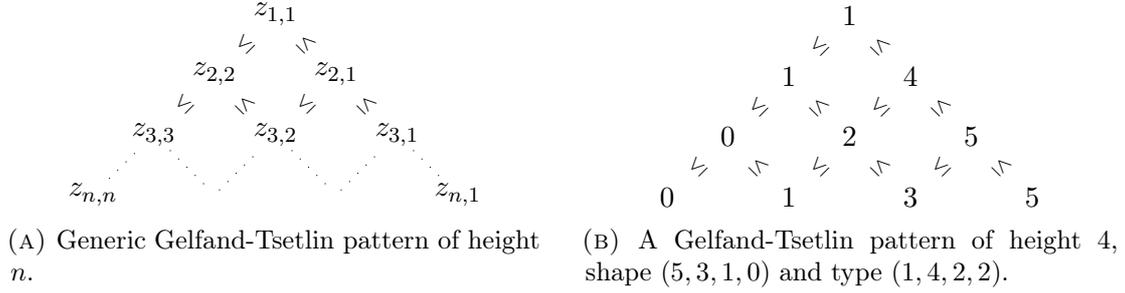
\begin{figure}
\centering
\begin{subfigure}[b]{.5\textwidth}
\centering
\captionsetup{width=.92\textwidth}%
\begin{tikzpicture}[scale=0.8, every path/.style={draw=none}, every node/.style={fill=white, scale=1, inner sep=1.5pt}, ineq1/.style={rotate=45, node contents={\tiny $\leq$}}, ineq2/.style={rotate=-45, node contents={\tiny $\leq$}}]

\node (z11) at (1,-1) {$z_{1,1}$};
\node (z21) at (2,-2) {$z_{2,1}$};
\node (z22) at (0,-2) {$z_{2,2}$};
\node (z31) at (3,-3) {$z_{3,1}$};
\node (z32) at (1,-3) {$z_{3,2}$};
\node (z33) at (-1,-3) {$z_{3,3}$};
\node (z41) at (4,-4) {$z_{n,1}$};
\node (z42) at (2,-4) {};
\node (z43) at (0,-4) {};
\node (z44) at (-2,-4) {$z_{n,n}$};

% arrows
\path (z22) -- node[ineq1]{} (z11) -- node[ineq2]{} (z21);
\path (z33) -- node[ineq1]{} (z22) -- node[ineq2]{} (z32) -- node[ineq1]{} (z21) -- node[ineq2]{} (z31);
\path[draw, loosely dotted] (z44) -- (z33) -- (z43) -- (z32) -- (z42) -- (z31) -- (z41);

\end{tikzpicture}
\subcaption{Generic Gelfand-Tsetlin pattern of height $n$.}
\label{subfig:GTpattern}
\end{subfigure}%
\begin{subfigure}[b]{.5\textwidth}
\centering
\captionsetup{width=.92\textwidth}%
\begin{tikzpicture}[scale=0.8, every path/.style={draw=none}, every node/.style={fill=white, scale=1, inner sep=1.5pt}, ineq1/.style={rotate=45, node contents={\tiny $\leq$}}, ineq2/.style={rotate=-45, node contents={\tiny $\leq$}}]

\node (z11) at (1,-1) {$1$};
\node (z21) at (2,-2) {$4$};
\node (z22) at (0,-2) {$1$};
\node (z31) at (3,-3) {$5$};
\node (z32) at (1,-3) {$2$};
\node (z33) at (-1,-3) {$0$};
\node (z41) at (4,-4) {$5$};
\node (z42) at (2,-4) {$3$};
\node (z43) at (0,-4) {$1$};
\node (z44) at (-2,-4) {$0$};

% arrows
\path (z22) -- node[ineq1]{} (z11) -- node[ineq2]{} (z21);
\path (z33) -- node[ineq1]{} (z22) -- node[ineq2]{} (z32) -- node[ineq1]{} (z21) -- node[ineq2]{} (z31);
\path (z44) -- node[ineq1]{} (z33) -- node[ineq2]{} (z43) -- node[ineq1]{} (z32) -- node[ineq2]{} (z42) -- node[ineq1]{} (z31) -- node[ineq2]{} (z41);

\end{tikzpicture}
\subcaption{A Gelfand-Tsetlin pattern of height $4$, shape $(5,3,1,0)$ and type $(1,4,2,2)$.}
\label{subfig:GTpatternEx}
\end{subfigure}
\caption{Gelfand-Tsetlin patterns.
The entries are non-negative integers and the inequalities illustrate the interlacing conditions.}
\label{fig:GTpatterns}
\end{figure}
A reparameterization of a certain kind of Young tableaux leads to the notion of Gelfand-Tsetlin patterns.
A {\bf\emph{Gelfand-Tsetlin pattern}} of height $n$ -- see Figure \ref{fig:GTpatterns} -- is a triangular array $z = (z_{i,j})_{ 1 \leq j\leq i \leq n}$ with non-negative integer entries that satisfy the {\bf \emph{interlacing conditions}}:
\begin{equation}
\label{eq:interlacing}
z_{i+1,j+1} \leq z_{i,j} \leq z_{i+1,j} \qquad\text{for all meaningful $i,j$} \, .
\end{equation}
We define the {\bf\emph{shape}} of $z$, denoted by $\shape(z)$, its bottom row $(z_{n,1}, \dots, z_{n,n})$.
We define the {\bf\emph{type}} of $z$ as the $n$-tuple $\type(z) \in \Z_{\geq 0}^{n}$ with entries
\begin{align*}
\type(z)_i := \sum_{j = 1}^i z_{i,j} - \sum_{j = 1}^{i-1} z_{i-1,j} \qquad \text{for } 1 \leq i \leq n \, ,
\end{align*}
where the convention (that we always adopt from now on) is that the empty sum equals zero.

A Gelfand-Tsetlin pattern $z$ of height $n$ and shape $\lambda$ can be equivalently viewed as an upwards interlacing sequence
\[ 
\Lambda
= \left(\ \emptyset = \lambda^{(0)} \prec \lambda^{(1)} \prec \dots \prec \lambda^{(n)} = \lambda \right) \, ,
\]
with $\lambda^{(i)}$ being an $i$-partition for $0\leq i\leq n$, by setting $\lambda^{(i)}_j := z_{i,j}$.

Moreover, we can map a semi-standard Young tableau $T$ in the alphabet $1<2<\dots <n$ to a Gelfand-Tsetlin pattern $z$ of height $n$ by setting $z_{i,j}$ to be the number of entries not greater than $i$ in row $j$ of $T$, for $1\leq j\leq i\leq n$.

Notice that, in the equivalence $z \leftrightarrow \Lambda \leftrightarrow T$, we have $\shape(z)= \lambda =\shape(T)$ and $\type(z)_i = |\lambda^{(i)}| - |\lambda^{(i-1)}| = \#\{ \text{$i$'s in } T\}$.

Schur polynomials of type A can be now defined as generating functions of Gelfand-Tsetlin patterns (or, equivalently, semi-standard Young tableaux).
Given an $n$-partition $\lambda$, let us denote by $\GT^{(n)}_{\lambda}$ the set of all Gelfand-Tsetlin patterns of height $n$ and shape $\lambda$.
\begin{definition}
\label{def:schur}
The {\bf\emph{Schur polynomial}} in $n$ variables $x=(x_1,\dots,x_n)$ indexed by an $n$-partition $\lambda$ is defined by
\begin{equation}
\label{eq:schur}
\schur^{(n)}_{\lambda}(x)
:= \sum_{z\in \GT^{(n)}_{\lambda}}
\prod_{i=1}^{n} x_i^{\type(z)_i}
\, .
\end{equation}
\end{definition}
Schur polynomials are characters of $\GL_n(\C)$ and as such they are invariant under the action of the associated Weyl group $S_n$: namely, they are invariant under permutation of the variables $x_1,\dots,x_n$.
Schur polynomials are \emph{determinantal}, in the sense that they can be expressed as ratios of determinants via the Weyl character formula:
\begin{equation}
\label{eq:schurWeyl}
\schur^{(n)}_{\lambda}(x)
= \frac{\underset{1\leq i,j\leq n}{\det} \big( x_j^{\lambda_i + n-i} \big)}
{\underset{1\leq i,j\leq n}{\det}\big( x_j^{n-i} \big)} \, ,
\end{equation}
where the denominator is the Vandermonde product $\prod_{1\leq i<j\leq n}(x_i-x_j)$.
An elementary proof of the equivalence of~\eqref{eq:schur} and~\eqref{eq:schurWeyl}, which does not resort to representation theoretic techniques, can be found in~\cite{Pro89a}.
The symmetry property is immediate from~\eqref{eq:schurWeyl}, while it is not obvious from their combinatorial definition~\eqref{eq:schur}.
Schur polynomials can also be expressed as single determinants of elementary or complete homogeneous symmetric polynomials via the so-called Jacobi-Trudi identities \cite{FK97}.

Finally, we mention a couple of properties of Schur polynomials that will turn out to be useful later on: denoting $\lambda+t:=(\lambda_1+t,\dots,\lambda_n + t)$, and recalling~\eqref{eq:complementPartition}, we have
\begin{align}
\label{eq:schurProp+}
\schur^{(n)}_{\lambda+t}(x_1,\dots,x_n)
&= \left[\prod_{i=1}^n x_i \right]^t \schur^{(n)}_{\lambda}(x_1,\dots,x_n) &&\text{for } t\geq 0 \, , \\
\label{eq:schurProp-}
\schur^{(n)}_{u^n-\lambda}(x_1,\dots,x_n)
&= \left[\prod_{i=1}^n x_i \right]^u \schur^{(n)}_{\lambda}(x_1^{-1},\dots,x_n^{-1}) &&\text{for } u\geq \lambda_1 \, .
\end{align}
These are obtained by using the changes of variables $z_{i,j} \mapsto z_{i,j} +t$ and $z_{i,j} \mapsto u-z_{i,i-j+1}$ in~\eqref{eq:schur}, respectively.

\subsection{Symplectic patterns and characters}
\label{subsec:sp}

\begin{figure}
\centering
\begin{subfigure}[b]{.5\textwidth}
\captionsetup{width=.92\textwidth}%
\centering
\begin{tikzpicture}[scale=0.8, every path/.style={draw=none}, every node/.style={fill=white, scale=1, inner sep=1.5pt}, ineq1/.style={rotate=45, node contents={\tiny $\leq$}}, ineq2/.style={rotate=-45, node contents={\tiny $\leq$}}]

\node (z11) at (1,-1) {$z_{1,1}$};
\node (z21) at (2,-2) {$z_{2,1}$};
\node (z22) at (0,-2) {};
\node (z31) at (3,-3) {$z_{3,1}$};
\node (z32) at (1,-3) {$z_{3,2}$};
\node (z41) at (4,-4) {$z_{4,1}$};
\node (z42) at (2,-4) {$z_{4,2}$};
\node (z43) at (0,-4) {};
\node (z51) at (5,-5) {};
\node (z52) at (3,-5) {};
\node (z53) at (1,-5) {};
\node (z61) at (6,-6) {$z_{N,1}$};
\node (z62) at (4,-6) {};
\node (z63) at (2,-6) {$z_{N,\lceil N/2 \rceil}$};
\node (z64) at (0,-6) {};

% arrows
\path (z11) -- node[ineq2]{} (z21);
\path (z32) -- node[ineq1]{} (z21) -- node[ineq2]{} (z31);
\path (z32) -- node[ineq2]{} (z42) -- node[ineq1]{} (z31) -- node[ineq2]{} (z41);
\path[draw, loosely dotted] (z53) -- (z42) -- (z52) -- (z41) -- (z51);
\path[draw, loosely dotted] (z53) -- (z63) -- (z52) -- (z62) -- (z51) -- (z61);

%\draw (0.3,-0.6) -- (0.3,-6.3);

\end{tikzpicture}
\subcaption{Generic symplectic pattern of height $N$. \\ \phantom{P} }
\label{subfig:spGTpattern}
\end{subfigure}%
\begin{subfigure}[b]{.5\textwidth}
\centering
\captionsetup{width=.92\textwidth}%
\begin{tikzpicture}[scale=0.8, every path/.style={draw=none}, every node/.style={fill=white, scale=1, inner sep=1.5pt}, ineq1/.style={rotate=45, node contents={\tiny $\leq$}}, ineq2/.style={rotate=-45, node contents={\tiny $\leq$}}]

\node (z11) at (1,-1) {$1$};
\node (z21) at (2,-2) {$2$};
\node (z22) at (0,-2) {};
\node (z31) at (3,-3) {$2$};
\node (z32) at (1,-3) {$0$};
\node (z41) at (4,-4) {$4$};
\node (z42) at (2,-4) {$1$};
\node (z43) at (0,-4) {};
\node (z51) at (5,-5) {$5$};
\node (z52) at (3,-5) {$3$};
\node (z53) at (1,-5) {$1$};
\node (z61) at (6,-6) {$5$};
\node (z62) at (4,-6) {$3$};
\node (z63) at (2,-6) {$2$};
\node (z64) at (0,-6) {};

% arrows
\path (z11) -- node[ineq2]{} (z21);
\path (z32) -- node[ineq1]{} (z21) -- node[ineq2]{} (z31);
\path (z32) -- node[ineq2]{} (z42) -- node[ineq1]{} (z31) -- node[ineq2]{} (z41);
\path (z53) -- node[ineq1]{} (z42) -- node[ineq2]{} (z52) -- node[ineq1]{} (z41) -- node[ineq2]{} (z51);
\path (z53) -- node[ineq2]{} (z63) -- node[ineq1]{} (z52) -- node[ineq2]{} (z62) -- node[ineq1]{} (z51) -- node[ineq2]{} (z61);

%\draw (0.4,-0.6) -- (0.4,-6.3);

\end{tikzpicture}
\subcaption{A symplectic pattern of height $6$, shape $(5,3,2)$, and type $(1,1,0,3,4,1)$.}
\label{subfig:spGTpatternEx}
\end{subfigure}%
\caption{Symplectic patterns.
The entries are non-negative integers and the inequalities illustrate the interlacing conditions.}
\label{fig:spGTpatterns}
\end{figure}
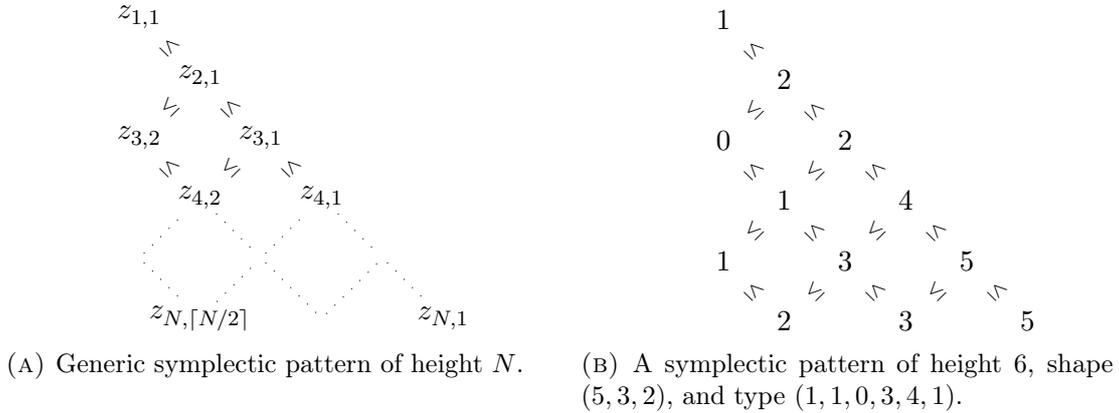

The even symplectic group $\Sp_{2n}(\C)$ is the group of all non-singular complex matrices of order $2n$ that preserve a non-degenerate skew-symmetric bilinear form.
It is a classical group with root system and Weyl group of type C.
Since there are no non-degenerate skew-symmetric bilinear forms on odd dimensional spaces, this definition makes sense only in the even case.
However, Proctor~\cite{Pro88} proposed an extended definition of symplectic group, requiring that matrices preserve a skew-symmetric bilinear form \emph{of maximal rank}.
This allows defining the \emph{odd symplectic group} $\Sp_{2n+1}(\C)$.
Odd symplectic groups are neither simple nor reductive, hence they are not counted among the classical groups.
They are, in various ways, related to root systems and Weyl groups of all three types B, C, and D -- see~\cite{Pro88} for details.

Besides the differences between even and odd symplectic groups, their characters are characterized by a very similar combinatorial definition as generating functions of certain patterns.
For this reason, we introduce them simultaneously in this subsection.

A {\bf\emph{symplectic pattern}} of height $N$ -- see Figure \ref{fig:spGTpatterns} -- is a ``half-triangular'' array $z=(z_{i,j})_{1\leq i\leq N, \, 1\leq j\leq \lceil i/2 \rceil}$ with non-negative integer entries that satisfy the interlacing conditions~\eqref{eq:interlacing}.
Its {\bf\emph{shape}} is the bottom row $\shape(z):=(z_{N,1},\dots,z_{N,\lceil N/2 \rceil})$ and its {\bf\emph{type}} is the $N$-tuple $\type(z) \in \Z_{\geq 0}^{N}$ defined by
\begin{equation}
\label{eq:spType}
\type(z)_i
:= \sum_{j = 1}^{\lceil i/2 \rceil} z_{i,j} - \sum_{j = 1}^{\lceil (i-1)/2 \rceil} z_{i-1,j}  \qquad\text{for } 1 \leq i \leq N \, .
\end{equation}

A symplectic pattern $z$ of height $N$ and shape $\lambda$ can be equivalently viewed as an upwards interlacing sequence
\[
\Lambda = \left(\ \emptyset = \lambda^{(0)} \prec \lambda^{(1)} \prec \dots \prec \lambda^{(N)} = \lambda \right) \, ,
\]
with $\lambda^{(i)}$ being an $\lceil i/2 \rceil$-partition for $0\leq i\leq N$, by setting $\lambda^{(i)}_j = z_{i,j}$.

Similarly to the standard Gelfand-Tsetlin case, symplectic patterns are in a bijective correspondence with the so-called ``symplectic tableaux''; the latter have been introduced by King~\cite{Kin76,KE83} in the even $N$ case, and by Proctor~\cite{Pro88} in the odd $N$ case.
Consider a semi-standard Young tableaux $T$ in the alphabet $1 < \overline{1} < 2 < \overline{2} < \dots < n < \overline{n}$ when $N=2n$ (respectively, in the alphabet $1 < \overline{1} < 2 < \overline{2} < \dots < n < \overline{n} < n+1$ when $N=2n+1$), and such that all entries in row $i$ are larger than or equal to $i$.
Setting $z_{2i-1,j}$ to be the number of entries not greater than $i$ in the $j$-th row of $T$ and $z_{2i,j}$ to be the number of entries not greater than $\overline{i}$ in the $j$-th row of $T$, we obtain a symplectic pattern $z$ of height $N$.
In the equivalence $z \leftrightarrow \Lambda \leftrightarrow T$, we have $\shape(z) = \lambda = \shape(T)$, $\mathrm{type}(z)_{2i-1} = |\lambda^{(2i-1)}| - |\lambda^{(2i-2)}| = \#\{ \text{$i$'s in } T\}$, and $\mathrm{type}(z)_{2i} = |\lambda^{(2i)}| - |\lambda^{(2i-1)}| = \#\{ \text{$\overline{i}$'s in } T\}$.

Symplectic Schur polynomials can be now defined as generating functions of symplectic patterns (or, equivalently, symplectic tableaux).
Given an $\lceil N/2 \rceil$-partition $\lambda$, let us denote by $\spGT^{(N)}_{\lambda}$ the set of all symplectic patterns of height $N$ and shape $\lambda$.
\begin{definition}
\label{def:spSchur}
The {\bf\emph{$(2n)$-symplectic Schur polynomial}} indexed by an $n$-partition $\lambda$ is the Laurent polynomial in variables $x=(x_1,\dots,x_n)$ defined by
\begin{equation}
\label{eq:spSchur}
\begin{split}
\sp^{(2n)}_{\lambda}(x)
:= \sum_{z\in \spGT^{(2n)}_{\lambda}}
\prod_{i=1}^{n} x_i^{\type(z)_{2i} - \type(z)_{2i-1}}
\, .
\end{split}
\end{equation}
\end{definition}
As characters of $\Sp_{2n}(\C)$, symplectic Schur polynomials are invariant under the action of the associated Weyl group $(\Z/2\Z)^n \rtimes S_n$ of type BC, i.e.\ they do not change if the variables $x_1,\dots,x_n$ are permuted or any of them is replaced by its multiplicative inverse.
One can deduce these properties from the Weyl character formula:
\begin{equation}
\label{eq:spWeyl}
\sp^{(2n)}_{\lambda}(x)
= \frac{\underset{1\leq i,j\leq n}{\det}\big( x_j^{\lambda_i + n-i+1} - x_j^{-(\lambda_i + n-i+1)} \big)}
{\underset{1\leq i,j\leq n}{\det}\big( x_j^{n-i+1} - x_j^{-(n-i+1)} \big)} \, .
\end{equation}
An ``elementary'' proof of the latter, not relying on any representation theory, can be found in~\cite{Pro93}.

\begin{definition}
\label{def:oddSpSchur}
The {\bf\emph{$(2n+1)$-symplectic Schur polynomial}} indexed by an $(n+1)$-partition $\lambda$ is the Laurent polynomial in variables $x=(x_1,\dots,x_n)$ and $y$ defined by
\begin{equation}
\label{eq:oddSpSchur}
\begin{split}
\sp^{(2n+1)}_{\lambda}(x;y)
:= \sum_{z\in \spGT^{(2n+1)}_{\lambda}}
\prod_{i=1}^{n} x_i^{\type(z)_{2i} - \type(z)_{2i-1}}
y^{\type(z)_{2n+1}}
\, .
\end{split}
\end{equation}
\end{definition}
Odd symplectic Schur polynomials are characters of $\Sp_{2n+1}(\C)$.
They are invariant under the action of the Weyl group of type BC \emph{on the $x$-variables only}.
Namely, they do not change if the variables $x_1,\dots,x_n$ are permuted or any of them is inverted; however, they have no invariance property with respect to the variable $y$.
A Weyl character formula for odd symplectic characaters was given in~\cite{Pro88} for the special case $y=1$.
A formula for general $y$ has recently appeared in~\cite{Oka19}:
\begin{equation}
\label{eq:oddSpWeyl}
\sp^{(2n+1)}_{\lambda}(x;y) = \frac{\underset{1\leq i,j\leq n+1}{\det}(A_{\lambda})}{\underset{1\leq i,j\leq n+1}{\det}(A_{\emptyset})} \, ,
\end{equation}
where, for any $(n+1)$-partition $\mu$, $A_{\mu}$ is the $(n+1)\times (n+1)$ matrix with $(i,j)$-entry
\[
\begin{cases}
\big(x_j^{\mu_i + n-i+2} - x_j^{-(\mu_i + n-i+2)} \big) - y^{-1} \big(x_j^{\mu_i + n-i+1} - x_j^{-(\mu_i + n-i+1)} \big) &\text{if } 1\leq j\leq n \, , \\
y^{\mu_i + n-i+1} &\text{if } j=n+1 \, .
\end{cases}
\]

Similarly to the standard Schur polynomials associated with $\GL_n(\C)$, the symplectic ones also have further determinantal expressions, such as the Jacobi-Trudi identities~\cite{FK97,Pro88}.

\subsection{Orthogonal patterns and characters}
\label{subsec:so}

\begin{figure}
\begin{subfigure}[b]{.5\textwidth}
\centering
\captionsetup{width=.92\textwidth}%
\begin{tikzpicture}[scale=0.8, every path/.style={draw=none}, every node/.style={fill=white, scale=1, inner sep=1.5pt}, ineq1/.style={rotate=45, node contents={\tiny $\leq$}}, ineq2/.style={rotate=-45, node contents={\tiny $\leq$}}]

\node (z11) at (1,-1) {$0$};
\node (z21) at (2,-2) {$2$};
\node (z22) at (0,-2) {};
\node (z31) at (3,-3) {$3$};
\node (z32) at (1,-3) {$-1$};
\node (z41) at (4,-4) {$5$};
\node (z42) at (2,-4) {$3$};
\node (z43) at (0,-4) {};
\node (z51) at (5,-5) {$5$};
\node (z52) at (3,-5) {$4$};
\node (z53) at (1,-5) {$-3$};
\node (z61) at (6,-6) {$6$};
\node (z62) at (4,-6) {$5$};
\node (z63) at (2,-6) {$4$};
\node (z64) at (0,-6) {};

% arrows
\path (z11) -- node[ineq2]{} (z21);
\path (z32) -- node[ineq1]{} (z21) -- node[ineq2]{} (z31);
\path (z32) -- node[ineq2]{} (z42) -- node[ineq1]{} (z31) -- node[ineq2]{} (z41);
\path (z53) -- node[ineq1]{} (z42) -- node[ineq2]{} (z52) -- node[ineq1]{} (z41) -- node[ineq2]{} (z51);
\path (z53) -- node[ineq2]{} (z63) -- node[ineq1]{} (z52) -- node[ineq2]{} (z62) -- node[ineq1]{} (z51) -- node[ineq2]{} (z61);

%\draw (0.3,-0.6) -- (0.3,-6.3);

\end{tikzpicture}
\subcaption{An orthogonal pattern of height $6$, shape $(6,5,4)$, and type $(0,2,2,4,4,3)$.}
\label{subfig:oOddGTpattern}
\end{subfigure}%
\begin{subfigure}[b]{.5\textwidth}
\centering
\captionsetup{width=.92\textwidth}%
\begin{tikzpicture}[scale=0.8, every path/.style={draw=none}, every node/.style={fill=white, scale=1, inner sep=1.5pt}, ineq1/.style={rotate=45, node contents={\tiny $\leq$}}, ineq2/.style={rotate=-45, node contents={\tiny $\leq$}}]

\node (z11) at (1,-1) {$-2.5$};
\node (z21) at (2,-2) {$2.5$};
\node (z22) at (0,-2) {};
\node (z31) at (3,-3) {$5.5$};
\node (z32) at (1,-3) {$0.5$};
\node (z41) at (4,-4) {$7.5$};
\node (z42) at (2,-4) {$1.5$};
\node (z43) at (0,-4) {};
\node (z51) at (5,-5) {$7.5$};
\node (z52) at (3,-5) {$7.5$};
\node (z53) at (1,-5) {$-0.5$};

% arrows
\path (z11) -- node[ineq2]{} (z21);
\path (z32) -- node[ineq1]{} (z21) -- node[ineq2]{} (z31);
\path (z32) -- node[ineq2]{} (z42) -- node[ineq1]{} (z31) -- node[ineq2]{} (z41);
\path (z53) -- node[ineq1]{} (z42) -- node[ineq2]{} (z52) -- node[ineq1]{} (z41) -- node[ineq2]{} (z51);

\end{tikzpicture}
\subcaption{An orthogonal pattern of height $5$, shape $(7.5,7.5,-0.5)$, and type $(2.5,0,3.5,3,6.5)$.}
\label{subfig:oEvenGTpattern}
\end{subfigure}%
\caption{Orthogonal patterns.
The generic orthogonal pattern has the same graphical representation as a symplectic pattern of the same height, see Figure~\ref{subfig:spGTpattern}.
However, the entries here are either all integers or all half-integers, and \emph{odd ends} are also allowed to be negative.
The interlacing conditions, illustrated by inequalities, hold in absolute value.}
\label{fig:oGTpatterns}
\end{figure}
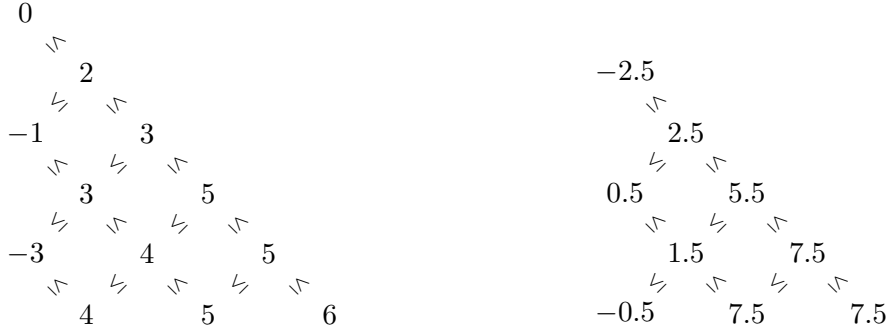

The (special) orthogonal group $\SO_N(\C)$ is the group of all complex orthogonal matrices of order $N$ with determinant $1$.
Usually, characters of the even and odd orthogonal groups (which are of type D and B respectively) are combinatorially defined using various types of tableaux -- see~\cite{KE83, KT87, KT90, Sun90a, Sun90b, Pro94}.
We will rather focus on two equivalent constructions of these polynomials as generating functions of patterns of two different kinds.

The first kind of pattern we deal with was first introduced by Gelfand and Tsetlin~\cite{GT50} and further studied by Proctor~\cite{Pro94}.
Let us define an {\bf\emph{orthogonal pattern}} of height $N$ -- see Figure~\ref{fig:oGTpatterns} -- to be a ``half-triangular'' array $z=(z_{i,j})_{1\leq i\leq N, \, 1\leq j\leq \lceil i/2 \rceil}$ that satisfies the following properties:
\begin{itemize}
\item
the entries are either all simultaneously integers or all simultaneously half-integers;
\item
the entries $z_{2i-1,i}$ for $1\leq i\leq \lceil N/2 \rceil$, which we call {\bf\emph{odd ends}}\footnote{This terminology is motivated by the fact that these entries are the last elements of odd rows of $z$.}, can be also \emph{negative}, whereas all other entries are non-negative;
\item
the interlacing conditions hold in absolute value\footnote{Here, $\abs{\cdot}$ denotes the absolute value of a number, whereas elsewhere in this work it denotes the $1$-norm of a tuple.}:
\begin{equation}
\label{eq:interlacingAbs}
\abs{z_{i+1,j+1}} \leq \abs{z_{i,j}} \leq \abs{z_{i+1,j}} \qquad\text{for all meaningful $i,j$} \, .
\end{equation}
\end{itemize}
The {\bf\emph{shape}} of $z$ is its bottom row $\shape(z):=(z_{N,1},\dots,z_{N,\lceil N/2 \rceil})$, which is an $n$-partition or $n$-half-partition, with or without sign according to whether $N=2n-1$ or $N=2n$.
We define the {\bf\emph{type}} of $z$ as in the previous subsections but considering the absolute values of the entries:
\begin{equation}
\label{eq:soType}
\type(z)_i
:= \sum_{j = 1}^{\lceil i/2 \rceil} \abs{z_{i,j}} - \sum_{j = 1}^{\lceil (i-1)/2 \rceil} \abs{z_{i-1,j}}  \qquad\text{for } 1 \leq i \leq N \, ,
\end{equation}
so that we have $\type(z) \in \left(\frac{1}{2} \Z_{\geq 0}\right)^{N}$.

For a given signed $n$-partition (respectively, signed $n$-half-partition) $\lambda_\epsilon$, an orthogonal pattern $z$ of height $2n-1$ and shape $\lambda_\epsilon$ can be equivalently viewed as an upwards interlacing sequence
\[
\Lambda = \left(\ \emptyset = \lambda^{(0)} \prec \lambda^{(1)}_{\epsilon_1} \prec \lambda^{(2)} \prec \dots \prec \lambda^{(2n-3)}_{\epsilon_{2n-3}} \prec \lambda^{(2n-2)} \prec \lambda^{(2n-1)}_{\epsilon_{2n-1}} = \lambda_{\epsilon} \right)
\]
such that:
\begin{itemize}
\item
$\lambda^{(2i-1)}_{\epsilon_{2i-1}}$ is a signed $i$-partition (respectively, a signed $i$-half-partition) for $1\leq i\leq n$;
\item
$\lambda^{(2i)}$ is an $i$-partition (respectively, an $i$-half-partition) for $0\leq i\leq n-1$.
\end{itemize}
An orthogonal pattern of height $2n$ can be viewed as an analogous upwards interlacing sequence that, this time, ends with an \emph{unsigned} $n$-(half-)partition.

Orthogonal patterns can be also shown to bijectively correspond to the so-called \emph{signed orthogonal tableaux} -- see~\cite[\S~8]{Pro93} for details.

As we will shortly see, both even and odd orthogonal Schur polynomials can be defined as generating functions of orthogonal patterns; however, the weight monomials differ from the ones used in the symplectic case.

Let us start with the even case.
Given a signed $n$-partition or signed $n$-half-partition $\lambda_\epsilon$, let us denote by $\oGT^{(2n-1)}_{\lambda_\epsilon}$ the set of all orthogonal patterns of height $2n-1$ and shape $\lambda_\epsilon$.
From now on, we set $\sign(a) := +1$ for $a\geq 0$ and $\sign(a) := -1$ for $a<0$.
\begin{definition}
\label{def:soEvenSchur}
The {\bf\emph{$(2n)$-orthogonal Schur polynomial}} indexed by a signed $n$-partition or signed $n$-half-partition $\lambda_\epsilon$ is the Laurent polynomial in variables $x=(x_1,\dots,x_n)$ defined by
\begin{equation}
\label{eq:soEvenSchur}
\begin{split}
\so^{(2n)}_{\lambda_\epsilon}(x)
:= \sum_{z\in \oGT^{(2n-1)}_{\lambda_\epsilon}}
x_1^{z_{1,1}}
\prod_{i=2}^{n} x_i^{\sign(z_{2i-3,i-1}) \sign(z_{2i-1,i}) [\type(z)_{2i-1} - \type(z)_{2i-2}]} \, .
\end{split}
\end{equation}
\end{definition}
Notice that the exponent of $x_1$ in~\eqref{eq:soEvenSchur} can be also expressed, as the exponents of the other variables, in terms of the type of $z$ and the sign of the odd ends: $z_{1,1} = \sign(z_{1,1}) \type(z)_{1}$.

As characters of $\SO_{2n}(\C)$, $(2n)$-orthogonal Schur polynomials are invariant under the action of the associated Weyl group $(\Z/2\Z)^{n-1} \rtimes S_n$ of type D.
Namely, they are invariant under permutation of the variables $x_1,\dots,x_n$ and multiplicative inversion of an \emph{even} number of them.
These properties can be deduced from the Weyl character formula:
\begin{equation}
\label{eq:soEvenWeyl}
\so^{(2n)}_{\lambda_{\epsilon}}(x) = \frac{\underset{1\leq i,j\leq n}{\det}\big( x_j^{\lambda_i + n-i} + x_j^{-(\lambda_i + n-i)} \big) + \epsilon \underset{1\leq i,j\leq n}{\det}\big( x_j^{\lambda_i + n-i} - x_j^{-(\lambda_i + n-i)} \big)}
{\underset{1\leq i,j\leq n}{\det}\big( x_j^{n-i} + x_j^{-(n-i)} \big)} \, .
\end{equation}

It is not difficult to prove, for example starting from Definition~\ref{def:soEvenSchur}, the following property that will be useful later on to deduce Corollary~\ref{coro:symLPP_0}:
\begin{equation}
\label{eq:soEvenVanishingVar}
\lim_{\alpha \downarrow 0} \alpha^k \so^{(2n+2)}_{(k,\lambda_\epsilon)} (x_1,\dots,x_n,\alpha^{-1})
= \so^{(2n)}_{\lambda_\epsilon} (x_1,\dots,x_n)
\end{equation}
for any integer (respectively, half-integer) $k$ such that $k\geq \lambda_1$, assuming that $\lambda_\epsilon$ is a signed $n$-partition (respectively, signed $n$-half-partition).

\vskip 1mm

Let us now pass to odd orthogonal characters.
Given an $n$-partition or $n$-half-partition $\lambda$, let us denote by $\oGT^{(2n)}_{\lambda}$ the set of all orthogonal patterns of height $2n$ and shape $\lambda$.
\begin{definition}
\label{def:soOddSchur}
The {\bf\emph{$(2n+1)$-orthogonal Schur polynomial}} indexed by an $n$-partition or $n$-half-partition $\lambda$ is the Laurent polynomial in variables $x=(x_1,\dots,x_n)$ defined by
\begin{equation}
\label{eq:soOddSchur}
\begin{split}
\so^{(2n+1)}_{\lambda}(x)
:= \sum_{z\in \oGT^{(2n)}_{\lambda}}
x_1^{z_{1,1}}
\prod_{i=2}^{n} x_i^{\sign(z_{2i-3,i-1}) \sign(z_{2i-1,i}) [\type(z)_{2i-1} - \type(z)_{2i-2}]} \, .
\end{split}
\end{equation}
\end{definition}
Notice that the weight monomials in~\eqref{eq:soEvenSchur} and~\eqref{eq:soOddSchur} coincide, but the sets of patterns over which the sums are taken differ.
Observe also that the shape $\lambda$ does not appear in the weights of~\eqref{eq:soOddSchur}: it just plays the role of ``bounding from above'' the entries of the previous rows.

The Weyl group of $\SO_{2n+1}(\C)$ is of type BC, as was the case for $\Sp_{2n}(\C)$; this means that, as characters of $\SO_{2n+1}(\C)$, $(2n+1)$-orthogonal Schur polynomials are invariant under permutation of the variables $x_1,\dots,x_n$ and multiplicative inversion of any $x_i$.
This property immediately follows from the Weyl character formula:
\begin{equation}
\label{eq:soOddWeyl}
\so^{(2n+1)}_{\lambda}(x) = \frac{\underset{1\leq i,j\leq n}{\det}\left( x_j^{\lambda_i + n-i+1/2} - x_j^{-(\lambda_i + n-i+1/2)} \right)}
{\underset{1\leq i,j\leq n}{\det}\left( x_j^{n-i+1/2} - x_j^{-(n-i+1/2)} \right)} \, .
\end{equation}

A property that relates odd and even orthogonal characters indexed by a ``rectangular (half-)partition'' $u^{k}:= \underbrace{(u,\dots,u)}_{k \text{ times}}$  is the following:
\begin{equation}
\label{eq:soEvenRect=soOddRect}
\so^{(2n+2)}_{u^{n+1}_{\epsilon}}(x_1,\dots,x_n,1)
= \so^{(2n+1)}_{u^{n}}(x_1,\dots,x_n) \, ,
\end{equation}
with $u\in\frac{1}{2} \Z_{\geq 0}$ and $\epsilon=\pm$ being any sign.
The latter can be easily verified using Definitions~\ref{def:soEvenSchur} and~\ref{def:soOddSchur} and noticing that, due to the interlacing conditions, the first $2n$ rows of an orthogonal pattern of height $2n+1$ and shape $u^{n+1}_{\epsilon}$ form an orthogonal pattern of height $2n$ and shape $u^{n}$.

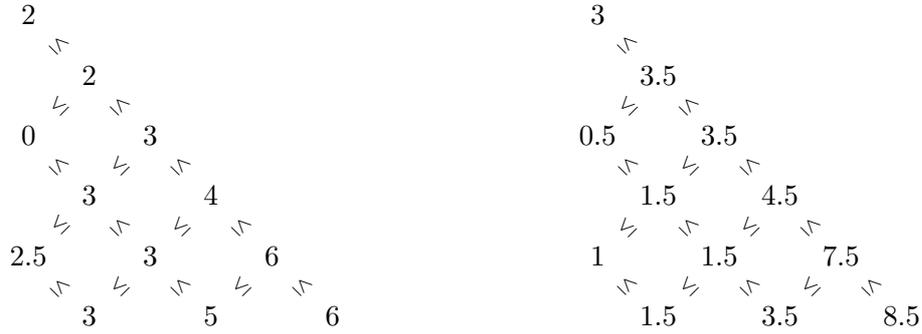
\begin{figure}
\centering
\begin{subfigure}[b]{.5\textwidth}
\centering
\captionsetup{width=.92\textwidth}%
\begin{tikzpicture}[scale=0.8, every path/.style={draw=none}, every node/.style={fill=white, scale=1, inner sep=1.5pt}, ineq1/.style={rotate=45, node contents={\tiny $\leq$}}, ineq2/.style={rotate=-45, node contents={\tiny $\leq$}}]

\node (z11) at (1,-1) {$2$};
\node (z21) at (2,-2) {$2$};
\node (z22) at (0,-2) {};
\node (z31) at (3,-3) {$3$};
\node (z32) at (1,-3) {$0$};
\node (z41) at (4,-4) {$4$};
\node (z42) at (2,-4) {$3$};
\node (z43) at (0,-4) {};
\node (z51) at (5,-5) {$6$};
\node (z52) at (3,-5) {$3$};
\node (z53) at (1,-5) {$2.5$};
\node (z61) at (6,-6) {$6$};
\node (z62) at (4,-6) {$5$};
\node (z63) at (2,-6) {$3$};
\node (z64) at (0,-6) {};

% arrows
\path (z11) -- node[ineq2]{} (z21);
\path (z32) -- node[ineq1]{} (z21) -- node[ineq2]{} (z31);
\path (z32) -- node[ineq2]{} (z42) -- node[ineq1]{} (z31) -- node[ineq2]{} (z41);
\path (z53) -- node[ineq1]{} (z42) -- node[ineq2]{} (z52) -- node[ineq1]{} (z41) -- node[ineq2]{} (z51);
\path (z53) -- node[ineq2]{} (z63) -- node[ineq1]{} (z52) -- node[ineq2]{} (z62) -- node[ineq1]{} (z51) -- node[ineq2]{} (z61);

\end{tikzpicture}
\subcaption{A split orthogonal pattern of height $6$, shape $(6,5,3)$, and type $(2,0,1,4,4.5,2.5)$, with one atypical entry $2.5$.}
\label{subfig:soGTpatternEx1}
\end{subfigure}%
\begin{subfigure}[b]{.5\textwidth}
\centering
\captionsetup{width=.92\textwidth}%
\begin{tikzpicture}[scale=0.8, every path/.style={draw=none}, every node/.style={fill=white, scale=1, inner sep=1.5pt}, ineq1/.style={rotate=45, node contents={\tiny $\leq$}}, ineq2/.style={rotate=-45, node contents={\tiny $\leq$}}]

\node (z11) at (1,-1) {$3$};
\node (z21) at (2,-2) {$3.5$};
\node (z22) at (0,-2) {};
\node (z31) at (3,-3) {$3.5$};
\node (z32) at (1,-3) {$0.5$};
\node (z41) at (4,-4) {$4.5$};
\node (z42) at (2,-4) {$1.5$};
\node (z43) at (0,-4) {};
\node (z51) at (5,-5) {$7.5$};
\node (z52) at (3,-5) {$1.5$};
\node (z53) at (1,-5) {$1$};
\node (z61) at (6,-6) {$8.5$};
\node (z62) at (4,-6) {$3.5$};
\node (z63) at (2,-6) {$1.5$};
\node (z64) at (0,-6) {};

% arrows
\path (z11) -- node[ineq2]{} (z21);
\path (z32) -- node[ineq1]{} (z21) -- node[ineq2]{} (z31);
\path (z32) -- node[ineq2]{} (z42) -- node[ineq1]{} (z31) -- node[ineq2]{} (z41);
\path (z53) -- node[ineq1]{} (z42) -- node[ineq2]{} (z52) -- node[ineq1]{} (z41) -- node[ineq2]{} (z51);
\path (z53) -- node[ineq2]{} (z63) -- node[ineq1]{} (z52) -- node[ineq2]{} (z62) -- node[ineq1]{} (z51) -- node[ineq2]{} (z61);

\end{tikzpicture}
\subcaption{A split orthogonal pattern of height $6$, shape $(8.5,3.5,1.5)$, and type $(3,0.5,0.5,2,4,3.5)$, with atypical entries $3$, $1$.}
\label{subfig:soGTpatternEx2}
\end{subfigure}%
\caption{Split orthogonal patterns.
The generic split orthogonal pattern has the same graphical representation as a symplectic pattern of the same height, see Figure~\ref{subfig:spGTpattern}.
All entries are non-negative, but may be either integers or half-integers here, with the constraint that all entries \emph{except odd ends} are of the same type.
As usual, the inequalities illustrate the interlacing conditions.}
\label{fig:soGTpatterns}
\end{figure}

Crucially for the development of this work, odd orthogonal characters can be also defined as generating functions of another kind of patterns, introduced by Proctor~\cite{Pro94}, where no negative entries are allowed but integer and half-integer entries might be simultaneously present.
Let us introduce them.
We call {\bf\emph{split orthogonal pattern}} of height $2n$ -- see Figure \ref{fig:soGTpatterns} -- any ``half-triangular'' array $z=(z_{i,j})_{1\leq i\leq 2n, \, 1\leq j\leq \lceil i/2 \rceil}$ of height $2n$ that satisfies the following properties:
\begin{itemize}
\item
the entries $z_{2i-1,i}$ for $1\leq i\leq n$, which again we call {\bf\emph{odd ends}}, are in $\frac{1}{2}\Z_{\geq 0}$;
\item
the other entries are either all simultaneously in $\Z_{\geq 0}$ or all simultaneously in $\frac{1}{2} + \Z_{\geq 0}$;
\item
the usual interlacing conditions~\eqref{eq:interlacing} hold.
\end{itemize}
Assuming that $z_{2n,1}$ is an integer (respectively, half-integer), we call {\bf\emph{atypical}} all the half-integer (respectively, integer) entries of the array.
According to the conditions above, atypical entries are necessarily odd ends, as shown in Figure~\ref{fig:soGTpatterns}.
Notice that, by definition, any symplectic pattern is a split orthogonal pattern of the same height where all entries are integers (in particular, with no atypical entries).
The definitions of shape and type are the usual ones; just notice that the shape of a split orthogonal pattern of height $2n$ is either an $n$-partition or an $n$-half-partition, as even rows do not contain atypical entries.

For a given $n$-partition (respectively, $n$-half-partition) $\lambda$, a split orthogonal pattern $z$ of height $2n$ and shape $\lambda$ can be equivalently viewed as an upwards interlacing sequence
\[
\Lambda = \left(\ \emptyset = \lambda^{(0)} \prec \lambda^{(1)} \prec \dots \prec \lambda^{(2n)} = \lambda \right)
\]
such that:
\begin{itemize}
\item
$\lambda^{(2i-1)}$ is a real $i$-partition with the first $i-1$ parts in $\Z_{\geq 0}$ (respectively, in $\frac{1}{2} + \Z_{\geq 0}$) and the $i$-th part in $\frac{1}{2} \Z_{\geq 0}$, for $1\leq i\leq n$;
\item
$\lambda^{(2i)}$ is an $i$-partition (respectively, an $i$-half-partition) for $0\leq i\leq n$.
\end{itemize}

Split orthogonal patterns bijectively correspond to certain tableaux introduced by Koike and Terada~\cite{KT87, KT90}.
Consider a semi-standard Young tableaux $T$ in the alphabet $1 < \mathring{1} < \overline{1} < 2 < \mathring{2} < \overline{2} < \dots < n < \mathring{n} < \overline{n}$ such that (i) all entries in row $i$ are larger than or equal to $i$ and (ii) symbol $\mathring{i}$ appears at most once in row $i$ and never in any other row.
The bijection then works similarly as the one between symplectic tableaux and symplectic patterns, with the convention that each extra symbol $\mathring{i}$ should be counted both as a half $i$ and as a half $\overline{i}$.
More precisely, set $z_{2i-1,j}$ to be the number of entries not greater than $i$ in the $j$-th row of $T$, increased by $1/2$ if the $j$-th row also contains $\mathring{i}$; also, set $z_{2i,j}$ to be the number of entries not greater than $\overline{i}$ in the $j$-th row of $T$.
Then, $z$ is a split orthogonal pattern of height $2n$.
In the correspondence $z \leftrightarrow \Lambda \leftrightarrow T$, we have $\shape(z) = \lambda = \shape(T)$, $\mathrm{type}(z)_{2i-1} = |\lambda^{(2i-1)}| - |\lambda^{(2i-2)}| = \#\{ \text{$i$'s in } T\} + \frac{1}{2} \#\{ \text{$\mathring{i}$'s in } T\}$, and $\mathrm{type}(z)_{2i} = |\lambda^{(2i)}| - |\lambda^{(2i-1)}| = \#\{ \text{$\overline{i}$'s in } T\} + \frac{1}{2} \#\{ \text{$\mathring{i}$'s in } T\}$.

It turns out that odd orthogonal Schur polynomials can be also defined as generating functions of split orthogonal patterns, using the same weight monomials as in the definition of even symplectic Schur polynomials -- see~\eqref{eq:spSchur}.
Given an $n$-partition or $n$-half-partition $\lambda$, let us denote by $\soGT^{(2n)}_{\lambda}$ the set of all split orthogonal patterns of height $2n$ and shape $\lambda$.
\begin{definition}
\label{def:soOddSchurSplit}
The {\bf \emph{$(2n+1)$-orthogonal Schur polynomial}} indexed by an $n$-partition or $n$-half-partition $\lambda$ can be defined as
\begin{equation}
\label{eq:soOddSchurSplit}
\begin{split}
\so^{(2n+1)}_{\lambda}(x)
:= \sum_{z\in \soGT^{(2n)}_{\lambda}}
\prod_{i=1}^{n} x_i^{\type(z)_{2i} - \type(z)_{2i-1}} \, .
\end{split}
\end{equation}
\end{definition}
For an analogous definition in terms of Koike-Terada tableaux, see~\cite{KT87, KT90}.
For the equivalence of Definitions~\ref{def:soOddSchur} and~\ref{def:soOddSchurSplit}, we refer to~\cite{Pro94}.

\section{Transition between characters}
\label{sec:transition}

In this section we introduce two classes of symmetric polynomials that, via a tuning parameter, establish a transition between classical groups' characters of different types.
We define them via generating functions of patterns and provide, for the first class, determinantal formulas, as we have done for the classical characters in Section~\ref{sec:patterns}.

In Subsection~\ref{subsec:bijectionPatterns} we describe a combinatorial bijection between split orthogonal and symplectic patterns.
In Subsection~\ref{subsec:CBtransition} we define a class of polynomials that interpolate between characters of types C and B; we also express them in terms of symplectic characters using the aforementioned combinatorial bijection and provide determinantal formulas of ``Weyl character type'', which permit us to link them to Koornwinder polynomials.
In Subsection~\ref{subsec:DBtransition} we introduce a second class of polynomials that interpolate between characters of types D and B.

\subsection{A bijection between symplectic and split orthogonal  patterns}
\label{subsec:bijectionPatterns}

Here we introduce a combinatorial bijection between a class of split orthogonal patterns with a fixed shape $\lambda$ and symplectic patterns with a ``perturbed'' shape.
The proof of this result is fairly natural and straightforward in the case of $\lambda$ being a half-partition.
On the other hand, the case of $\lambda$ being an \emph{integer} partition is more interesting, as it requires a non-trivial algorithmic procedure.
In the latter case, our bijection can be also equivalently viewed as a correspondence between Koike-Terada orthogonal tableaux~\cite{KT87, KT90} and Sundaram's orthogonal tableaux~\cite{Sun90a}; for more details, see the remarks on the \emph{Relation to Sundaram's tableaux} at the end of Section~\ref{subsec:CBtransition}.

For a split orthogonal pattern $z$ of height $2n$, we denote by $\atyp(z)\in \{0,1\}^n$ the $n$-tuple whose $i$-th entry equals $1$ if and only if $z_{2i-1,i}$ is \emph{atypical}, according to the definition given in Subsection~\ref{subsec:so}.
E.g., for the pattern in Figure~\ref{subfig:soGTpatternEx1}, $\atyp(z) = (0,0,1)$.
We also denote by $\abs{v}:= \sum_{i=1}^n \abs{v_i}$ the $1$-norm of any $v\in\R^n$.

\begin{theorem}
\label{thm:sp-soPatternsBijection}
For any $n$-partition $\lambda$ and integer $1\leq k\leq n$, there exists a bijection
\begin{equation}
\label{eq:sp-soPatternsBijection_1}
\{ z \in \soGT^{(2n)}_{\lambda} \colon \abs{\atyp(z)}=k \}
\longleftrightarrow
\{ z' \in \spGT^{(2n)}_{\lambda-\epsilon} \colon \epsilon \in\{0,1\}^n, \, \abs{\epsilon} = k \}
\end{equation}
that satisfies, for $1\leq i\leq n$,
\begin{equation}
\label{eq:typePreserved_1}
\type(z)_{2i} - \type(z)_{2i-1}
= \type(z')_{2i} - \type(z')_{2i-1} \, .
\end{equation}
Furthermore, for any $n$-half-partition $\lambda$ and $\epsilon\in\{0,1\}^n$, there exists a bijection
\begin{equation}
\label{eq:sp-soPatternsBijection_2}
\{ z \in \soGT^{(2n)}_{\lambda} \colon \atyp(z)=\epsilon \}
\longleftrightarrow
\{ z' \in \spGT^{(2n)}_{\lambda-1/2} \}
\end{equation}
that satisfies, for $1\leq i\leq n$,
\begin{equation}
\label{eq:typePreserved_2}
\type(z)_{2i} - \type(z)_{2i-1}
= \type(z')_{2i} - \type(z')_{2i-1} + \epsilon_i - \frac{1}{2} \, .
\end{equation}
\end{theorem}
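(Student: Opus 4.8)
The plan is to treat the two assertions of the theorem separately, since the half-partition case \eqref{eq:sp-soPatternsBijection_2}--\eqref{eq:typePreserved_2} is a simple global rescaling whereas the integer-partition case \eqref{eq:sp-soPatternsBijection_1}--\eqref{eq:typePreserved_1} genuinely requires an algorithm, and the only real difficulty lies there.

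\textbf{The half-partition case.} When $\lambda$ is an $n$-half-partition, the bottom row of any $z\in\soGT^{(2n)}_\lambda$ is a half-partition, so by the conventions of Subsection~\ref{subsec:so} the atypical entries of $z$ are exactly the odd ends $z_{2l-1,l}$ that happen to be nonnegative integers; thus fixing $\atyp(z)=\epsilon$ amounts to prescribing, for each $l$, whether $z_{2l-1,l}$ is an integer ($\epsilon_l=1$) or a half-integer ($\epsilon_l=0$). I would define $z'$ by setting $z'_{2l-1,l}:=z_{2l-1,l}$ whenever $\epsilon_l=1$, and $z'_{i,j}:=z_{i,j}-\tfrac12$ for every other entry of $z$ (in particular $z'_{2l-1,l}=z_{2l-1,l}-\tfrac12$ when $\epsilon_l=0$). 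Then each entry of $z'$ is a nonnegative integer: a non-odd-end entry and a typical odd end are half-integers $\geq\tfrac12$ and drop to nonnegative integers, while an atypical odd end is already a nonnegative integer. The interlacing conditions \eqref{eq:interlacing} survive: away from atypical odd ends all entries drop uniformly by $\tfrac12$, and an atypical odd end $z_{2l-1,l}$, being an integer, is constrained only by $z_{2l-1,l}\le z_{2l-2,l-1}$ and $z_{2l-1,l}\le z_{2l,l}$ against its two half-integer neighbours, so integrality upgrades these to $z_{2l-1,l}\le z_{2l-2,l-1}-\tfrac12=z'_{2l-2,l-1}$ and $z_{2l-1,l}\le z'_{2l,l}$ — exactly what \eqref{eq:interlacing} demands, an odd end carrying no lower interlacing bound. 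As the bottom row contains no odd end, $\shape(z')=\lambda-\tfrac12$, so $z'\in\spGT^{(2n)}_{\lambda-1/2}$; adding $\tfrac12$ back to all entries of a symplectic pattern except at the positions $\{l:\epsilon_l=1\}$ produces the inverse and is readily seen to land in $\{z\in\soGT^{(2n)}_\lambda:\atyp(z)=\epsilon\}$. Finally \eqref{eq:typePreserved_2} is immediate from \eqref{eq:soType}--\eqref{eq:spType}: rows $2i-2$ and $2i$ each lose $\tfrac12$ per entry and row $2i-1$ loses $\tfrac12$ per entry but gains back $\tfrac12$ at its odd end exactly when $\epsilon_i=1$, so $\type(z)_{2i}-\type(z)_{2i-1}=[\type(z')_{2i}-\type(z')_{2i-1}]+\epsilon_i-\tfrac12$.

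\textbf{The integer-partition case.} Now the atypical entries of $z$ are its half-integer odd ends, and the target is the \emph{disjoint union} $\bigsqcup_{|\epsilon|=k}\spGT^{(2n)}_{\lambda-\epsilon}$; in contrast to the half-partition case the correspondence cannot send an atypical pattern to a symplectic pattern whose shape decreases in the \emph{same} coordinates, so the ``defect'' carried by the half-integer odd ends must be redistributed. I would argue by induction on $n$: deleting rows $2n-1$ and $2n$ of $z$ leaves a split orthogonal pattern $\hat z$ of height $2(n-1)$ of integer shape $\lambda^{(2n-2)}$ (an even level, hence integral) with $|\atyp(\hat z)|=k-\atyp(z)_n$, and the inductive bijection yields a type-matched $\hat z'\in\spGT^{(2n-2)}_{\lambda^{(2n-2)}-\hat\epsilon}$ with $|\hat\epsilon|=k-\atyp(z)_n$. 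It then remains to append two integer rows to $\hat z'$ — a length-$n$ row $\rho$ and a new bottom row $\lambda-\epsilon$ — so that $\lambda^{(2n-2)}-\hat\epsilon\prec\rho\prec\lambda-\epsilon$, $|\epsilon|=k$, and the single outstanding identity $\type(z')_{2n}-\type(z')_{2n-1}=\type(z)_{2n}-\type(z)_{2n-1}$ holds; this identity forces $|\rho|$, so what is needed is a bijection between the admissible rows $2n-1$ of $z$ (real partitions $\lambda^{(2n-1)}$ with $\lambda^{(2n-2)}\prec\lambda^{(2n-1)}\prec\lambda$, last part in $\tfrac12\mathbb{Z}$, its half-integrality recording $\atyp(z)_n$) and the admissible pairs $(\rho,\epsilon)$. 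Equivalently, this can be packaged as a top-down ``propagation'' that removes a nonnegative defect array $D=z-z'$ whose row-sums are forced — $\sum_j D_{2i,j}=\#\{l\le i:\atyp(z)_l=1\}$ and $\sum_j D_{2i-1,j}=\tfrac12(\sum_j D_{2i,j}+\sum_j D_{2i-2,j})$, so $\sum_j D_{2n,j}=k$ and the bottom row of $D$ is the $\{0,1\}$-vector $\epsilon$ — while the distribution of $D$ within each row is fixed by a canonical greedy rule; these row-sums do \emph{not} determine $D$, as one already sees on small examples, which is why the greedy rule is needed. The inverse recreates the half-integer odd ends by running the procedure backwards along the ``defect paths'' recorded by $\epsilon$, and preservation of $\type_{2i}-\type_{2i-1}$ gives \eqref{eq:typePreserved_1}. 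As a cross-check on the bookkeeping the same correspondence underlies the bijection between Koike--Terada tableaux~\cite{KT87,KT90} and Sundaram's orthogonal tableaux~\cite{Sun90a}, the latter splitting — according to the occurrences of the special letter — into symplectic tableaux of the various shapes $\lambda-\epsilon$; this is the viewpoint mentioned in the remarks on the \emph{Relation to Sundaram's tableaux} in Subsection~\ref{subsec:CBtransition}.

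\textbf{Main obstacle and a consistency check.} The one non-routine step is the well-definedness and reversibility of the two-row extension (equivalently, the greedy propagation) in the integer-partition case: one must check that the mass-prescribed removal always terminates with a bona fide symplectic pattern — nonnegativity of $z-D$ is not an issue since $\sum_j D_{i,j}\le|\lambda^{(i)}|$, but the \emph{downward} interlacing between consecutive rows of $z-D$ is precisely where the defect ``jumps'' from one coordinate to another and must be shown never to fail — and that the forward and backward procedures compose to the identity in both directions. I expect this to reduce, after introducing the right local invariants, to a bounded case analysis on the configuration of an odd end together with its (at most) four neighbours: laborious, but mechanical. As a consistency check, summing \eqref{eq:typePreserved_1} with weights $\beta^k$ and $\prod_i x_i^{\type(z)_{2i}-\type(z)_{2i-1}}$ over all split orthogonal patterns turns their generating function into $\sum_\epsilon\beta^{|\epsilon|}\sp^{(2n)}_{\lambda-\epsilon}(x)$, which by multilinearity of the determinant in the Weyl formula \eqref{eq:spWeyl} is exactly the ``Weyl character formula'' \eqref{eq:BCWeyl_intro} announced for the $\CB$-interpolating Schur polynomials.
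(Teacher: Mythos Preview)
Your treatment of the half-partition case \eqref{eq:sp-soPatternsBijection_2}--\eqref{eq:typePreserved_2} is correct and matches the paper's proof exactly: subtract $\tfrac12$ from all non-atypical entries, leave atypical ones fixed, and check the type relation directly.

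For the integer-partition case, however, there is a genuine gap. You correctly identify that the row-sums of the defect array $D$ are forced, that they do \emph{not} determine $D$, and that some ``canonical greedy rule'' is needed; but you never write down that rule. Your inductive framework reduces the problem to a ``two-row extension bijection'' between admissible $(2n{-}1)$-th rows of $z$ and admissible pairs $(\rho,\epsilon)$ --- and then stops, calling this ``the one non-routine step'' and predicting it will yield to ``a bounded case analysis \dots\ laborious, but mechanical''. That is precisely the content of the theorem in this case: without an explicit rule and a verification that it preserves interlacing and is invertible, you have a plan rather than a proof. The downward interlacing you flag as the delicate point is indeed where a naive greedy rule can fail.

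The paper fills this gap with an explicit, global construction rather than induction on $n$. For each atypical odd end, processed in \emph{decreasing} order of its row index, one builds a nearest-neighbor path from that odd end down to the bottom row by the local rule ``step to $(i{+}1,j{+}1)$ if $z_{i,j}=z_{i+1,j+1}$, otherwise step to $(i{+}1,j)$'', and then subtracts $\tfrac12$ from the initial entry and $1$ from every other entry on the path. The $k$ paths are shown to be non-intersecting, which forces their endpoints to lie in $k$ distinct columns of the bottom row and hence determines $\epsilon$; nonnegativity and interlacing of the output are then checked by a short case analysis exploiting the path rule (the interlacing check is the one genuinely delicate argument). The inverse runs reverse nearest-neighbor paths upward from the $k$ marked bottom-row positions, in \emph{increasing} order, with the mirror-image local rule. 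This is the ``defect paths'' picture you allude to, made precise; your inductive packaging would, if completed, have to reproduce the same local propagation rule row by row, so the paper's global path description is both the missing ingredient and a cleaner way to organise the proof.
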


\begin{proof}
We first prove~\eqref{eq:sp-soPatternsBijection_2}-\eqref{eq:typePreserved_2}, which is straightforward.
Let $\lambda$ be an $n$-half-partition and $\epsilon:=(\epsilon_1,\dots, \epsilon_n) \in\{0,1\}^n$.
Given a split orthogonal pattern $z$ of height $2n$, shape $\lambda$ and such that $\atyp(z)=\epsilon$, we define $z'$ by setting $z'_{i,j} = z_{i,j}$ for all atypical (i.e.\ integer) entries and $z'_{i,j} := z_{i,j} - 1/2$ for all the other entries of $z$.
The entries of the new pattern $z'$ are all integers and still satisfy the interlacing conditions, hence $z'$ is a symplectic pattern of height $2n$ and shape $\lambda - 1/2$.
It is immediate to verify that, if $\epsilon$ remains fixed, $z\mapsto z'$ is a bijection.
Since $\type(z')_{2i} = \type(z)_{2i} - \epsilon_i /2$ and $\type(z')_{2i-1} = \type(z)_{2i-1} - (1-\epsilon_i)/2$, we deduce~\eqref{eq:typePreserved_2}.

\vspace{1mm}
Let us now prove~\eqref{eq:sp-soPatternsBijection_1}-\eqref{eq:typePreserved_1}.
For the purpose of this proof, let $\mathcal{I} := \{(i,j)\colon 1\leq i\leq 2n, \, 1\leq j\leq \lceil i/2 \rceil\}$ be the index set of any ``half-triangular'' pattern of height $2n$.
Moreover, define a \emph{nearest neighbor path} to be a sequence in $\mathcal{I}$ such that the element that comes after $(i,j)$ is either $(i+1,j)$ or $(i+1,j+1)$; with respect to the graphical representation of a pattern, this is a downwards path.
Finally, by \emph{reverse nearest neighbor path} we mean a sequence in $\mathcal{I}$ such that the element that comes after $(i,j)$ is either $(i-1,j)$ or $(i-1,j-1)$; graphically, this is an upwards path in a pattern.

\begin{figure}
\centering
\begin{subfigure}[b]{.5\textwidth}
\centering
\begin{tikzpicture}[scale=0.45, inner sep=0.8]

\node (z11) at (1,-1) {$0.5$};
\node (z21) at (2,-2) {$1$};
\node (z22) at (0,-2) {};
\node (z31) at (3,-3) {$3$};
\node (z32) at (1,-3) {$1$};
\node (z41) at (4,-4) {$4$};
\node (z42) at (2,-4) {$2$};
\node (z43) at (0,-4) {};
\node (z51) at (5,-5) {$4$};
\node (z52) at (3,-5) {$2$};
\node (z53) at (1,-5) {$0$};
\node (z61) at (6,-6) {$4$};
\node (z62) at (4,-6) {$3$};
\node (z63) at (2,-6) {$2$};
\node (z64) at (0,-6) {};

\draw[densely dotted, red, very thick] (z11) -- (z21) -- (z32) -- (z42) -- (z52) -- (z63);

\node at (6.9,-3.5) {$\longleftrightarrow$};

\begin{scope}[shift={(8,0)}]

\node (z11) at (1,-1) {$0$};
\node (z21) at (2,-2) {$0$};
\node (z22) at (0,-2) {};
\node (z31) at (3,-3) {$3$};
\node (z32) at (1,-3) {$0$};
\node (z41) at (4,-4) {$4$};
\node (z42) at (2,-4) {$1$};
\node (z43) at (0,-4) {};
\node (z51) at (5,-5) {$4$};
\node (z52) at (3,-5) {$1$};
\node (z53) at (1,-5) {$0$};
\node (z61) at (6,-6) {$4$};
\node (z62) at (4,-6) {$3$};
\node (z63) at (2,-6) {$1$};
\node (z64) at (0,-6) {};

\draw[densely dotted, red, very thick] (z11) -- (z21) -- (z32) -- (z42) -- (z52) -- (z63);
\end{scope}

\end{tikzpicture}
\end{subfigure}%
\begin{subfigure}[b]{.5\textwidth}
\centering
\begin{tikzpicture}[scale=0.45, inner sep=0.8pt]

\node (z11) at (1,-1) {$0.5$};
\node (z21) at (2,-2) {$2$};
\node (z22) at (0,-2) {};
\node (z31) at (3,-3) {$2$};
\node (z32) at (1,-3) {$1.5$};
\node (z41) at (4,-4) {$3$};
\node (z42) at (2,-4) {$2$};
\node (z43) at (0,-4) {};
\node (z51) at (5,-5) {$3$};
\node (z52) at (3,-5) {$2$};
\node (z53) at (1,-5) {$2$};
\node (z61) at (6,-6) {$4$};
\node (z62) at (4,-6) {$3$};
\node (z63) at (2,-6) {$2$};
\node (z64) at (0,-6) {};

\draw[densely dotted, red, very thick] (z32) -- (z42) -- (z53) -- (z63);

\draw[densely dotted, blue, very thick] (z11) -- (z21) -- (z31) -- (z41) -- (z51) -- (z62);

\node at (6.9,-3.5) {$\longleftrightarrow$};

\begin{scope}[shift={(8,0)}]

\node (z11) at (1,-1) {$0$};
\node (z21) at (2,-2) {$1$};
\node (z22) at (0,-2) {};
\node (z31) at (3,-3) {$1$};
\node (z32) at (1,-3) {$1$};
\node (z41) at (4,-4) {$2$};
\node (z42) at (2,-4) {$1$};
\node (z43) at (0,-4) {};
\node (z51) at (5,-5) {$2$};
\node (z52) at (3,-5) {$2$};
\node (z53) at (1,-5) {$1$};
\node (z61) at (6,-6) {$4$};
\node (z62) at (4,-6) {$2$};
\node (z63) at (2,-6) {$1$};
\node (z64) at (0,-6) {};

\draw[densely dotted, red, very thick] (z32) -- (z42) -- (z53) -- (z63);

\draw[densely dotted, blue, very thick] (z11) -- (z21) -- (z31) -- (z41) -- (z51) -- (z62);
\end{scope}

\end{tikzpicture}
\end{subfigure}

\caption{An illustration of bijection~\eqref{eq:sp-soPatternsBijection_1} for $n=3$ and $\lambda=(4,3,2)$.
In the two examples, for $k=1$ and $k=2$ respectively, the left-hand pattern is split orthogonal and the right-hand pattern is symplectic.
All and only the entries of the patterns modified throughout the algorithm lie along the $k$ colored non-intersecting paths (see proof of Theorem~\ref{thm:sp-soPatternsBijection}).
In the $k=2$ example, the red path is the first to be constructed in the map ``$\to$''; conversely, the blue path is the first to be constructed in the reverse map ``$\leftarrow$''.}
\label{fig:so-spBijection}
\end{figure}
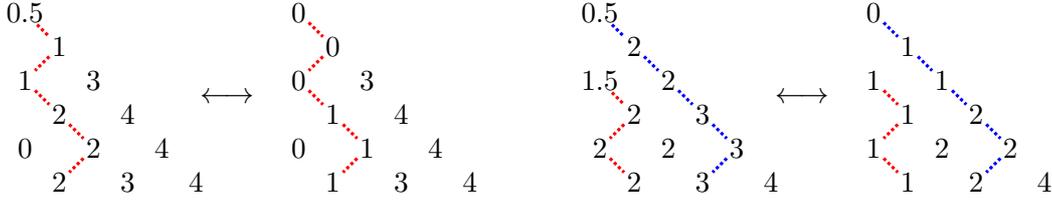

Let $\lambda$ be an $n$-partition and $1\leq k\leq n$.
For any split orthogonal pattern $z$ of height $2n$ and shape $\lambda$ with $k$ atypical entries, we will construct a symplectic pattern $z'$ of the same height and shape $\lambda-\epsilon$, being $\epsilon \in \{0,1\}^n$ with exactly $k$ ones.
We will do this via an algorithmic procedure, for which we refer to Figure~\ref{fig:so-spBijection} as a guiding pictorial example.
Let $1\leq l_1 < \dots < l_k \leq n$ be such that $z_{2l_1-1,l_1}, \dots, z_{2l_k-1, l_k}$ are all and only the atypical entries of $z$.
We construct the output pattern $z'$ by starting from the input $z$ and performing the following actions for all $l=l_k,\dots,l_1$ consecutively in \emph{decreasing order} of $l$:
\begin{enumerate}
\item Design a nearest neighbor path starting at $(2l-1,l)$ and ending on the $(2n)$-th row (i.e.\ the bottom row of the pattern) such that, given any $(i,j)$ in the path:
\begin{itemize}
\item if $z_{i,j} = z_{i+1,j+1}$, the path goes from $(i,j)$ to $(i+1,j+1)$;
\item if $z_{i,j} > z_{i+1,j+1}$, the path goes from $(i,j)$ to $(i+1,j)$;
\item if $(i+1,j+1)\notin\mathcal{I}$ and $(i+1,j)\in\mathcal{I}$, the path goes from $(i,j)$ to $(i+1,j)$;
\item if $(i+1,j+1)\notin\mathcal{I}$ and $(i+1,j)\notin\mathcal{I}$, which happens if and only if $i=2n$, the path stops at $(i,j)$.
\end{itemize}
\item Update $z$ by subtracting $1/2$ from the first entry $z_{2l-1,l}$ and $1$ from all the other entries along the path constructed in the previous step.
\end{enumerate}

Such a procedure generates $k$ uniquely determined nearest neighbor paths within $z$, which we claim to be non-intersecting.
This in particular implies that each path ends at a different index $(2n,j)$ of the bottom row.
Therefore, at the end of the algorithm the shape $\lambda$ of $z$ has been modified by subtracting $1$ from exactly $k$ distinct parts; the shape of $z'$ is thus $\lambda-\epsilon$, for some $\epsilon \in\{0,1\}^n$ with exactly $k$ ones.
To show the non-intersecting property, assume by contradiction that a given path $\pi$ intersects at least one of the previously constructed paths.
Let $(i+1,j+1)$ be the point of $\pi$ with smallest $i$ such that $(i+1,j+1)$ also belongs to some other path $\pi'$.
Then, the index that comes before $(i+1,j+1)$ in path $\pi$ is necessarily $(i,j)$ and, by construction, before updating the entries along path $\pi$, it has to be that $z_{i,j} = z_{i+1,j+1}$.
But since $(i+1,j+1)$ also belongs, by assumption, to the previously constructed path $\pi'$, entry $z_{i+1,j+1}$ must have been already decreased by at least $1/2$ during the update along path $\pi'$.
On the other hand, by our assumption that $(i+1,j+1)$ is the site with minimal $i$ where $\pi$ intersects any other path, we know that $(i,j)$ does not belong to any previous path, so $z_{i,j}$ was left unchanged by previous updates.
This means that the \emph{initial} pattern $z$, before any updates, satisfied $z_{i,j} < z_{i+1,j+1}$, contradicting the interlacing conditions.
This concludes the proof of the non-intersecting property of the update paths.

We now prove that the output pattern $z'$ has non-negative integer entries.
Notice first that the entries along each update path are non-decreasing (both before and after the corresponding update), due to the interlacing conditions and the construction.
Even though each path might contain various odd ends, the only atypical entry among these is the first one, since each atypical entry is the starting point of a path and paths do not intersect.
This implies that, along each path, the first entry is a half-integer $\geq 1/2$, whereas the next ones are integers $\geq 1$.
Now, each of the $k$ updates amounts to shifting the first entry along the update path by $-1/2$ and the remaining ones by $-1$; consequently, after such an update, the resulting pattern has still non-negative entries, with one atypical entry less ($z_{2l-1,l}$ is initially half-integer but becomes integer after the update).
At the end of the algorithm, all the entries of $z$ will thus be non-negative integers.

To conclude that the algorithm provides indeed the desired map, we are left to prove that it preserves the interlacing conditions of the pattern.
We will show that any given update preserves them, assuming inductively that all the previous updates do.
Since the update decreases all and only the entries along an update path $\pi$, it suffices to prove that, for all $(i,j) \in \pi$, after the update, we have $z_{i,j}\geq z_{i-1,j}$ (if $(i-1,j)\in\mathcal{I}$) and $z_{i,j}\geq z_{i+1,j+1}$ (if $(i+1,j+1)\in\mathcal{I}$).
We will only prove the former inequality: the proof of the latter is analogous, so we omit it.
If $z_{i,j}$ is an odd end there is nothing to prove, so we may assume this is not the case; in particular, $z_{i,j}$ will be an integer (before and after the update).
If $(i-1,j)$ belongs to $\pi$ and $z_{i-1,j}$ is half-integer, then before the update we have $z_{i,j} \geq z_{i-1,j} + 1/2$; after the update, $z_{i-1,j}$ and $z_{i,j}$ are decreased by $1/2$ and $1$, respectively, hence $z_{i,j}\geq z_{i-1,j}$ still holds.
If $(i-1,j)$ belongs to $\pi$ and $z_{i-1,j}$ is integer, then both $z_{i-1,j}$ and $z_{i,j}$ are decreased by $1$ and the interlacing condition between them continues to hold after the update.
We may assume from now on that $(i-1,j)$ does \emph{not} belong to $\pi$.
Let $(a,b)$ be the bottommost (i.e.\ with largest $a$) element of $\pi$ such that $a-b=i-1-j$.
The portion of the pattern of interest, \emph{before} the update along $\pi$, is then illustrated in the following diagram:
\[
\begin{tikzpicture}[scale=0.7, every path/.style={draw=none}, every node/.style={fill=white, scale=1, inner sep=1.5pt}, leq1/.style={rotate=45, node contents={\tiny $\leq$}}, leq2/.style={rotate=-45, node contents={\tiny $\leq$}}, less1/.style={rotate=45, node contents={\tiny $<$}}, eq1/.style={rotate=45, node contents={\tiny $=$}}]

\node (1) {$z_{i-1,j}$};
\node (2) at ($ (1) + (45:2) $) {$z_{i-2,j-1}$};
\node[rotate=45] (3) at ($ (2) + (45:2) $) {$\dots$};
\node (4) at ($ (3) + (45:2) $) {$z_{a+1,b+1}$};
\node[text=red] (5) at ($ (4) + (45:2) $) {$z_{a,b}$};
\node[text=red] (6) at ($ (5) + (-45:2) $) {$z_{a+1,b}$};
\node[text=red] (7) at ($ (6) + (-135:2) $) {$z_{a+2,b+1}$};
\node[text=red, rotate=45] (8) at ($ (7) + (-135:2) $) {$\dots$};
\node[text=red] (9) at ($ (8) + (-135:2) $) {$z_{i-1,j-1}$};
\node[text=red] (10) at ($ (9) + (-135:2) $) {$z_{i,j}$};

% arrows
\path (1) -- node[leq1]{} (2) -- node[leq1]{} (3) -- node[leq1]{} (4) -- node[less1]{} (5) -- node[leq2]{} (6);
\path (10) -- node[eq1]{} (9) -- node[eq1]{} (8) -- node[eq1]{} (7) -- node[eq1]{} (6);
\end{tikzpicture}
\]
Due to the choice of $(a,b)$, the path must contain $(a,b)$, $(a+1,b)$, $(a+2,b+1)$, \dots, $(i-1,j-1)$, $(i,j)$: the corresponding entries along $\pi$ are highlighted in red.
Let us justify the ordering of the entries in the diagram above:
\begin{itemize}
\item the weak inequalities follow from the interlacing conditions of $z$;
\item due to the choice of $(a,b)$, we know that $(a+1,b+1)$ does not belong to $\pi$, hence by the rules of the algorithm $z_{a,b} > z_{a+1,b+1}$;
\item the equalities follow from the fact that $\pi$ contains $(a+1,b)$, $(a+2,b+1)$, \dots, $(i-1,j-1)$, $(i,j)$ and, again, from the rules of the algorithm.
\end{itemize}
From the diagram above we deduce that $z_{i-1,j} < z_{i,j}$.
If $z_{i-1,j}$ were half-integer, then there would have been a previous path starting at $(i-1,j)$ and necessarily passing by $(i,j)$, thus violating the non-intersecting properties of the paths.
Therefore, $z_{i-1,j}$ and $z_{i,j}$ are both integers and, \emph{before} the update along $\pi$, $z_{i-1,j} < z_{i,j}$ actually means $z_{i-1,j} +1 \leq z_{i,j}$.
\emph{After} the update along $\pi$, $z_{i,j}$ is decreased by $1$ and $z_{i-1,j}$ is left unchanged, so the latter inequality turns into the desired $z_{i-1,j} \leq z_{i,j}$.
\vskip 1mm

Finally, to prove identity~\eqref{eq:typePreserved_1}, first notice that throughout our algorithm the same quantities are subtracted from two consecutive rows, unless either row contains an atypical entry; in the latter case, an extra $1/2$ is subtracted from the lower row.
This translates, in terms of the types of the patterns, as:
\begin{align*}
\type(z')_{2i-1} &= \begin{cases}
\type(z)_{2i-1} - \frac{1}{2} &\text{if $z_{2i-1,i}\in \frac{1}{2}+\Z$,} \\
\type(z)_{2i-1} &\text{otherwise,}
\end{cases}
\\
\type(z')_{2i} &= \begin{cases}
\type(z)_{2i} - \frac{1}{2} &\text{if $z_{2i-1,i}\in \frac{1}{2}+\Z$,} \\
\type(z)_{2i} &\text{otherwise.}
\end{cases}
\end{align*}
From the above, it is immediate to deduce~\eqref{eq:typePreserved_1}.
\vskip 1mm

We now prove that our map is a bijection, by describing the inverse algorithm that maps $z'$ to $z$.
Let $z'$ be a symplectic pattern of height $2n$ and shape $\lambda-\epsilon$, where $\epsilon \in\{0,1\}^n$ has exactly $k$ entries equal to $1$.
Let $1\leq m_1 < \dots < m_k \leq n$ be all and only the indices such that $\epsilon_{m_1}= \dots = \epsilon_{m_k}=1$.
We construct $z$ by starting from the input $z'$ and performing the following actions for all $m=m_1,\dots,m_k$ consecutively, in \emph{increasing order} of $m$:
\begin{enumerate}
\item Design a reverse nearest neighbor path starting at $(2n,m)$ and ending at some odd end $(2l-1,l)$, such that, given any $(i,j)$ in the path:
\begin{itemize}
\item if $z'_{i,j} = z'_{i-1,j-1}$, then the index that comes after $(i,j)$ is $(i-1,j-1)$;
\item if $z'_{i,j} < z'_{i-1,j-1}$ and $(i-1,j)\in\mathcal{I}$, then the index that comes after $(i,j)$ is $(i-1,j)$;
\item if $z'_{i,j} < z'_{i-1,j-1}$ and $(i-1,j)\notin\mathcal{I}$, then the path stops at $(i,j)$;
\item if $(i-1,j-1)\notin \mathcal{I}$ and $(i-1,j)\in \mathcal{I}$, the index that comes after $(i,j)$ is $(i-1,j)$;
\item if $(i-1,j-1)\notin\mathcal{I}$ and $(i-1,j)\notin\mathcal{I}$, i.e.\ $(i,j)=(1,1)$, then the path stops at $(i,j)$.
\end{itemize}
\item Update $z'$ by adding $1/2$ to the last entry $z'_{2l-1,l}$ and $1$ to all the other entries along the path constructed in the previous step.
\end{enumerate}
One can verify that the above algorithm returns a split orthogonal pattern $z$ of height $2n$ and shape $\lambda$ with $k$ atypical entries, and that our direct algorithm maps $z\mapsto z'$, as desired.
\end{proof}

\subsection{Transition between even symplectic and odd orthogonal characters}
\label{subsec:CBtransition}

Recall from Section~\ref{sec:patterns} (see in particular~\eqref{eq:spSchur} and~\eqref{eq:soOddSchurSplit}) that $(2n+1)$-orthogonal Schur polynomials can be defined via the same weight monomials as $(2n)$-symplectic Schur polynomials.
The difference is that orthogonal characters are generated by a larger set of patterns, whose entries may also be half-integers according to certain rules.
This observation, in combination with a probabilistic motivation that will emerge in Section~\ref{sec:results}, lead us to define a class of symmetric functions that interpolate between characters of types C and B via a parameter $\beta$.

\begin{definition}
\label{def:CBtrans}
We define the {\bf\emph{$\CB$-interpolating Schur polynomial}} to be the following function in variables $x=(x_1,\dots,x_n)$, parametrized by $\beta$ and indexed by an $n$-partition or $n$-half-partition $\lambda$:
\begin{equation}
\label{eq:CBtrans}
\begin{split}
\trans{\lambda}(x; \beta) 
:= \sum_{z\in \soGT^{(2n)}_{\lambda}}
\beta^{\abs{\atyp(z)}}
\prod_{i=1}^{n} x_i^{\type(z)_{2i} - \type(z)_{2i-1}} \, ,
\end{split}
\end{equation}
where $\abs{\atyp(z)}$ is the number of atypical entries of $z$.
\end{definition}
If $\beta=0$, the sum in~\eqref{eq:CBtrans} is over all split orthogonal patterns of height $2n$ and shape $\lambda$ with no atypical entries; in particular, if $\lambda$ is an $n$-partition (respectively, $n$-half-partition), these are $(2n)$-symplectic patterns (respectively, $(2n)$-symplectic patterns where each entry is increased by $1/2$).
Using~\eqref{eq:spSchur}, one then essentially recovers a $(2n)$-symplectic Schur polynomial in both cases:
\begin{equation}
\label{eq:transition=Sp}
\trans{\lambda}(x; 0) =
\begin{cases}
\sp^{(2n)}_{\lambda}(x) &\text{if $\lambda$ is an $n$-partition} \, , \\
\left[\prod_{i=1}^n x_i\right]^{-1/2} \sp^{(2n)}_{\lambda - 1/2}(x) &\text{if $\lambda$ is an $n$-half-partition} \, ,
\end{cases}
\end{equation}
with $\lambda - 1/2 := (\lambda_1 - 1/2, \dots, \lambda_n - 1/2)$.
On the other hand, it is clear that, for $\beta=1$, \eqref{eq:CBtrans} reduces to the Definition~\ref{def:soOddSchurSplit} of $(2n+1)$-orthogonal Schur polynomial:
\begin{equation}
\label{eq:transition=So}
\trans{\lambda}(x; 1) =
\so^{(2n+1)}_{\lambda}(x)
\end{equation}
for all $n$-partitions and $n$-half-partitions $\lambda$.

Using the combinatorial bijection introduced in Subsection~\ref{subsec:bijectionPatterns}, 
we are able to express $\CB$-interpolating Schur polynomials in terms of even symplectic characters:
\begin{theorem}
\label{thm:CBtrans=sp}
If $\lambda$ is an $n$-partition, then
\begin{equation}
\label{eq:CBtrans=sp1}
\trans{\lambda}(x; \beta)
= \sum_{\epsilon\in\{0,1\}^n} \beta^{\abs{\epsilon}} \sp^{(2n)}_{\lambda-\epsilon}(x) \, ,
\end{equation}
where $\abs{\epsilon}$ is the $1$-norm of $\epsilon$ and by convention $\sp^{(2n)}_{\mu}(x) := 0$ if $\mu$ is not a partition\footnote{Notice that $\lambda-\epsilon:= (\lambda_1 -\epsilon_1, \dots, \lambda_n - \epsilon_n)$ is not necessarily a partition for all $\epsilon\in \{0,1\}^n$, as the non-decreasing ordering might fail.}.
If $\lambda$ is an $n$-half-partition, then
\begin{equation}
\label{eq:CBtrans=sp2}
\trans{\lambda}(x; \beta)
= \prod_{i=1}^n \left[\beta x_i^{1/2} + x_i^{-1/2} \right]
\sp^{(2n)}_{\lambda - \frac{1}{2}}(x) \, .
\end{equation}
\end{theorem}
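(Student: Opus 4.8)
The plan is to derive both identities directly from the pattern bijections of Theorem~\ref{thm:sp-soPatternsBijection}, by splitting the sum defining $\trans{\lambda}(x;\beta)$ in~\eqref{eq:CBtrans} according to the atypical entries of each split orthogonal pattern, and transporting each resulting block to a sum over symplectic patterns via the type-preservation relations.

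For the partition case, I would first isolate the contribution of patterns with no atypical entries: a split orthogonal pattern of height $2n$ and integer shape $\lambda$ with $\abs{\atyp(z)}=0$ is exactly a $(2n)$-symplectic pattern of shape $\lambda$, and the weight monomial $\prod_{i=1}^n x_i^{\type(z)_{2i}-\type(z)_{2i-1}}$ is precisely the one in~\eqref{eq:spSchur}, so this block contributes $\sp^{(2n)}_\lambda(x)$, matching the $\epsilon=0$ term of~\eqref{eq:CBtrans=sp1}. For each $k$ with $1\le k\le n$ I would then invoke bijection~\eqref{eq:sp-soPatternsBijection_1}: the sum over $\{z\in\soGT^{(2n)}_\lambda\colon \abs{\atyp(z)}=k\}$ becomes a sum over all $\epsilon\in\{0,1\}^n$ with $\abs{\epsilon}=k$ and all $z'\in\spGT^{(2n)}_{\lambda-\epsilon}$, with the weight monomial unchanged thanks to~\eqref{eq:typePreserved_1}. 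Pulling out the factor $\beta^k=\beta^{\abs{\epsilon}}$, recognising $\sum_{z'\in\spGT^{(2n)}_{\lambda-\epsilon}}\prod_i x_i^{\type(z')_{2i}-\type(z')_{2i-1}}=\sp^{(2n)}_{\lambda-\epsilon}(x)$, and summing over $k$ from $0$ to $n$ yields~\eqref{eq:CBtrans=sp1}. Here the stated convention $\sp^{(2n)}_\mu(x):=0$ for non-partitions $\mu$ is automatic: if $\lambda-\epsilon$ is not weakly decreasing then $\spGT^{(2n)}_{\lambda-\epsilon}$ is empty and the associated generating function vanishes.

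For the half-partition case, I would instead split the sum according to the full vector $\atyp(z)=\epsilon\in\{0,1\}^n$ and apply bijection~\eqref{eq:sp-soPatternsBijection_2}, which for each fixed $\epsilon$ identifies $\{z\in\soGT^{(2n)}_\lambda\colon \atyp(z)=\epsilon\}$ with $\spGT^{(2n)}_{\lambda-1/2}$ (note that $\lambda-1/2$ is a genuine $n$-partition here). By~\eqref{eq:typePreserved_2} the exponent of $x_i$ now shifts by $\epsilon_i-\tfrac12$, so this block contributes $\prod_{i=1}^n x_i^{\epsilon_i-1/2}\cdot\sp^{(2n)}_{\lambda-1/2}(x)$. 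Extracting $\beta^{\abs{\epsilon}}=\prod_i\beta^{\epsilon_i}$ and summing over $\epsilon\in\{0,1\}^n$ factorises as $\sp^{(2n)}_{\lambda-1/2}(x)\prod_{i=1}^n\big(x_i^{-1/2}+\beta x_i^{1/2}\big)$, which is exactly~\eqref{eq:CBtrans=sp2}.

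Once Theorem~\ref{thm:sp-soPatternsBijection} is available there is no genuine difficulty left: the argument is a pure reorganisation of the defining sum. The only points deserving attention are the careful tracking of the type-exponent shifts supplied by~\eqref{eq:typePreserved_1} and~\eqref{eq:typePreserved_2}, and the observation just noted that the ``missing'' summands on the right-hand side of~\eqref{eq:CBtrans=sp1} correspond to \emph{empty} index sets, so that the convention on $\sp^{(2n)}_\mu$ is forced rather than an arbitrary choice.
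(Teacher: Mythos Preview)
Your proposal is correct and follows essentially the same route as the paper's own proof: both split the defining sum~\eqref{eq:CBtrans} according to the atypical-entry data, invoke the two bijections of Theorem~\ref{thm:sp-soPatternsBijection} together with the type-preservation relations~\eqref{eq:typePreserved_1}--\eqref{eq:typePreserved_2}, and then regroup into symplectic characters. Your explicit treatment of the $k=0$ block in the partition case is a small but welcome addition, since Theorem~\ref{thm:sp-soPatternsBijection} is stated only for $1\le k\le n$; the paper handles that term implicitly.
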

Observe that, in~\eqref{eq:CBtrans=sp2}, $\lambda-\frac{1}{2}$ is an \emph{integer} partition, so $\sp^{(2n)}_{\lambda - \frac{1}{2}}$ is indeed a symplectic Schur polynomial.
\begin{proof}
By Definition~\ref{def:CBtrans}, we may write
\begin{equation}
\label{eq:CBtrans_2}
\begin{split}
\trans{\lambda}(x; \beta)
= \sum_{\epsilon\in\{0,1\}^n} \sum_{\substack{z\in \soGT^{(2n)}_{\lambda}\colon \\ \atyp(z)=\epsilon}}
\beta^{\abs{\epsilon}}
\prod_{i=1}^{n} x_i^{\type(z)_{2i} - \type(z)_{2i-1}} \, .
\end{split}
\end{equation}

Let first $\lambda$ be an $n$-partition.
By~\eqref{eq:sp-soPatternsBijection_1}-\eqref{eq:typePreserved_1} of Theorem~\ref{thm:sp-soPatternsBijection}, we then have:
\begin{align*}
\trans{\lambda}(x; \beta)
&= \sum_{k=0}^n \beta^k \sum_{\substack{z\in\soGT^{(2n)}_{\lambda}\colon \\ \abs{\atyp(z)}=k}} \prod_{i=1}^n x_i^{\type(z)_{2i} - \type(z)_{2i-1}} \\
&= \sum_{k=0}^n \beta^k \sum_{\substack{\epsilon\in \{0,1\}^n\colon \\ \abs{\epsilon}=k}} \sum_{z'\in\spGT^{(2n)}_{\lambda-\epsilon}} \prod_{i=1}^n x_i^{\type(z')_{2i} - \type(z')_{2i-1}} \, ,
\end{align*}
keeping in mind that $\spGT^{(2n)}_{\lambda-\epsilon}$ is empty if $\lambda-\epsilon$ is not a partition.
By Definition~\ref{def:spSchur}, the rightmost sum over $z'\in \spGT^{(2n)}_{\lambda-\epsilon}$ equals $\sp^{(2n)}_{\lambda-\epsilon}(x)$, which implies~\eqref{eq:CBtrans=sp1}.

Let now $\lambda$ be an $n$-half-partition.
By~\eqref{eq:sp-soPatternsBijection_2}-\eqref{eq:typePreserved_2} of Theorem~\ref{thm:sp-soPatternsBijection}, we may rewrite~\eqref{eq:CBtrans_2} as
\begin{align*}
\trans{\lambda}(x; \beta)
&= \sum_{\epsilon\in\{0,1\}^n}
\sum_{z'\in \spGT^{(2n)}_{\lambda-\frac{1}{2}}}
\prod_{i=1}^{n} \beta^{\epsilon_i} x_i^{\type(z')_{2i} - \type(z')_{2i-1} + \epsilon_i - 1/2} \\
&= \sum_{z'\in \spGT^{(2n)}_{\lambda-\frac{1}{2}}}
\prod_{i=1}^{n} \left[ \sum_{\epsilon_i = 0}^1 \beta^{\epsilon_i} x_i^{\epsilon_i - 1/2} \right]
x_i^{\type(z')_{2i} - \type(z')_{2i-1}} \\
&= \prod_{i=1}^n \left[\beta x_i^{1/2} + x_i^{-1/2} \right]
\sum_{z'\in \spGT^{(2n)}_{\lambda-\frac{1}{2}}}
x_i^{\type(z')_{2i} - \type(z')_{2i-1}} \, .
\end{align*}
By Definition~\ref{def:spSchur}, the latter sum equals $\sp^{(2n)}_{\lambda - \frac{1}{2}}(x)$, which implies~\eqref{eq:CBtrans=sp2}.
\end{proof}

As a consequence of the latter theorem and the invariance properties of symplectic characters, we can deduce the invariance properties of the $\CB$-interpolating Schur polynomials with respect to the variables $x_i$'s, for any fixed $\beta$.
Namely, $\trans{\lambda}(x;\beta)$ is always symmetric in the variables $x_i$'s and, when $\lambda$ is a partition, also invariant under multiplicative inversion of any $x_i$.
However, \eqref{eq:CBtrans=sp2} implies that $\trans{\lambda}(x;\beta)$ is \emph{not} invariant under inversion of the variables when $\lambda$ is a half-partition, unless $\beta = 1$ (which corresponds to the odd orthogonal case).

Notice that, in the case $\beta=0$, Theorem~\ref{thm:CBtrans=sp} just reduces to~\eqref{eq:transition=Sp}.
On the other hand, the specialization to $\beta=1$ leads to:

\begin{corollary}
\label{coro:soOdd=sp}
If $\lambda$ is an $n$-partition, then
\begin{equation}
\label{eq:soOdd=sp1}
\so^{(2n+1)}_{\lambda}(x)
= \sum_{\epsilon\in\{0,1\}^n}  \sp^{(2n)}_{\lambda-\epsilon}(x) \, ,
\end{equation}
with the same conventions as in Theorem~\ref{thm:CBtrans=sp}.
If $\lambda$ is an $n$-half-partition, then
\begin{equation}
\label{eq:soOdd=sp2}
\so^{(2n+1)}_{\lambda}(x)
= \prod_{i=1}^n \left[x_i^{1/2} + x_i^{-1/2} \right]
\sp^{(2n)}_{\lambda - \frac{1}{2}}(x) \, .
\end{equation}
\end{corollary}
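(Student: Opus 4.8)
The plan is to deduce both identities directly from Theorem~\ref{thm:CBtrans=sp} by specializing the tuning parameter to $\beta=1$, combined with the elementary observation~\eqref{eq:transition=So} that $\trans{\lambda}(x;1)=\so^{(2n+1)}_{\lambda}(x)$ for every $n$-partition and every $n$-half-partition $\lambda$. No new combinatorics is needed: all of the content sits in the pattern bijection of Theorem~\ref{thm:sp-soPatternsBijection} that was already used to prove Theorem~\ref{thm:CBtrans=sp}.

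First I would treat the case of an $n$-partition $\lambda$. Setting $\beta=1$ in~\eqref{eq:CBtrans=sp1} gives
\[
\trans{\lambda}(x;1)=\sum_{\epsilon\in\{0,1\}^n}\sp^{(2n)}_{\lambda-\epsilon}(x),
\]
with the convention $\sp^{(2n)}_{\mu}(x):=0$ whenever $\mu$ fails to be a partition; since the left-hand side equals $\so^{(2n+1)}_{\lambda}(x)$ by~\eqref{eq:transition=So}, this is precisely~\eqref{eq:soOdd=sp1}. For an $n$-half-partition $\lambda$, setting $\beta=1$ in~\eqref{eq:CBtrans=sp2} gives
\[
\trans{\lambda}(x;1)=\prod_{i=1}^{n}\left[x_i^{1/2}+x_i^{-1/2}\right]\sp^{(2n)}_{\lambda-\frac12}(x),
\]
and again~\eqref{eq:transition=So} identifies the left-hand side with $\so^{(2n+1)}_{\lambda}(x)$, yielding~\eqref{eq:soOdd=sp2}; here $\lambda-\frac12$ is a genuine integer partition, so the right-hand side really is a symplectic Schur polynomial.

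There is essentially no obstacle to overcome — the corollary is a one-line specialization — which is why the substantive effort was invested earlier in Theorems~\ref{thm:sp-soPatternsBijection} and~\ref{thm:CBtrans=sp}. If a reader preferred an argument that does not pass through the interpolating polynomials, one could instead apply the bijection of Theorem~\ref{thm:sp-soPatternsBijection} directly to the generating-function expressions~\eqref{eq:soOddSchurSplit} and~\eqref{eq:spSchur} at $\beta=1$; but this merely re-derives, in a special case, what Theorem~\ref{thm:CBtrans=sp} already records at general $\beta$, so nothing is gained.
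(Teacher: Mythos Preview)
Your proposal is correct and matches the paper's own treatment: the corollary is stated immediately after Theorem~\ref{thm:CBtrans=sp} as the $\beta=1$ specialization, using~\eqref{eq:transition=So} to identify $\trans{\lambda}(x;1)$ with $\so^{(2n+1)}_{\lambda}(x)$. The paper does not give a separate proof environment for this corollary, since it is exactly the one-line specialization you describe.
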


Identity~\eqref{eq:soOdd=sp1} first appeared in~\cite{Pro89b,Sun90a}.
On the other hand, \eqref{eq:soOdd=sp2}, which easily follows from the Weyl character formulas~\eqref{eq:soOddWeyl} and~\eqref{eq:spWeyl}, can be found e.g.\ in~\cite{Pro94}.

We now obtain determinantal formulas of Weyl character type for the $\CB$-interpolating polynomials.
In principle, we could do this via a generalization of Proctor's proof~\cite{Pro93} for the Weyl character formula of type C.
However, this approach would be considerably long, as was the case in~\cite{Pro93}, and not innovative.
Instead, we propose a different strategy that reduces the proof of the general $\beta$ case to the Weyl character formula of type C, via the formulas of Theorem~\ref{thm:CBtrans=sp} and determinant expansions.
\begin{theorem}
\label{thm:transitionWeyl}
If $\lambda$ is an $n$-partition, then
\begin{equation}
\label{eq:transitionWeyl1}
\trans{\lambda}(x;\beta)
= \frac{\underset{1\leq i,j\leq n}{\det}\left(
x_j^{\lambda_i + n-i+1} - x_j^{-(\lambda_i + n-i+1)}
+ \beta \left[x_j^{\lambda_i + n-i} - x_j^{-(\lambda_i + n-i)}\right]
\right)}
{\underset{1\leq i,j\leq n}{\det}\left( x_j^{n-i+1} - x_j^{-(n-i+1)} \right)} \, .
\end{equation}
If $\lambda$ is an $n$-half-partition,
then
\begin{equation}
\label{eq:transitionWeyl2}
\trans{\lambda}(x;\beta)
= \frac{\underset{1\leq i,j\leq n}{\det}\left( \left[\beta x_j^{1/2} + x_j^{-1/2}\right] \left[
x_j^{\lambda_i + n-i+1/2} - x_j^{-(\lambda_i + n-i+1/2)}
\right] \right)}
{\underset{1\leq i,j\leq n}{\det}\left( x_j^{n-i+1} - x_j^{-(n-i+1)} \right)} \, .
\end{equation}
\end{theorem}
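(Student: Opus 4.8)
The plan is to deduce both formulas from the type-C Weyl character formula~\eqref{eq:spWeyl} together with the expressions for $\trans{\lambda}(x;\beta)$ in terms of symplectic characters provided by Theorem~\ref{thm:CBtrans=sp}, using nothing beyond elementary row/column operations and the multilinearity of the determinant. I would dispatch the half-partition case~\eqref{eq:transitionWeyl2} first, as it is immediate. By~\eqref{eq:CBtrans=sp2} we have $\trans{\lambda}(x;\beta) = \prod_{i=1}^n\left[\beta x_i^{1/2}+x_i^{-1/2}\right]\sp^{(2n)}_{\lambda-1/2}(x)$, and since $\lambda-\tfrac12$ is an integer partition we may apply~\eqref{eq:spWeyl} to it, noting that $(\lambda_i-\tfrac12)+n-i+1 = \lambda_i+n-i+\tfrac12$. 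The scalar $\beta x_j^{1/2}+x_j^{-1/2}$ depends only on the column index, so, writing $\prod_{i=1}^n\left[\beta x_i^{1/2}+x_i^{-1/2}\right] = \prod_{j=1}^n\left[\beta x_j^{1/2}+x_j^{-1/2}\right]$ and absorbing the $j$-th factor into the $j$-th column of the numerator determinant of~\eqref{eq:spWeyl}, one obtains exactly~\eqref{eq:transitionWeyl2}.

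For the integer-partition case~\eqref{eq:transitionWeyl1} I would start from~\eqref{eq:CBtrans=sp1}, namely $\trans{\lambda}(x;\beta) = \sum_{\epsilon\in\{0,1\}^n}\beta^{\abs{\epsilon}}\sp^{(2n)}_{\lambda-\epsilon}(x)$, with the convention $\sp^{(2n)}_{\mu}(x):=0$ when $\mu$ is not a partition. The one point requiring care is that this convention is compatible with plugging $\lambda-\epsilon$ directly into the right-hand side of~\eqref{eq:spWeyl}. Indeed, for $\epsilon\in\{0,1\}^n$ the content vector $\nu_i := \lambda_i-\epsilon_i+n-i+1$ is always weakly decreasing, since $\nu_i-\nu_{i+1} = (\lambda_i-\lambda_{i+1})-(\epsilon_i-\epsilon_{i+1})+1 \ge 0-1+1 = 0$; moreover one checks that $\lambda-\epsilon$ fails to be a partition exactly when either two consecutive entries of $\nu$ coincide (forcing two equal rows) or $\nu_n=0$ (forcing a zero row), and in both cases $\det_{1\le i,j\le n}\left(x_j^{\nu_i}-x_j^{-\nu_i}\right)$ vanishes by antisymmetry. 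Hence, for \emph{every} $\epsilon\in\{0,1\}^n$,
$\sp^{(2n)}_{\lambda-\epsilon}(x) = D^{-1}\det_{1\le i,j\le n}\left(x_j^{\nu_i}-x_j^{-\nu_i}\right)$, where $D := \det_{1\le i,j\le n}\left(x_j^{n-i+1}-x_j^{-(n-i+1)}\right)$ is the common type-C Weyl denominator.

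It then remains to carry out the sum over $\epsilon$. Writing $\beta^{\abs{\epsilon}} = \prod_i\beta^{\epsilon_i}$ and pulling $\beta^{\epsilon_i}$ into the $i$-th row shows that $\beta^{\abs{\epsilon}}\det_{1\le i,j\le n}\left(x_j^{\nu_i}-x_j^{-\nu_i}\right)$ is the determinant whose $i$-th row is either $\left(x_j^{\lambda_i+n-i+1}-x_j^{-(\lambda_i+n-i+1)}\right)_j$ when $\epsilon_i=0$, or $\beta\left(x_j^{\lambda_i+n-i}-x_j^{-(\lambda_i+n-i)}\right)_j$ when $\epsilon_i=1$. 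Summing over all $2^n$ vectors $\epsilon$ and expanding by multilinearity in each of the $n$ rows collapses the sum into a single determinant whose $i$-th row is the sum of those two vectors, i.e.\ $\left(x_j^{\lambda_i+n-i+1}-x_j^{-(\lambda_i+n-i+1)}+\beta\left[x_j^{\lambda_i+n-i}-x_j^{-(\lambda_i+n-i)}\right]\right)_j$; dividing by $D$ yields~\eqref{eq:transitionWeyl1}. I do not expect a genuine obstacle: the only step with any subtlety is verifying the compatibility of the vanishing convention with the determinantal formula (the monotonicity of the content vector), and everything else is routine determinant manipulation.
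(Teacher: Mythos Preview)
Your proposal is correct and follows essentially the same approach as the paper: both cases are reduced to the type-C Weyl formula via Theorem~\ref{thm:CBtrans=sp}, with the half-partition case handled by absorbing the column factor $\beta x_j^{1/2}+x_j^{-1/2}$ into the numerator, and the integer-partition case by verifying the vanishing convention is compatible with~\eqref{eq:spWeyl} and then collapsing the $\epsilon$-sum by row multilinearity. Your treatment of the vanishing case is in fact slightly more complete than the paper's, since you also explicitly cover the case $\nu_n=0$ (i.e.\ $\lambda_n=0,\ \epsilon_n=1$), which the paper's argument glosses over.
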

\begin{proof}
Let $\lambda$ be an $n$-partition.
By~\eqref{eq:spWeyl}, the symplectic characters appearing in~\eqref{eq:CBtrans=sp1} can be written as
\begin{equation}
\label{eq:spWeyl_transProof1}
\sp^{(2n)}_{\lambda-\epsilon}(x)
= \frac{\underset{1\leq i,j\leq n}{\det}\left( x_j^{\lambda_i - \epsilon_i + n-i+1} - x_j^{-(\lambda_i - \epsilon_i + n-i+1)} \right)}
{\underset{1\leq i,j\leq n}{\det}\left( x_j^{n-i+1} - x_j^{-(n-i+1)} \right)} \, ,
\end{equation}
for $\epsilon \in\{0,1\}^n$ such that $\lambda-\epsilon$ is a partition.
When $\lambda-\epsilon$ is \emph{not} a partition, in the sense that the non-decreasing condition fails, we have $\lambda_i - \epsilon_i < \lambda_{i+1} - \epsilon_{i+1}$ for some $i$.
In such a case, since $\lambda_i \geq \lambda_{i+1}$ and $\epsilon_i, \epsilon_{i+1}\in\{0,1\}$, it must hold that $\lambda_i - \epsilon_i = \lambda_{i+1}-\epsilon_{i+1} -1$.
Therefore, the $i$-th and $(i+1)$-th rows of the numerator matrix in~\eqref{eq:spWeyl_transProof1} are equal and the right-hand side of the equation vanishes.
On the other hand, when $\lambda-\epsilon$ is not a partition, the left-hand side also vanishes by the convention adopted in Theorem~\ref{thm:CBtrans=sp}.
We conclude that~\eqref{eq:spWeyl_transProof1} is actually valid for all $\epsilon\in\{0,1\}^n$.
Using the multilinearity of determinants, from~\eqref{eq:CBtrans=sp1} we then obtain:
\begin{align*}
\trans{\lambda}(x; \beta)
&= \sum_{\epsilon\in\{0,1\}^n} \beta^{\epsilon_1 + \dots + \epsilon_n}
\frac{\underset{1\leq i,j\leq n}{\det}\left( x_j^{\lambda_i - \epsilon_i + n-i+1} - x_j^{-(\lambda_i - \epsilon_i + n-i+1)} \right)}
{\underset{1\leq i,j\leq n}{\det}\left( x_j^{n-i+1} - x_j^{-(n-i+1)} \right)} \\
&= \frac{\underset{1\leq i,j\leq n}{\det}\left( \sum_{\epsilon_i = 0}^1 \beta^{\epsilon_i} \left[x_j^{\lambda_i - \epsilon_i + n-i+1} - x_j^{-(\lambda_i - \epsilon_i + n-i+1)} \right] \right)}
{\underset{1\leq i,j\leq n}{\det}\left( x_j^{n-i+1} - x_j^{-(n-i+1)} \right)} \, .
\end{align*}
The latter formula is clearly equivalent to~\eqref{eq:transitionWeyl1}.

Let now $\lambda$ be an $n$-half-partition.
By~\eqref{eq:spWeyl}, we can rewrite \eqref{eq:CBtrans=sp2} as
\[
\trans{\lambda}(x; \beta)
= \prod_{i=1}^n \left[\beta x_i^{1/2} + x_i^{-1/2} \right]
\frac{\underset{1\leq i,j\leq n}{\det}\left( x_j^{\lambda_i + n-i+1/2} - x_j^{-(\lambda_i + n-i+1/2)} \right)}
{\underset{1\leq i,j\leq n}{\det}\left( x_j^{n-i+1} - x_j^{-(n-i+1)} \right)} \, .
\]
Applying the multilinear property to take the prefactor into the numerator determinant, we obtain~\eqref{eq:transitionWeyl2}.
\end{proof}

Notice that the above determinantal formulas have denominators of type C, just because they are deduced from~\eqref{eq:CBtrans=sp1} and~\eqref{eq:CBtrans=sp2}.
In particular, for $\beta=0$, it is immediate to recover~\eqref{eq:transition=Sp} using the Weyl character formula~\eqref{eq:spWeyl} for symplectic characters.
On the other hand, for $\beta=1$, \eqref{eq:transitionWeyl1} and~\eqref{eq:transitionWeyl2} provide further determinantal expressions of odd orthogonal characters, equivalent to the Weyl character formula of type B.
This can be shown starting from~\eqref{eq:soOddWeyl} and multiplying the $j$-th column of both the numerator and the denominator matrices by $x_j^{1/2} + x_j^{-1/2}$:
\begin{align*}
\so^{(2n+1)}_{\lambda}(x)
&= \frac{\underset{1\leq i,j\leq n}{\det}\left( \left[ x_j^{1/2} + x_j^{-1/2}\right] \left[
x_j^{\lambda_i + n-i+1/2} - x_j^{-(\lambda_i + n-i+1/2)}
\right] \right)}
{\underset{1\leq i,j\leq n}{\det}\left( \left[ x_j^{1/2} + x_j^{-1/2}\right] \left[x_j^{n-i+1/2} - x_j^{-(n-i+1/2)} \right] \right)} \\
&= \frac{\underset{1\leq i,j\leq n}{\det}\left(
x_j^{\lambda_i + n-i+1} - x_j^{-(\lambda_i + n-i+1)}
+ x_j^{\lambda_i + n-i} - x_j^{-(\lambda_i + n-i)}
\right)}
{\underset{1\leq i,j\leq n}{\det}\left( x_j^{n-i+1} - x_j^{-(n-i+1)} + x_j^{n-i} - x_j^{-(n-i)} \right)} \\
&= \frac{\underset{1\leq i,j\leq n}{\det}\left(
x_j^{\lambda_i + n-i+1} - x_j^{-(\lambda_i + n-i+1)}
+ x_j^{\lambda_i + n-i} - x_j^{-(\lambda_i + n-i)}
\right)}
{\underset{1\leq i,j\leq n}{\det}\left( x_j^{n-i+1} - x_j^{-(n-i+1)} \right)} \, .
\end{align*}
The latter equality follows from a manipulation on the denominator determinant: subtract the $n$-th row from the $(n-1)$-th row, then subtract the $(n-1)$-th row from the $(n-2)$-th row and so on.
The above display proves our claim that both~\eqref{eq:transitionWeyl1} and~\eqref{eq:transitionWeyl2} reduce to~\eqref{eq:soOddWeyl} for $\beta=1$.

\vskip 2mm

\noindent {\bf Relation to Sundaram's tableaux.}
There are some profound links  between the results of Subsections~\ref{subsec:bijectionPatterns}-\ref{subsec:CBtransition} and the combinatorics of Sundaram's orthogonal tableaux~\cite{Sun90a}, as we now explain.
A \textbf{\emph{Sundaram's orthogonal tableau}} is a semistandard Young tableau in the alphabet $1< \overline{1} < 2 < \overline{2} < \dots < n < \overline{n} < \infty$ such that (i) the entries are \emph{weakly} increasing along the rows and down the columns, (ii) all ``finite'' entries $1,\overline{1},2,\overline{2},\dots,n,\overline{n}$ are in \emph{strict} increasing order down the columns (iii) the entries in row $i$ are not less than $i$, and (iv) there is at most one $\infty$ symbol in each row.
The latter condition ensures that the cells that contain $\infty$ form a so-called \emph{vertical strip} contained in the shape $\lambda$ of the tableau.
Removing the $\infty$'s from a Sundaram's tableau of shape $\lambda$, one simply obtains a symplectic tableau (as defined in Subsection~\ref{subsec:sp}) of a certain shape $\mu$ such that $\lambda/\mu$ is a vertical strip.
Vice versa, a symplectic tableau is a Sundaram's tableau with no $\infty$'s.

Fix now a partition $\lambda$ and consider the combinatorial bijection~\eqref{eq:sp-soPatternsBijection_1}.
Recalling from Subsection~\eqref{subsec:sp} the correspondence between symplectic patterns and tableaux, one can realize that the set of symplectic patterns $z' \in \spGT^{(2n)}_{\lambda-\epsilon}$ with $\epsilon \in\{0,1\}^n$ is in bijection with the set of Sundaram's tableaux of shape $\lambda$.
In the tableau, the diagram $\mu:= \lambda-\epsilon$ contains the ``finite'' entries and the skew shape $\lambda/ \mu$, of size $\abs{\epsilon}$, contains the $\infty$'s; moreover, the fact that each $\epsilon_i$ is either $0$ or $1$ forces $\mu$ to be a \emph{vertical strip}.
On the other hand, we already observed in Subsection~\ref{subsec:so} that split orthogonal patterns $z\in \soGT^{(2n)}_{\lambda}$ bijectively correspond to Koike-Terada tableaux~\cite{KT87, KT90} of shape $\lambda$; the number of atypical entries in the pattern equals the number of ``circled'' symbols in the Koike-Terada tableau.
Therefore, \eqref{eq:sp-soPatternsBijection_1} can be seen as a correspondence between Koike-Terada tableaux with $k$ ``circled'' symbols and Sundaram's tableaux with $k$ occurrences of the $\infty$ symbol.
After completion of this work, and led by the useful comments of an anonymous referee, we realized that a bijection of this type had been also discovered in~\cite{CS13}: their approach is based on a \emph{jeu de taquin} procedure on tableaux that can be shown to be equivalent to our construction of non-intersecting paths on patterns (see proof of Theorem~\ref{thm:sp-soPatternsBijection}).
From an algorithmic point of view, $k$ represents the number of non-intersecting paths in our proof of Theorem~\ref{thm:sp-soPatternsBijection} (respectively, the number of \emph{jeu de taquin} operations in the framework of~\cite{CS13}) needed to map the split orthogonal pattern onto the symplectic pattern with perturbed shape (respectively, the Koike-Terada tableau onto the Sundaram's tableau).

Sundaram~\cite{Sun90a} showed that the $(2n+1)$-orthogonal Schur polynomials indexed by a partition $\lambda$ can be expressed as
\begin{equation}
\label{eq:so_Sundaram}
\so^{(2n+1)}_{\lambda}(x)
= \sum_{T} x_i^{\#\{\text{$\overline{i}$'s in $T$}\} - \#\{\text{$i$'s in $T$}\}}
= \sum_{\mu} \sp^{(2n)}_{\mu}(x) \, ,
\end{equation}
where the first sum is over all Sundaram's tableaux $T$ of shape $\lambda$, whereas the second sum is over all partitions $\mu \subseteq \lambda$ such that the skew shape $\lambda/ \mu$ is a vertical strip.
The first equality was proven in~\cite{Sun90a} using an insertion algorithm.
The second equality follows immediately from the definition of Sundaram's tableaux and the definition of a symplectic character as generating function of symplectic tableaux.
Notice that the equality between the left-hand side and the right-hand side of~\eqref{eq:so_Sundaram} is the way in which~\eqref{eq:soOdd=sp1} first appeared in~\cite{Pro89b, Sun90a}.
On the other hand, \eqref{eq:CBtrans=sp1} allows us to interpret our $\CB$-interpolating Schur polynomials (when indexed by an integer partition) as generating functions of Sundaram's tableaux where all the $\infty$'s are assigned a weight $\beta$:
\begin{equation}
\label{eq:CB_Sundaram}
\trans{\lambda}(x; \beta) = \sum_{T} \beta^{\#\{\text{$\infty$'s in $T$}\}}
x_i^{\#\{\text{$\overline{i}$'s in $T$}\} - \#\{\text{$i$'s in $T$}\}} \, ,
\end{equation}
with the sum running over all Sundaram's tableaux $T$ of shape $\lambda$.
When $\beta=0$, the $\infty$'s are not allowed in the tableaux and we recover $\sp^{(2n)}_{\lambda}(x)$; when $\beta=1$, the $\infty$'s are given weight $1$ and we recover $\so^{(2n+1)}_{\lambda}(x)$.
\vskip 2mm

\noindent {\bf Relation to Koornwinder polynomials.}
Koornwinder polynomials~\cite{Koo92} can be viewed as a $\BC$-analog of standard Macdonald polynomials, in the sense that they are associated to the root system of type $\BC$ instead of type A.
They depend on the usual parameters $q$ and $t$ of Macdonald polynomials as well as four extra interchangeable parameters $t_0, t_1, t_2, t_3$.
We now briefly introduce them, following the exposition of~\cite{RW21}.
Denoting by
\[
(z;q)_{\infty} := \prod_{k=0}^{\infty} (1-q^k z)
\]
the $q$-shifted factorial, we define the Koornwinder density in variables $x=(x_1,\dots,x_n)$ by
\[
\Delta(x;q,t;t_0,t_1,t_2,t_3)
:= \prod_{i=1}^n \Bigg[ \prod_{\epsilon\in\{\pm 1\}} \frac{ (x_i^{2\epsilon};q)_{\infty} }{ \prod_{k=0}^3 (t_k x_i^{\epsilon}; q)_{\infty} } \Bigg]
\prod_{1\leq i<j\leq n} \Bigg[ \prod_{\epsilon, \delta \in\{\pm 1\}} \frac{(x_i^{\epsilon} x_j^{\delta}; q)_{\infty}}{(t x_i^{\epsilon} x_j^{\delta}; q)_{\infty} } \Bigg] \, .
\]
For $\abs{q}, \abs{t}, \abs{t_0}, \dots, \abs{t_3} < 1$, we then define the inner product
\[
\langle f,g\rangle^{(n)}_{q,t;t_0,t_1,t_2,t_3} :=
\frac{1}{2^n n! (2\pi\i)^n} \int_{\mathbb{T}^n} f(x) g(x^{-1}) \Delta(x;q,t;t_0,t_1,t_2,t_3) \prod_{i=1}^n \frac{\diff x_i}{x_i} \, ,
\]
where $\mathbb{T}^n:=\{x\in\C^n\colon \abs{x_1}=\dots=\abs{x_n}=1\}$ is the $n$-dimensional complex torus and $f,g$ are Laurent polynomials with coefficients in $\C$.
Koornwinder polynomials are then defined as the unique $\langle \cdot , \cdot \rangle_{q,t;t_0,t_1,t_2,t_3}$-orthogonal family of $\BC$-invariant Laurent polynomials $K_{\lambda} = K_{\lambda}(x;q,t;t_0,t_1,t_2,t_3)$ on $\C$ indexed by $n$-partitions $\lambda$ with leading coefficient $x^{\lambda}$.
In other words, they satisfy the following properties:
\begin{itemize}
\item
they are invariant under permutations of the $x_i$'s and inversion of any of them;
\item
they satisfy $\langle K_{\lambda} , K_{\mu} \rangle_{q,t;t_0,t_1,t_2,t_3} = 0$ for all $\lambda \neq \mu$;
\item
for any $n$-partition $\mu$, the coefficient of the monomial $x^{\mu} := x_1^{\mu_1} \cdots x_n^{\mu_n}$ in $K_{\lambda}$ is zero unless $\mu \leq \lambda$ (in the ``dominance order'' sense, i.e.\ $\mu_1 + \dots + \mu_k \leq \lambda_1 + \dots + \lambda_k$ for $1\leq k\leq n$), and the coefficient of $x^{\lambda}$ is precisely $1$.
\end{itemize}

Setting $q=0$ and any\footnote{Since the Koornwinder density $\Delta$ is symmetric with respect to the $t_k$'s, Koornwinder polynomials also are.} two of $t_0,\dots,t_3$ to be zero in $K_{\lambda}$, one recovers the so-called \emph{Hall-Littlewood polynomials} of type $\BC$~\cite{Ven15}, which possess an expansion over the Weyl group of type $\BC$.
When taking also $t=0$, such an expansion takes a determinantal form~\cite{RW21}:
\[
K_{\lambda}(x;0,0;a,b,0,0)
= \frac{\underset{1\leq i,j\leq n}{\det}\left(
x_j^{\lambda_i + n-i-1}(x_j - a)(x_j - b) - x_j^{-\lambda_i - n+i-1} (ax_j - 1) (bx_j -1) \right)}
{\underset{1\leq i,j\leq n}{\det}\left( x_j^{n-i+1} - x_j^{-(n-i+1)} \right)} \, .
\]
It is immediate to see that the latter expression corresponds to~\eqref{eq:transitionWeyl1} for $a=-\beta$ and $b=0$, thus implying that $\CB$-interpolating polynomials belong to the Koornwinder family.
More precisely, for any $n$-partition $\lambda$, we have
\[
\trans{\lambda}(x;\beta) \equiv K_{\lambda}(x;0,0;-\beta,0,0,0) \, .
\]
The general definition of Koornwinder polynomials, given before, is abstract and difficult to handle in practice.
On the other hand, our construction of $\CB$-interpolating polynomials provides a concrete and explicit combinatorial interpretation, based on Gelfand-Tsetlin patterns, of a one-parameter specialization of Koornwinder polynomials.

It would be interesting to investigate if, and to what extent, the results of the present article extend to a further one-parameter generalization of $\CB$-interpolating polynomials, corresponding to the above determinantal specialization of Koornwinder polynomials (allowing $b\neq 0$).
Do these polynomials still have a combinatorial interpretation as generating functions of certain patterns?
Do they satisfy similar decomposition identities?
Do they appear in any LPP models?

\subsection{Transition between even and odd orthogonal characters}
\label{subsec:DBtransition}

Recall from~\eqref{eq:soEvenSchur} and~\eqref{eq:soOddSchur} that $(2n)$- and $(2n+1)$-orthogonal characters can be defined via the same weight monomials.
The patterns that generate odd orthogonal characters have one more row (which does not appear in the weight monomials, though) than the patterns that generate even orthogonal characters.
Again motivated by a probabilistic significance that will emerge in Section~\ref{sec:results}, it is then natural to introduce polynomials that interpolate, via an extra parameter $\alpha$, between the characters of types D and B.

\begin{definition}
\label{def:DBtrans}
We define the {\bf\emph{$\DB$-interpolating Schur polynomials}} to be the following function in $x=(x_1,\dots,x_n)$, with parameter $\alpha$, and indexed by an $n$-partition or $n$-half-partition $\lambda$:
\begin{equation}
\label{eq:DBtrans}
\transDB{\lambda}(x; \alpha)
:= \sum_{z\in \oGT^{(2n)}_{\lambda}}
x_1^{z_{1,1}}
\prod_{i=2}^{n} x_i^{\sign(z_{2i-3,i-1}) \sign(z_{2i-1,i}) [\type(z)_{2i-1} - \type(z)_{2i-2}]}
\alpha^{\sum_{i=1}^n (\lambda_i - z_{2n-1,i})} \, .
\end{equation}
\end{definition}

Notice that the exponent of $\alpha$ in~\eqref{eq:DBtrans} does \emph{not} coincide with $\type(z)_{2n}$ according to~\eqref{eq:soType}, as $z_{2n-1,n}$ (the only entry involved that might be negative) is not taken in absolute value.
Moreover, due to the interlacing conditions~\eqref{eq:interlacingAbs}, the exponent of $\alpha$ is non-negative, while it equals $0$ if and only if $\lambda_i = z_{2n-1,i}$ for all $1\leq i\leq n$.
Therefore, for $\alpha=0$, the general term of the sum in~\eqref{eq:DBtrans} vanishes unless the $(2n-1)$-th row equals the shape $\lambda$; we are then reduced to sum over $z\in \oGT_{\lambda}^{(2n-1)}$, thus obtaining the even orthogonal character $\so^{(2n)}_{\lambda}(x)$ defined in~\eqref{eq:soEvenSchur}.
It is likewise obvious that, for $\alpha=1$, \eqref{eq:DBtrans} reduces to the odd orthogonal character $\so^{(2n+1)}_{\lambda}(x)$ defined in~\eqref{eq:soOddSchur}.
To sum up, as announced, the functions defined above interpolate between characters of type D and B:
\begin{equation}
\label{eq:DBinterpolation}
\transDB{\lambda}(x; \alpha)
= \begin{cases}
\so^{(2n)}_{\lambda}(x) &\text{if } \alpha=0 \, , \\
\so^{(2n+1)}_{\lambda}(x) &\text{if } \alpha=1 \, ,
\end{cases}
\end{equation}
for all $n$-partitions or $n$-half-partitions $\lambda$.
We stress that a transition between types D and B may only exist for unsigned partitions, although even orthogonal characters can be also indexed by signed partitions.

We can also express a $\DB$-interpolating Schur polynomial as a ``linear combination'' of even orthogonal characters, with coefficients being powers of $\alpha$:
\begin{proposition}
\label{prop:DBbranching}
For all $n$-partitions (respectively, $n$-half-partitions) $\lambda$, we have
\begin{equation}
\label{eq:DBbranching}
\transDB{\lambda}(x; \alpha)
= \sum_{\mu_{\epsilon} \prec \lambda} \alpha^{\sum_{i=1}^{n-1} (\lambda_i - \mu_i) + (\lambda_n - \epsilon\mu_n)} \cdot \so^{(2n)}_{\mu_{\epsilon}}(x) \, ,
\end{equation}
where the sum is over all signed $n$-partitions (respectively, signed $n$-half-partitions) $\mu_{\epsilon}$ that upwards interlace with $\lambda$.
In particular, when $\lambda=u^{n}:= \underbrace{(u,\dots,u)}_{n \text{ times}}$ for some $u\in\frac{1}{2}\Z_{\geq 0}$, we have
\begin{equation}
\label{eq:DBbranchingRectangular}
\transDB{u^{n}}(x; \alpha)
= \sum_{k=0}^{2u} \alpha^k \cdot \so^{(2n)}_{(u^{n-1}, u-k )} (x) \, .
\end{equation}
\end{proposition}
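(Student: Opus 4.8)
The plan is to prove \eqref{eq:DBbranching} by peeling off the bottom row of an orthogonal pattern, and then to deduce \eqref{eq:DBbranchingRectangular} as an immediate specialization.

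First I would note that, in the defining sum \eqref{eq:DBtrans}, both the weight monomial in the $x_i$'s and each quantity $\type(z)_i$ with $i \le 2n-1$ involve only the first $2n-1$ rows of $z$; in particular the bottom row $z_{2n,\cdot} = \lambda$ does not enter the $x$-weight. So I would decompose each $z \in \oGT^{(2n)}_{\lambda}$ as the pair consisting of its bottom row $\lambda$ together with the array $z'$ formed by its first $2n-1$ rows. The interlacing conditions \eqref{eq:interlacingAbs} restricted to rows $1,\dots,2n-1$ say precisely that $z'$ is an orthogonal pattern of height $2n-1$ whose shape is $\mu_{\epsilon} := (z_{2n-1,1},\dots,z_{2n-1,n})$, a signed $n$-partition (respectively $n$-half-partition) with $\mu_i = z_{2n-1,i}$ for $i<n$ and $\epsilon\mu_n = z_{2n-1,n}$ (the only entry of that row allowed to be negative, $\epsilon:=\sign(z_{2n-1,n})$). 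Comparing with \eqref{eq:soEvenSchur}, the $x$-weight of $z$ in \eqref{eq:DBtrans} coincides with the $x$-weight of $z'$ in the generating function for $\so^{(2n)}_{\mu_{\epsilon}}(x)$. The remaining interlacing conditions, between rows $2n-1$ and $2n$, amount in absolute value to $\lambda_i \ge \mu_i \ge \lambda_{i+1}$ for all $i$, i.e.\ to $\mu_{\epsilon} \prec \lambda$; and, for fixed $\mu_{\epsilon}$, the $\alpha$-exponent $\sum_{i=1}^n (\lambda_i - z_{2n-1,i})$ reduces to the constant $\sum_{i=1}^{n-1}(\lambda_i - \mu_i) + (\lambda_n - \epsilon\mu_n)$. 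Summing \eqref{eq:DBtrans} over the $(2n-1)$-th row first and recognizing each inner sum over $z' \in \oGT^{(2n-1)}_{\mu_{\epsilon}}$ as $\so^{(2n)}_{\mu_{\epsilon}}(x)$ via \eqref{eq:soEvenSchur} then yields \eqref{eq:DBbranching}. (The map $z \mapsto (z',\lambda)$ is a bijection onto such pairs, since an orthogonal pattern of height $2n-1$ exists for every signed $n$-(half-)partition shape; the only point to watch is that when $z_{2n-1,n}=0$ the sign $\epsilon$ is immaterial, consistently with the identification $\mu_+ = \mu_-$.)

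For \eqref{eq:DBbranchingRectangular} I would specialize $\lambda = u^{(n)}$ in \eqref{eq:DBbranching}. The constraint $\mu_{\epsilon} \prec u^{(n)}$ forces $\mu_1 = \dots = \mu_{n-1} = u$ and leaves only $\mu_n$ free, ranging over $\{0,1,\dots,u\}$ in the integer case and over $\{1/2, 3/2,\dots,u\}$ in the half-integer case, together with a sign $\epsilon$ that plays no role when $\mu_n = 0$. Writing $v := \epsilon\mu_n$, the parameter $v$ then runs over all integers (respectively half-integers) with $-u \le v \le u$, the associated $\alpha$-exponent is $u-v$, and the associated character is $\so^{(2n)}_{(u^{(n-1)}, v)}(x)$. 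Re-indexing by $k := u-v \in \{0,1,\dots,2u\}$, with the convention that $(u^{(n-1)}, u-k)$ denotes the signed partition whose last part is $u-k$ (negatively signed when $k>u$), gives exactly \eqref{eq:DBbranchingRectangular}.

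There is no real obstacle here; the argument is a bookkeeping check matching the combinatorial weights of \eqref{eq:DBtrans} against those of \eqref{eq:soEvenSchur}. The only steps deserving attention are: (i) verifying that no entry of the bottom row appears in the $x$-weight of \eqref{eq:DBtrans}, so that it genuinely equals the $\so^{(2n)}$-weight on the peeled pattern; (ii) correctly reading off the sign $\epsilon = \sign(z_{2n-1,n})$ of the last odd end, which is what makes \emph{signed} partitions appear on the right-hand side; and (iii) in the rectangular specialization, avoiding any double counting of the $\mu_n = 0$ term and treating the integer and half-integer cases uniformly.
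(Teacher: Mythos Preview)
Your proof is correct and follows essentially the same approach as the paper: peel off the bottom row of the height-$2n$ orthogonal pattern, identify the $(2n-1)$-th row with a signed $n$-(half-)partition $\mu_\epsilon \prec \lambda$, and recognize the inner sum over $\oGT^{(2n-1)}_{\mu_\epsilon}$ as $\so^{(2n)}_{\mu_\epsilon}(x)$ via Definition~\ref{def:soEvenSchur}. Your treatment of the rectangular specialization, including the careful handling of the sign and the $\mu_n=0$ case, is also the same as the paper's.
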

\begin{proof}
The sum in~\eqref{eq:DBtrans}, which defines a $\DB$-interpolating Schur polynomial, is over orthogonal patterns $z$ of height $2n$ and shape $\lambda$.
Setting $\mu_i := \abs{z_{2n-1,i}}$ for $1\leq i\leq n$ and $\epsilon := \sign(z_{2n-1,n})$, it turns out that $\mu_{\epsilon}$ is any signed $n$-partition (or any signed $n$-half-partition, if $\lambda$ is an $n$-half-partition) that upwards interlaces with $\lambda$.
On the other hand, the first $2n-1$ rows of $z$ form a new orthogonal pattern of height $2n-1$ and shape $\mu_\epsilon$.
The sum in~\eqref{eq:DBtrans} may thus be split into two nested sums, over $\mu_\epsilon \prec \lambda$ and over patterns in $\oGT^{(2n-1)}_{\mu_\epsilon}$ respectively:
\[
\begin{split}
\transDB{\lambda}(x; \alpha)
= \sum_{\mu_\epsilon \prec \lambda}
&\alpha^{\sum_{i=1}^{n-1} (\lambda_i - \mu_i) + (\lambda_n - \epsilon\mu_n)} \\
&\times \sum_{z\in \oGT^{(2n-1)}_{\mu_\epsilon}}
x_1^{z_{1,1}}
\prod_{i=2}^{n} x_i^{\sign(z_{2i-3,i-1}) \sign(z_{2i-1,i}) [\type(z)_{2i-1} - \type(z)_{2i-2}]} \, .
\end{split}
\]
By Definition~\ref{def:soEvenSchur}, the inner sum is a $(2n)$-orthogonal Schur polynomial in $x$, indexed by $\mu_\epsilon$; this yields~\eqref{eq:DBbranching}.

Let us now specialize~\eqref{eq:DBbranching} to the case $\lambda = u^{n}$, for $u\in\frac{1}{2}\Z_{\geq 0}$.
The interlacing between $\mu_\epsilon$ and $u^{n}$, defined by~\eqref{eq:interlacingAbs}, forces $\lambda_i = \mu_i =u$ for $1\leq i\leq n-1$.
It then suffices to sum over all integers $0\leq k\leq 2u$, where $k = \lambda_n - \epsilon\mu_n = u -\epsilon\mu_n$ (notice that $k$ is always integer, independently of whether $u$ is integer or half-integer).
This readily yields~\eqref{eq:DBbranchingRectangular}.
\end{proof}

The case $\alpha=1$ in~\eqref{eq:DBbranching} degenerates to the classical branching rule from $\SO_{2n+1}(\C)$ to $\SO_{2n}(\C)$ -- see~\cite{Pro94}.
It follows directly from~\eqref{eq:DBbranching} and the invariance properties of even orthogonal characters that $\DB$-interpolating polynomials, for any fixed $\alpha$, are symmetric in the variables $x_1,\dots,x_n$ and invariant under inversion of an \emph{even} number of them.

Throughout this work we will be especially interested in $\DB$-interpolating Schur polynomials indexed by ``rectangular (half-)partitions'', as in~\eqref{eq:DBbranchingRectangular}.
In this case, it turns out that our interpolating function with \emph{arbitrary} parameter $\alpha$ essentially reduces to a rectangular shaped even orthogonal character with one extra variable $\alpha^{-1}$, as the next proposition states.
\begin{proposition}
\label{prop:DBrectangular=so}
For $u\in\frac{1}{2}\Z_{\geq 0}$, we have
\begin{equation}
\label{eq:DBrectangular=so}
\transDB{u^{n}}(x; \alpha)
= \alpha^u \cdot \so^{(2n+2)}_{u^{n+1}}(x,\alpha^{-1}) \, .
\end{equation}
\end{proposition}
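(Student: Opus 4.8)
The plan is to expand the right-hand side directly from the pattern definition of even orthogonal characters and to compare the result with the expansion of $\transDB{u^{(n)}}(x;\alpha)$ already obtained in~\eqref{eq:DBbranchingRectangular}. By Definition~\ref{def:soEvenSchur} (equation~\eqref{eq:soEvenSchur}), $\so^{(2n+2)}_{u^{(n+1)}}(x_1,\dots,x_n,\alpha^{-1})$ is a weighted sum over orthogonal patterns $z$ of height $2n+1$ and shape $u^{(n+1)}$. The first step is to note that, since the shape is rectangular, the interlacing conditions~\eqref{eq:interlacingAbs} in absolute value force the entire row $2n$ to equal $(u,\dots,u)$ and the first $n-1$ entries of row $2n-1$ to equal $u$; the only free entry among these top two rows below the shape is the odd end $z_{2n-1,n}$, which ranges over $\{-u,-u+1,\dots,u\}$. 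Consequently rows $1,\dots,2n-1$ of $z$ form an arbitrary orthogonal pattern of height $2n-1$ with shape the signed $n$-partition (or signed $n$-half-partition) $\mu_\epsilon:=(u^{(n-1)},z_{2n-1,n})$, and conversely any such pattern extends uniquely.

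The second step is to split the sum over $z$ according to the value of $z_{2n-1,n}$ and evaluate the weight monomial of~\eqref{eq:soEvenSchur}. Its $x_1,\dots,x_n$-part involves only rows $1,\dots,2n-1$ and thus coincides with the weight of the sub-pattern in $\so^{(2n)}_{\mu_\epsilon}(x)$. The remaining factor is $(\alpha^{-1})^{\sign(z_{2n-1,n})\sign(z_{2n+1,n+1})[\type(z)_{2n+1}-\type(z)_{2n}]}$; using $z_{2n+1,n+1}=u\ge 0$, so that $\sign(z_{2n+1,n+1})=+1$, together with~\eqref{eq:soType}, one computes $\type(z)_{2n+1}=(n+1)u-nu=u$ and $\type(z)_{2n}=nu-\bigl((n-1)u+|z_{2n-1,n}|\bigr)=u-|z_{2n-1,n}|$, hence the exponent equals $\sign(z_{2n-1,n})\,|z_{2n-1,n}|=z_{2n-1,n}$ and the factor is $\alpha^{-z_{2n-1,n}}$. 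Summing over patterns and reindexing by $k:=u-z_{2n-1,n}\in\{0,1,\dots,2u\}$, so that $\mu_\epsilon=(u^{(n-1)},u-k)$, gives
\[
\so^{(2n+2)}_{u^{(n+1)}}(x,\alpha^{-1})
= \sum_{k=0}^{2u}\alpha^{k-u}\,\so^{(2n)}_{(u^{(n-1)},\,u-k)}(x)
= \alpha^{-u}\sum_{k=0}^{2u}\alpha^{k}\,\so^{(2n)}_{(u^{(n-1)},\,u-k)}(x) ,
\]
and comparison with~\eqref{eq:DBbranchingRectangular} yields the claimed identity after multiplying by $\alpha^u$. (The degenerate case $u=0$ is trivial, both sides being $1$.)

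The computation is short, so there is no serious obstacle; the one point that needs care is the bookkeeping of the $\alpha$-exponent. One has to keep track that the extra variable sits in the \emph{last} slot $x_{n+1}=\alpha^{-1}$, whose weight in~\eqref{eq:soEvenSchur} carries the odd-end sign $\sign(z_{2n-1,n})$, and to verify that after peeling off the two forced top rows this sign combines with $|z_{2n-1,n}|$ to reconstitute exactly $z_{2n-1,n}$ (and hence, after the substitution $k=u-z_{2n-1,n}$, the shift of the last part by $u-k$). Everything else --- the structure of the forced rows, the identification of the $x_1,\dots,x_n$-weight with that of $\so^{(2n)}_{\mu_\epsilon}(x)$, and the final matching with~\eqref{eq:DBbranchingRectangular} --- is routine.
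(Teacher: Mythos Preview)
Your proof is correct and uses the same core idea as the paper: expand $\so^{(2n+2)}_{u^{(n+1)}}(x,\alpha^{-1})$ via the pattern definition, observe that the rectangular shape forces row $2n$ to be $u^{(n)}$ and all but the last entry of row $2n-1$ to be $u$, and then compute that the $\alpha^{-1}$-exponent equals $z_{2n-1,n}$. The only difference is in the final matching step: the paper compares directly with Definition~\ref{def:DBtrans} specialized to shape $u^{(n)}$ (at the level of height-$2n$ patterns), whereas you peel off one more row and match against the branching expansion~\eqref{eq:DBbranchingRectangular} of Proposition~\ref{prop:DBbranching} (at the level of $\so^{(2n)}$ characters). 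This is a cosmetic difference --- the paper's route is marginally more self-contained, yours reuses the preceding proposition --- but the substance is identical.
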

\begin{proof}
Any orthogonal pattern $z$ of height $2n$ and shape $u^{n}$, due to the interlacing conditions, satisfies $z_{2n-1,i}=u$ for $1\leq i\leq n-1$.
Therefore, by Definition~\ref{def:DBtrans} we have
\begin{equation}
\label{eq:DBrectangular=so_proof1}
\transDB{u^{n}}(x; \alpha)
= \sum_{z\in \oGT^{(2n)}_{u^{n}}}
x_1^{z_{1,1}}
\prod_{i=2}^{n} x_i^{\sign(z_{2i-3,i-1}) \sign(z_{2i-1,i}) [\type(z)_{2i-1} - \type(z)_{2i-2}]}
\alpha^{u-z_{2n-1,n}} \, .
\end{equation}
On the other hand, by Definition~\ref{def:soEvenSchur} we have
\begin{equation}
\label{eq:DBrectangular=so_proof2}
\begin{split}
\so^{(2n+2)}_{u^{n+1}}(x,\alpha^{-1})
= \sum_{z\in \oGT^{(2n+1)}_{u^{n+1}}}
&x_1^{z_{1,1}}
\prod_{i=2}^{n} x_i^{\sign(z_{2i-3,i-1}) \sign(z_{2i-1,i}) [\type(z)_{2i-1} - \type(z)_{2i-2}]} \\
&\times \alpha^{-\sign(z_{2n-1,n}) \sign(z_{2n+1,n+1}) [\type(z)_{2n+1} - \type(z)_{2n}]} \, .
\end{split}
\end{equation}
Now, by the interlacing conditions, any orthogonal pattern $z$ of height $2n+1$ and shape $u^{n+1}$ must have $(2n)$-th row equal to $u^{n}$ and $(2n-1)$-th row equal to $(u^{n-1}, z_{2n-1,n})$.
This implies:
\[
\sign(z_{2n+1,n+1}) = \sign(u) = +1 \, ,
\qquad
\type(z)_{2n+1} = u \, ,
\qquad
\type(z)_{2n} = u-\abs{z_{2n-1,n}} \, .
\]
The exponent of $\alpha$ in~\eqref{eq:DBrectangular=so_proof2} then equals $-z_{2n-1,n}$.
Moreover, the sum in~\eqref{eq:DBrectangular=so_proof2} can be now taken over orthogonal patterns of height $2n$ and shape $u^{n}$, thus obtaining:
\[
\so^{(2n+2)}_{u^{n+1}}(x,\alpha^{-1})
= \sum_{z\in \oGT^{(2n)}_{u^{n}}}
x_1^{z_{1,1}}
\prod_{i=2}^{n} x_i^{\sign(z_{2i-3,i-1}) \sign(z_{2i-1,i}) [\type(z)_{2i-1} - \type(z)_{2i-2}]} \alpha^{-z_{2n-1,n}} \, .
\]
Comparing the latter with~\eqref{eq:DBrectangular=so_proof1}, we obtain~\eqref{eq:DBrectangular=so}.
\end{proof}

\section{Character identities and last passage percolation}
\label{sec:results}

In Section~\ref{sec:intro} we have introduced the Last Passage Percolation (LPP) model.
In this section we explain how LPP with certain symmetries on the weight array is related to character identities and decompositions of the irreducible polynomial representations of classical groups.

For bounded Cauchy or Littlewood sums, we will often use the following conventions.
If $\mu$ is a fixed $n$-partition (respectively, $n$-half-partition), a sum over $\lambda \subseteq \mu$ will be taken on all $n$-partitions (respectively, $n$-half-partitions) $\lambda$ such that $\lambda \subseteq \mu$.
Analogously, a sum over $\lambda_\epsilon \subseteq \mu$ will be taken either on signed $n$-partitions or on signed $n$-half-partitions according to whether $\mu$ is an $n$-partition or an $n$-half-partition.
Recall also the notation $u^{n}$ for the $n$-tuple $(u,\dots,u)$, and $(a,b) :=(a_1,\dots,a_n,b_1,\dots,b_m)$ for the concatenation of two tuples $a=(a_1,\dots,a_n)$ and $b=(b_1,\dots,b_m)$.
Finally, recall the notation $\oddrows$ from~\eqref{eq:oddRows}.

\subsection{Antidiagonally symmetric LPP and decompositions of symplectic and odd orthogonal characters}
\label{subsec:antisymLPP}

Let us first consider the LPP model with weight array $\{W_{i,j}\colon 1\leq i,j\leq N\}$ symmetric about the antidiagonal $\{i+j=N+1\}$, i.e.\ such that $W_{i,j} = W_{N-j+1,N-i+1}$ for all $(i,j)$.
In this case, the link to combinatorics emerges when the weights on and above the antidiagonal are independent and such that, for all $k\in\Z_{\geq 0}$,
\begin{equation}
\label{eq:antisymWeights}
\P(W_{i,j} = k) =
\begin{cases}
(1-p_{N-i+1} p_j) (p_{N-i+1} p_j)^k &\text{if } i+j< N+1 \, , \\
\dfrac{1-p_j^2}{1 + \beta p_j} \beta^{k \bmod 2} p_j^k &\text{if } i+j = N+1 \, . \\
\end{cases}
\end{equation}
We define the normalization constant (whose dependence on the $p_i$'s is dropped from the notation) for the joint distribution of the above weights:
\begin{equation}
\label{eq:antisymLPP_normalization}
c^{\antisymSq}_{\beta}
:= \prod_{1\leq i<j\leq N} \frac{1}{1-p_i p_j}
\prod_{1\leq j\leq N} \frac{1 + \beta p_j}{1-p_j^2} \, .
\end{equation}
Denote by $L^{\antisymSq}_{\beta}(N,N)$ the point-to-point LPP time from $(1,1)$ to $(N,N)$ with a weight array symmetric about the antidiagonal and distributed as in~\eqref{eq:antisymWeights}.
Baik and Rains~\cite{BR01a} showed that the distribution of $L^{\antisymSq}_{\beta}(N,N)$ is given in terms of classical Schur polynomials.
Here, our main result states that the same distribution can be also expressed in terms of the $\CB$-interpolating Schur polynomials with parameter $\beta$ introduced in Subsection~\ref{subsec:CBtransition}:
\begin{theorem}
\label{thm:antisymLPP}
For $u\in\frac{1}{2}\Z_{\geq 0}$, the following quantities are equal:
\begin{align*}
        A^{\antisymSq}_{\beta} &:= c^{\antisymSq}_{\beta} \cdot \P\left(L^{\antisymSq}_{\beta}(N,N) \leq 2u\right) , \\
        B^{\antisymSq}_{\beta} &:= \sum_{\mu \subseteq (2u)^{N}} \beta^{\oddrows\mu} \cdot \schur^{(N)}_{\mu} (p_1,\dots,p_N) \, , \\
        C^{\antisymSq}_{\beta} &:= \left[ \prod_{i=1}^N p_i \right]^u \trans{u^{N}} (p_1,\dots,p_N;\beta) \, , \\
        D^{\antisymSq}_{\beta} &:= \left[ \prod_{i=1}^N p_i \right]^u \sum_{\lambda \subseteq u^{N-n}} \trans{(u^{2n-N}, \lambda)} (p_1,\dots, p_n;\beta) \cdot \trans{\lambda} (p_{n+1}, \dots, p_{N};\beta) \, ,
\end{align*}
where $D^{\antisymSq}_{\beta}$ is valid for any integer $n$ with $\lceil N/2 \rceil \leq n \leq N$.
\end{theorem}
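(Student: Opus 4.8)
The plan is to prove the fourfold equality through a short chain, $A^{\antisymSq}_{\beta} = C^{\antisymSq}_{\beta}$, then $C^{\antisymSq}_{\beta} = B^{\antisymSq}_{\beta}$, then $C^{\antisymSq}_{\beta} = D^{\antisymSq}_{\beta}$ for every admissible $n$, rather than verifying all pairs separately. Of these links, only the first carries genuinely new probabilistic content; I would simply read the other two off from the character identities \eqref{eq:CB=LittlewoodSchur} and \eqref{eq:decompositionCB_intro}, whose proofs are deferred to Sections~\ref{sec:nearlyRectangular} and~\ref{sec:decomposition}. Note that $C^{\antisymSq}_{\beta}$ is precisely the $n=N$ instance of $D^{\antisymSq}_{\beta}$ (the sum over $\lambda\subseteq u^{(0)}$ collapsing to the single term $\lambda=\emptyset$), so the $D$-equality amounts to nothing beyond \eqref{eq:decompositionCB_intro}.

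For the crux, $A^{\antisymSq}_{\beta} = C^{\antisymSq}_{\beta}$, I would first exploit the antidiagonal symmetry $W_{i,j} = W_{N-j+1,N-i+1}$ via a reflection (``folding'') argument about the line $\{i+j=N+1\}$: any directed path from $(1,1)$ to $(N,N)$ meets this line in exactly one point and thus decomposes into a portion in the closed upper triangle $\{(i,j)\colon i+j\leq N+1\}$ together with the mirror image of a second such portion, so that $L^{\antisymSq}_{\beta}(N,N)$ is governed by a \emph{point-to-line} last passage problem on that triangle (cf.\ Figure~\ref{subfig:pointToLinePath}), with the weights on the antidiagonal playing the role of a distinguished boundary strip; the event $\{L^{\antisymSq}_{\beta}(N,N)\leq 2u\}$ should then translate into the triangular point-to-line time being $\leq u$. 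Next I would apply the RSK correspondence on triangular arrays developed in Section~\ref{sec:RSK} as a weight-preserving bijection between such bounded triangular arrays and split orthogonal patterns $z$ of height $2N$ and shape $u^{(N)}$. The essential bookkeeping is that the independent geometric weights with parameters $p_{N-i+1}p_j$ strictly above the antidiagonal should assemble into the monomial $\prod_{i=1}^N p_i^{\type(z)_{2i}-\type(z)_{2i-1}}$, while the parity-modulated geometric weights $\propto\beta^{k\bmod 2}p_j^k$ along the antidiagonal should reproduce exactly the factor $\beta^{\abs{\atyp(z)}}$ — this is the mechanism forcing the half-integer, ``atypical'', entries to appear. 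Summing over all such $z$ and dividing by the joint normalization $c^{\antisymSq}_{\beta}$ of \eqref{eq:antisymLPP_normalization} then gives $\P\left(L^{\antisymSq}_{\beta}(N,N)\leq 2u\right) = (c^{\antisymSq}_{\beta})^{-1}\left[\prod_{i=1}^N p_i\right]^u \trans{u^{(N)}}(p_1,\dots,p_N;\beta)$ by Definition~\ref{def:CBtrans}, i.e.\ $A^{\antisymSq}_{\beta}=C^{\antisymSq}_{\beta}$.

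The remaining two links I would dispatch by direct substitution. Setting $x_i=p_i$ in \eqref{eq:CB=LittlewoodSchur} and multiplying through by $\left[\prod_{i=1}^N p_i\right]^u$ turns its left-hand side into $C^{\antisymSq}_{\beta}$ and its right-hand side into $B^{\antisymSq}_{\beta}$ (a sum over $N$-partitions with $\mu_1\leq 2u$, i.e.\ over $\mu\subseteq(2u)^{(N)}$). Likewise, applying \eqref{eq:decompositionCB_intro} with $x=(p_1,\dots,p_n)$ and $y=(p_{n+1},\dots,p_N)$ — so that the roles of its ``$n$'' and ``$m$'' are played here by $n$ and $N-n$, the hypothesis $n\geq m$ becoming $n\geq N-n$, i.e.\ $n\geq\lceil N/2\rceil$, while $n\leq N$ guarantees $m\geq 0$ — and again multiplying by $\left[\prod_{i=1}^N p_i\right]^u$ yields $C^{\antisymSq}_{\beta}=D^{\antisymSq}_{\beta}$ for each admissible $n$. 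As a by-product, chaining $A^{\antisymSq}_{\beta}=C^{\antisymSq}_{\beta}=B^{\antisymSq}_{\beta}$ reproves, and extends to arbitrary $N$, the Schur-function formula of Baik and Rains~\cite{BR01a}.

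I expect the genuine difficulty to lie entirely in the first step: making the RSK correspondence on triangular (rather than rectangular) arrays precise, so that its output is exactly a split orthogonal pattern, and verifying the full weight dictionary — in particular that the parity statistic on the antidiagonal, carried by the parameter $\beta$, is matched term by term with the atypical-entry count $\abs{\atyp(z)}$ of Definition~\ref{def:CBtrans}, and that the bounded point-to-line constraint $\leq u$ corresponds to the rectangular shape $u^{(N)}$. The folding argument also has to be run uniformly in the parity of $N$, the odd case being slightly less symmetric than the $N=2n$ situation emphasized in the introduction.
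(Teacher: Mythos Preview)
Your plan is correct and matches the paper's approach essentially step for step: the paper proves $A^{\antisymSq}_{\beta}=C^{\antisymSq}_{\beta}$ in Section~\ref{sec:RSK} via exactly the point-to-line reformulation and triangular $\RSK$ you describe (including the identification of the $\beta$-parity on the antidiagonal with $\abs{\atyp(z)}$ and of the bound $\leq u$ with the rectangular shape $u^{(N)}$), and then obtains $B^{\antisymSq}_{\beta}=C^{\antisymSq}_{\beta}$ and $C^{\antisymSq}_{\beta}=D^{\antisymSq}_{\beta}$ from the identities \eqref{eq:CB=LittlewoodSchur} and \eqref{eq:decompositionCB_intro} proved in Sections~\ref{sec:nearlyRectangular} and~\ref{sec:decomposition}. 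The one minor divergence is that the paper also cites $A^{\antisymSq}_{\beta}=B^{\antisymSq}_{\beta}$ from~\cite{BR01a} (via $\RSK$ on symmetric square matrices) as an independent input rather than deriving it as a by-product, but your chain renders that reference logically unnecessary.
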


Before discussing Theorem~\ref{thm:antisymLPP}, we deduce its specializations to the cases $\beta=0$ and $\beta=1$.
For $\beta=0$, each weight on the antidiagonal is even and distributed as twice a geometric random variable, i.e.\ $\P(W_{i,j} = k) = (1-p_j^2) p_j^k$ for $k\in 2\Z_{\geq 0}$ and $i+j=N+1$.
In particular, $L^{\antisymSq}_{0}(N,N)$ is almost surely even, hence it suffices to compute its distribution function at even integers.
We may therefore ignore the case of $u$ half-integer in the next corollary.
Thanks to~\eqref{eq:transition=Sp}, $\CB$-interpolating Schur polynomials degenerate to \emph{even} symplectic characters for $\beta=0$.
Moreover, for $\beta=0$ we establish a further connection with \emph{odd} symplectic characters.
\begin{corollary}
\label{coro:antisymLPP_0}
For $u\in\Z_{\geq 0}$, the following quantities are equal:
\begin{align*}
        A^{\antisymSq}_0 &:= c^{\antisymSq}_0 \cdot \P\left(L^{\antisymSq}_{0}(N,N) \leq 2u\right) , \\
        B^{\antisymSq}_0 &:= \sum_{\substack{\mu\subseteq (2u)^{N}\colon \\ \oddrows\mu=0}} \schur^{(N)}_{ \mu} (p_1,\dots,p_N) \, , \\
        C^{\antisymSq}_0 &:= \left[ \prod_{i=1}^N p_i \right]^u \sp^{(2N)}_{u^{N}} (p_1,\dots,p_N) \, , \\
        D^{\antisymSq}_0 &:= \left[ \prod_{i=1}^N p_i \right]^u
\sum_{\lambda\subseteq u^{N-n}} \sp^{(2n)}_{(u^{2n-N}, \lambda)} (p_1,\dots,p_n) \cdot
\sp^{(2N-2n)}_{\lambda} (p_{n+1}, \dots, p_{N}) \, , \\
		E^{\antisymSq}_0 &:= \left[ \prod_{i=1}^{N-1} p_i \right]^u 
\sum_{\lambda\subseteq u^{N-n}}  \sp^{(2n+1)}_{(u^{2n+1-N}, \lambda)} (p_1,\dots,p_n;p_N) \cdot
\sp^{(2N-2n-1)}_{\lambda} (p_{n+1}, \dots, p_{N-1}; p_N) \, ,
\end{align*}
where $D^{\antisymSq}_{0}$ holds for $\lceil N/2 \rceil \leq n \leq N$ and $E^{\antisymSq}_{0}$ holds for $\lceil (N-1)/2 \rceil \leq n \leq N-1$.
\end{corollary}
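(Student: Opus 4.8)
The plan is to obtain the chain $A^{\antisymSq}_0 = B^{\antisymSq}_0 = C^{\antisymSq}_0 = D^{\antisymSq}_0$ as the $\beta=0$ specialization of Theorem~\ref{thm:antisymLPP}, and then to append the new quantity $E^{\antisymSq}_0$ by means of the odd-symplectic decomposition identity.

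First I would observe that for $\beta=0$ every weight on the antidiagonal is twice a geometric random variable, so $L^{\antisymSq}_0(N,N)$ is almost surely even; it therefore suffices to record its distribution function at even arguments $2u$ with $u\in\Z_{\geq 0}$, which explains the restriction to integer $u$. Putting $\beta=0$ in $B^{\antisymSq}_\beta$, the weight $\beta^{\sum_i(\mu_i\bmod 2)}$ (with the usual convention $0^0=1$) annihilates every term possessing an odd part and retains exactly the sum over $\mu$ with all parts even, i.e.\ $B^{\antisymSq}_0$. In $C^{\antisymSq}_\beta$ and $D^{\antisymSq}_\beta$, since $u$ is an integer all the indexing (half-)partitions are genuine integer partitions, so each $\trans{\lambda}(\,\cdot\,;0)$ collapses to the even symplectic character $\sp^{(2k)}_{\lambda}$ of the appropriate rank by~\eqref{eq:transition=Sp}; this reproduces $C^{\antisymSq}_0$ and $D^{\antisymSq}_0$ verbatim, the latter with the same range $\lceil N/2\rceil\le n\le N$.

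The genuinely new step is to identify $E^{\antisymSq}_0$ with $C^{\antisymSq}_0$. Here I would apply the odd-symplectic character decomposition~\eqref{eq:decompositionOddSp_intro} --- established combinatorially in Section~\ref{sec:decomposition} through pattern decomposition --- to the rectangular character $\sp^{(2N)}_{u^{(N)}}(p_1,\dots,p_N)$. Writing $N=n+m+1$ with $m=N-n-1$ and specializing the three blocks of variables in~\eqref{eq:decompositionOddSp_intro} as $x=(p_1,\dots,p_n)$, $y=(p_{n+1},\dots,p_{N-1})$, $s=p_N$, one obtains
\[
\sp^{(2N)}_{u^{(N)}}(p_1,\dots,p_N)
= p_N^{-u}\sum_{\lambda\subseteq u^{(N-n)}}
\sp^{(2n+1)}_{(u^{(2n+1-N)}\!,\,\lambda)}(p_1,\dots,p_n;p_N)\cdot
\sp^{(2N-2n-1)}_{\lambda}(p_{n+1},\dots,p_{N-1};p_N),
\]
which is valid exactly when $n\ge m\ge 0$, i.e.\ $\lceil (N-1)/2\rceil\le n\le N-1$. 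Multiplying through by $\bigl[\prod_{i=1}^{N}p_i\bigr]^u$ and absorbing the extra factor $p_N^{-u}$ turns the prefactor of $C^{\antisymSq}_0$ into $\bigl[\prod_{i=1}^{N-1}p_i\bigr]^u$, which is precisely $E^{\antisymSq}_0$; hence $E^{\antisymSq}_0=C^{\antisymSq}_0$.

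The only delicate part is the bookkeeping: checking that $(u^{(2n+1-N)},\lambda)$ is a genuine $(n+1)$-partition whenever $\lambda\subseteq u^{(N-n)}$ and $n\ge (N-1)/2$, reconciling the partition lengths between the general form of~\eqref{eq:decompositionOddSp_intro} (where the second factor is indexed by an $(m+1)$-partition) and the notation of $E^{\antisymSq}_0$, and verifying $2(N-n-1)+1=2N-2n-1$ so that the rank labels match. I do not anticipate any real obstacle, since all the substantive analytic and combinatorial input --- the LPP identity of Theorem~\ref{thm:antisymLPP} and the decomposition~\eqref{eq:decompositionOddSp_intro} --- is proved elsewhere.
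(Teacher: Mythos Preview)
Your proposal is correct and follows essentially the same route as the paper: the equalities $A^{\antisymSq}_0=B^{\antisymSq}_0=C^{\antisymSq}_0=D^{\antisymSq}_0$ are obtained as the $\beta=0$ specialization of Theorem~\ref{thm:antisymLPP} via~\eqref{eq:transition=Sp}, and the additional equality $C^{\antisymSq}_0=E^{\antisymSq}_0$ is exactly the odd-symplectic decomposition~\eqref{eq:decompositionOddSp} (equivalently~\eqref{eq:decompositionOddSp_intro}) applied with $n+m+1=N$ and $s=p_N$, which is how the paper handles it as well (see the discussion preceding Theorem~\ref{thm:decompositionCB}). The only caveat is that Theorem~\ref{thm:decompositionCB} is stated for $n\geq m\geq 1$, whereas your range allows $m=0$ (i.e.\ $n=N-1$); the bijection~\eqref{eq:oddSpBijection} in the proof is in fact asserted for all $n\geq m$, so this boundary case goes through without issue.
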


Let us now consider $\beta=1$.
All the weights on the antidiagonal now follow the geometric distribution defined by $\P(W_{i,j} = k) = (1-p_j) p_j^k$ for $k\in \Z_{\geq 0}$ and $i+j=N+1$.
Recalling from~\eqref{eq:transition=So} that $\CB$-interpolating Schur polynomials degenerate to odd orthogonal characters for $\beta=1$, we have:
\begin{corollary}
\label{coro:antisymLPP_1}
For $u\in\frac{1}{2}\Z_{\geq 0}$, the following quantities are equal:
\begin{align*}
        A^{\antisymSq}_1 &:= c^{\antisymSq}_{1} \cdot \P\left(L^{\antisymSq}_{1}(N,N) \leq 2u\right) , \\
        B^{\antisymSq}_1 &:= \sum_{\mu \subseteq (2u)^{N}} \schur^{(N)}_{\mu} (p_1,\dots,p_N) \, , \\
        C^{\antisymSq}_1 &:= \left[ \prod_{i=1}^N p_i \right]^u \so^{(2N+1)}_{u^{N}} (p_1,\dots,p_N) \, , \\
        D^{\antisymSq}_1 &:= \left[ \prod_{i=1}^N p_i \right]^u
\sum_{\lambda \subseteq u^{N-n}} \so^{(2n+1)}_{(u^{2n-N}, \lambda)} (p_1,\dots, p_n) \cdot
\so^{(2N-2n+1)}_{\lambda} (p_{n+1}, \dots, p_{N}) \, ,
\end{align*}
where $D^{\antisymSq}_1$ holds for any $\lceil N/2 \rceil \leq n \leq N$.
\end{corollary}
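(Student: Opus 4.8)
The plan is to deduce Corollary~\ref{coro:antisymLPP_1} directly from Theorem~\ref{thm:antisymLPP} by setting $\beta=1$, so that the only task is to identify the specializations at $\beta=1$ of the four quantities $A^{\antisymSq}_\beta,B^{\antisymSq}_\beta,C^{\antisymSq}_\beta,D^{\antisymSq}_\beta$ with the quantities appearing in the corollary.

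First I would check the probabilistic side. In~\eqref{eq:antisymWeights}, the antidiagonal weight distribution at $\beta=1$ becomes $\frac{1-p_j^2}{1+p_j}\,p_j^k=\frac{(1-p_j)(1+p_j)}{1+p_j}\,p_j^k=(1-p_j)p_j^k$ for all $k\in\Z_{\geq0}$, i.e.\ the geometric law stated in the corollary; correspondingly the normalization~\eqref{eq:antisymLPP_normalization} becomes $c^{\antisymSq}_1=\prod_{1\le i<j\le N}(1-p_ip_j)^{-1}\prod_{1\le j\le N}(1-p_j)^{-1}$, so that $A^{\antisymSq}_\beta\big|_{\beta=1}=A^{\antisymSq}_1$. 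Since at $\beta=1$ the antidiagonal weights are supported on all of $\Z_{\geq0}$ (not just the even integers), evaluation of the distribution function at an odd $2u$ is meaningful, which is why --- unlike Corollary~\ref{coro:antisymLPP_0} --- the full range $u\in\frac12\Z_{\geq0}$ is retained.

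Next I would treat the three algebraic quantities. In $B^{\antisymSq}_\beta$ the weight $\beta^{\sum_i(\mu_i\bmod 2)}$ is identically $1$ at $\beta=1$, so $B^{\antisymSq}_\beta\big|_{\beta=1}=\sum_{\mu\subseteq(2u)^{(N)}}\schur^{(N)}_\mu(p_1,\dots,p_N)=B^{\antisymSq}_1$; this recovers the bounded Littlewood identity of Baik and Rains~\cite{BR01a}. For $C^{\antisymSq}_\beta$ and $D^{\antisymSq}_\beta$ I would invoke the interpolation identity~\eqref{eq:transition=So}, which asserts $\trans{\mu}(x;1)=\so^{(2m+1)}_\mu(x)$ for every $m$-partition or $m$-half-partition $\mu$ in $m$ variables $x$, and is in particular valid for half-partitions. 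Applying it with $\mu=u^{(N)}$ in the $N$ variables $p_1,\dots,p_N$ gives $C^{\antisymSq}_\beta\big|_{\beta=1}=\big[\prod_{i=1}^N p_i\big]^u\so^{(2N+1)}_{u^{(N)}}(p_1,\dots,p_N)=C^{\antisymSq}_1$; applying it to each of the two factors of $D^{\antisymSq}_\beta$ --- the first with the $n$-partition $(u^{(2n-N)},\lambda)$ in the variables $p_1,\dots,p_n$, the second with $\lambda$ in the variables $p_{n+1},\dots,p_N$ --- turns $D^{\antisymSq}_\beta\big|_{\beta=1}$ into $\big[\prod_{i=1}^N p_i\big]^u\sum_{\lambda\subseteq u^{(N-n)}}\so^{(2n+1)}_{(u^{(2n-N)},\lambda)}(p_1,\dots,p_n)\cdot\so^{(2N-2n+1)}_\lambda(p_{n+1},\dots,p_N)=D^{\antisymSq}_1$, on the same range $\lceil N/2\rceil\le n\le N$. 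Chaining these identifications with the equalities $A^{\antisymSq}_\beta=B^{\antisymSq}_\beta=C^{\antisymSq}_\beta=D^{\antisymSq}_\beta$ of Theorem~\ref{thm:antisymLPP} yields $A^{\antisymSq}_1=B^{\antisymSq}_1=C^{\antisymSq}_1=D^{\antisymSq}_1$.

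There is no genuine obstacle here: the entire substance lies in Theorem~\ref{thm:antisymLPP} and in~\eqref{eq:transition=So}, and the corollary is a pure specialization. The only points requiring a moment of attention are purely bookkeeping ones --- verifying the algebraic collapse $\frac{1+p_j}{1-p_j^2}=\frac1{1-p_j}$ in the normalization, confirming that~\eqref{eq:transition=So} applies verbatim in the half-integer case so that $C^{\antisymSq}_1,D^{\antisymSq}_1$ are honest odd orthogonal characters, and noting that the admissible range of $n$ in $D^{\antisymSq}_\beta$ is independent of $\beta$ and so is inherited unchanged.
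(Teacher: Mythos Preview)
Your proposal is correct and matches the paper's approach exactly: the corollary is obtained by specializing Theorem~\ref{thm:antisymLPP} to $\beta=1$ and invoking~\eqref{eq:transition=So} to turn the $\CB$-interpolating polynomials into odd orthogonal characters, together with the observation that the antidiagonal weights become geometric with parameter $p_j$. Your additional bookkeeping checks (the simplification of $c^{\antisymSq}_1$, the validity of~\eqref{eq:transition=So} for half-partitions, the unchanged range of $n$) are all sound and simply make explicit what the paper leaves implicit.
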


Let us discuss the above results.
Notice first that, if $N=2n$ in Theorem~\ref{thm:antisymLPP}, the two $\CB$-interpolating Schur polynomials of $D^{\antisymSq}_{\beta}$ have the same number $n$ of variables and are both indexed by the same $n$-partition $\lambda$.
Therefore, for $N=2n$, $D^{\antisymSq}_{0}$ and $D^{\antisymSq}_{1}$ read as bounded Cauchy sums of, respectively, even symplectic and odd orthogonal Schur polynomials of the same shape $\lambda$.
Analogously, in the special case $N=2n+1$, $E^{\antisymSq}_{0}$ reads as a bounded Cauchy sum of odd symplectic Schur polynomials of the same shape $\lambda$.

Identity $A^{\antisymSq}_{\beta}= B^{\antisymSq}_{\beta}$, whose proof we omit, corresponds to~\eqref{eq:antisymSchur_intro} and traces back to Baik and Rains~\cite{BR01a} (see also~\cite{Fer04} for a Poissonized model); it can be proved by applying the standard $\RSK$ correspondence on square matrices with symmetry about the antidiagonal.
We will rather prove in Section~\ref{sec:RSK} another identity that relates to last passage percolation, i.e.\ $A^{\antisymSq}_\beta = C^{\antisymSq}_\beta$, reformulating the probabilistic model in terms of point-to-line paths, applying the $\RSK$ on triangular arrays and then using certain pattern transformations.
This point of view is inspired by earlier works of the authors on LPP models and a positive temperature version of it known as \emph{log-gamma directed polymer}~\cite{BZ19a, NZ17, Bis18}.

In section~\ref{sec:nearlyRectangular} we will also give a direct proof of $B^{\antisymSq}_\beta = C^{\antisymSq}_\beta$ based on an identity established by Krattenthaler~\cite{Kra98} for a symplectic character of ``nearly rectangular'' shape.
This proof involves classical tools from the theory of symmetric functions such as the dual Pieri rule, but also uses our formulas~\eqref{eq:CBtrans=sp1} and~\eqref{eq:CBtrans=sp2} that express CB-interpolating Schur polynomials in terms of symplectic characters.
Identity $B^{\antisymSq}_\beta = C^{\antisymSq}_\beta$ implicitly appeared in~\cite[Theorem 4.1]{RW21} in a more general form involving Macdonald polynomials, but {\em crucially} only in the case $u$ half-integer.
More precisely, if $(q,t)$ are the Macdonald parameters, the degeneration $q=t=0$ of Rains-Warnaar's formula coincides with $B^{\antisymSq}_\beta = C^{\antisymSq}_\beta$ for $u$ half-integer.
The latter is the ``trivial'' case when $\CB$-interpolating polynomials essentially reduce to symplectic characters, as the parameter $\beta$ factorizes out -- see Theorem~\ref{thm:CBtrans=sp}.

Specializations $B^{\antisymSq}_0 = C^{\antisymSq}_0$ and $B^{\antisymSq}_1 = C^{\antisymSq}_1$ are known and respectively due to~\cite[Theorem 4.1]{Ste90b} and~\cite[Ex.\ I.5.16]{Mac95} (see also~\cite[Corollary 7.4]{Ste90a}).
Our result should then be viewed as unifying such special cases.

Finally, in Section~\ref{sec:decomposition} we will provide bijective proofs of $C^{\antisymSq}_\beta = D^{\antisymSq}_\beta$ and $C^{\antisymSq}_0 = E^{\antisymSq}_0$ based on decomposition of split orthogonal patterns and symplectic patterns respectively.
For convenience, we reformulate the latter identities (and their specializations) in a separate theorem, as follows.
\begin{theorem}
\label{thm:decompositionCB}
Given integers $n \geq m\geq 1$, we have
\begin{align}
\label{eq:decompositionCB}
\trans{u^{n+m}}(x,y;\beta)
&= \sum_{\lambda\subseteq u^{m}}
\trans{(u^{n-m}, \lambda)}(x;\beta) \cdot
\trans{\lambda}(y;\beta) \, , &
&u\in\frac{1}{2}\Z_{\geq 0} \, ,
\intertext{where $x=(x_1, \dots, x_n)$ and $y=(y_1, \dots, y_m)$.
In particular, for $\beta=0$ and $\beta=1$, we deduce:}
\label{eq:decompositionC}
\sp^{(2n+2m)}_{u^{n+m}}(x,y)
&= \sum_{\lambda\subseteq u^{m}}
\sp^{(2n)}_{(u^{n-m}, \lambda)}(x) \cdot
\sp^{(2m)}_{\lambda}(y) \, , &
&u\in\Z_{\geq 0} \, , \\
\label{eq:decompositionB}
\so^{(2n+2m+1)}_{u^{n+m}}(x,y)
&= \sum_{\lambda\subseteq u^{m}}
\so^{(2n+1)}_{(u^{n-m}, \lambda)}(x) \cdot
\so^{(2m+1)}_{\lambda}(y) \, , &
&u\in\frac{1}{2}\Z_{\geq 0} \, .
\intertext{Moreover, we have}
\label{eq:decompositionOddSp}
\sp^{(2n+2m+2)}_{u^{n+m+1}}(x,y,s)
&= s^{-u} \!\!\!
\sum_{\lambda\subseteq u^{m+1}}
\!\!\!
\sp^{(2n+1)}_{(u^{n-m}, \lambda)}(x;s) \cdot
\sp^{(2m+1)}_{\lambda}(y;s) \, , &
&u\in\Z_{\geq 0} \, ,
\end{align}
where $s$ is an extra univariate variable.
\end{theorem}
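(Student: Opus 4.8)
The plan is to prove all four identities in Theorem~\ref{thm:decompositionCB} by a \emph{combinatorial decomposition of patterns} of rectangular shape, and then to obtain the particular cases \eqref{eq:decompositionC} and \eqref{eq:decompositionB} simply by specializing $\beta = 0$ and $\beta = 1$ in \eqref{eq:decompositionCB} via \eqref{eq:transition=Sp} and \eqref{eq:transition=So}. So the heart of the matter is \eqref{eq:decompositionCB} (for split orthogonal patterns) and \eqref{eq:decompositionOddSp} (for symplectic patterns). The common mechanism: a split orthogonal pattern $z \in \soGT^{(2(n+m))}_{u^{(n+m)}}$ of rectangular shape is built from $2(n+m)$ rows, and because the shape is rectangular, the interlacing conditions~\eqref{eq:interlacing} force all entries in rows $2m+1, 2m+2, \dots, 2(n+m)$ that could possibly exceed $u$ to equal $u$ in the first $n-m$ slots — more precisely, the structure of a rectangular-shaped pattern is rigid enough that the ``top portion'' of the pattern (the first $2m$ rows, say) is itself a split orthogonal pattern of height $2m$ and some shape $\lambda \subseteq u^{(m)}$, while the ``bottom portion'' glues onto it as a split orthogonal pattern of height $2n$ and shape $(u^{(n-m)}, \lambda)$. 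The key point to check is that this splitting is a \emph{bijection}
\[
\soGT^{(2(n+m))}_{u^{(n+m)}}
\;\longleftrightarrow\;
\bigsqcup_{\lambda \subseteq u^{(m)}}
\soGT^{(2n)}_{(u^{(n-m)},\lambda)} \times \soGT^{(2m)}_{\lambda},
\]
and — crucially — that it respects the $\atyp$-statistic and the type-difference statistic additively, i.e.\ $\abs{\atyp(z)} = \abs{\atyp(z_{\mathrm{bot}})} + \abs{\atyp(z_{\mathrm{top}})}$ and the exponents of the $x_i$'s and $y_j$'s distribute correctly across the two factors. Granting this, summing the weight $\beta^{\abs{\atyp(z)}}\prod x_i^{\cdots}\prod y_j^{\cdots}$ over $z$ and factoring the sum yields exactly \eqref{eq:decompositionCB}.

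Concretely, I would first set $N = n+m$ and analyze the shape constraint: since $\shape(z) = u^{(N)}$, interlacing~\eqref{eq:interlacing} propagates upward to force $z_{i,j} = u$ whenever $j \le \lceil i/2\rceil - m$ (equivalently, whenever there are at least $m$ columns to the right of position $j$ in row $i$); in particular, the first $n-m$ columns of every row from the bottom up to row $2m$ are pinned to $u$. One then reads off that the subarray on columns $\ge 1$ of rows $1,\dots,2m$ is an arbitrary split orthogonal pattern of height $2m$, whose shape I call $\lambda = (z_{2m,1},\dots,z_{2m,m})$, necessarily satisfying $\lambda \subseteq u^{(m)}$ (here I should double-check whether $\lambda$ is forced to be integer or half-integer — by the definition of split orthogonal patterns, the non-odd-end entries are either all integer or all half-integer, and this property is inherited by both subpatterns, matching the case split in Definition~\ref{def:CBtrans}). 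Conversely, given $\lambda$, any split orthogonal pattern of height $2n$ and shape $(u^{(n-m)},\lambda)$ has its row $2m$ equal to $(u^{(n-m)},\lambda)$, hence its ``bottom $2m$ rows'' — columns $n-m+1$ onward — end in shape $\lambda$; stacking an arbitrary height-$2m$ shape-$\lambda$ pattern below (in the appropriate index alignment) reconstructs a unique rectangular pattern of height $2N$. I would verify the interlacing across the gluing row is automatic and that $\atyp$ and the weight monomials split as claimed; the odd-ends bookkeeping (which entries are atypical) is the one place demanding care, since odd ends live at positions $(2i-1,i)$ and the reindexing between the two subpatterns shifts these. For \eqref{eq:decompositionOddSp} the argument is the same with symplectic patterns $\spGT$ in place of $\soGT$, except that the odd-symplectic weight carries the extra variable $s$ (resp.\ $y$) attached to the topmost row, producing the $s^{-u}$ prefactor; I would track where the row-$(2n+1)$ and row-$(2m+1)$ types go and confirm the exponent of $s$ sums to $-u$ after using $\type(z)_{2N+2} = \cdots$, analogous to the computation in the proof of Proposition~\ref{prop:DBrectangular=so}.

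The main obstacle I anticipate is not the existence of the bijection but the precise \emph{index alignment and statistic-matching across the gluing row}, especially for the half-partition case of \eqref{eq:decompositionCB} and the careful verification that the ``pinned to $u$'' entries contribute $0$ to every relevant type difference (so that no stray factor of $x_i$ appears for $1 \le i \le n-m$ on the $x$-side, matching the shape $(u^{(n-m)},\lambda)$ on the right). A secondary subtlety: \eqref{eq:decompositionCB} is stated for $u \in \frac12\Z_{\ge 0}$ uniformly, but split orthogonal patterns of integer versus half-integer rectangular shape behave slightly differently (the former may have $\abs{\atyp}$ ranging over $0,\dots,m$; the latter has every odd end atypical or not according to Theorem~\ref{thm:sp-soPatternsBijection}), so I would either treat the two cases in parallel or, more efficiently, invoke Theorem~\ref{thm:CBtrans=sp} to reduce the half-integer case to \eqref{eq:decompositionC} for symplectic characters plus the product formula~\eqref{eq:CBtrans=sp2}, thereby avoiding a second pattern-decomposition argument. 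Once the bijection and statistic-matching are pinned down, the passage to \eqref{eq:decompositionC}, \eqref{eq:decompositionB} is immediate by setting $\beta = 0, 1$ and invoking \eqref{eq:transition=Sp}--\eqref{eq:transition=So}, noting that on the right-hand side the shape $(u^{(n-m)},\lambda)$ remains a genuine partition since $\lambda \subseteq u^{(m)}$ guarantees $\lambda_1 \le u$.
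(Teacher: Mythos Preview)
Your plan matches the paper's proof: establish the bijection
\[
\soGT^{(2n+2m)}_{u^{(n+m)}} \;\longleftrightarrow\; \bigcup_{\lambda\subseteq u^{(m)}} \soGT^{(2n)}_{(u^{(n-m)},\lambda)} \times \soGT^{(2m)}_{\lambda}
\]
via a frozen-region decomposition of the rectangular pattern, verify it respects $\abs{\atyp}$ and the type differences additively, then sum and specialize $\beta$; the odd-symplectic identity \eqref{eq:decompositionOddSp} is proved identically by cutting a height-$(2n{+}2m{+}2)$ symplectic pattern at row $2n{+}1$.

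One concrete correction to your indexing: the cut should be at row $2n$, not at row $2m$. The frozen condition is $i-j\ge n+m$ (not $j\le\lceil i/2\rceil-m$); with it, the \emph{first $2n$ rows} of $Z$ form the height-$2n$ subpattern $z$ of shape $(u^{(n-m)},\lambda)$, where $\lambda:=(Z_{2n,n-m+1},\dots,Z_{2n,n})$. The height-$2m$ subpattern $z'$ is read off the \emph{bottom} of $Z$, reversed, via $z'_{i,j}:=Z_{2n+2m-i,\,n+m-i+j}$ --- your ``Conversely'' paragraph already tacitly moves to this orientation. With the cut placed correctly, one gets $\type(Z)_{2n+i}=u-\type(z')_{2m-i+1}$, which is why the paper first expands $\trans{u^{(n+m)}}(x_1,\dots,x_n,y_m,\dots,y_1;\beta)$ and then reorders the $y$-variables by symmetry. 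Integer and half-integer $u$ are handled uniformly in this argument, so the detour via Theorem~\ref{thm:CBtrans=sp} is not needed.
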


As mentioned in the introduction, identities~\eqref{eq:decompositionC} and~\eqref{eq:decompositionB} have been also proved by Okada~\cite[Theorem 2.2]{Oka98} via determinantal calculus.

\subsection{Diagonally symmetric LPP and decompositions of even orthogonal characters.}
\label{subsec:symLPP}

We now discuss the LPP model for a weight array $\{W_{i,j}\colon 1\leq i,j\leq N\}$ symmetric about the diagonal $\{i=j\}$, i.e.\ such that $W_{i,j}=W_{j,i}$ for all $(i,j)$.
Here the link to combinatorics occurs when the weights on and above the antidiagonal are independent and such that, for all $k\in\Z_{\geq 0}$,
\begin{align}
\label{eq:symWeights}
\P(W_{i,j}=k)=
\begin{cases}
(1-p_i p_j)(p_i p_j)^k & 1\leq i<j\leq N  \, , \\
(1-\alpha p_j)(\alpha p_j)^k & 1\leq i=j\leq N \, ,
\end{cases}
\end{align}
with parameters $p_1, \dots, p_N, \alpha$.
Notice that here the parameter $\alpha$ modulates the intensity of the diagonal weights, and therefore plays a role analogous to the parameter $\beta$ in~\eqref{eq:antisymWeights} for the antidiagonal weights.
We define the normalization constant (whose dependence on the $p_i$'s is dropped from the notation) for the joint distribution of the above weights:
\begin{equation}
\label{eq:symLPP_normalization}
c^{\symSq}_{\alpha} :=
\prod_{1\leq i< j\leq N} \frac{1}{1-p_i p_j}
\prod_{1\leq j\leq N} \frac{1}{1-\alpha p_j} \, .
\end{equation}
Denote by $L^{\symSq}_{\alpha}(N,N)$ the point-to-point LPP time from $(1,1)$ to $(N,N)$ with weights symmetric about the diagonal and geometrically distributed as specified in~\eqref{eq:symWeights}.
We remark that, because of the symmetry constraint, $L^{\symSq}_{\alpha}(N,N)$ coincides with the LPP time with the same weights and paths restricted to stay on or above the main diagonal $\{i=j\}$.
Besides its well-established formula in terms of classical Schur polynomials~\cite{BR01a}, it turns out that the distribution function of $L^{\symSq}_{\alpha}(N,N)$ can be also expressed in terms of $\DB$-interpolating Schur polynomials with parameter $\alpha$ (see Subsection~\ref{subsec:DBtransition}) as well as even orthogonal characters, as the next theorem states.
\begin{theorem}
\label{thm:symLPP}
For $u\in\frac{1}{2}\Z_{\geq 0}$, the following quantities are equal:
\begin{align*}
        A^{\symSq}_{\alpha} &:= c^{\symSq}_{\alpha} \cdot \P\left(L^{\symSq}_{\alpha}(N,N) \leq 2u\right) , \\
        B^{\symSq}_{\alpha} &:= \sum_{\mu \subseteq (2u)^{N}}
\alpha^{\oddrows\mu'} \cdot
\schur^{(N)}_{\mu}(p_1,\dots,p_{N}) \, , \\
        C^{\symSq}_{\alpha} &:= \left[\prod_{i=1}^{N} p_i \right]^u \transDB{u^{N}}(p_1^{-1},\dots,p_{N}^{-1}; \alpha)
= \left[ \alpha \prod_{i=1}^{N} p_i \right]^u \so^{(2N+2)}_{u^{N+1}}(p_1^{-1},\dots,p_{N}^{-1},\alpha^{-1}) \, , \\
        D^{\symSq}_{\alpha} &:= \left[ \alpha \prod_{i=1}^N p_i \right]^u \sum_{\lambda_{\delta} \subseteq u^{N-n}} \so^{(2n+2)}_{(u^{2n+1-N}, \lambda_{\delta})}(p_1^{-1},\dots,p_n^{-1},\alpha^{-1}) \cdot \so^{(2N-2n)}_{\lambda_{\delta}}(p_{n+1}^{-1},\dots,p_{N}^{-1}) \, ,
\end{align*}
where $D^{\symSq}_{\alpha}$ holds for any $\lceil (N-1)/2 \rceil \leq n \leq N$.
\end{theorem}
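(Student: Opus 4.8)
The plan is to prove Theorem~\ref{thm:symLPP} in two essentially independent stages: first the purely combinatorial chain $C^{\symSq}_{\alpha}=D^{\symSq}_{\alpha}$ (a decomposition identity for $\DB$-interpolating / even orthogonal characters), and separately the probabilistic identity $A^{\symSq}_{\alpha}=(\text{one of the character expressions})$, with $A^{\symSq}_{\alpha}=B^{\symSq}_{\alpha}$ being the known Baik--Rains formula that we may cite. The identification of $C^{\symSq}_{\alpha}$'s two expressions for itself is immediate from Proposition~\ref{prop:DBrectangular=so} (replacing $x_i$ by $p_i^{-1}$ and $u$ unchanged), so the real content is: (i) $B^{\symSq}_{\alpha}=C^{\symSq}_{\alpha}$, a ``Littlewood-type'' identity expressing a rectangular $\DB$-interpolating Schur polynomial as a weighted sum of standard Schur polynomials; and (ii) $C^{\symSq}_{\alpha}=D^{\symSq}_{\alpha}$, a Cauchy-type decomposition of a rectangular even orthogonal character. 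Identity (i) is exactly~\eqref{eq:DB=LittlewoodSchur} from the introduction (after the substitution $x_i\mapsto p_i^{-1}$ and using~\eqref{eq:schurProp-} to move the $\prod p_i$ factor), which the paper announces will be proven in Section~\ref{sec:nearlyRectangular} via Krattenthaler's ``nearly rectangular'' character identities; so here I would simply reduce to that statement. Identity (ii) is~\eqref{eq:decompositionD_intro}, Okada's decomposition, to be established in Section~\ref{sec:decomposition} by the Gelfand--Tsetlin pattern decomposition method.

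For the combinatorial heart, $C^{\symSq}_{\alpha}=D^{\symSq}_{\alpha}$, I would argue directly at the level of orthogonal patterns rather than invoking determinants. Start from $\transDB{u^{(N)}}(x;\alpha)$ written as a generating function over $\oGT^{(2n)}_{u^{(N)}}$ (Definition~\ref{def:DBtrans}), where now $N$ is the total number of variables; here I would relabel so that the first $n$ variables are $x=(x_1,\dots,x_n)$ and the remaining $N-n$ are $y$. An orthogonal pattern of height $2N$ and rectangular shape $u^{(N)}$ has, by the interlacing conditions~\eqref{eq:interlacingAbs}, its rows $2n+1,\dots,2N$ completely determined down to a boundary $(2n+1)$-st or $(2n)$-th row; more precisely, the interlacing forces the first several columns of the lower part of the pattern to be saturated at $u$, so that the ``free'' data splits into: an orthogonal pattern of height $2n$ bounded by a signed partition $\lambda_{\delta}$ sitting in row $2n$ (contributing $\so^{(2n+2)}_{(u^{(2n+1-N)},\lambda_{\delta})}$ — note the shifted index because of how the $x$-variables and the extra $\alpha^{-1}$ interact, exactly as in Proposition~\ref{prop:DBrectangular=so}), and a complementary orthogonal pattern for the remaining $N-n$ variables bounded by the same $\lambda_{\delta}$ (contributing $\so^{(2N-2n)}_{\lambda_{\delta}}$). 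The weight monomial factorizes accordingly, and the $\alpha$-power bookkeeping is controlled by the exponent $\sum(\lambda_i - z_{2N-1,i})$ in~\eqref{eq:DBtrans}, which — after the pattern splits — collapses to the displayed $\left[\alpha\prod p_i\right]^u$ prefactor plus the orthogonal characters carrying no further $\alpha$. This is precisely the pattern-decomposition argument promised for~\eqref{eq:decompositionD_intro}, specialized and combined with Proposition~\ref{prop:DBrectangular=so}.

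For the probabilistic link $A^{\symSq}_{\alpha}=C^{\symSq}_{\alpha}$ (equivalently $B^{\symSq}_{\alpha}=C^{\symSq}_{\alpha}$ given Baik--Rains), I would either (a) cite $A^{\symSq}_{\alpha}=B^{\symSq}_{\alpha}$ from~\cite{BR01a} and then prove the symmetric-function identity $B^{\symSq}_{\alpha}=C^{\symSq}_{\alpha}$ by the nearly-rectangular-shape method of Section~\ref{sec:nearlyRectangular}, or (b) give a direct $\RSK$-on-triangular-arrays argument in the spirit of Section~\ref{sec:RSK} that produces the even orthogonal characters straight from the diagonally symmetric weight array~\eqref{eq:symWeights}. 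The paper signals that the diagonal case is handled ``via a more indirect method'' (see the discussion around~\eqref{eq:symSo_intro}), so I would follow route (a): reduce $B^{\symSq}_{\alpha}=C^{\symSq}_{\alpha}$ to~\eqref{eq:DB=LittlewoodSchur} via the substitution $x_i\mapsto p_i^{-1}$ and the Schur reflection identity~\eqref{eq:schurProp-}, being careful that the weighted sum $\sum_{\mu}\alpha^{\sum(-1)^{i-1}\mu_i}\schur^{(N)}_{\mu}$ matches after the $\mu\mapsto (2u-\mu_N,\dots,2u-\mu_1)$ relabelling that reflection induces (the alternating sign pattern in the $\alpha$-exponent is preserved, up to an overall shift absorbed into the prefactor).

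The main obstacle I anticipate is the bookkeeping in the pattern-decomposition step (ii): keeping track of exactly which rows of the rectangular orthogonal pattern are frozen by interlacing, getting the shifted superscript $u^{(2n+1-N)}$ right (it reflects that one of the $N-n+1$ ``upper'' variables is really the auxiliary $\alpha^{-1}$ coming from Proposition~\ref{prop:DBrectangular=so}), and verifying the factorization of the sign-twisted weight monomial in~\eqref{eq:soEvenSchur} across the split — in particular checking that the sign factors $\sign(z_{2i-3,i-1})\sign(z_{2i-1,i})$ at the interface row behave correctly and that the intermediate signed partition $\lambda_{\delta}$ carries a \emph{single} consistent sign $\delta$ shared by both halves (this is where the signed-partition subtleties of type D enter, analogous to the $\lambda_{\delta\epsilon}$ appearing in~\eqref{eq:decompositionD_intro}). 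The validity range $\lceil (N-1)/2\rceil \le n \le N$ should emerge naturally as the condition that the frozen portion of the pattern is nonempty on both sides; I would verify the two boundary cases $n=N$ (where $\so^{(2N-2n)}=\so^{(0)}\equiv 1$ and $D$ collapses to $C$) and $n=\lceil(N-1)/2\rceil$ as sanity checks.
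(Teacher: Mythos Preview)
Your proposal is correct and follows essentially the same approach as the paper: cite Baik--Rains for $A^{\symSq}_{\alpha}=B^{\symSq}_{\alpha}$, prove $B^{\symSq}_{\alpha}=C^{\symSq}_{\alpha}$ via Krattenthaler's nearly-rectangular identity~\eqref{eq:krattenthalerD} combined with Proposition~\ref{prop:DBbranching} (this is carried out in Subsection~\ref{subsec:nearlyRectangularDB}, and your remark about the $\mu\mapsto(2u-\mu_N,\dots,2u-\mu_1)$ relabelling via~\eqref{eq:schurProp-} is exactly the manipulation used there), and prove $C^{\symSq}_{\alpha}=D^{\symSq}_{\alpha}$ by Gelfand--Tsetlin pattern decomposition (Subsection~\ref{subsec:decompositionD}). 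One organizational point: the paper first invokes Proposition~\ref{prop:DBrectangular=so} to rewrite $C^{\symSq}_{\alpha}$ as $\so^{(2N+2)}_{u^{(N+1)}}(p_1^{-1},\dots,p_N^{-1},\alpha^{-1})$ and then decomposes \emph{that} character via patterns in $\oGT^{(2N+1)}_{u^{(N+1)}}$ (odd height, cut at row $2n+1$), rather than decomposing $\transDB{u^{(N)}}$ and its height-$2N$ patterns directly as you describe; this avoids precisely the interface bookkeeping with the $\alpha$-weight that you flag as the main obstacle, since after the conversion $\alpha^{-1}$ is just an ordinary variable of the even orthogonal character.
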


Observe also that the two equivalent expressions of $C^{\symSq}_{\alpha}$ in terms of $\DB$-interpolating and even orthogonal Schur polynomials indexed by ``rectangular'' partitions are due to Proposition~\ref{prop:DBrectangular=so}.
For the diagonally symmetric LPP, we thus have a transition between characters of type D and B when $\alpha$ goes from $0$ to $1$.

The case $\alpha=0$ corresponds to all the weights on the diagonal being zero, thus $L^{\symSq}_{0}(2n,2n)$  equals the LPP time from $(1,2)$ to $(N-1,N)$ with paths restricted to stay strictly above the diagonal.
Using~\eqref{eq:DBinterpolation} and~\eqref{eq:soEvenVanishingVar}, we obtain:
\begin{corollary}
\label{coro:symLPP_0}
For $u\in\frac{1}{2}\Z_{\geq 0}$, the following quantities are equal:
\begin{align*}
        A^{\symSq}_{0} &:= c^{\symSq}_{0} \cdot \P\left(L^{\symSq}_{0}(N,N) \leq 2u\right) , \\
        B^{\symSq}_{0} &:= \sum_{\substack{\mu\subseteq (2u)^{N}, \\ \oddrows\mu'=0}} \schur^{(N)}_{\mu}(p_1,\dots,p_N) \, , \\
        C^{\symSq}_{0} &:= \left[ \prod_{i=1}^N p_i \right]^u \so^{(2N)}_{u^{N}}(p_1^{-1},\dots,p_N^{-1}) \, , \\
        D^{\symSq}_{0} &:= \left[ \prod_{i=1}^N p_i \right]^u \sum_{\lambda_\delta \subseteq u^{N-n}} \so^{(2n)}_{(u^{2n-N} , \lambda_\delta)}(p_1^{-1},\dots,p_n^{-1}) \cdot \so^{(2N-2n)}_{\lambda_\delta}(p_{n+1}^{-1},\dots,p_{N}^{-1}) \, ,
\end{align*}
where $D^{\symSq}_{0}$ holds for any $\lceil N/2 \rceil \leq n \leq N$.
\end{corollary}

For $\alpha=1$, the $\DB$-interpolating Schur polynomial and the even orthogonal character appearing in $C^{\symSq}_{\alpha}$ both reduce to the same odd orthogonal character, by~\eqref{eq:DBinterpolation} and~\eqref{eq:soEvenRect=soOddRect} respectively.
Theorem~\ref{thm:symLPP} thus specializes to:
\begin{corollary}
\label{coro:symLPP_1}
For $u\in\frac{1}{2}\Z_{\geq 0}$, the following quantities are equal:
\begin{align*}
        A^{\symSq}_{1} &:= c^{\symSq}_{1} \cdot \P\left(L^{\symSq}_{1}(N,N) \leq 2u\right) , \\
        B^{\symSq}_{1} &:= \sum_{\mu \subseteq (2u)^{N}}
\schur^{(N)}_{\mu}(p_1,\dots,p_{N}) \, , \\
        C^{\symSq}_{1} &:= \left[ \prod_{i=1}^{N} p_i \right]^u \so^{(2N+1)}_{u^{N}}(p_1,\dots,p_{N}) \, , \\
        D^{\symSq}_{1} &:= \left[ \prod_{i=1}^N p_i \right]^u \sum_{\lambda_{\delta} \subseteq u^{N-n}} \so^{(2n+2)}_{(u^{2n+1-N}, \lambda_{\delta})}(p_1,\dots,p_n,1) \cdot \so^{(2N-2n)}_{\lambda_{\delta}}(p_{n+1},\dots,p_{N}) \, ,
\end{align*}
where $D^{\symSq}_{\alpha}$ holds for any $\lceil (N-1)/2 \rceil \leq n \leq N$.
\end{corollary}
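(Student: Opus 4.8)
The plan is to derive Corollary~\ref{coro:symLPP_1} as the $\alpha=1$ specialization of Theorem~\ref{thm:symLPP}. First I would check that $\alpha=1$ is an admissible parameter: with $p_1,\dots,p_N\in(0,1)$ the diagonal weights in~\eqref{eq:symWeights} still define genuine geometric laws (one only needs $\alpha p_j<1$) and the normalization constant~\eqref{eq:symLPP_normalization} remains finite, so the chain of identities $A^{\symSq}_\alpha=B^{\symSq}_\alpha=C^{\symSq}_\alpha=D^{\symSq}_\alpha$ of Theorem~\ref{thm:symLPP} holds at $\alpha=1$. It then only remains to rewrite each of the four quantities in the form claimed in the corollary. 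The term $A^{\symSq}_1$ is immediate, and in $B^{\symSq}_\alpha$ the prefactor $\alpha^{\sum_{i=1}^N(-1)^{i-1}\mu_i}$ equals $1$ when $\alpha=1$, so $B^{\symSq}_1=\sum_{\mu\subseteq(2u)^{(N)}}\schur^{(N)}_\mu(p_1,\dots,p_N)$.

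For $C^{\symSq}_1$ I would invoke the interpolation property~\eqref{eq:DBinterpolation}, namely $\transDB{u^{(N)}}(x;1)=\so^{(2N+1)}_{u^{(N)}}(x)$, which turns the first expression of $C^{\symSq}_\alpha$ into $\bigl[\prod_{i=1}^N p_i\bigr]^u\,\so^{(2N+1)}_{u^{(N)}}(p_1^{-1},\dots,p_N^{-1})$; since $(2N{+}1)$-orthogonal characters are $\BC$-invariant, i.e.\ unchanged under inversion of any of their variables (clear from~\eqref{eq:soOddWeyl}), this equals $\bigl[\prod_{i=1}^N p_i\bigr]^u\,\so^{(2N+1)}_{u^{(N)}}(p_1,\dots,p_N)$, as stated. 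The second (equivalent) expression of $C^{\symSq}_\alpha$ in Theorem~\ref{thm:symLPP} is reconciled the same way after applying~\eqref{eq:soEvenRect=soOddRect} to the variable $\alpha^{-1}=1$.

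The only point needing care is $D^{\symSq}_1$: setting $\alpha=1$ produces $\bigl[\prod_{i=1}^N p_i\bigr]^u\sum_{\lambda_\delta\subseteq u^{(N-n)}}\so^{(2n+2)}_{(u^{(2n+1-N)},\lambda_\delta)}(p_1^{-1},\dots,p_n^{-1},1)\cdot\so^{(2N-2n)}_{\lambda_\delta}(p_{n+1}^{-1},\dots,p_N^{-1})$, and one must convert the inverted variables into the $p_i$ while matching shapes. The key observation I would record is that an even orthogonal character with one of its variables equal to $1$ does not depend on the sign of its index: by the Weyl formula~\eqref{eq:soEvenWeyl}, the column of the skew (``$-$'') determinant corresponding to the variable $1$ vanishes identically, whence $\so^{(2m)}_{\mu_+}(\cdot,1)=\so^{(2m)}_{\mu_-}(\cdot,1)$, and this quantity is moreover invariant under inversion of any of the remaining variables (only the symmetric (``$+$'') determinant survives). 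Applying this to the first factor removes simultaneously the substitution $p_i\mapsto p_i^{-1}$ and any sign ambiguity. For the second factor, $\so^{(2N-2n)}_{\lambda_\delta}$ is a type D character, invariant under inversion of an even number of its variables; inverting all $N-n$ of them leaves it unchanged when $N-n$ is even and sends $\lambda_\delta\mapsto\lambda_{-\delta}$ when $N-n$ is odd, and in the latter case the sign flip is harmless since the first factor is $\delta$-independent and the sum over $\lambda_\delta$ is stable under $\delta\mapsto-\delta$. This brings $D^{\symSq}_1$ to the stated form, the constraint $\lceil (N-1)/2 \rceil\le n\le N$ being inherited verbatim from Theorem~\ref{thm:symLPP}. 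The main (and only minor) obstacle is exactly this bookkeeping of type D sign conventions under variable inversion; once the $\delta$-independence of $\so^{(2m)}(\cdot,1)$ is in hand, everything collapses.
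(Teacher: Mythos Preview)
Your proposal is correct and follows the same route as the paper: specialize Theorem~\ref{thm:symLPP} at $\alpha=1$, use~\eqref{eq:DBinterpolation} (and equivalently~\eqref{eq:soEvenRect=soOddRect}) together with the $\BC$-invariance of odd orthogonal characters to simplify $C^{\symSq}_1$, and then justify that the inverted variables in $D^{\symSq}_1$ can be replaced by non-inverted ones. The only place where your argument differs from the paper's is in the treatment of $D^{\symSq}_1$: the paper simply remarks that since $C^{\symSq}_1$ can be written with non-inverted variables, the same holds for $D^{\symSq}_1$ (implicitly by re-applying the decomposition~\eqref{eq:decompositionBintoD} to $\so^{(2N+1)}_{u^{(N)}}(p_1,\dots,p_N)$ directly), whereas you instead manipulate the Weyl formula~\eqref{eq:soEvenWeyl} term by term, observing that one variable equal to $1$ kills the skew determinant and hence makes the first factor both $\delta$-independent and invariant under inversion of any variable, which then allows relabelling $\delta\mapsto -\delta$ in the sum when $N-n$ is odd. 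Both justifications are valid; yours is more explicit and self-contained, while the paper's is terser but relies on re-invoking the decomposition identity.
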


Notice that in $C^{\symSq}_1$ -- and therefore in $D^{\symSq}_1$ -- it is not necessary to invert the variables $p_1,\dots,p_N$ as in Theorem~\ref{thm:symLPP}, because odd orthogonal Schur polynomials are invariant under inversion of \emph{any} number of variables (see Subsection~\ref{subsec:so}).

Let us now discuss the results above and their proofs.
Identity $A^{\symSq}_{\alpha} = B^{\symSq}_{\alpha}$ was proved by Baik and Rains~\cite{BR01a} by applying the standard $\RSK$ on symmetric matrices.
In the case of antidiagonal symmetry (see Subsection~\ref{subsec:antisymLPP}), we are able to link directly the LPP model to the interpolating Schur polynomials, reformulating the probabilistic problem in terms of point-to-line paths.
Notice however that, in the case of diagonal symmetry, it does not seem to be possible to prove $A^{\symSq}_{\alpha} = C^{\symSq}_{\alpha}$ directly without passing through $B^{\symSq}_{\alpha}$.

In Section~\ref{sec:nearlyRectangular} we will prove $B^{\symSq}_{\alpha} = C^{\symSq}_{\alpha}$ using the ``branching rule'' of Proposition~\ref{prop:DBbranching} as well as an identity of Krattenthaler~\cite{Kra98} for an even orthogonal character of ``nearly rectangular'' shape.
The specializations $B^{\symSq}_0 = C^{\symSq}_0$ and $B^{\symSq}_1 = C^{\symSq}_1$ are already known and respectively due to~\cite[Theorem~2.3]{Oka98} and~\cite[Ex.\ I.5.16]{Mac95}.
Thus, again, our result should be viewed as unifying these special cases.

By comparing Corollaries~\ref{coro:antisymLPP_1} and~\ref{coro:symLPP_1} one can notice that $B^{\antisymSq}_1$ and $C^{\antisymSq}_1$ exactly coincide, respectively, with $B^{\symSq}_1$ and $C^{\symSq}_1$.
This implies that $A^{\antisymSq}_{1}$ and $A^{\symSq}_{1}$ are equivalent, i.e.\ the distributions of $L_1^{\antisymSq}(N,N)$ and $L_1^{\symSq}(N,N)$ are identical.
Observe also that the distribution~\eqref{eq:antisymWeights} for $\beta=1$ equals the distribution~\eqref{eq:symWeights} for $\alpha=1$, under the row reversal $i \mapsto N-i+1$.
However, the fact that a $\antisymSq$-symmetric LPP model be equivalent to the $\symSq$-symmetric LPP model obtained by reversing the rows of the weights has \emph{no} reason to hold in general.
It is rather specific to the distribution~\eqref{eq:antisymWeights} with $\beta=1$, and yields a non-trivial identity in law between piecewise linear functionals of geometric random variables, which in the special case $N=2$ reads as
\begin{equation}
\label{eq:equalityInLaw}
W_{1,1} + \max(W_{1,2},W_{2,1}) + W_{1,1} \stackrel{\text{d}}{\equiv}
W_{2,1} + \max(W_{1,1},W_{1,1}) + W_{1,2} \, .
\end{equation}
Using the identity $\max(a,b) = a+b-\min(a,b)$, \eqref{eq:equalityInLaw} is easily seen to be equivalent to the fact that the minimum of two independent geometric random variables, with parameters $p$ and $q$ respectively, follows a geometric distribution with parameter $pq$.
Thus, the equality in distribution between $L_1^{\antisymSq}(N,N)$ and $L_1^{\symSq}(N,N)$ can be viewed as a high dimensional analog of this fact.

Finally, in Section~\ref{sec:decomposition} we will prove, via decomposition of orthogonal patterns of odd height, that $C^{\symSq}_{\alpha} = D^{\symSq}_{\alpha}$ (and therefore $C^{\symSq}_0 = D^{\symSq}_0$).
We notice that, comparing again Corollaries~\ref{coro:antisymLPP_1} and~\ref{coro:symLPP_1}, $C^{\symSq}_1$ and $C^{\antisymSq}_1$ coincide exactly, whereas $D^{\symSq}_1$ and $D^{\antisymSq}_1$ do not.
More specifically, identities $C^{\symSq}_1 = D^{\symSq}_1$ and $C^{\antisymSq}_1 = D^{\antisymSq}_1$ are \emph{different} decompositions of the \emph{same} odd orthogonal character in terms of odd and even orthogonal characters, respectively.
For convenience, we reformulate in the next theorem the decomposition formulas to be proven.
\begin{theorem}
\label{thm:decompositionD}
For $u\in\frac{1}{2}\Z_{\geq 0}$, we have
\begin{align}
\label{eq:decompositionD}
\so^{(2n+2m)}_{u^{n+m}_{\epsilon}}(x,y)
&= \sum_{\lambda_{\delta} \subseteq u^{m}}
\so^{(2n)}_{(u^{n-m} , \lambda_{\delta})}(x) \cdot
\so^{(2m)}_{\lambda_{\delta\epsilon}}(y) \, , && n \geq m \, , \\
\label{eq:decompositionBintoD}
\so^{(2n+2m+1)}_{u^{n+m}}(x,y)
&= \sum_{\lambda_{\delta} \subseteq u^{m}}
\so^{(2n+2)}_{(u^{n+1-m} , \lambda_{\delta})}(x,1) \cdot
\so^{(2m)}_{\lambda_{\delta}}(y) \, , && n+1 \geq m \, ,
\end{align}
where $x=(x_1, \dots, x_n)$ and $y=(y_1, \dots, y_m)$.
\end{theorem}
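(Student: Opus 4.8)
The plan is to prove~\eqref{eq:decompositionD} by a weight‑preserving bijection between the Gelfand--Tsetlin‑type patterns that generate the two sides, following the philosophy of the rectangular‑shape decompositions developed in the previous subsections, and then to deduce~\eqref{eq:decompositionBintoD} formally from~\eqref{eq:decompositionD} together with the elementary identity~\eqref{eq:soEvenRect=soOddRect}. By Definition~\ref{def:soEvenSchur}, the left‑hand side of~\eqref{eq:decompositionD} is the generating function of orthogonal patterns $z\in\oGT^{(2(n+m)-1)}_{u^{(n+m)}_{\epsilon}}$, which (thinking of $z$ as an upwards interlacing sequence $\emptyset=\lambda^{(0)}\prec\lambda^{(1)}\prec\cdots\prec\lambda^{(2(n+m)-1)}=u^{(n+m)}_{\epsilon}$) we want to cut at level $2n-1$.

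The first step is to observe that the rectangular shape, together with the absolute‑value interlacing conditions~\eqref{eq:interlacingAbs}, forces a triangular ``frozen'' block of entries of $z$ to be pinned to the value $u$; reading upwards from the bottom, each $\lambda^{(k)}$ with $k$ large has all but its last $\le m$ parts equal to $u$, and in particular $\lambda^{(2n-1)}$ is a signed $n$‑partition (resp.\ $n$‑half‑partition) of the form $(u^{(n-m)},\lambda_{\delta})$ with $\lambda_{\delta}\subseteq u^{(m)}$. The remaining free part of $z$ then splits into two pieces: the initial segment $z^{(x)}:=\big(\lambda^{(0)}\prec\cdots\prec\lambda^{(2n-1)}\big)$, which is an orthogonal pattern of height $2n-1$ and shape $(u^{(n-m)},\lambda_{\delta})$; and an orthogonal pattern $z^{(y)}$ of height $2m-1$ and shape $\lambda_{\delta\epsilon}$, built from the free parts of $\lambda^{(2n-1)},\lambda^{(2n)},\dots,\lambda^{(2(n+m)-2)}$ read in reverse, with its odd ends multiplied by the sign $\epsilon$ (this twist is exactly what turns the shared underlying partition $\lambda_{\delta}$ into $\lambda_{\delta\epsilon}$ on the $y$‑side). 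I would then check that $z\mapsto(\lambda,z^{(x)},z^{(y)})$ is a bijection onto $\bigsqcup_{\lambda_{\delta}\subseteq u^{(m)}}\oGT^{(2n-1)}_{(u^{(n-m)},\lambda_{\delta})}\times\oGT^{(2m-1)}_{\lambda_{\delta\epsilon}}$: the inverse re‑glues $z^{(x)}$ and $z^{(y)}$ along their common (absolute) shape $\lambda$ and restores the frozen $u$'s, and the interlacing of $z$ is equivalent to the interlacing of $z^{(x)}$, that of $z^{(y)}$, and the automatic interlacing inside the frozen block. Finally, on the frozen block consecutive rows have equal $1$‑norms, so the variables attached to it carry exponent $0$ and do not see it; telescoping the remaining differences $\type(z)_{2i-1}-\type(z)_{2i-2}$ and tracking the sign products $\sign(z_{2i-3,i-1})\sign(z_{2i-1,i})$ (the sign $\epsilon=\sign(z_{2(n+m)-1,\,n+m})$ of the bottom odd end being absorbed into the $\epsilon$‑twist of $z^{(y)}$) shows that the weight monomial of $z$ in $(x,y)$ factors as the product of the weight monomial of $z^{(x)}$ in $x$ and that of $z^{(y)}$ in $y$. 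Summing this identity over all $z$ and applying Definition~\ref{def:soEvenSchur} three times yields~\eqref{eq:decompositionD}; the reversal of the $y$‑variables in the extraction of $z^{(y)}$ is immaterial because $\so^{(2m)}_{\lambda_{\delta\epsilon}}$ is symmetric in $y$, and the integer and half‑integer cases of $u$ are handled uniformly, orthogonal patterns having all entries of the same type.

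For~\eqref{eq:decompositionBintoD} I would argue purely formally: by~\eqref{eq:soEvenRect=soOddRect} we have $\so^{(2n+2m+1)}_{u^{(n+m)}}(x,y)=\so^{(2(n+m+1))}_{u^{(n+m+1)}_{+}}(x,y,1)$, and since even orthogonal characters are symmetric in their variables this equals $\so^{(2((n+1)+m))}_{u^{((n+1)+m)}_{+}}\big((x,1),\,y\big)$, to which~\eqref{eq:decompositionD} applies with $n+1$ in place of $n$ (legitimate precisely because $n+1\ge m$) and $\epsilon=+$; as $\lambda_{\delta\cdot+}=\lambda_{\delta}$, the resulting expansion is exactly~\eqref{eq:decompositionBintoD}.

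The main obstacle is the bijection underlying~\eqref{eq:decompositionD}: pinning down exactly which entries of a rectangular‑shaped pattern of odd height are frozen, and — the genuinely delicate part — verifying the factorization of the weight monomial, where the bookkeeping of the sign products and the correct placement of $\epsilon$ on exactly one of the two sub‑shapes has to be done carefully, treating odd ends (which may be negative) separately from the other entries. As an independent sanity check, one can also derive~\eqref{eq:decompositionD} directly from the Weyl character formulas~\eqref{eq:soEvenWeyl} and~\eqref{eq:soOddWeyl} by minor‑summation‑type determinant manipulations, as in Okada~\cite{Oka98}; but it is the bijective argument that makes the combinatorial mechanism behind the identity transparent.
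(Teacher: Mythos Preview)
Your proposal is correct and follows essentially the same route as the paper: a frozen‑block decomposition of $Z\in\oGT^{(2(n+m)-1)}_{u^{(n+m)}_\epsilon}$ at level $2n-1$ into two smaller orthogonal patterns, followed by the formal deduction of~\eqref{eq:decompositionBintoD} from~\eqref{eq:decompositionD} via~\eqref{eq:soEvenRect=soOddRect}. The only cosmetic difference is that the paper defines the second sub‑pattern $z'$ with shape $\lambda_\delta$ (not $\lambda_{\delta\epsilon}$) and then performs the sign flip $z'_{2i-1,i}\mapsto\epsilon z'_{2i-1,i}$ as a change of variables in the weight computation, whereas you bake this $\epsilon$‑twist directly into the bijection; the content is identical. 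One small imprecision: your $z^{(y)}$ should be built from the free parts of rows $2n-1$ through $2(n+m)-3$ (a total of $2m-1$ rows), not through $2(n+m)-2$, though the latter row is entirely frozen so this does not affect the argument.
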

Notice that the sign of the partition on the left-hand side of~\eqref{eq:decompositionD} is $\epsilon$; on the other hand, the signs of the partitions on the right-hand side are $\delta$ and $\delta\epsilon$ respectively, hence they are either equal or opposite according to whether $\epsilon$ is positive or negative.
We also stress that, in case $\lambda_+ = \lambda_-$ (i.e.\ $\lambda_m=0$), the signed partition is counted only once in the sum.

As stated in the introduction, \eqref{eq:decompositionD} was originally proven by Okada~\cite[Theorem 2.2]{Oka98} via determinantal calculus.

\subsection{Doubly symmetric LPP and decompositions of general linear characters.}
\label{subsec:doubleSymLPP}

Finally, let us consider the LPP model for a weight array $\{W_{i,j}\colon 1\leq i,j\leq 2n\}$ symmetric about both the antidiagonal $\{i+j=2n+1\}$ and the diagonal $\{i=j\}$, i.e.\ such that $W_{i,j} = W_{2n-j+1, 2n-i+1} = W_{j,i}$ for all $(i,j)$.
In this case, we choose the weights on or above the antidiagonal and on or above the diagonal to be independent and distributed as follows:
\begin{equation}
\label{eq:doubleSymWeights}
\P(W_{i,j} = k) =
\begin{cases}
(1-p_i p_j) (p_i p_j)^k &\text{if } i<j<2n-i+1 \, , \\
\dfrac{1-p_j^2}{1 + \beta p_j} \beta^{k \bmod 2} p_j^k &\text{if } i<j=2n-i+1 \, , \\
(1-\alpha p_j) (\alpha p_j)^k &\text{if } 1\leq i=j\leq n \, ,
\end{cases}
\end{equation}
with parameters $p_1, \dots, p_{2n}, \alpha, \beta$ satisfying $p_{2n-i+1} = p_i$ for all $1\leq i\leq n$.
We define the normalization constant for the joint distribution of the above weights:
\begin{equation}
\label{eq:doubleSymLPP_normalization}
c^{\doubleSymSq}_{\alpha,\beta} := \prod_{1\leq i < j\leq n} \frac{1}{(1-p_i p_j)^2}
\prod_{1\leq j\leq n} \frac{1 + \beta p_j}{1-p_j^2}
\frac{1}{1-\alpha p_j} \, .
\end{equation}
Let $L^{\doubleSymSq}_{\alpha,\beta}(2n,2n)$ be the point-to-point LPP time from $(1,1)$ to $(2n,2n)$ with a doubly symmetric weight array distributed according to~\eqref{eq:doubleSymWeights}.
We can express the distribution of $L^{\doubleSymSq}_{\alpha,\beta}(2n,2n)$ in terms of $\CB$-interpolating polynomials:
\begin{theorem}
\label{thm:doubleSymLPP}
For all $u\in\frac{1}{2}\Z_{\geq 0}$, we have
\begin{equation}
\label{eq:doubleSymLPP}
c^{\doubleSymSq}_{\alpha,\beta} \cdot \P\left(L^{\doubleSymSq}_{\alpha, \beta}(2n,2n) \leq 2u\right)
= \left[ \prod_{i=1}^n p_i \right]^u\sum_{\lambda \subseteq u^{n}} \alpha^{\oddrows(u^n-\lambda)'} \cdot \trans{\lambda}(p_1,\dots,p_n ;\beta) \, .
\end{equation}
\end{theorem}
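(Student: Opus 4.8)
The plan is to follow the same scheme used in Section~\ref{sec:RSK} to establish $A^{\antisymSq}_\beta = C^{\antisymSq}_\beta$ of Theorem~\ref{thm:antisymLPP}, namely a point-to-line reformulation of the percolation problem together with the $\RSK$ correspondence on triangular arrays, but now carrying along the \emph{extra} diagonal symmetry of the weight array.

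First, using the antidiagonal symmetry $W_{i,j} = W_{2n-j+1,2n-i+1}$, I would rewrite $\{L^{\doubleSymSq}_{\alpha,\beta}(2n,2n) \le 2u\}$ in terms of a point-to-line last passage time: a directed path from $(1,1)$ to $(2n,2n)$ is symmetric up to reflection about the antidiagonal and is thus determined by its portion until it first meets the line $\{i+j = 2n+1\}$, so the event becomes a bound of the form ``$\le u$'' on the point-to-line last passage time over the \emph{independent} weights lying on or above the antidiagonal, with the usual case split according to whether the extremal path touches the antidiagonal at a lattice point or crosses it through an edge (this is where the parity weight $\beta^{k \bmod 2}$ in~\eqref{eq:doubleSymWeights} is used, precisely as in the purely antidiagonal case). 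The triangular region of independent weights $\{i+j \le 2n+1\}$ so obtained is \emph{itself} invariant under the reflection $(i,j)\mapsto(j,i)$, and the weights on it inherit the diagonal symmetry, with the diagonal segment carrying the parameter $\alpha$.

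Next I would apply the $\RSK$ correspondence to this triangular array, as developed in Section~\ref{sec:RSK}. Because the array is diagonally symmetric, I expect its $\RSK$ image to collapse to a \emph{single} pattern rather than a pair --- the triangular analogue of the classical fact that $\RSK$ maps a symmetric matrix to a single tableau --- and I expect this pattern to be a split orthogonal pattern of height $2n$ and some shape $\lambda$ (equivalently, via the bijection of Theorem~\ref{thm:sp-soPatternsBijection}, a symplectic pattern of perturbed shape), with the number of atypical entries recording how many lattice points the path touched on the antidiagonal and with the diagonal weights $W_{i,i}$ accounting, through the ``type differences along the symmetry axis'', for a power of $\alpha$. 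Reading off the monomial weights: the contribution of the $p_i$'s, after factoring out $\big[\prod_{i=1}^n p_i\big]^u$ and using Definition~\ref{def:CBtrans} (together with shift properties such as~\eqref{eq:schurProp+}), reassembles into $\trans{\lambda}(p_1,\dots,p_n;\beta)$, while the diagonal weights contribute the factor $\alpha^{\sum_{i=1}^n (-1)^{n-i}(u-\lambda_i)}$; the bound ``$\le u$'' forces $\lambda_1 \le u$, i.e.\ $\lambda \subseteq u^{(n)}$. Summing over all admissible patterns and matching the product of the marginal normalizing constants of the independent weights against $c^{\doubleSymSq}_{\alpha,\beta}$ in~\eqref{eq:doubleSymLPP_normalization} yields~\eqref{eq:doubleSymLPP}. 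Heuristically, this diagonal folding replaces one of the two factors $\trans{\lambda}$ that the antidiagonal reduction alone would produce (cf.\ $D^{\antisymSq}_\beta$) by the $\alpha$-power.

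The main obstacle is the simultaneous bookkeeping at the two boundaries: one must reproduce the split-orthogonal, $\beta$-weighted structure coming from the antidiagonal \emph{and} the alternating exponent $\sum_{i=1}^n(-1)^{n-i}(u-\lambda_i)$ coming from the diagonal, and check that the two do not interfere near the centre $(n,n)$ where the diagonal meets the antidiagonal. Pinning down the exact sign pattern $(-1)^{n-i}$ requires carefully tracking how the diagonal weights are distributed among the successive rows of the $\RSK$ pattern under the symmetry, and the normalization constant must be verified term by term. As a consistency check, setting $\beta=0$ should recover the Baik--Rains Cauchy identity~\eqref{eq:doublesymSchur_intro} after applying~\eqref{eq:transition=Sp} and a decomposition of rectangular symplectic characters such as~\eqref{eq:decompositionC}.
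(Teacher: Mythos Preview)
Your proposal is correct and follows essentially the same route as the paper's proof in Subsection~\ref{subsec:RSK_doubleSymLPP}: point-to-line reformulation $L^{\doubleSymSq}_{\alpha,\beta}(2n,2n)=2\cdot L^{\pointToLineSym}_{\alpha,\beta}(2n)$, $\RSK$ on the symmetric triangular array (using property~\ref{prop:RSK_symmetry} of Proposition~\ref{prop:RSK}), and then the change of variables~\eqref{eq:changeVariablesDoubleSym} turning the symmetric output tableau into a single split orthogonal pattern of height $2n$ and shape $\lambda\subseteq u^{(n)}$. The ``main obstacle'' you flag --- the alternating exponent $\sum_i(-1)^{n-i}(u-\lambda_i)$ --- is disposed of precisely by the identity $\sum_{j=1}^n w_{j,j}=\sum_{j=1}^n(-1)^{n-j}t_{j,j}$ recorded as~\eqref{eq:RSK_symmetry_diagSum} in Proposition~\ref{prop:RSK}\ref{prop:RSK_symmetry}; no appeal to the bijection of Theorem~\ref{thm:sp-soPatternsBijection} or to shift properties like~\eqref{eq:schurProp+} is needed.
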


Inspired by computations we carried out for related models in~\cite{BZ19a,Bis18}, we will prove~\eqref{eq:doubleSymLPP} by reformulating the problem in terms of symmetric point-to-line paths and applying the $\RSK$ correspondence on symmetric triangular arrays.
We recall from Subsection~\ref{subsec:CBtransition} that the $\CB$-interpolating Schur polynomial in~\eqref{eq:doubleSymLPP} degenerates to either a $(2n)$-symplectic or a $(2n+1)$-orthogonal Schur polynomial in the cases $\beta=0$ and $\beta=1$, respectively.
For $\beta=0$, we also have further expressions in terms of standard Schur polynomials, as next theorem states.
\begin{theorem}
\label{thm:doubleSymLPP_beta=0}
For all $u\in\Z_{\geq 0}$, the following four quantities are equal:
\begin{align*}
        A^{\doubleSymSq}_{\alpha,0} &:= c^{\doubleSymSq}_{\alpha,0} \cdot \P\left(L^{\doubleSymSq}_{\alpha,0}(2n,2n) \leq 2u\right) , \\
        B^{\doubleSymSq}_{\alpha,0} &:= \left[\prod_{i=1}^n p_i\right]^u
\sum_{\lambda \subseteq u^{n}}
\alpha^{\oddrows(u^n-\lambda)'} \cdot
\sp^{(2n)}_{\lambda}(p_1,\dots,p_n) \, , \\
        C^{\doubleSymSq}_{\alpha,0} &:= \left[\prod_{i=1}^n p_i\right]^u \schur^{(2n+1)}_{(u^{n} , 0^{n+1})}(p_1^{-1},\dots,p_n^{-1},p_1,\dots,p_n,\alpha) \, , \\
        D^{\doubleSymSq}_{\alpha,0} &:= \sum_{\mu \subseteq u^{n}} \schur^{(n)}_{\mu}(p_1,\dots,p_n) \cdot \schur^{(n+1)}_{(\mu,0)}(p_1,\dots,p_n,\alpha) \, .
\end{align*}
\end{theorem}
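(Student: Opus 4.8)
The plan is to obtain Theorem~\ref{thm:doubleSymLPP_beta=0} as a short chain of already available facts: one probabilistic input, namely the $\beta=0$ specialization of Theorem~\ref{thm:doubleSymLPP}, and two purely algebraic inputs, namely identity~\eqref{eq:schur=LittlewoodSp} and the bounded Cauchy identity~\eqref{eq:decompositionA_intro} for general linear characters, both recorded in the introduction and proved in Sections~\ref{sec:nearlyRectangular} and~\ref{sec:decomposition} respectively. Concretely, I would establish $A^{\doubleSymSq}_{\alpha,0}=B^{\doubleSymSq}_{\alpha,0}$, then $B^{\doubleSymSq}_{\alpha,0}=C^{\doubleSymSq}_{\alpha,0}$, then $C^{\doubleSymSq}_{\alpha,0}=D^{\doubleSymSq}_{\alpha,0}$; since these three equalities span all four quantities, the theorem follows. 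One could equally well anchor the chain at $A^{\doubleSymSq}_{\alpha,0}=D^{\doubleSymSq}_{\alpha,0}$, which is the Baik--Rains formula~\eqref{eq:doublesymSchur_intro}, and close it the other way.

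For $A^{\doubleSymSq}_{\alpha,0}=B^{\doubleSymSq}_{\alpha,0}$ I would set $\beta=0$ in Theorem~\ref{thm:doubleSymLPP}. The left-hand side of~\eqref{eq:doubleSymLPP} becomes $A^{\doubleSymSq}_{\alpha,0}$, since $c^{\doubleSymSq}_{\alpha,0}$ is the value at $\beta=0$ of the normalization constant~\eqref{eq:doubleSymLPP_normalization}. On the right-hand side, because $u\in\Z_{\geq 0}$, every partition $\lambda\subseteq u^{(n)}$ in the sum is an $n$-partition with integer parts, so the first case of~\eqref{eq:transition=Sp} gives $\trans{\lambda}(p_1,\dots,p_n;0)=\sp^{(2n)}_{\lambda}(p_1,\dots,p_n)$, turning the right-hand side into $B^{\doubleSymSq}_{\alpha,0}$.

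For the two algebraic equalities I would use the substitution $x_i=p_i$ throughout and read $u^{(n)}$, when it indexes a Schur polynomial in $2n+1$ variables, as the partition $(u^{(n)},0^{(n+1)})$ obtained by padding with zeros. Identity~\eqref{eq:schur=LittlewoodSp} then reads
\[
\schur^{(2n+1)}_{(u^{(n)},\,0^{(n+1)})}(p_1^{-1},\dots,p_n^{-1},p_1,\dots,p_n,\alpha)
=\sum_{\lambda\subseteq u^{(n)}}\alpha^{\sum_{i=1}^n(-1)^{n-i}(u-\lambda_i)}\,\sp^{(2n)}_{\lambda}(p_1,\dots,p_n),
\]
and multiplying through by $\big[\prod_{i=1}^n p_i\big]^u$ identifies the left side with $C^{\doubleSymSq}_{\alpha,0}$ and the right side with $B^{\doubleSymSq}_{\alpha,0}$; the sign exponents $\sum_{i=1}^n(-1)^{n-i}(u-\lambda_i)$ match verbatim. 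Next, applying~\eqref{eq:decompositionA_intro} with first variable block $(p_1^{-1},\dots,p_n^{-1})$, second variable block $(p_1,\dots,p_n,\alpha)$, and shape $(u^{(n)},v^{(n+1)})$ with $v=0$ — so that there $x^{-1}=(p_1,\dots,p_n)$ and $\prod_i x_i=\big[\prod_i p_i\big]^{-1}$ — yields
\[
\schur^{(2n+1)}_{(u^{(n)},\,0^{(n+1)})}(p_1^{-1},\dots,p_n^{-1},p_1,\dots,p_n,\alpha)
=\Big[\prod_{i=1}^n p_i\Big]^{-u}\sum_{\mu\subseteq u^{(n)}}\schur^{(n)}_{\mu}(p_1,\dots,p_n)\cdot\schur^{(n+1)}_{(\mu,0)}(p_1,\dots,p_n,\alpha),
\]
and multiplying by $\big[\prod_{i=1}^n p_i\big]^u$ gives $C^{\doubleSymSq}_{\alpha,0}=D^{\doubleSymSq}_{\alpha,0}$.

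At the level of this theorem there is no substantial obstacle: the argument just concatenates inputs established elsewhere, and the only points needing care are bookkeeping ones — zero-padding partitions so that $\schur^{(n+1)}_{\mu}=\schur^{(n+1)}_{(\mu,0)}$, checking that the $(-1)^{n-i}$ sign conventions in~\eqref{eq:schur=LittlewoodSp} and~\eqref{eq:doubleSymLPP} agree, and observing that the last step invokes~\eqref{eq:decompositionA_intro} in the range $m=n+1>n$, which lies beyond Okada's original formulation but is exactly the generality proved in Section~\ref{sec:decomposition}. The real content sits upstream, in the proof of Theorem~\ref{thm:doubleSymLPP} via $\RSK$ on symmetric triangular arrays and in the proofs of~\eqref{eq:schur=LittlewoodSp} and~\eqref{eq:decompositionA_intro}.
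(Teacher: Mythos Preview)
Your proposal is correct and matches the paper's own approach essentially verbatim: the paper also obtains $A^{\doubleSymSq}_{\alpha,0}=B^{\doubleSymSq}_{\alpha,0}$ by specializing Theorem~\ref{thm:doubleSymLPP} to $\beta=0$, proves $B^{\doubleSymSq}_{\alpha,0}=C^{\doubleSymSq}_{\alpha,0}$ in Section~\ref{subsec:nearlyRectangularA} (which is precisely~\eqref{eq:schur=LittlewoodSp}), and deduces $C^{\doubleSymSq}_{\alpha,0}=D^{\doubleSymSq}_{\alpha,0}$ as the specialization $m=n+1$, $v=0$, $x_i^{-1}=y_i=p_i$, $y_{n+1}=\alpha$ of Theorem~\ref{thm:decompositionA}. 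Your remark that this last step requires the case $m>n$ of~\eqref{eq:decompositionA_intro}, hence the full generality of Theorem~\ref{thm:decompositionA} rather than Okada's original $n\geq m$ version, is exactly the point the paper makes as well.
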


Identities $A^{\doubleSymSq}_{\alpha,0} = B^{\doubleSymSq}_{\alpha,0}$ and $A^{\doubleSymSq}_{\alpha,0} = D^{\doubleSymSq}_{\alpha,0}$ relate directly to the LPP model.
The first one is just~\eqref{eq:doubleSymLPP} for $\beta=0$.
The second one comes from a parallel approach to study the distribution of $L^{\doubleSymSq}_{\alpha, \beta}(2n,2n)$ adopted in~\cite{BR01a}, consisting in applying the classical $\RSK$ correspondence on a doubly symmetric square matrix.
Baik-Rains' formula, valid for general $\beta$, is in terms of ``self-dual'' Schur polynomials\footnote{As seen in Subsection~\ref{subsec:schur}, standard Schur polynomials can be seen as generating functions of Young tableaux of a certain shape.
On the other hand, ``self-dual'' Schur polynomials are generating functions of the only Young tableaux that are self-dual with respect to a combinatorial bijection known as \emph{Sch\"utzenberger involution}.}, and is omitted here for the sake of conciseness.
For $\beta=0$ the weights on the antidiagonal are all even: Forrester and Rains~\cite{FR07} thus deduced $A^{\doubleSymSq}_{\alpha,0} = D^{\doubleSymSq}_{\alpha,0}$, which instead involves standard Schur polynomials, using a bijection between self-dual Young tableaux with even rows, domino tableaux, and pairs of semi-standard Young tableux.

In Section~\ref{sec:nearlyRectangular} we will prove $B^{\doubleSymSq}_{\alpha,0} = C^{\doubleSymSq}_{\alpha,0}$ using  an identity of Krattenthaler~\cite{Kra98} for a standard Schur polynomial of ``nearly rectangular'' shape as well as the branching rule for general linear characters.
The specializations to $\alpha=0$ and $\alpha=1$ are known~\cite[Theorem~2.6]{Oka98} and read as:
\begin{align*}
\schur^{(2n)}_{(u^{n} , 0^{n})}(p_1^{-1}, \dots, p_n^{-1},
p_1, \dots, p_n)
&=
\begin{cases}
\displaystyle
\sum\limits_{\substack{\lambda \subseteq u^{n}, \\ \oddrows\lambda'=0}}
\sp^{(2n)}_{\lambda}(p_1, \dots, p_n) &\text{if $n$ is even,} \\ 
\displaystyle
\sum\limits_{\substack{\lambda \subseteq u^{n-1}, \\ \oddrows\lambda'=0}}
\sp^{(2n)}_{(u,\lambda)}(p_1, \dots, p_n) &\text{if $n$ is odd,}
\end{cases} \\
\schur^{(2n+1)}_{(u^{n} , 0^{n+1})}(p_1^{-1}, \dots, p_n^{-1},
p_1, \dots, p_n,1)
&= \sum_{\lambda \subseteq u^{n}}
\sp^{(2n)}_{\lambda}(p_1, \dots, p_n) \, .
\end{align*}
Therefore, our result unifies these special cases.
Notice that a $(q,t)$-Macdonald version of the first identity has recently appeared in~\cite[Corollary~1.3]{LRW20}; it would be interesting to investigate an $\alpha$-deformation of the latter, as in $B^{\doubleSymSq}_{\alpha,0} = C^{\doubleSymSq}_{\alpha,0}$.

Finally, a more general version of $C^{\doubleSymSq}_{\alpha,0} = D^{\doubleSymSq}_{\alpha,0}$ has been proved by Okada~\cite[Theorem 2.1]{Oka98} using determinantal calculus.
We will prove a further generalization of this formula, as the next theorem states, via decomposition of Gelfand-Tsetlin patterns.

\begin{theorem}
\label{thm:decompositionA}
For $n,m\geq 1$, $l:=\min(n,m)$ and $u\geq v\geq 0$, we have
\begin{align}
\label{eq:decompositionA}
\schur^{(n+m)}_{(u^{n} , v^{m})}(x,y)
&= \left[\prod_{i=1}^n x_i\right]^{u}
\left[\prod_{i=1}^m y_i\right]^{v}
\sum_{\mu \subseteq (u-v)^{(l)}}
\schur^{(n)}_{(\mu, 0^{n-l})} (x^{-1}) \cdot
\schur^{(m)}_{(\mu, 0^{m-l})} (y) \, ,
\end{align}
where $x=(x_1, \dots, x_n)$ and $y=(y_1, \dots, y_m)$.
\end{theorem}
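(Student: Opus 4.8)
The plan is to prove the identity by decomposing each Gelfand-Tsetlin pattern of height $n+m$ and bi-rectangular shape $(u^{(n)},v^{(m)})$ into two pieces along the horizontal ``slice'' at height $n$. First I would recall that $\schur^{(n+m)}_{(u^{(n)},v^{(m)})}(x,y)$ is the generating function, over $z\in\GT^{(n+m)}_{(u^{(n)},v^{(m)})}$, of $\prod_{i=1}^{n+m} x_i^{\type(z)_i}$ (where $x,y$ together supply the $n+m$ variables). The key structural observation is the interlacing rigidity forced by the bi-rectangular shape: because rows $n,n+1,\dots,n+m$ all ``end'' in the constant block $v^{(\,\cdot\,)}$ while the top has the constant block $u^{(\,\cdot\,)}$, the $n$-th row $\lambda^{(n)}$ is forced to have the form $(\lambda^{(n)}_1,\dots,\lambda^{(n)}_l, v,\dots,v)$ with $u\ge \lambda^{(n)}_1\ge\cdots\ge\lambda^{(n)}_l\ge v$, where $l=\min(n,m)$ — the constancy of the bottom $m$ rows at level $v$ on the right and of the implicit top constraint at level $u$ pins all but the first $l$ entries. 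Writing $\mu_i:=\lambda^{(n)}_i - v$ for $1\le i\le l$, we get a partition $\mu\subseteq (u-v)^{(l)}$ indexing the slice.

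Next I would split a pattern $z$ into its top part $z^{\mathrm{top}}$ (rows $1,\dots,n$), which is a Gelfand-Tsetlin pattern of height $n$ with bottom row $\lambda^{(n)}=(\mu+v^{(l)},v^{(n-l)})$ (interpreting the lengths appropriately), and its bottom part $z^{\mathrm{bot}}$ (rows $n,\dots,n+m$), a Gelfand-Tsetlin pattern of height $m$ with top row $\lambda^{(n)}$ and bottom row $v^{(m)}$. The type splits as $\type(z)_i$ for $1\le i\le n$ matching $\type(z^{\mathrm{top}})_i$, and $\type(z)_{n+i}$ for $1\le i\le m$ matching $\type(z^{\mathrm{bot}})_i$. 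Then I would apply the shift property~\eqref{eq:schurProp+} to the top part: a Gelfand-Tsetlin pattern of height $n$ with bottom row $(\mu+v^{(l)},v^{(n-l)})$ is, via $z_{i,j}\mapsto z_{i,j}$ on the first $l$ columns but more cleanly via the global shift combined with the reversal property~\eqref{eq:schurProp-}, in weight-preserving bijection (up to the monomial factor $\prod_{i=1}^n x_i^{u}$) with patterns producing $\schur^{(n)}_{(\mu,0^{(n-l)})}(x^{-1})$; here one uses~\eqref{eq:schurProp-} with the bound $u$ and the observation that the complement of $(\mu+v^{(l)},v^{(n-l)})$ inside $u^{(n)}$ is $((u-v-\mu_l),\dots,(u-v-\mu_1),(u-v)^{(n-l)})$ reversed, which is $(\mu,0^{(n-l)})$ in the reversed-complement sense — exactly the content of~\eqref{eq:schurProp-}. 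For the bottom part, the shift $z_{i,j}\mapsto z_{i,j}-v$ by~\eqref{eq:schurProp+} turns it into a height-$m$ pattern with bottom row $0^{(m)}$ and top row $(\mu,0^{(m-l)})$, contributing $\prod_{i=1}^m y_i^{v}\cdot\schur^{(m)}_{(\mu,0^{(m-l)})}(y)$.

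Assembling these pieces: summing first over $\mu\subseteq(u-v)^{(l)}$ and then independently over the top and bottom patterns compatible with the slice $\mu$, and collecting the monomial prefactors $\prod_{i=1}^n x_i^{u}$ and $\prod_{i=1}^m y_i^{v}$ pulled out by the shift/reversal steps, yields precisely the right-hand side of~\eqref{eq:decompositionA}. The main obstacle I anticipate is handling the reversal property~\eqref{eq:schurProp-} carefully for the top block when $n>l$ (i.e.\ $n>m$): one must verify that the constant $v$-entries in columns $l+1,\dots,n$ of $\lambda^{(n)}$ behave correctly under the complementation $z_{i,j}\mapsto u - z_{i,i-j+1}$, so that the output genuinely has shape $(\mu,0^{(n-l)})$ and the factor $\prod x_i^{u}$ (not $\prod x_i^{u-v}$ or similar) is extracted — this bookkeeping of which constant block is which, and tracking that the reversal sends $x$ to $x^{-1}$ only on the top factor while the bottom factor keeps $y$, is the delicate point. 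The symmetric case $m>n$ is handled by the same argument with the roles reversed, using that $l=\min(n,m)$ is symmetric; alternatively one may invoke the symmetry $\schur^{(n+m)}_{(u^{(n)},v^{(m)})}(x,y)=\schur^{(n+m)}_{(u^{(n)},v^{(m)})}$ viewed appropriately, though doing it directly is cleaner.
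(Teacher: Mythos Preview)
Your overall strategy matches the paper's: cut the Gelfand--Tsetlin pattern of bi-rectangular shape horizontally, recognise two frozen triangles (of $u$'s and of $v$'s), and use \eqref{eq:schurProp+}--\eqref{eq:schurProp-} to massage the resulting sub-patterns into the Schur polynomials on the right-hand side. Two concrete errors would, however, stop your plan from going through as written.

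First, the structure of row $n$ is misidentified when $n>m$. The frozen regions of $Z\in\GT^{(n+m)}_{(u^{(n)},v^{(m)})}$ are $Z_{i,j}=u$ for $i-j\ge m$ and $Z_{i,j}=v$ for $j\ge n+1$; since row $n$ has only columns $j\le n$, \emph{no} entry of row $n$ is ever forced to equal $v$. For $n>m$ it is the first $n-m$ entries that are frozen, equal to $u$, so $\lambda^{(n)}=(u^{(n-m)},\mu')$ with $\mu'$ free in $[v,u]^m$, not $(\mu+v^{(l)},v^{(n-l)})$.

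Second---and this is the real gap---your ``bottom part'' (rows $n,\dots,n+m$) is not a Gelfand--Tsetlin pattern: those rows have lengths $n,n+1,\dots,n+m$, a trapezoid. The claim that shifting by $-v$ gives ``a height-$m$ pattern with bottom row $0^{(m)}$ and top row $(\mu,0^{(m-l)})$'' cannot hold, because a GT pattern of shape $0^{(m)}$ has every entry zero and generates the constant $1$, not $\schur^{(m)}_{(\mu,0^{(m-l)})}(y)$. What actually works is to delete \emph{both} frozen triangles from the lower piece; the remaining free entries then form an upside-down triangle which, read from the bottom row upward as $z'_{i,j}:=Z_{n+m-i,\,\cdot}$, is a genuine GT pattern of height $\min(n,m)$. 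Under this flip the types satisfy $\type(Z)_{n+i}=u-\type(z')_{m-i+1}$, so summing over $z'$ produces a monomial $\bigl[\prod_i \cdot\,\bigr]^{u}$ times a Schur polynomial in \emph{inverted} variables; a further application of \eqref{eq:schurProp-} is then needed to reach the stated form. The paper also cuts at level $\max(n,m)$ rather than always at $n$ (equivalently, reorders variables by symmetry so that the longer block sits on top), which keeps the upper sub-pattern a full GT pattern in both cases. The delicate bookkeeping you anticipate is thus located in extracting and flipping the lower sub-pattern, not in the top block.
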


We will see in the proof of the latter theorem (see in particular~\eqref{eq:s=sumSS_Okada}) that Okada's formula is a specialization of~\eqref{eq:decompositionA} valid for $n\geq m$ and $v=0$.
Notice also that, by putting $m=n+1$, $v=0$, $x_i^{-1}=y_i=p_i$ for $1\leq i\leq n$, and $y_{n+1}=\alpha$ in~\eqref{eq:decompositionA}, we indeed recover $C^{\doubleSymSq}_{\alpha,0} = D^{\doubleSymSq}_{\alpha,0}$.
Besides Gelfand-Tsetlin pattern decomposition and determinantal calculus, a third (algebraic) method to approach Theorem~\ref{thm:decompositionA} relies on skew Schur functions; we will outline this in Subsection~\ref{subsec:decompositionA}.

For the sake of simplicity, throughout this subsection we have restricted ourselves to weight matrices $W$ of even order $2n$, and therefore to ``even'' LLP times (i.e.\ for directed paths from $(1,1)$ to $(2n,2n)$).
However, analogously to $A^{\doubleSymSq}_{\alpha,0} = B^{\doubleSymSq}_{\alpha,0}$, one can show that the ``odd'' LPP time with doubly symmetric weights can be expressed as a bounded Littlewood sum of odd symplectic characters.

\section{$\RSK$ on triangular arrays and last passage percolation}
\label{sec:RSK}

In this section we prove the identities that express the distribution functions of our LPP models in terms of formulas involving $\CB$-interpolating Schur polynomials.
In particular, we prove $A^{\antisymSq}_\beta = C^{\antisymSq}_\beta$ in Theorem~\ref{thm:antisymLPP} (which specializes to $A^{\antisymSq}_0 = C^{\antisymSq}_0$ of Corollary~\ref{coro:antisymLPP_0} and $A^{\antisymSq}_1 = C^{\antisymSq}_1$ of Corollary~\ref{coro:antisymLPP_1} in the cases $\beta=0$ and $\beta=1$), as well as~\eqref{eq:doubleSymLPP} in Theorem~\ref{thm:doubleSymLPP} (which specializes to $A^{\doubleSymSq}_{\alpha,0} = B^{\doubleSymSq}_{\alpha,0}$ of Theorem~\ref{thm:doubleSymLPP_beta=0} in the case $\beta=0$).
The proofs we present are all based on the Robinson-Schensted-Knuth ($\RSK$)
correspondence applied to triangular arrays.
As described by Knuth~\cite{Knu70}, the $\RSK$ is a combinatorial bijection that maps matrices with non-negative integer entries to pairs of semi-standard Young tableaux, or equivalently pairs of Gelfand-Tsetlin patterns, of the same shape.
Via Fomin's growth diagrams~\cite{Fom95, Kra06}, it can be generalized to a bijection mapping a Young tableau of a given (not necessarily rectangular) shape to another Young tableau of the same shape.
If the entries of the input tableau are non-negative reals, then the output tableau has the constraint that the entry of the box $(i,j)$, denoted by $t_{i,j}$, obeys the interlacing constraints $t_{i,j}\geq \max( t_{i-1,j}, t_{i,j-1})$ for all sensible pairs $(i,j)$.
Restricting this bijection to square Young tableaux with non-negative integer entries yields the classical $\RSK$ correspondence: when the input tableau is a square matrix, the output tableau is a square matrix of the same dimensions and corresponds to the pair of Gelfand-Tsetlin patterns ``glued together'' along the common shape, which is the main diagonal of the output matrix.
For the sake of brevity we will not give the explicit construction of $\RSK$, but we will rather recall the properties that we need.

We first introduce some notation.
Let $\mathcal{I}$ be the index set of a Young diagram\footnote{Namely, $\mathcal{I}$ is a finite subset of $\N^2$ satisfying: if $(i,j)\in\mathcal{I}$, then $(i-1,j)\in\mathcal{I}$ if $i>1$, and $(i,j-1)\in\mathcal{I}$ if $j>1$.}, and let $t = \{t_{i,j} \colon (i,j)\in \mathcal{I}\}$ be a Young tableau with non-negative real entries.
We denote by
\begin{equation}
\label{eq:sumDiagonal}
\sigma_k(t) := \sum_{\substack{(i,j)\in\mathcal{I} \colon \\ j-i=k}} t_{i,j}
\end{equation}
the sum of the $k$-th diagonal of $t$.
We call $(i,j)$ {\bf\emph{outer index}} of $\mathcal{I}$ if none of the three sites $(i,j+1),(i+1,j),(i+1,j+1)$ belongs to $\mathcal{I}$, or equivalently if $\mathcal{I}\setminus\{(i,j)\}$ is still the index set of a Young diagram; we call $(i,j)$ {\bf\emph{inner index}} otherwise.
We say that $(i,j)\in\mathcal{I}$ is a {\bf\emph{border index}} if $(i+1,j+1)\notin \mathcal{I}$, or equivalently if it is the last index of its diagonal.
Clearly, every outer index is also a border index.
We call $t_{i,j}$ an {\bf\emph{outer/inner/border entry}} of $t$ if $(i,j)$ is an outer/inner/border index of $\mathcal{I}$, respectively.

For our purposes, it is essential to restrict the $\RSK$ bijection to input Young tableaux with inner and outer entries drawn from two different subsets $\cA$ and $\cB$ of $\R_{\geq 0}$ respectively.
In order for the image tableau to be of the same type (i.e., with inner and outer entries drawn from the same sets $\cA$ and $\cB$, respectively), we need $\cA\subseteq \cB$ to be submonoids of the additive monoid $\R_{\geq 0}$.
In other words, both $\cA$ and $\cB$ must be closed under addition and contain $0$.
In our proofs, we will need either $\cA=\cB=\Z_{\geq 0}$ (all entries are integers) or $\cA=\Z_{\geq 0}$ and $\cB=\frac{1}{2}\Z_{\geq 0}$ (the outer entries are also allowed to be half-integers).
The next proposition, whose proof is omitted, is a fairly straightforward adaptation of a few classical properties of $\RSK$ to this setting.
More details on this algorithm and its various properties and extensions can be found e.g.\ in~\cite{Knu70, Kir01, Bis18, Zyg18}.

\begin{proposition}
\label{prop:RSK}
Let $\mathcal{I}$ be the index set of a Young diagram and let $\cA\subseteq \cB \subseteq \R_{\geq 0}$ be submonoids of the additive monoid $\R_{\geq 0}$.
There exists a piecewise linear bijection
\begin{equation}
\label{eq:RSKmapping}
w = \{w_{i,j} \colon (i,j)\in \mathcal{I}\}
\mapsto
t = \{t_{i,j} \colon (i,j)\in \mathcal{I}\} \, ,
\end{equation}
called $\RSK$ correspondence, between Young tableaux with inner entries in $\cA$ and outer entries in $\cB$, such that the output tableau satisfies the ordering
\begin{equation}
\label{eq:RSKordering}
t_{i-1,j} \leq t_{i,j} \quad\text{if } i>1
\qquad\quad \text{and} \qquad\quad
t_{i,j-1} \leq t_{i,j} \quad\text{if } j>1 \, ,
\qquad
\text{for } (i,j)\in\mathcal{I} \, .
\end{equation}
The $\RSK$ satisfies the following properties:
\begin{enumerate}
\item
\label{prop:RSK_LPP}
For any border index $(m,n)\in \mathcal{I}$, denoting by $\Pi_{m,n}$ the set of all directed paths from $(1,1)$ to $(m,n)$, we have
\begin{equation}
\label{eq:RSK_LPP}
t_{m,n}
= \max_{\pi\in\Pi_{m,n}} \sum_{(i,j)\in \pi} w_{i,j} \, .
\end{equation}
\item
\label{prop:RSK_type}
For any border index $(m,n)\in \mathcal{I}$, we have
\begin{equation}
\label{eq:RSK_type}
\sigma_{n-m}(t)
= \sum_{i=1}^m \sum_{j=1}^n w_{i,j} \, .
\end{equation}
\item
\label{prop:RSK_symmetry}
The symmetry of tableaux about the diagonal is preserved by the $\RSK$ correspondence.
Namely, assume that $\mathcal{I}$ is a symmetric index set, i.e.\ $(i,j)\in \mathcal{I}$ if and only if $(j,i)\in\mathcal{I}$, and $w$ is a symmetric tableau, i.e.\ $w_{i,j}=w_{j,i}$ for all $(i,j)$; then $t$ is also a symmetric tableau.
In this case, denoting by $(n,n)$ the border index of the main diagonal, we also have that
\begin{equation}
\label{eq:RSK_symmetry_diagSum}
\sum_{j=1}^n w_{j,j} = \sum_{j=1}^n (-1)^{n-j} t_{j,j} \, .
\end{equation}
\end{enumerate}
\end{proposition}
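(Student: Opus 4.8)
The plan is to realize the $\RSK$ map on Young-diagram-shaped arrays through Fomin's growth-diagram formalism (equivalently, iterated column insertion), so that all the asserted facts reduce to classical statements about the usual rectangular $\RSK$. First I would recall the construction: attach a partition $\lambda(i,j)$ to each lattice vertex, equal to $\emptyset$ along the northern and western borders of $\mathcal{I}$, and propagate south-east through $\mathcal{I}$ (possible since $\mathcal{I}$ is a down-set) using the local $\RSK$ rule; the output entry $t_{i,j}$ is read off from the four partitions surrounding box $(i,j)$. The local rule is a composition of additions and maxima, hence $w\mapsto t$ in \eqref{eq:RSKmapping} is piecewise linear, and it is invertible because the local rule is: running the growth diagram backwards (from the south-east) reconstructs the box entries. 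The ordering \eqref{eq:RSKordering} is immediate from the fact that consecutive partitions along a row (resp.\ column) differ by a horizontal (resp.\ vertical) strip. The submonoid hypothesis $\cA\subseteq\cB$ is precisely what keeps the inner/outer dichotomy intact, for reasons I will make precise using the last-passage description below.

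Parts~\eqref{prop:RSK_LPP} and~\eqref{prop:RSK_type} rest on a locality principle: $\lambda(i,j)$ and $t_{i,j}$ depend only on the restriction of $w$ to $[1,i]\times[1,j]$. Since $\mathcal{I}$ is a down-set, a border index $(m,n)$ has the full rectangle $[1,m]\times[1,n]$ contained in $\mathcal{I}$, so the part of $t$ lying on the diagonal $\{j-i=n-m\}$ is exactly what the ordinary $\RSK$ on the $m\times n$ matrix $(w_{i,j})_{i\le m,\,j\le n}$ produces. For that rectangular correspondence, the entries along the diagonal through the south-east corner $(m,n)$ are the parts of the shape $\lambda(m,n)$ listed in reverse order; hence the south-east-most entry is $\lambda(m,n)_1$ and the full diagonal sum $\sigma_{n-m}(t)$ in \eqref{eq:sumDiagonal} equals $|\lambda(m,n)|$. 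Then~\eqref{prop:RSK_type} follows because $|\lambda(m,n)|$ equals the number of letters of the associated biword, i.e.\ $\sum_{i\le m,\,j\le n}w_{i,j}$, which is \eqref{eq:RSK_type}; and~\eqref{prop:RSK_LPP} follows because, by Schensted's theorem (or Greene's theorem), $\lambda(m,n)_1$ is the maximal weight of a weakly increasing subsequence of that biword, which in path language is $\max_{\pi\in\Pi_{m,n}}\sum_{(i,j)\in\pi}w_{i,j}$, giving \eqref{eq:RSK_LPP}. The finer description of all parts of $\lambda(m,n)$ via Greene's theorem — as successive differences of maximal weights of $k$ pairwise disjoint directed paths — is what is also needed to settle the entry-range claim: an outer box of $\mathcal{I}$ can only be the endpoint of a directed path, never an interior vertex of one, so its (possibly $\cB$-valued) weight enters at most one path of every $k$-disjoint-path optimum, and therefore cancels in the telescoping producing all parts of $\lambda(m,n)$ except the largest one; the remaining parts, being $t$-values at inner indices, stay in $\cA$, while the largest one, a $t$-value at the outer corner, may lie in $\cB$.

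For part~\eqref{prop:RSK_symmetry}, transpose-invariance is built into the growth rule: reflecting a growth diagram in the main diagonal interchanges the horizontal- and vertical-strip roles but leaves the local $\RSK$ rule invariant, so if $\mathcal{I}$ and $w$ are symmetric then $t$ is too. For \eqref{eq:RSK_symmetry_diagSum}, locality again reduces matters to the symmetric $n\times n$ matrix $(w_{i,j})_{i,j\le n}$; by the diagonal description above its image has main diagonal $(t_{1,1},\dots,t_{n,n})=(\lambda_n,\dots,\lambda_1)$ with $\lambda=\lambda(n,n)$ the $\RSK$ shape, whence $\sum_{j=1}^n(-1)^{n-j}t_{j,j}=\sum_{l\ge1}(-1)^{l-1}\lambda_l$, the alternating sum of the parts of $\lambda$ (equivalently the number of odd-length columns of $\lambda$). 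The claim \eqref{eq:RSK_symmetry_diagSum} thus becomes the classical fact that the trace of a symmetric non-negative integer matrix equals this alternating sum of its $\RSK$ shape, which I would invoke from Knuth's symmetry theorem for symmetric $\RSK$.

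The genuine work is not any one of parts~\eqref{prop:RSK_LPP}--\eqref{prop:RSK_symmetry} in isolation — over a rectangle each is classical — but the uniform bookkeeping needed to transport everything to an arbitrary Young-diagram index set $\mathcal{I}$ while keeping the inner/outer entry constraints exactly matched with the submonoid hypothesis. Concretely, the delicate point is the verification, along the diagonal through a border index $(m,n)$ whose corner box is an \emph{outer} index of $\mathcal{I}$, that $w_{m,n}$ is the only possibly half-integral input on that diagonal and that it cancels everywhere except in the south-east-most output entry; this is what I expect to require the most care.
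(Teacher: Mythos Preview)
The paper does not prove this proposition: it explicitly states that ``the next proposition, whose proof is omitted, is a fairly straightforward adaptation of a few classical properties of $\RSK$ to this setting,'' and refers to \cite{Knu70, Kir01, Bis18, Zyg18}. The surrounding discussion already names the tools you invoke --- Fomin's growth diagrams for non-rectangular shapes and Greene's theorem for the last-passage interpretation --- so your sketch is supplying precisely what the paper chose to leave out, along the lines the paper itself indicates.

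One small comment on the inner/outer submonoid claim, which you rightly flag as the delicate point. Your Greene-based cancellation (``$w_{m,n}$ enters at most one path of every $k$-disjoint-path optimum, and therefore cancels in the telescoping'') is morally correct but, as stated, only yields $\lambda_k \in \cA - \cA$ for $k\ge 2$, which need not lie in $\cA$ for an arbitrary submonoid. A cleaner route is to observe that in the rectangle $[1,m]\times[1,n]$ the corner weight $w_{m,n}$ is inserted last and, being the largest letter with the largest index, only appends to the first row of the $P$-tableau; hence $\lambda_k$ for $k\ge 2$ is literally independent of $w_{m,n}$ and therefore lies in $\cA$, while $\lambda_1$ picks up $w_{m,n}$ and lands in $\cB$. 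This also makes the inverse direction (needed for the bijection claim) transparent. In the cases the paper actually uses ($\cA=\Z_{\ge 0}$, $\cB=\tfrac12\Z_{\ge 0}$) your argument already suffices.
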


Property~\ref{prop:RSK_LPP} is a particular case of Greene's Theorem~\cite{Gre74} and states that the last entry of any diagonal of the $\RSK$ output tableau can be interpreted as a point-to-point LPP time on the input variables.
Property~\ref{prop:RSK_type} relates the sum of a diagonal of the $\RSK$ output to the sum of certain rectangular subarrays of the input array.
Property~\ref{prop:RSK_symmetry} is useful for dealing with symmetric input tableaux.

In the following proofs we will consider the $\RSK$ on triangular Young tableaux of shape $(N,N-1,\dots,1)$, or equivalently indexed by $\mathcal{I} := \{(i,j)\in\N^2\colon i+j\leq N+1\}$.
In such a case, the outer indices are $\{(i,j)\in\N^2\colon i+j= N+1\}$, whereas the border indices are $\{(i,j)\in\N^2\colon N \leq i+j \leq N+1\}$.

\subsection{Proof of $A^{\antisymSqProtect}_\beta = C^{\antisymSqProtect}_\beta$ in Theorem~\ref{thm:antisymLPP}}
\label{subsec:RSK_antisymLPP}

We first see the connection between the point-to-point LPP with symmetry about the antidiagonal and the point-to-line LPP, defined in~\eqref{eq:pointToLineLPP} below.
Let us consider the point-to-point LPP $L^{\antisymSq}(N,N)$ associated with a square weight array symmetric about the antidiagonal $i+j=N+1$ (let us not specify the distribution of the weights for the moment).
Because of the symmetry constraint, at least one of the maximal paths\footnote{By maximal path we mean any of the allowed paths that maximize the passage time, see~\eqref{eq:LPP}. Notice that such a path does not need to be unique.} from $(1,1)$ to $(N,N)$ is symmetric about the antidiagonal; the weights collected along such a path will be all counted twice (once above and once below the antidiagonal), except the one on the antidiagonal itself.
Let us now consider the \emph{point-to-line} LPP time $L^{\pointToLine}(N)$ for directed paths starting at $(1,1)$ and ending at any point of the antidiagonal line $i+j=N+1$, which can be expressed in terms of point-to-point LPP times as
\begin{equation}
\label{eq:pointToLineLPP}
L^{\pointToLine}(N)
= \max_{i+j=N+1} L(i,j) \, .
\end{equation}
We then have that the $\antisymSq$-symmetric LPP coincides with twice the point-to-line LPP, i.e.\ $L^{\antisymSq}(N,N) = 2 \cdot L^{\pointToLine}(N)$, assuming that \emph{the weights of the point-to-line LPP are halved on the antidiagonal}.

Let us now consider the point-to-point last passage time $L^{\antisymSq}_{\beta}(N,N)$ with weights distributed as in~\eqref{eq:antisymWeights}.
Then, the corresponding point-to-line LPP $L^{\pointToLine}_\beta(N)$ is taken on a triangular array of independent weights $W=\{W_{i,j}\colon i+j\leq N+1\}$ distributed as
\begin{equation}
\label{eq:antisymWeightsPointToLine}
\P(W_{i,j} = k) =
\begin{cases}
(1-p_{N-i+1} p_j) (p_{N-i+1} p_j)^k &\text{for } k\in\Z_{\geq 0}, \, i+j< N+1 \, , \\
\dfrac{1-p_j^2}{1 + \beta p_j} \beta^{\1_{\frac{1}{2}+\Z}(k)} p_j^{2k} &\text{for } k\in\frac{1}{2} \Z_{\geq 0}, \, i+j = N+1 \, ,
\end{cases}
\end{equation}
where the indicator function $\1_{\frac{1}{2}+\Z}(k)$ gives $1$ if $k$ is a half-integer and $0$ if $k$ is an integer.
Given the identity $L^{\antisymSq}_\beta(N,N) = 2 \cdot L^{\pointToLine}_\beta(N)$, we are thus reduced to prove the point-to-line reformulation of $A^{\antisymSq}_\beta = C^{\antisymSq}_\beta$ in Theorem~\ref{thm:antisymLPP}:
\begin{equation}
\label{eq:antisymLPP_pointToLine}
c^{\antisymSq}_{\beta} \cdot \P\left(L^{\pointToLine}_\beta(N) \leq u\right)
= \left[ \prod_{i=1}^N p_i \right]^u \trans{u^{N}} (p_1,\dots,p_N;\beta)
\end{equation}
for $u\in\frac{1}{2}\Z_{\geq 0}$, where $L^{\pointToLine}_\beta(N)$ is taken on the modified weights~\eqref{eq:antisymWeightsPointToLine}.

To prove the latter, we first rewrite the joint distribution of $W$ under~\eqref{eq:antisymWeightsPointToLine}:
\begin{align*}
\P(W=w)
&= \prod_{i+j< N+1} (1- p_{N-i+1} p_j) (p_{N-i+1} p_j)^{w_{i,j}}
\prod_{i+j=N+1} \frac{1-p_j^2}{1 + \beta p_j} \beta^{\1_{\frac{1}{2}+\Z}(w_{i,j})} p_j^{2w_{i,j}} \\
&= \frac{1}{c^{\antisymSq}_{\beta}}
\beta^{\#\{j\colon w_{N-j+1,j} \in \frac{1}{2}+\Z\}}
\prod_{i=1}^N
p_i^{\sum_{j=1}^i w_{N-i+1,j}}
\prod_{j=1}^N p_j^{\sum_{i=1}^{N-j+1} w_{i,j}}
\end{align*}
for any triangular tableau $w=\{w_{i,j}\colon i+j\leq N+1\}$ such that $w_{i,j}\in\Z_{\geq 0}$ for $i+j< N+1$ and $w_{i,j}\in\frac{1}{2}\Z_{\geq 0}$ for $i+j= N+1$.
Taking $\cA=\Z_{\geq 0}$ and $\cB=\frac{1}{2}\Z_{\geq 0}$ in Proposition~\ref{prop:RSK}, we can consider $\RSK$ as a bijection $w\mapsto t$ between Young tableaux of shape $(N,N-1,\dots,1)$ with inner entries in $\Z_{\geq 0}$ and outer entries in $\frac{1}{2}\Z_{\geq 0}$, such that the output tableau satisfies ordering~\eqref{eq:RSKordering}.
By property~\ref{prop:RSK_LPP} of the proposition, each entry $t_{i,j}$ of the $\RSK$ output tableau such that $i+j=N+1$ is the point-to-point last passage time on $w$ from $(1,1)$ to $(i,j)$.
On the other hand, all inner $w_{i,j}$'s are integers, whereas the outer ones might also be half-integers; therefore, for each $(i,j)$ with $i+j=N+1$, $t_{i,j}$ is either integer, if $w_{i,j}$ is integer, or half-integer, if $w_{i,j}$ is half-integer.
This, in particular, implies that
\[
\#\left\{j\colon w_{N-j+1,j} \in \frac{1}{2}+\Z\right\}
= \#\left\{j\colon t_{N-j+1,j} \in \frac{1}{2}+\Z\right\}
\]
under the bijection $w\mapsto t$.
Using the latter identity and property~\ref{prop:RSK_type} of Proposition~\ref{prop:RSK}, we can write the distribution that $W$ induces on its $\RSK$ image $T$ as
\[
\P(T = t)
= \frac{1}{c^{\antisymSq}_{\beta}}
\beta^{\#\{j\colon t_{2n-j+1,j} \in \frac{1}{2}+\Z\}}
\prod_{i=1}^N
p_i^{\sigma_{-N+2i-1}(t)-\sigma_{-N+2i}(t)} \prod_{j=1}^N
p_j^{\sigma_{-N+2j-1}(t) - \sigma_{-N+2j-2}(t)}
\]
for all tableaux $t=\{t_{i,j}\colon i+j\leq N+1\}$ satisfying~\eqref{eq:RSKordering} and such that $t_{i,j}\in\Z_{\geq 0}$ for $i+j< N+1$ and $t_{i,j}\in\frac{1}{2}\Z_{\geq 0}$ for $i+j= N+1$.

On the other hand, it follows from~\eqref{eq:pointToLineLPP}, \eqref{eq:RSK_LPP} and~\eqref{eq:RSKordering} that the distribution function of $L^{\pointToLine}_\beta(N)$ is given by
\[
\P\left(L^{\pointToLine}_\beta(N) \leq u\right)
= \P\left(\max_{i+j=N+1} T_{i,j} \leq u \right)
= \sum_{t\colon \max t_{i,j}\leq u} \P(T=t) \, .
\]

\begin{figure}
\centering
\begin{tikzpicture}[scale=1.05, every path/.style={draw=none}, every node/.style={fill=white, scale=1, inner sep=1.5pt}, ineqE/.style={node contents={\tiny $\leq$}}, ineqW/.style={node contents={\tiny $\geq$}}, ineqS/.style={rotate=-90, node contents={\tiny $\leq$}}, ineqN/.style={rotate=-90, node contents={\tiny $\geq$}}, ineqSE/.style={rotate=-45, node contents={\tiny \blue{$\leq$}}}, ineqNW/.style={rotate=-45, node contents={\tiny \blue{$\geq$}}}]

\node (t11) at (1,-1) {$t_{1,1}$};
\node (t12) at (2,-1) {$t_{1,2}$};
\node (t13) at (3,-1) {$t_{1,3}$};
\node (t14) at (4,-1) {$t_{1,4}$};

\node (t21) at (1,-2) {$t_{2,1}$};
\node (t22) at (2,-2) {$t_{2,2}$};
\node (t23) at (3,-2) {$t_{2,3}$};

\node (t31) at (1,-3) {$t_{3,1}$};
\node (t32) at (2,-3) {$t_{3,2}$};

\node (t41) at (1,-4) {$t_{4,1}$};

\node[text=blue] (o) at (0.25,-0.25) {$0$};
\path (o) -- node[ineqSE]{} (t11);

\node[text=blue] (u1) at (5-0.25,-2+0.25) {$u$};
\node[text=blue] (u2) at (4-0.25,-3+0.25) {$u$};
\node[text=blue] (u3) at (3-0.25,-4+0.25) {$u$};
\node[text=blue] (u4) at (2-0.25,-5+0.25) {$u$};

\path (t14) -- node[ineqSE]{} (u1);
\path (t23) -- node[ineqSE]{} (u2);
\path (t32) -- node[ineqSE]{} (u3);
\path (t41) -- node[ineqSE]{} (u4);

\path (t11) -- node[ineqE]{} (t12) -- node[ineqE]{} (t13) -- node[ineqE]{} (t14);

\path (t21) -- node[ineqE]{} (t22) -- node[ineqE]{} (t23);

\path (t31) -- node[ineqE]{} (t32);

\path (t11) -- node[ineqS]{} (t21) -- node[ineqS]{} (t31) -- node[ineqS]{} (t41);

\path (t12) -- node[ineqS]{} (t22) -- node[ineqS]{} (t32);

\path (t13) -- node[ineqS]{} (t23);

\node at (7,-1.5) {\Large $\longmapsto$};

\begin{scope}[shift={(8,0)}]

\node (t11) at (1,-1) {$Z_{4,1}$};
\node (t12) at (2,-1) {$Z_{5,2}$};
\node (t13) at (3,-1) {$Z_{6,3}$};
\node (t14) at (4,-1) {$Z_{7,4}$};

\node (t21) at (1,-2) {$Z_{3,1}$};
\node (t22) at (2,-2) {$Z_{4,2}$};
\node (t23) at (3,-2) {$Z_{5,3}$};

\node (t31) at (1,-3) {$Z_{2,1}$};
\node (t32) at (2,-3) {$Z_{3,2}$};

\node (t41) at (1,-4) {$Z_{1,1}$};

\node[text=blue] (o1) at (5-0.25,-2+0.25) {$0$};
\node[text=blue] (o2) at (4-0.25,-3+0.25) {$0$};
\node[text=blue] (o3) at (3-0.25,-4+0.25) {$0$};
\node[text=blue] (o4) at (2-0.25,-5+0.25) {$0$};

\path (t14) -- node[ineqNW]{} (o1);
\path (t23) -- node[ineqNW]{} (o2);
\path (t32) -- node[ineqNW]{} (o3);
\path (t41) -- node[ineqNW]{} (o4);

\path (t11) -- node[ineqW]{} (t12) -- node[ineqW]{} (t13) -- node[ineqW]{} (t14);

\path (t21) -- node[ineqW]{} (t22) -- node[ineqW]{} (t23);

\path (t31) -- node[ineqW]{} (t32);

\path (t11) -- node[ineqN]{} (t21) -- node[ineqN]{} (t31) -- node[ineqN]{} (t41);

\path (t12) -- node[ineqN]{} (t22) -- node[ineqN]{} (t32);

\path (t13) -- node[ineqN]{} (t23);

\node[text=red] (s11) at (1,0) {$\bm{u}$};
\node[text=red] (s12) at (2,0) {$\bm{u}$};
\node[text=red] (s13) at (3,0) {$\bm{u}$};
\node[text=red] (s14) at (4,0) {$\bm{u}$};

\node[text=red] (s21) at (1,1) {$\bm{u}$};
\node[text=red] (s22) at (2,1) {$\bm{u}$};
\node[text=red] (s23) at (3,1) {$\bm{u}$};

\node[text=red] (s31) at (1,2) {$\bm{u}$};
\node[text=red] (s32) at (2,2) {$\bm{u}$};

\node[text=red] (s41) at (1,3) {$\bm{u}$};

\path (s11) -- node[ineqW]{} (s12) -- node[ineqW]{} (s13) -- node[ineqW]{} (s14);

\path (s21) -- node[ineqW]{} (s22) -- node[ineqW]{} (s23);

\path (s31) -- node[ineqW]{} (s32);

\path (t11) -- node[ineqN]{} (s11) -- node[ineqN]{} (s21) -- node[ineqN]{} (s31) -- node[ineqN]{} (s41);

\path (t12) -- node[ineqN]{} (s12) -- node[ineqN]{} (s22) -- node[ineqN]{} (s32);

\path (t13) -- node[ineqN]{} (s13) -- node[ineqN]{} (s23);

\path (t14) -- node[ineqN]{} (s14);

\end{scope}
\end{tikzpicture}
\caption{A pictorial representation of transformation~\eqref{eq:changeVariables} for $N=4$.
Array $t$, on the left-hand side, satisfies an ordering that the transformation reverses.
Moreover, on the right-hand side, extra entries equal to $u$ (in red, bold) are introduced, to form a split orthogonal pattern $Z$ of height $2N$ and shape $u^{N}$ (the picture should be rotated by $135$ degrees clockwise to visualize the pattern as in Figure~\ref{fig:soGTpatterns}).
In blue, we have illustrated the lower bound $0$ and upper bound $u$ of all entries of the arrays.}
\label{fig:changeVariables}
\end{figure}

Out of the array $t$, we now define a new array $Z = (Z_{i,j})_{1\leq i\leq 2N, \, 1\leq j\leq \lceil i/2 \rceil}$ by setting
\begin{equation}
\label{eq:changeVariables}
Z_{i,j} :=
\begin{cases}
u - t_{N+j-i,j} &\text{if } 0\leq i-j\leq N-1 \, , \\
u &\text{if } N\leq i-j\leq 2N-1 \, .
\end{cases}
\end{equation}
This transformation amounts to a change of variables plus an artificial definition of new fixed entries equal to $u$.
From its pictorial representation given in Figure~\ref{fig:changeVariables}, one can visualize the following facts:
\begin{itemize}
\item 
all the entries $Z_{i,j}$'s are bounded between $0$ and $u$, as the $t_{i,j}$'s are;
\item $Z$ satisfies the interlacing conditions~\eqref{eq:interlacing}, due to the ordering of the $t_{i,j}$'s;
\item each of the entries $Z_{2i-1,i}$, for $1\leq i\leq N$, runs in $\frac{1}{2}\Z_{\geq 0}$, as the outer $t_{i,j}$'s do;
\item the other $Z_{i,j}$'s are either all simultaneously in $\Z_{\geq 0}$, when $u$ is an integer, or all simultaneously in $\frac{1}{2} + \Z_{\geq 0}$, when $u$ is a half-integer.
\end{itemize}
By definition, array $Z$ is then a split orthogonal pattern of height $2N$ and shape $u^{N}$, whose \emph{atypical} entries are in bijection with the outer half-integer entries of $t$.
Conversely, every split orthogonal pattern of height $2N$ and shape $u^{N}$ can be constructed in such a way, starting from an array $t$ with the features described above.
Denoting by $\abs{Z_i} :=\sum_{j=1}^{\ceil{i/2}} Z_{i,j}$ the sum of the $i$-th row of $Z$, it is easy to see that
\[
\abs{Z_k} = \ceil{\frac{k}{2}} u - \sigma_{-N+k}(t) \qquad\qquad
\text{for } 0\leq k\leq 2N \, ,
\]
with the convention that $\sigma_{-N}(t) = \sigma_{N}(t):= 0$.
Recalling the notation $\abs{\atyp(Z)}$ for the number of atypical entries $Z$, we then obtain:
\begin{align*}
c^{\antisymSq}_{\beta} \cdot \P\left(L^{\pointToLine}_\beta(N) \leq u\right)
&= \sum_{Z \in \soGT^{(2N)}_{u^{N}}}
\beta^{|\atyp(Z)|}
\prod_{i=1}^N
p_i^{-\abs{Z_{2i-1}} + \abs{Z_{2i}}}
\prod_{j=1}^N
p_j^{u -\abs{Z_{2j-1}} +\abs{Z_{2j-2}}} \\
&= \prod_{j=1}^N p_j^u
\sum_{Z\in \soGT^{(2N)}_{u^{N}}}
\beta^{|\atyp(Z)|}
\prod_{i=1}^N p_i^{\type(Z)_{2i}-\type(Z)_{2i-1}} \, .
\end{align*}
We recognize the latter sum over $\soGT^{(2N)}_{u^{N}}$ to be the $\CB$-interpolating Schur polynomial appearing on the right-hand side of~\eqref{eq:antisymLPP_pointToLine}, thus proving the desired identity.

\begin{remark}
Setting $\beta=0$, we obtain an LPP formula involving symplectic characters, i.e.\ $A^{\antisymSq}_0 = C^{\antisymSq}_0$ of Corollary~\ref{coro:antisymLPP_0}.
We remark that the latter may be proved via a more direct and \emph{ad hoc} argument: let us quickly sketch this.
For $\beta=0$, all the weights~\eqref{eq:antisymWeightsPointToLine} of the corresponding point-to-line model are integers.
Following the outline of our proof above, one may then apply the $\RSK$ bijection of Proposition~\ref{prop:RSK} with $\cA = \cB = \Z_{\geq 0}$.
The subsequent transformation defines a split orthogonal pattern where all entries are integers, i.e.\ a symplectic pattern, which will generate the symplectic character appearing in $C^{\antisymSq}_0$.
\end{remark}

\subsection{Proof of Theorem~\ref{thm:doubleSymLPP}}
\label{subsec:RSK_doubleSymLPP}

Let us consider the point-to-point LPP $L^{\doubleSymSq}(N,N)$ associated to a square weight array symmetric about both the antidiagonal $i+j=N+1$ and the diagonal $i=j$, without specifying the distribution of the weights for the moment.
Let us also consider the point-to-line LPP time $L^{\pointToLineSym}(N)$ from point $(1,1)$ to the antidiagonal line $i+j=N+1$, taken on a triangular weight array symmetric about the diagonal $i=j$.
As in Subsection~\ref{subsec:RSK_antisymLPP}, we then have that $L^{\doubleSymSq}(N,N) = 2 \cdot L^{\pointToLineSym}(N)$, assuming that \emph{the weights of the point-to-line LPP are halved on the antidiagonal}.

In particular, if we consider $L^{\doubleSymSq}_{\alpha,\beta}(2n,2n)$ with the weights distributed as in~\eqref{eq:doubleSymWeights}, then the corresponding point-to-line LPP $L^{\pointToLineSym}_{\alpha,\beta}(2n)$ is taken on a triangular array of independent weights $W=\{W_{i,j}\colon i+j\leq 2n+1\}$ such that $W_{i,j}=W_{j,i}$ for all $i>j$ and
\begin{equation}
\label{eq:doubleSymWeights_pointToLine}
\P(W_{i,j} = k) =
\begin{cases}
(1-p_i p_j) (p_i p_j)^k &\text{for} \,\, k\in\Z_{\geq 0} \, , \,\, i<j<2n-i+1 \, , \\
\dfrac{1-p_j^2}{1 + \beta p_j} \beta^{\1_{\frac{1}{2}+\Z}(k)} p_j^{2k} &\text{for} \,\, k\in\frac{1}{2}\Z_{\geq 0} \, , \,\, i<j=2n-i+1 \, , \\
(1-\alpha p_j) (\alpha p_j)^k &\text{for} \,\, k\in\Z_{\geq 0} \, , \,\, 1\leq i=j\leq n \, ,
\end{cases}
\end{equation}
with $p_{2n-i+1} = p_i$ for all $1\leq i\leq n$.
Recalling~\eqref{eq:oddPartsConjugate}, and given the identity $L^{\doubleSymSq}_{\alpha,\beta}(2n,2n) = 2 \cdot L^{\pointToLineSym}_{\alpha,\beta}(2n)$, we are thus reduced to prove the point-to-line version of Theorem~\ref{thm:doubleSymLPP}:
\begin{equation}
\label{eq:doubleSymLPP_pointToLine}
c^{\doubleSymSq}_{\alpha,\beta} \cdot \P\left(L^{\pointToLineSym}_{\alpha, \beta}(2n) \leq u\right)
= \left[ \prod_{i=1}^n p_i \right]^u\sum_{\lambda \subseteq u^{n}} \alpha^{\sum_{i=1}^n (-1)^{n-i}(u-\lambda_i)} \cdot \trans{\lambda}(p_1,\dots,p_n ;\beta)
\end{equation}
for $u\in\frac{1}{2}\Z_{\geq 0}$, where $L^{\pointToLineSym}_{\alpha,\beta}(2n)$ is taken on the modified weight distribution~\eqref{eq:doubleSymWeights_pointToLine}.

To prove the latter we first observe that, if $W$ is distributed as in~\eqref{eq:doubleSymWeights_pointToLine}, then
\begin{align*}
&\P(W=w) \\
= \, &\prod_{i<j< 2n-i+1} \!\!\! (1-p_i p_j) (p_i p_j)^{w_{i,j}}
\prod_{i<j = 2n-i+1} \frac{1-p_j^2}{1 + \beta p_j} \beta^{\1_{\frac{1}{2}+\Z}(w_{i,j})} p_j^{2w_{i,j}}
\prod_{j=1}^n (1-\alpha p_j) (\alpha p_j)^{w_{j,j}} \\
= \, &\frac{1}{c^{\doubleSymSq}_{\alpha,\beta}}
\alpha^{\sum_{j=1}^n w_{j,j}} \cdot
\beta^{\#\{i<j=2n-i+1 \colon w_{i,j} \in \frac{1}{2}+\Z\}}
\prod_{i=1}^n p_i^{\sum_{j=1}^{2n-i+1} w_{i,j}}
\prod_{j=1}^n p_j^{\sum_{i=1}^j w_{i,2n-j+1}}
\end{align*}
for all symmetric triangular tableau $w$ of shape $(2n,2n-1,\dots,1)$ such that $w_{i,j}\in\Z_{\geq 0}$ for $i+j< 2n+1$ and $w_{i,j}\in\frac{1}{2}\Z_{\geq 0}$ for $i+j= 2n+1$.
Taking $\cA=\Z_{\geq 0}$ and $\cB=\frac{1}{2}\Z_{\geq 0}$ in Proposition~\ref{prop:RSK}, we now consider the $\RSK$ bijection $w\mapsto t$.
As $w$ is symmetric, so is $t$ by property~\ref{prop:RSK_symmetry}.
Via a similar argument as in Subsection~\ref{subsec:RSK_antisymLPP}, one can realize that, under this bijection,
\[
\#\left\{i<j=2n-i+1 \colon w_{i,j} \in \frac{1}{2}+\Z \right\}
= \#\left\{i<j=2n-i+1 \colon t_{i,j} \in \frac{1}{2}+\Z \right\} \, .
\]
Using the latter identity as well as properties~\ref{prop:RSK_type} and~\ref{prop:RSK_symmetry} of the proposition, we see that the distribution that $W$ induces on its $\RSK$ image $T$ is given by
\[
\begin{split}
\P(T=t)
= \, &\frac{1}{c^{\doubleSymSq}_{\alpha,\beta}}
\alpha^{\sum_{j=1}^n (-1)^{n-j} t_{j,j}} \cdot
\beta^{\#\{i<j=2n-i+1 \colon t_{i,j} \in \frac{1}{2}+\Z\}} \\
&\qquad\qquad \times \prod_{i=1}^n
p_i^{\sigma_{2n-2i+1}(t) - \sigma_{2n-2i+2}(t)}
\prod_{j=1}^n
p_j^{\sigma_{2n-2j+1}(t) - \sigma_{2n-2j}(t)}
\end{split}
\]
for all symmetric tableau $t$ of shape $(2n,2n-1,\dots,1)$, satisfying~\eqref{eq:RSKordering}, and such that $t_{i,j}\in\Z_{\geq 0}$ for $i+j< 2n+1$ and $t_{i,j}\in\frac{1}{2}\Z_{\geq 0}$ for $i+j= 2n+1$.

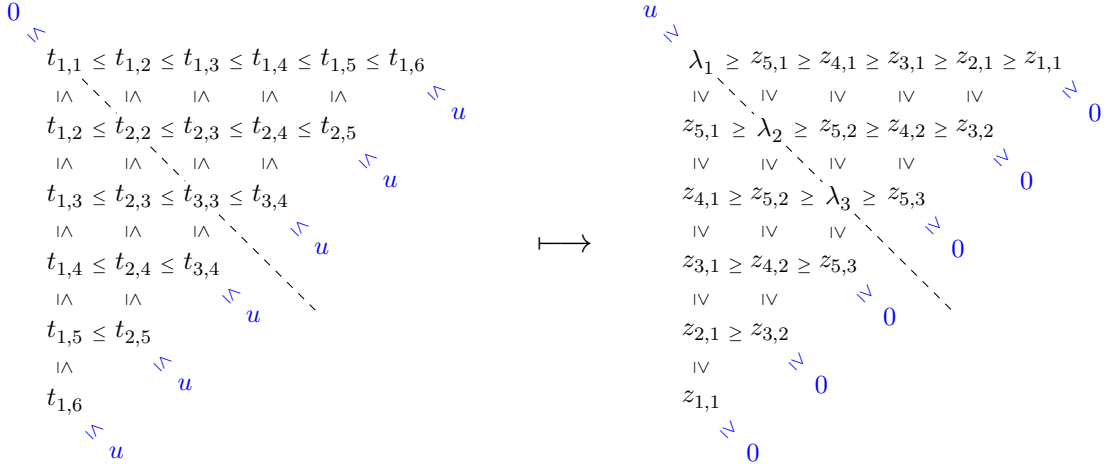
\begin{figure}
\centering
\begin{tikzpicture}[scale=0.9, every path/.style={draw=none}, every node/.style={scale=0.9, fill=white, scale=1, inner sep=1.5pt}, ineqE/.style={node contents={\tiny $\leq$}}, ineqW/.style={node contents={\tiny $\geq$}}, ineqS/.style={rotate=-90, node contents={\tiny $\leq$}}, ineqN/.style={rotate=-90, node contents={\tiny $\geq$}}, ineqSE/.style={rotate=-45, node contents={\tiny \blue{$\leq$}}}, ineqNW/.style={rotate=-45, node contents={\tiny \blue{$\geq$}}}]

\node (t11) at (1,-1) {$t_{1,1}$};
\node (t12) at (2,-1) {$t_{1,2}$};
\node (t13) at (3,-1) {$t_{1,3}$};
\node (t14) at (4,-1) {$t_{1,4}$};
\node (t15) at (5,-1) {$t_{1,5}$};
\node (t16) at (6,-1) {$t_{1,6}$};

\node (t21) at (1,-2) {$t_{1,2}$};
\node (t22) at (2,-2) {$t_{2,2}$};
\node (t23) at (3,-2) {$t_{2,3}$};
\node (t24) at (4,-2) {$t_{2,4}$};
\node (t25) at (5,-2) {$t_{2,5}$};

\node (t31) at (1,-3) {$t_{1,3}$};
\node (t32) at (2,-3) {$t_{2,3}$};
\node (t33) at (3,-3) {$t_{3,3}$};
\node (t34) at (4,-3) {$t_{3,4}$};

\node (t41) at (1,-4) {$t_{1,4}$};
\node (t42) at (2,-4) {$t_{2,4}$};
\node (t43) at (3,-4) {$t_{3,4}$};

\node (t51) at (1,-5) {$t_{1,5}$};
\node (t52) at (2,-5) {$t_{2,5}$};

\node (t61) at (1,-6) {$t_{1,6}$};

\draw[dashed] (t11) -- (t22) -- (t33) -- (4.7,-4.7);

\node[text=blue] (o) at (0.25,-0.25) {$0$};
\path (o) -- node[ineqSE]{} (t11);

\node[text=blue] (u1) at (7-0.25,-2+0.25) {$u$};
\node[text=blue] (u2) at (6-0.25,-3+0.25) {$u$};
\node[text=blue] (u3) at (5-0.25,-4+0.25) {$u$};
\node[text=blue] (u4) at (4-0.25,-5+0.25) {$u$};
\node[text=blue] (u5) at (3-0.25,-6+0.25) {$u$};
\node[text=blue] (u6) at (2-0.25,-7+0.25) {$u$};

\path (t16) -- node[ineqSE]{} (u1);
\path (t25) -- node[ineqSE]{} (u2);
\path (t34) -- node[ineqSE]{} (u3);
\path (t43) -- node[ineqSE]{} (u4);
\path (t52) -- node[ineqSE]{} (u5);
\path (t61) -- node[ineqSE]{} (u6);

\path (t11) -- node[ineqE]{} (t12) -- node[ineqE]{} (t13) -- node[ineqE]{} (t14) -- node[ineqE]{} (t15) -- node[ineqE]{} (t16);

\path (t21) -- node[ineqE]{} (t22) -- node[ineqE]{} (t23) -- node[ineqE]{} (t24) -- node[ineqE]{} (t25);

\path (t31) -- node[ineqE]{} (t32) -- node[ineqE]{} (t33) -- node[ineqE]{} (t34);

\path (t41) -- node[ineqE]{} (t42) -- node[ineqE]{} (t43);

\path (t51) -- node[ineqE]{} (t52);

\path (t11) -- node[ineqS]{} (t21) -- node[ineqS]{} (t31) -- node[ineqS]{} (t41) -- node[ineqS]{} (t51) -- node[ineqS]{} (t61);

\path (t12) -- node[ineqS]{} (t22) -- node[ineqS]{} (t32) -- node[ineqS]{} (t42) -- node[ineqS]{} (t52);

\path (t13) -- node[ineqS]{} (t23) -- node[ineqS]{} (t33) -- node[ineqS]{} (t43);

\path (t14) -- node[ineqS]{} (t24) -- node[ineqS]{} (t34);

\path (t15) -- node[ineqS]{} (t25);

\node at (8.3,-3.7) {\Large $\longmapsto$};

\begin{scope}[shift={(9.3,0)}]

\node (t11) at (1,-1) {$\lambda_{1}$};
\node (t12) at (2,-1) {$z_{5,1}$};
\node (t13) at (3,-1) {$z_{4,1}$};
\node (t14) at (4,-1) {$z_{3,1}$};
\node (t15) at (5,-1) {$z_{2,1}$};
\node (t16) at (6,-1) {$z_{1,1}$};

\node (t21) at (1,-2) {$z_{5,1}$};
\node (t22) at (2,-2) {$\lambda_{2}$};
\node (t23) at (3,-2) {$z_{5,2}$};
\node (t24) at (4,-2) {$z_{4,2}$};
\node (t25) at (5,-2) {$z_{3,2}$};

\node (t31) at (1,-3) {$z_{4,1}$};
\node (t32) at (2,-3) {$z_{5,2}$};
\node (t33) at (3,-3) {$\lambda_{3}$};
\node (t34) at (4,-3) {$z_{5,3}$};

\node (t41) at (1,-4) {$z_{3,1}$};
\node (t42) at (2,-4) {$z_{4,2}$};
\node (t43) at (3,-4) {$z_{5,3}$};

\node (t51) at (1,-5) {$z_{2,1}$};
\node (t52) at (2,-5) {$z_{3,2}$};

\node (t61) at (1,-6) {$z_{1,1}$};

\draw[dashed] (t11) -- (t22) -- (t33) -- (4.7,-4.7);

\node[text=blue] (u) at (0+0.25,0-0.25) {$u$};
\path (u) -- node[ineqNW]{} (t11);

\node[text=blue] (o1) at (7-0.25,-2+0.25) {$0$};
\node[text=blue] (o2) at (6-0.25,-3+0.25) {$0$};
\node[text=blue] (o3) at (5-0.25,-4+0.25) {$0$};
\node[text=blue] (o4) at (4-0.25,-5+0.25) {$0$};
\node[text=blue] (o5) at (3-0.25,-6+0.25) {$0$};
\node[text=blue] (o6) at (2-0.25,-7+0.25) {$0$};

\path (t16) -- node[ineqNW]{} (o1);
\path (t25) -- node[ineqNW]{} (o2);
\path (t34) -- node[ineqNW]{} (o3);
\path (t43) -- node[ineqNW]{} (o4);
\path (t52) -- node[ineqNW]{} (o5);
\path (t61) -- node[ineqNW]{} (o6);

\path (t11) -- node[ineqW]{} (t12) -- node[ineqW]{} (t13) -- node[ineqW]{} (t14) -- node[ineqW]{} (t15) -- node[ineqW]{} (t16);

\path (t21) -- node[ineqW]{} (t22) -- node[ineqW]{} (t23) -- node[ineqW]{} (t24) -- node[ineqW]{} (t25);

\path (t31) -- node[ineqW]{} (t32) -- node[ineqW]{} (t33) -- node[ineqW]{} (t34);

\path (t41) -- node[ineqW]{} (t42) -- node[ineqW]{} (t43);

\path (t51) -- node[ineqW]{} (t52);

\path (t11) -- node[ineqN]{} (t21) -- node[ineqN]{} (t31) -- node[ineqN]{} (t41) -- node[ineqN]{} (t51) -- node[ineqN]{} (t61);

\path (t12) -- node[ineqN]{} (t22) -- node[ineqN]{} (t32) -- node[ineqN]{} (t42) -- node[ineqN]{} (t52);

\path (t13) -- node[ineqN]{} (t23) -- node[ineqN]{} (t33) -- node[ineqN]{} (t43);

\path (t14) -- node[ineqN]{} (t24) -- node[ineqN]{} (t34);

\path (t15) -- node[ineqN]{} (t25);
\end{scope}
\end{tikzpicture}
\caption{A pictorial representation of change of variables~\eqref{eq:changeVariablesDoubleSym} for $n=3$.
The symmetric triangular array $t$, on the left-hand side, is mapped onto (two identical copies of) a split orthogonal pattern $z$, on the right-hand side.
The transformation reverses the ordering of the variables.
The dashed line on the right-hand side goes through the partition $\lambda=(\lambda_1,\dots,\lambda_n)=(z_{2n,1},\dots,z_{2n,n})$, which is the shape of $z$.
All entries, before and after the transformation, are bounded below by $0$ and above by $u$ (in blue).}
\label{fig:changeVariablesDoubleSym}
\end{figure}

Analogously to Subsection~\ref{subsec:RSK_antisymLPP}, we now write
\[
\P\left(L^{\pointToLineSym}_{\alpha,\beta}(2n) \leq u\right)
= \sum_{t\colon \max t_{i,j}\leq u} \P(T=t) \, .
\]
Next, we change variables in the latter summation, by setting
\begin{equation}
\label{eq:changeVariablesDoubleSym}
z_{i,j} := u - t_{2n+j-i,j} = u - t_{j,2n+j-i} \qquad \text{for} \,\, 1\leq i\leq 2n \, ,  \,\, 1\leq j\leq \ceil{i/2} \, .
\end{equation}
Such a transformation, illustrated in Figure~\ref{fig:changeVariablesDoubleSym}, defines a new array $z=(z_{i,j})_{1\leq i\leq 2n,  \, 1\leq j\leq \ceil{i/2}}$ that, due to the properties induced by $t$, turns out to be a split orthogonal pattern of height $2n$ and shape $\lambda \subseteq u^{n}$.
The \emph{atypical} entries of $z$ are in bijection with the half-integer entries of $t$ above (or, equivalently, below) the diagonal.
Using the fact that $\sigma_{2n-k}(t) = \ceil{k/2} u - \abs{z_k}$ for $0\leq k\leq 2n$, we then obtain:
\begin{align*}
& c^{\doubleSymSq}_{\alpha,\beta} \cdot \P\left(L^{\pointToLineSym}_{\alpha,\beta}(2n) \leq u\right) \\
= \, &\sum_{\lambda \subseteq u^{n}} \,
\sum_{z \in \soGT^{(2n)}_{\lambda}}
\alpha^{\sum_{j=1}^n (-1)^{n-j} (u-\lambda_j)}
\beta^{\abs{\atyp(z)}}
\prod_{i=1}^n
p_i^{u -\abs{z_{2i-1}} +\abs{z_{2i-2}}}
\prod_{j=1}^n
p_j^{-\abs{z_{2j-1}} + \abs{z_{2j}}}
 \\
= \, &\prod_{i=1}^n p_i^u
\sum_{\lambda \subseteq u^{n}}
\alpha^{\sum_{j=1}^n (-1)^{n-j}
(u-\lambda_j)}
\sum_{z \in \soGT^{(2n)}_{\lambda}}
\beta^{\abs{\atyp(z)}}
\prod_{i=1}^n
p_i^{\type(z)_{2i}-\type(z)_{2i-1}} \, .
\end{align*}
We recognize the latter sum over $\soGT^{(2n)}_{\lambda}$ to be the $\CB$-interpolating Schur polynomial appearing on the right-hand side of~\eqref{eq:doubleSymLPP_pointToLine}, thus proving the desired identity.

\section{Decomposition of Gelfand-Tsetlin patterns}
\label{sec:decomposition}

Using a method that we call \emph{decomposition of Gelfand-Tsetlin patterns} (actually including all types of patterns introduced in Section~\ref{sec:patterns}), here we prove Theorems~\ref{thm:decompositionCB}, \ref{thm:decompositionD}, and~\ref{thm:decompositionA}.
These results, in particular, imply the following identities: $C^{\antisymSq}_\beta = D^{\antisymSq}_\beta$ of Theorem~\ref{thm:antisymLPP} (and its specializations $C^{\antisymSq}_0 = D^{\antisymSq}_0$ of Corollary~\ref{coro:antisymLPP_0} and $C^{\antisymSq}_1 = D^{\antisymSq}_1$ of Corollary~\ref{coro:antisymLPP_1}), $C^{\antisymSq}_0 = E^{\antisymSq}_0$ of Corollary~\ref{coro:antisymLPP_0}, $C^{\symSq}_\alpha = D^{\symSq}_\alpha$ of Theorem~\ref{thm:symLPP} (and its specializations $C^{\symSq}_0 = D^{\symSq}_0$ of Corollary~\ref{coro:symLPP_0} and $C^{\symSq}_1 = D^{\symSq}_1$ of Corollary~\ref{coro:symLPP_1}) and $C^{\doubleSymSq}_{\alpha,0} = D^{\doubleSymSq}_{\alpha,0}$ of Theorem~\ref{thm:doubleSymLPP_beta=0}.

\subsection{Proof of Theorem~\ref{thm:decompositionCB}}
\label{subsec:decompositionCB}

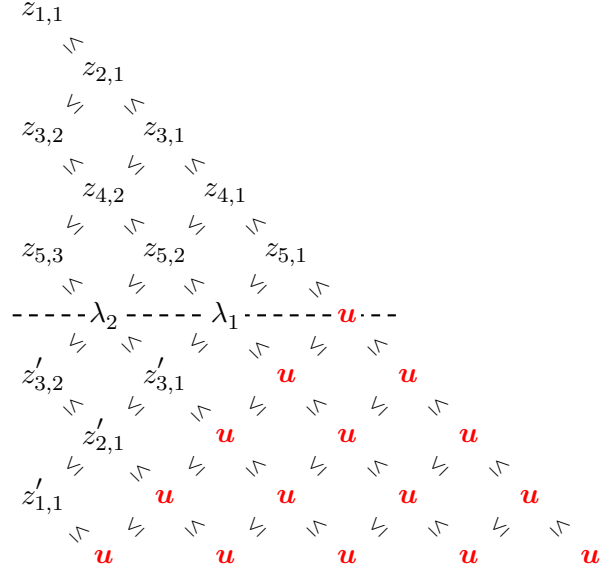
\begin{figure}
\centering
\begin{tikzpicture}[scale=0.8, every node/.style={fill=white, scale=1, inner sep=1.5pt}, ineq1/.style={rotate=45, node contents={\tiny $\leq$}}, ineq2/.style={rotate=-45, node contents={\tiny $\leq$}}]

\node (z11) at (1,-1) {$z_{1,1}$};
\node (z21) at (2,-2) {$z_{2,1}$};
\node (z22) at (0,-2) {};
\node (z31) at (3,-3) {$z_{3,1}$};
\node (z32) at (1,-3) {$z_{3,2}$};
\node (z41) at (4,-4) {$z_{4,1}$};
\node (z42) at (2,-4) {$z_{4,2}$};
\node (z43) at (0,-4) {};
\node (z51) at (5,-5) {$z_{5,1}$};
\node (z52) at (3,-5) {$z_{5,2}$};
\node (z53) at (1,-5) {$z_{5,3}$};
\node (z61) at (6,-6) {$\textcolor{red}{\bm{u}}$};
\node (z62) at (4,-6) {$\lambda_1$};
\node (z63) at (2,-6) {$\lambda_2$};
\node (z64) at (0,-6) {};
\node (z71) at (7,-7) {$\textcolor{red}{\bm{u}}$};
\node (z72) at (5,-7) {$\textcolor{red}{\bm{u}}$};
\node (z73) at (3,-7) {$z'_{3,1}$};
\node (z74) at (1,-7) {$z'_{3,2}$};
\node (z81) at (8,-8) {$\textcolor{red}{\bm{u}}$};
\node (z82) at (6,-8) {$\textcolor{red}{\bm{u}}$};
\node (z83) at (4,-8) {$\textcolor{red}{\bm{u}}$};
\node (z84) at (2,-8) {$z'_{2,1}$};
\node (z85) at (0,-8) {};
\node (z91) at (9,-9) {$\textcolor{red}{\bm{u}}$};
\node (z92) at (7,-9) {$\textcolor{red}{\bm{u}}$};
\node (z93) at (5,-9) {$\textcolor{red}{\bm{u}}$};
\node (z94) at (3,-9) {$\textcolor{red}{\bm{u}}$};
\node (z95) at (1,-9) {$z'_{1,1}$};
\node (z101) at (10,-10) {$\textcolor{red}{\bm{u}}$};
\node (z102) at (8,-10) {$\textcolor{red}{\bm{u}}$};
\node (z103) at (6,-10) {$\textcolor{red}{\bm{u}}$};
\node (z104) at (4,-10) {$\textcolor{red}{\bm{u}}$};
\node (z105) at (2,-10) {$\textcolor{red}{\bm{u}}$};
\node (z106) at (0,-10) {};

% arrows
\path (z11) -- node[ineq2]{} (z21);
\path (z32) -- node[ineq1]{} (z21) -- node[ineq2]{} (z31);
\path (z32) -- node[ineq2]{} (z42) -- node[ineq1]{} (z31) -- node[ineq2]{} (z41);
\path (z53) -- node[ineq1]{} (z42) -- node[ineq2]{} (z52) -- node[ineq1]{} (z41) -- node[ineq2]{} (z51);
\path (z53) -- node[ineq2]{} (z63) -- node[ineq1]{} (z52) -- node[ineq2]{} (z62) -- node[ineq1]{} (z51) -- node[ineq2]{} (z61);
\path (z74) -- node[ineq1]{} (z63) -- node[ineq2]{} (z73) -- node[ineq1]{} (z62) -- node[ineq2]{} (z72) -- node[ineq1]{} (z61) -- node[ineq2]{} (z71);
\path (z74) -- node[ineq2]{} (z84) -- node[ineq1]{} (z73) -- node[ineq2]{} (z83) -- node[ineq1]{} (z72) -- node[ineq2]{} (z82) -- node[ineq1]{} (z71) -- node[ineq2]{} (z81);
\path (z95) -- node[ineq1]{} (z84) -- node[ineq2]{} (z94) -- node[ineq1]{} (z83) -- node[ineq2]{} (z93) -- node[ineq1]{} (z82) -- node[ineq2]{} (z92) -- node[ineq1]{} (z81) -- node[ineq2]{} (z91);
\path (z95) -- node[ineq2]{} (z105) -- node[ineq1]{} (z94) -- node[ineq2]{} (z104) -- node[ineq1]{} (z93) -- node[ineq2]{} (z103) -- node[ineq1]{} (z92) -- node[ineq2]{} (z102) -- node[ineq1]{} (z91) -- node[ineq2]{} (z101);

\begin{pgfonlayer}{background}
\draw[dash pattern=on 3.3pt off 2.8pt, thick] (0.5,-6) -- (6.8,-6);
\end{pgfonlayer}

\end{tikzpicture}
\caption{A split orthogonal pattern of height $2n+2m$ and shape $u^{n+m}$, for $n\geq m$, can be decomposed into: a ``frozen'' triangular part of $u$'s, shown in red; a split orthogonal pattern $z$ of height $2n$ and shape $(u^{n-m}, \lambda)$, which overlaps with the frozen part if $n>m$; a split orthogonal pattern $z'$ of height $2m$ and shape $\lambda$.
The shapes of $z$ and $z'$ lie at the level of the ``cut'', illustrated by the dashed line.
Such a decomposition is valid, in particular, for symplectic patterns of even height.
The picture is for $n=3$ and $m=2$.}
\label{fig:frozenSoOddPattern}
\end{figure}

Let us start by proving~\eqref{eq:decompositionCB}.
The idea is to show that, for $n\geq m$ and $u\in\frac{1}{2}\Z_{\geq 0}$, there exists a bijection
\begin{equation}
\label{eq:soBijection}
\soGT^{(2n+2m)}_{u^{n+m}}
\quad \longleftrightarrow \quad
\bigcup_{\lambda \subseteq u^{m}}
\soGT^{(2n)}_{(u^{n-m} , \lambda)} \times \soGT^{(2m)}_{\lambda}
\end{equation}
that yields the desired identity.

We invite the reader to see Figure~\ref{fig:frozenSoOddPattern} for an illustration of the bijection, which can be constructed as follows.
We start by observing that any $Z\in \soGT^{(2n+2m)}_{u^{n+m}}$ satisfies $Z_{i,j}=u$ for $i-j \geq n+m$, due to the interlacing conditions.
We call \emph{frozen part} the portion of the pattern whose entries are all equal to $u$.
Let us now cut $Z$ horizontally at level $2n$ (from the top) and ignore all the frozen entries below such a cut, i.e.\ all $Z_{i,j}=u$ with $i-j \geq n+m$ and $i>2n$.
What remains can be seen as the union of two smaller split orthogonal patterns.
The first one, denoted by $z$, is made of the first $2n$ rows of $Z$ (from the top), i.e.\ $z_{i,j}:=Z_{i,j}$ for all $1\leq i\leq 2n$ and $1\leq j\leq \ceil{i/2}$.
The shape of $z$ is $(u^{n-m}, Z_{2n,n-m+1},\dots,Z_{2n,n})$.
The second pattern, denoted by $z'$, is obtained by reading from bottom to top the last $2m$ rows of $Z$ after removing the \emph{whole} frozen part: namely, $z'_{i,j} := Z_{2n+2m-i,n+m-i+j}$ for all $1\leq i\leq 2m$ and $1\leq j\leq \ceil{i/2}$.
The shape of $z'$ is
$(Z_{2n,n-m+1},\dots,Z_{2n,n})$.
Notice that, when $n>m$, $z$ overlaps with the frozen part; conversely, when $n=m$, there is no overlap and the two patterns $z$ and $z'$ have the same shape.
By definition of split orthogonal pattern, each of the odd ends of $Z$ is either an integer or a half-integer independently of everything else; therefore, the same holds for $z$ and $z'$.
Again by definition, all the entries of $Z$ \emph{except the odd ends} are either simultaneously integers or simultaneously half-integers; therefore, the same holds for $z$ and $z'$.
Moreover, as can be visualized in Figure \ref{fig:frozenSoOddPattern}, the interlacing conditions for $Z$ directly imply that: (1) $z$ and $z'$ also satisfy the interlacing conditions and (2) all entries of $z$ and $z'$ are less than or equal to $u$.
Changing now notation and denoting
\begin{align*}
\lambda
:=(\lambda_1,\dots,\lambda_m)
:=(Z_{2n,n-m+1},\dots,Z_{2n,n}) \, ,
\end{align*}
it turns out that the shape of $z$ is $(u^{n-m}, \lambda)$ and the shape of $z'$ is $\lambda$.
Here, $\lambda$ is an arbitrary $m$-partition or $m$-half-partition (according to whether $u$ is an integer or a half-integer\footnote{Notice that the $\lambda_i$'s occupy a row of even index, so none of them is an odd end.}) such that $\lambda \subseteq u^{m}$.
This proves that $z\in \soGT^{(2n)}_{(u^{n-m} , \lambda)}$ and $z' \in \soGT^{(2m)}_{\lambda}$, thus establishing bijection~\eqref{eq:soBijection}.

Under this bijection, it is easy to verify that $\type(Z)_i = \type(z)_i$ for $1\leq i \leq 2n$ and $\type(Z)_{2n+i} = u - \type(z')_{2m-i+1}$ for $1\leq i \leq 2m$.
It is also immediate that the number of atypical entries of $Z$ equals the number of atypical entries of $z$ and $z'$, i.e.\ $\abs{\atyp(Z)}=\abs{\atyp(z)} + \abs{\atyp(z')}$.
We will now use these facts to prove~\eqref{eq:decompositionCB}.
In particular, we will split the summation over all patterns $Z\in \soGT^{(2n+2m)}_{u^{n+m}}$ by first summing over the sub-patterns $z\in \soGT^{(2n)}_{(u^{n-m} , \lambda)}$ and $z'\in \soGT^{(2m)}_{\lambda}$ for a fixed $m$-(half-)partition  
$\lambda$, and then summing over all $\lambda \subseteq u^{m}$.
According to Definition~\ref{def:CBtrans}, we thus have:
\begin{align*}
&\trans{u^{n+m}}(x_1,\dots,x_n,y_m,\dots,y_1; \beta) \\
= \, &\sum_{Z \in \soGT^{(2n+2m)}_{u^{n+m}}}
\beta^{\abs{\atyp(Z)}}
\prod_{i=1}^n
x_i^{\type(Z)_{2i} - \type(Z)_{2i-1}}
\prod_{i=1}^m y_{i}^{\type(Z)_{2n+2m-2i+2} - \type(Z)_{2n+2m-2i+1}} \\
= \, &\sum_{\lambda\subseteq u^{m}}
\sum_{z \in \soGT^{(2n)}_{(u^{n-m} , \lambda)}}
\beta^{\abs{\atyp(z)}}
\prod_{i=1}^{n}
x_i^{\type(z)_{2i} - \type(z)_{2i-1}} \\*
&\qquad\qquad\qquad\qquad\qquad\qquad\qquad \times
\sum_{z' \in \soGT^{(2m)}_{\lambda}}
\beta^{\abs{\atyp(z')}}
\prod_{i=1}^n
y_i^{[u-\type(z')_{2i-1}] - [u-\type(z')_{2i}]} \\
=\,& \sum_{\lambda\subseteq u^{m}} 
\trans{(u^{n-m} , \lambda)} (x_1, \dots, x_n; \beta) \cdot
\trans{\lambda} (y_1, \dots, y_m; \beta) \, .
\end{align*}
Since $\CB$-interpolating Schur polynomials are symmetric (see Subsection~\ref{subsec:CBtransition}), we have
\[
\trans{u^{n+m}}(x_1,\dots,x_n,y_m,\dots,y_1; \beta)
= \trans{u^{n+m}}(x_1,\dots,x_n,y_1,\dots,y_m; \beta) \, ,
\]
which concludes the proof of~\eqref{eq:decompositionCB}.

The specializations of~\eqref{eq:decompositionCB} to $\beta=0$ and $\beta=1$ yield the corresponding identities~\eqref{eq:decompositionC} and~\eqref{eq:decompositionB} for even symplectic and odd orthogonal Schur polynomials, respectively.
Notice however that a more \emph{ad hoc} proof of~\eqref{eq:decompositionC} is based on the restriction of bijection~\eqref{eq:soBijection} to symplectic patterns of even height, which reads as
\begin{equation}
\label{eq:spBijection}
\spGT^{(2n+2m)}_{u^{n+m}}
\quad \longleftrightarrow \quad
\bigcup_{\lambda\subseteq u^{m}} \spGT^{(2n)}_{(u^{n-m} , \lambda)} \times \spGT^{(2m)}_{\lambda}
\end{equation}
for $u\in\Z_{\geq 0}$.

We now prove~\eqref{eq:decompositionOddSp}.
This is based on the bijection
\begin{equation}
\label{eq:oddSpBijection}
\spGT^{(2n+2m+2)}_{u^{n+m+1}}
\quad \longleftrightarrow \quad
\bigcup_{\lambda\subseteq u^{m+1}} \spGT^{(2n+1)}_{(u^{n-m} , \lambda)} \times \spGT^{(2m+1)}_{\lambda} \, ,
\end{equation}
valid for $n\geq m$ and $u\in\Z_{\geq 0}$, which can be proved via a ``graphical'' decomposition as we did for~\eqref{eq:soBijection}.
Essentially, one cuts a symplectic pattern $Z$ of height $2n+2m+2$ and rectangular shape $u^{m+n+1}$ at the level of the $(2n+1)$-th row, thus obtaining a frozen part of $u$'s and two sub-patterns $z$ and $z'$ of height $2n+1$ and $2m+1$, respectively.

By Definition~\ref{def:spSchur}, we have
\begin{align*}
&\sp^{(2n+2m+2)}_{u^{n+m+1}}(x_1,\dots,x_n,s^{-1},y_m,\dots,y_1) \\
= \, &\sum_{Z \in \spGT^{(2n+2m+2)}_{u^{n+m+1}}}
\prod_{i=1}^n
x_i^{\type(Z)_{2i} - \type(Z)_{2i-1}}
\cdot
s^{-[\type(Z)_{2n+2} - \type(Z)_{2n+1}]} \\*
&\qquad\qquad\qquad\qquad\qquad \times
\prod_{i=1}^m y_{i}^{\type(Z)_{2n+2m-2i+4} - \type(Z)_{2n+2m-2i+3}} \, . \\
\intertext{Using now bijection~\eqref{eq:oddSpBijection}, under which $\type(Z)_i = \type(z)_i$ for $1\leq i \leq 2n+1$ and $\type(Z)_{2n+1+i} = u - \type(z')_{2m+2-i}$ for $1\leq i \leq 2m+1$, the above expression becomes}
&\sum_{\lambda\subseteq u^{m+1}}
\sum_{z \in \spGT^{(2n+1)}_{(u^{n-m} , \lambda)}}
\prod_{i=1}^{n}
x_i^{\type(z)_{2i} - \type(z)_{2i-1}} \cdot
s^{\type(z)_{2n+1}} \\*
&\qquad\qquad\qquad\qquad\qquad \times
\sum_{z' \in \spGT^{(2m+1)}_{\lambda}}
\prod_{i=1}^m
y_i^{[u-\type(z')_{2i-1}] - [u-\type(z')_{2i}]} \cdot 
s^{-u+\type(z')_{2m+1}} \\
=\,& s^{-u} \sum_{\lambda\subseteq u^{m+1}} 
\sp^{(2n+1)}_{(u^{n-m} , \lambda)} (x_1, \dots, x_n; s) \cdot
\sp^{(2m+1)}_{\lambda} (y_1, \dots, y_m; s) \, .
\end{align*}
The latter equality follows from Definition~\ref{def:oddSpSchur} of odd symplectic characters.
On the other hand, recalling from Subsection~\ref{subsec:sp} the invariance properties of even symplectic characters, we have
\[
\sp^{(2n+2m+2)}_{u^{n+m+1}}(x_1,\dots,x_n,s^{-1},y_m,\dots,y_1)
= \sp^{(2n+2m+2)}_{u^{n+m+1}}(x_1,\dots,x_n,y_1,\dots,y_m,s) \, ,
\]
from which~\eqref{eq:decompositionOddSp} follows.

\subsection{Proof of Theorem~\ref{thm:decompositionD}}
\label{subsec:decompositionD}

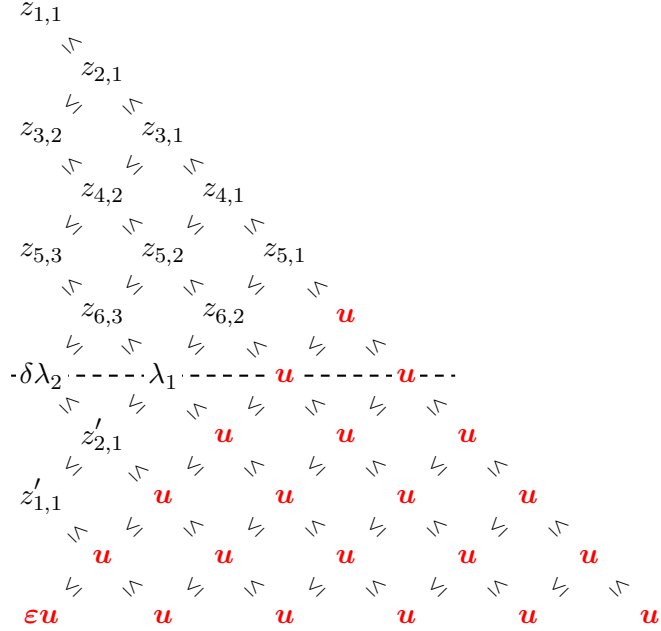
\begin{figure}
\centering
\begin{tikzpicture}[scale=0.8, every path/.style={draw=none}, every node/.style={fill=white, scale=1, inner sep=1.5pt}, ineq1/.style={rotate=45, node contents={\tiny $\leq$}}, ineq2/.style={rotate=-45, node contents={\tiny $\leq$}}]

\node (z11) at (1,-1) {$z_{1,1}$};
\node (z21) at (2,-2) {$z_{2,1}$};
\node (z22) at (0,-2) {};
\node (z31) at (3,-3) {$z_{3,1}$};
\node (z32) at (1,-3) {$z_{3,2}$};
\node (z41) at (4,-4) {$z_{4,1}$};
\node (z42) at (2,-4) {$z_{4,2}$};
\node (z43) at (0,-4) {};
\node (z51) at (5,-5) {$z_{5,1}$};
\node (z52) at (3,-5) {$z_{5,2}$};
\node (z53) at (1,-5) {$z_{5,3}$};
\node (z61) at (6,-6) {$\textcolor{red}{\bm{u}}$};
\node (z62) at (4,-6) {$z_{6,2}$};
\node (z63) at (2,-6) {$z_{6,3}$};
\node (z64) at (0,-6) {};
\node (z71) at (7,-7) {$\textcolor{red}{\bm{u}}$};
\node (z72) at (5,-7) {$\textcolor{red}{\bm{u}}$};
\node (z73) at (3,-7) {$\lambda_1$};
\node (z74) at (1,-7) {$\delta \lambda_2$};
\node (z81) at (8,-8) {$\textcolor{red}{\bm{u}}$};
\node (z82) at (6,-8) {$\textcolor{red}{\bm{u}}$};
\node (z83) at (4,-8) {$\textcolor{red}{\bm{u}}$};
\node (z84) at (2,-8) {$z'_{2,1}$};
\node (z85) at (0,-8) {};
\node (z91) at (9,-9) {$\textcolor{red}{\bm{u}}$};
\node (z92) at (7,-9) {$\textcolor{red}{\bm{u}}$};
\node (z93) at (5,-9) {$\textcolor{red}{\bm{u}}$};
\node (z94) at (3,-9) {$\textcolor{red}{\bm{u}}$};
\node (z95) at (1,-9) {$z'_{1,1}$};
\node (z101) at (10,-10) {$\textcolor{red}{\bm{u}}$};
\node (z102) at (8,-10) {$\textcolor{red}{\bm{u}}$};
\node (z103) at (6,-10) {$\textcolor{red}{\bm{u}}$};
\node (z104) at (4,-10) {$\textcolor{red}{\bm{u}}$};
\node (z105) at (2,-10) {$\textcolor{red}{\bm{u}}$};
\node (z106) at (0,-10) {};
\node (z111) at (11,-11) {$\textcolor{red}{\bm{u}}$};
\node (z112) at (9,-11) {$\textcolor{red}{\bm{u}}$};
\node (z113) at (7,-11) {$\textcolor{red}{\bm{u}}$};
\node (z114) at (5,-11) {$\textcolor{red}{\bm{u}}$};
\node (z115) at (3,-11) {$\textcolor{red}{\bm{u}}$};
\node (z116) at (1,-11) {$\textcolor{red}{\bm{\epsilon u}}$};

% arrows
\path (z11) -- node[ineq2]{} (z21);
\path (z32) -- node[ineq1]{} (z21) -- node[ineq2]{} (z31);
\path (z32) -- node[ineq2]{} (z42) -- node[ineq1]{} (z31) -- node[ineq2]{} (z41);
\path (z53) -- node[ineq1]{} (z42) -- node[ineq2]{} (z52) -- node[ineq1]{} (z41) -- node[ineq2]{} (z51);
\path (z53) -- node[ineq2]{} (z63) -- node[ineq1]{} (z52) -- node[ineq2]{} (z62) -- node[ineq1]{} (z51) -- node[ineq2]{} (z61);
\path (z74) -- node[ineq1]{} (z63) -- node[ineq2]{} (z73) -- node[ineq1]{} (z62) -- node[ineq2]{} (z72) -- node[ineq1]{} (z61) -- node[ineq2]{} (z71);
\path (z74) -- node[ineq2]{} (z84) -- node[ineq1]{} (z73) -- node[ineq2]{} (z83) -- node[ineq1]{} (z72) -- node[ineq2]{} (z82) -- node[ineq1]{} (z71) -- node[ineq2]{} (z81);
\path (z95) -- node[ineq1]{} (z84) -- node[ineq2]{} (z94) -- node[ineq1]{} (z83) -- node[ineq2]{} (z93) -- node[ineq1]{} (z82) -- node[ineq2]{} (z92) -- node[ineq1]{} (z81) -- node[ineq2]{} (z91);
\path (z95) -- node[ineq2]{} (z105) -- node[ineq1]{} (z94) -- node[ineq2]{} (z104) -- node[ineq1]{} (z93) -- node[ineq2]{} (z103) -- node[ineq1]{} (z92) -- node[ineq2]{} (z102) -- node[ineq1]{} (z91) -- node[ineq2]{} (z101);
\path (z116) -- node[ineq1]{} (z105) -- node[ineq2]{} (z115) -- node[ineq1]{} (z104) -- node[ineq2]{} (z114) -- node[ineq1]{} (z103) -- node[ineq2]{} (z113) -- node[ineq1]{} (z102) -- node[ineq2]{} (z112) -- node[ineq1]{} (z101) -- node[ineq2]{} (z111);

\begin{pgfonlayer}{background}
\draw[dash pattern=on 3.3pt off 2.8pt, thick] (0.5,-7) -- (7.8,-7);
\end{pgfonlayer}

\end{tikzpicture}
\caption{An orthogonal pattern of height $2n+2m-1$ and shape $u^{n+m}$, for $n\geq m$, can be decomposed into: a ``frozen'' triangular part of $u$'s, shown in red; an orthogonal pattern $z$ of height $2n-1$ and shape $(u^{n-m} , \lambda_\delta)$, which overlaps with the frozen part if $n>m$; an orthogonal pattern $z'$ of height $2m-1$ and shape $\lambda_\delta$.
The shapes of $z$ and $z'$ lie at the level of the ``cut'', illustrated by the dashed line.
The picture is for $n=4$ and $m=2$.}
\label{fig:frozenSoEvenPattern}
\end{figure}

Eq.~\eqref{eq:decompositionBintoD} immediately follows from~\eqref{eq:decompositionD} and~\eqref{eq:soEvenRect=soOddRect}, so it suffices to prove~\eqref{eq:decompositionD}.
For this, we are going to adapt the proof of Theorem~\ref{thm:decompositionCB} to the case of orthogonal patterns and characters of type D.
We wish now to show that, for $n\geq m$, $u\in \frac{1}{2}\Z_{\geq 0}$ and $\epsilon=\pm 1$, there exists a bijection
\begin{equation}
\label{eq:evenSoBijection}
\oGT^{(2n+2m-1)}_{u^{n+m}_{\epsilon}}
\quad \longleftrightarrow \quad
\bigcup_{\lambda_{\delta} \subseteq u^{m}} \oGT^{(2n-1)}_{(u^{n-m} , \lambda_{\delta})} \times \oGT^{(2m-1)}_{\lambda_{\delta}}
\end{equation}
that yields identity~\eqref{eq:decompositionD}.

Similarly to the cases considered in the previous subsection, an orthogonal pattern $Z$ of height $2n+2m-1$ and shape $u^{n+m}_{\epsilon}$ has a \emph{frozen part} of $u$'s, as visualized in Figure~\ref{fig:frozenSoEvenPattern}.
Ignoring all the frozen entries below the $(2n-1)$-th row, we are left with two smaller orthogonal patterns of height $2n-1$ and $2m-1$ respectively.
The first one, denoted by $z$, is defined by $z_{i,j}:=Z_{i,j}$ for all $1\leq i\leq 2n-1$ and $1\leq j\leq \ceil{i/2}$ and has shape $(u^{n-m}, Z_{2n-1,n-m+1},\dots,Z_{2n-1,n})$.
The second one, denoted by $z'$, is defined by $z'_{i,j} := Z_{2(n+m)-2-i,n+m-1-i+j}$ for all $1\leq i\leq 2m-1$ and $1\leq j\leq \ceil{i/2}$ and has shape $(Z_{2n-1,n-m+1},\dots,Z_{2n-1,n})$.
These are indeed orthogonal patterns as they inherit the properties of $Z$.
Denoting
\begin{align*}
\lambda_{\delta}
:=(\lambda_1,\dots,\delta\lambda_m)
:=(Z_{2n-1,n-m+1},\dots,Z_{2n-1,n}) \, ,
\end{align*}
it turns out that the shape of $z$ is $(u^{n-m}, \lambda_{\delta})$ and the shape of $z'$ is $\lambda_{\delta}$.
Here, $\lambda_\delta$ is an arbitrary signed $m$-partition or signed $m$-half-partition (according to whether $u$ is an integer or a half-integer) such that $\lambda_\delta \subseteq u^{m}$.
This proves that $z\in \oGT^{(2n-1)}_{(u^{n-m}  , \lambda_{\delta})}$ and $z' \in \oGT^{(2m-1)}_{\lambda_{\delta}}$, thus establishing bijection~\eqref{eq:evenSoBijection}.
Notice that $\type(Z)_i = \type(z)_i$ for $1\leq i \leq 2n-1$, $\type(Z)_{2n+i-1} = u - \type(z')_{2m-i}$ for $1\leq i \leq 2m-1$ and $\type(Z)_{2n+2m-1} = u$.

We now proceed to prove~\eqref{eq:decompositionD}.
Starting from Definition~\ref{def:soEvenSchur} of even orthogonal Schur polynomials and applying the results described above, we obtain:
\begin{align*}
&\so^{(2n+2m)}_{u^{n+m}_{\epsilon}}(x_1,\dots,x_n,y_m,\dots,y_1) \\
= &\sum_{Z \in \oGT^{(2n+2m-1)}_{u^{n+m}_{\epsilon}}}
x_1^{\sign(Z_{1,1}) \type(Z)_1}
\prod_{i=2}^n
x_i^{\sign(Z_{2i-3,i-1}) \sign(Z_{2i-1,i}) [\type(Z)_{2i-1} - \type(Z)_{2i-2}]} \\*
& \times
\prod_{i=1}^m y_{i}^{\sign(Z_{2n+2m-2i-1,n+m-i}) \sign(Z_{2n+2m-2i+1,n+m-i+1}) [\type(Z)_{2n+2m-2i+1} - \type(Z)_{2n+2m-2i}]} \\
= &\sum_{\lambda_\delta \subseteq u^{m}}
\sum_{z \in \oGT^{(2n-1)}_{(u^{n-m}  , \lambda_\delta)}}
\!\!\!\!\!\!\!\!\!
x_1^{\sign(z_{1,1}) \type(z)_1}
\prod_{i=2}^n
x_i^{\sign(z_{2i-3,i-1}) \sign(z_{2i-1,i}) [\type(z)_{2i-1} - \type(z)_{2i-2}]} \\*
&\times
\sum_{z' \in \oGT^{(2m-1)}_{\lambda_\delta}}
y_1^{\epsilon \sign(z'_{1,1})\type(z')_1} 
\prod_{i=2}^{m}
y_{i}^{\sign(z'_{2i-1,i}) \sign(z'_{2i-3,i-1}) [\type(z')_{2i-1} - \type(z')_{2i-2})]} \\
= &\sum_{\lambda_\delta \subseteq u^{m}}
\sum_{z \in \oGT^{(2n-1)}_{(u^{n-m} , \lambda_\delta)}}
\!\!\!\!\!\!\!\!\!
x_1^{\sign(z_{1,1}) \type(z)_1}
\prod_{i=2}^n
x_i^{\sign(z_{2i-3,i-1}) \sign(z_{2i-1,i}) [\type(z)_{2i-1} - \type(z)_{2i-2}]} \\*
&\times
\sum_{z' \in \oGT^{(2m-1)}_{\lambda_{\delta\epsilon}}}
y_1^{\sign(z'_{1,1})\type(z')_1} 
\prod_{i=2}^{m}
y_{i}^{\sign(z'_{2i-1,i}) \sign(z'_{2i-3,i-1}) [\type(z')_{2i-1} - \type(z')_{2i-2})]} \, .
\end{align*}
For the latter equality we have set $z'_{2i-1,i} \mapsto \epsilon z'_{2i-1,i}$ for $1\leq i \leq m$, thus changing, if $\epsilon=-1$, the sign of all odd ends of $z'$.
The two sums over orthogonal patterns are, by definition, the two even orthogonal Schur polynomials appearing on the right-hand side of~\eqref{eq:decompositionD}.
On the other hand, by symmetry, the variables of the initial orthogonal Schur polynomial of shape $u^{n+m}_{\epsilon}$ can be reordered to get the left-hand side of~\eqref{eq:decompositionD}.

\subsection{Proof of Theorem~\ref{thm:decompositionA}}
\label{subsec:decompositionA}

This proof is similar in spirit to the previous ones, but differs for the presence of \emph{two} ``frozen parts'' (instead of one) in a \emph{triangular} pattern (instead of ``half-triangular'').

Let $u\geq v\geq 0$ be integers.
We will first show the existence of two natural bijections:
\begin{align}
\label{eq:sBijection1}
\GT^{(n+m)}_{(u^{n}  , v^{m})}
\quad &\longleftrightarrow \quad
\bigcup_{v^{n} \subseteq \mu \subseteq u^{n}}
\GT^{(m)}_{(\mu, \, v^{m-n})} \times \GT^{(n)}_{\mu} &&\text{if } n\leq m \, , \\
\label{eq:sBijection2}
\GT^{(n+m)}_{(u^{n}  , v^{m})}
\quad &\longleftrightarrow \quad
\bigcup_{v^{m} \subseteq \mu \subseteq u^{m}}
\GT^{(n)}_{(u^{n-m}  , \mu)} \times \GT^{(m)}_{\mu} &&\text{if } n\geq m \, .
\end{align}

\begin{figure}
\centering
\begin{subfigure}[b]{.5\linewidth}
\centering
\begin{tikzpicture}[scale=0.8, every path/.style={draw=none}, every node/.style={fill=white, scale=1, inner sep=1.5pt}, ineq1/.style={rotate=45, node contents={\tiny $\leq$}}, ineq2/.style={rotate=-45, node contents={\tiny $\leq$}}]

\begin{pgfonlayer}{background}
\draw[dash pattern=on 3.3pt off 2.8pt, thick] (-1.8,-3) -- (3.8,-3);
\end{pgfonlayer}

\node (z11) at (1,-1) {$z_{1,1}$};
\node (z21) at (2,-2) {$z_{2,1}$};
\node (z22) at (0,-2) {$z_{2,2}$};
\node (z31) at (3,-3) {$\mu_1$};
\node (z32) at (1,-3) {$\mu_2$};
\node (z33) at (-1,-3) {$\blue{\bm{v}}$};
\node (z41) at (4,-4) {$\textcolor{red}{\bm{u}}$};
\node (z42) at (2,-4) {$z'_{1,1}$};
\node (z43) at (0,-4) {$\blue{\bm{v}}$};
\node (z44) at (-2,-4) {$\blue{\bm{v}}$};
\node (z51) at (5,-5) {$\textcolor{red}{\bm{u}}$};
\node (z52) at (3,-5) {$\textcolor{red}{\bm{u}}$};
\node (z53) at (1,-5) {$\blue{\bm{v}}$};
\node (z54) at (-1,-5) {$\blue{\bm{v}}$};
\node (z55) at (-3,-5) {$\blue{\bm{v}}$};

% arrows
\path (z22) -- node[ineq1]{} (z11) -- node[ineq2]{} (z21);
\path (z33) -- node[ineq1]{} (z22) -- node[ineq2]{} (z32) -- node[ineq1]{} (z21) -- node[ineq2]{} (z31);
\path (z44) -- node[ineq1]{} (z33) -- node[ineq2]{} (z43) -- node[ineq1]{} (z32) -- node[ineq2]{} (z42) -- node[ineq1]{} (z31) -- node[ineq2]{} (z41);
\path (z55) -- node[ineq1]{} (z44) -- node[ineq2]{} (z54) -- node[ineq1]{} (z43) -- node[ineq2]{} (z53) -- node[ineq1]{} (z42) -- node[ineq2]{} (z52) -- node[ineq1]{} (z41) -- node[ineq2]{} (z51);

\end{tikzpicture}
\subcaption{Pattern in $\GT^{(5)}_{(u^{2} , v^{3})}$.}
\label{subfig:frozenGTpattern_n<m}
\end{subfigure}%
\begin{subfigure}[b]{.5\linewidth}
\centering
\begin{tikzpicture}[scale=0.8, every path/.style={draw=none}, every node/.style={fill=white, scale=1, inner sep=1.5pt}, ineq1/.style={rotate=45, node contents={\tiny $\leq$}}, ineq2/.style={rotate=-45, node contents={\tiny $\leq$}}]

\begin{pgfonlayer}{background}
\draw[dash pattern=on 3.3pt off 2.8pt, thick] (-1.8,-3) -- (3.8,-3);
\end{pgfonlayer}

\node (z11) at (1,-1) {$z_{1,1}$};
\node (z21) at (2,-2) {$z_{2,1}$};
\node (z22) at (0,-2) {$z_{2,2}$};
\node (z31) at (3,-3) {$\textcolor{red}{\bm{u}}$};
\node (z32) at (1,-3) {$\mu_1$};
\node (z33) at (-1,-3) {$\mu_2$};
\node (z41) at (4,-4) {$\textcolor{red}{\bm{u}}$};
\node (z42) at (2,-4) {$\textcolor{red}{\bm{u}}$};
\node (z43) at (0,-4) {$z'_{1,1}$};
\node (z44) at (-2,-4) {$\blue{\bm{v}}$};
\node (z51) at (5,-5) {$\textcolor{red}{\bm{u}}$};
\node (z52) at (3,-5) {$\textcolor{red}{\bm{u}}$};
\node (z53) at (1,-5) {$\textcolor{red}{\bm{u}}$};
\node (z54) at (-1,-5) {$\blue{\bm{v}}$};
\node (z55) at (-3,-5) {$\blue{\bm{v}}$};

% arrows
\path (z22) -- node[ineq1]{} (z11) -- node[ineq2]{} (z21);
\path (z33) -- node[ineq1]{} (z22) -- node[ineq2]{} (z32) -- node[ineq1]{} (z21) -- node[ineq2]{} (z31);
\path (z44) -- node[ineq1]{} (z33) -- node[ineq2]{} (z43) -- node[ineq1]{} (z32) -- node[ineq2]{} (z42) -- node[ineq1]{} (z31) -- node[ineq2]{} (z41);
\path (z55) -- node[ineq1]{} (z44) -- node[ineq2]{} (z54) -- node[ineq1]{} (z43) -- node[ineq2]{} (z53) -- node[ineq1]{} (z42) -- node[ineq2]{} (z52) -- node[ineq1]{} (z41) -- node[ineq2]{} (z51);

\end{tikzpicture}
\subcaption{Pattern in $\GT^{(5)}_{(u^{3} , v^{2})}$.}
\label{subfig:frozenGTpattern_n>m}
\end{subfigure}
\caption{A Gelfand-Tsetlin pattern of height $n+m$ and shape $(u^{n}, v^{m})$ can be decomposed into: a ``frozen'' triangular part of $u$'s, in red; a ``frozen'' triangular part of $v$'s, in blue; two Gelfand-Tsetlin patterns $z$ and $z'$ whose shapes contain a common partition $\mu$.
If $n\leq m$ as in Figure~\ref{subfig:frozenGTpattern_n<m}, $z$ is of height $m$ and shape $(\mu, v^{m-n})$, whereas $z'$ is of height $n$ and shape $\mu$.
If $n\geq m$ as in Figure~\ref{subfig:frozenGTpattern_n>m}, $z$ is of height $n$ and shape $(u^{n-m}, \mu)$, whereas $z'$ is of height $m$ and shape $\mu$.
The shapes of $z$ and $z'$ lie at the level of the ``cut'', illustrated by the dashed line.}
\label{fig:frozenGTpatterns}
\end{figure}
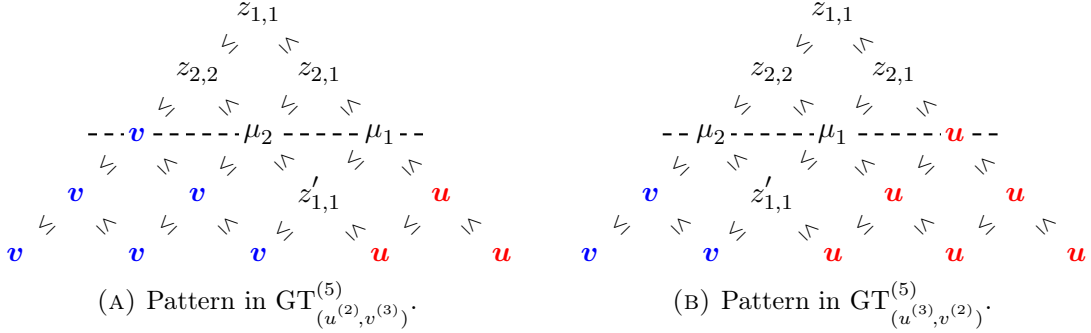

An illustration of these bijections is given by Figure~\ref{fig:frozenGTpatterns}.
A Gelfand-Tsetlin pattern $Z\in\GT^{(n+m)}_{(u^{n} , v^{m})}$ is characterized by a portion made of $u$'s only, which we call ``$u$-frozen part'', and a portion made of $v$'s only, which we call ``$v$-frozen part''.
More precisely, we have $Z_{i,j}=u$ for $i-j\geq m$ and $Z_{i,j}=v$ for $j\geq n+1$.
This phenomenon is due to the interlacing conditions~\eqref{eq:interlacing}, as in the case of (split) orthogonal patterns.
Assume first that $n\leq m$: in this case we cut $Z$ horizontally at level $m$ (from the top) and ignore all the frozen entries below such a cut, i.e.\ all $Z_{i,j}=u$ for $i-j\geq m$ and $Z_{i,j}=v$ for $j\geq n+1$, $i>m$.
What remains can be seen as the union of two Gelfand-Tsetlin patterns.
The first one, denoted by $z$, is made of the first $m$ rows of $Z$ (from the top), i.e.\ $z_{i,j}:=Z_{i,j}$ for all $1\leq j\leq i\leq m$.
The shape of $z$ is the partition $(Z_{m,1},\dots,Z_{m,n}, v^{m-n})$.
The second pattern, denoted by $z'$, is obtained by reading from bottom to top the last $n$ rows of $Z$ after removing the \emph{whole} frozen parts: namely, $z'_{i,j} := Z_{n+m-i,n-i+j}$ for all $1\leq j\leq i\leq n$.
The shape of $z'$ is the partition
$(Z_{m,1},\dots,Z_{m,n})$.
As can be visualized in Figure \ref{fig:frozenGTpatterns}, the interlacing conditions for $Z$ directly imply that 1) $z$ and $z'$ also satisfy the interlacing conditions, and 2) all entries of $z$ and $z'$ are bounded between $v$ and $u$.
Changing now notation and denoting
\[
\mu
:=(\mu_1,\dots,\mu_n)
:=(Z_{m,1},\dots,Z_{m,n}) \, ,
\]
it turns out that the shape of $z$ is $(\mu, v^{m-n})$ and the shape of $z'$ is $\mu$, where $\mu$ is an arbitrary $n$-partition such that $v\leq \mu_n\leq \dots \leq \mu_1 \leq u$.
This establishes bijection~\eqref{eq:sBijection1}.
In case $n\geq m$, we obtain a similar decomposition of $Z$ via a horizontal cut at level $n$ (from the top) instead of $m$.
Setting this time
\[
\mu := (\mu_1,\dots,\mu_m) := (Z_{n,n-m+1},\dots,Z_{n,n}) \, ,
\]
we see that $Z$ is in a bijective correspondence with a pair $(z,z')$ of Gelfand-Tsetlin patterns of height $n$ and $m$ respectively and shape $(u^{n-m}, \mu)$ and $\mu$ respectively, being $\mu$ an $m$-partition such that $v\leq \mu_m\leq \dots \leq \mu_1 \leq u$.
This proves~\eqref{eq:sBijection2}.
We also observe the following: when $n<m$, $z$ overlaps with the $v$-frozen part; when $n>m$, $z$ overlaps with the $u$-frozen part; when $n=m$, there is no overlap and the two patterns $z$ and $z'$ have the same shape $\mu$.

Identity~\eqref{eq:decompositionA} then follows from bijections~\eqref{eq:sBijection1} and~\eqref{eq:sBijection2}.
For the sake of simplicity, we will assume $v=0$ and show that
\begin{equation}
\label{eq:s=sumSSgeneral2}
\schur^{(n+m)}_{(u^{n} , 0^{m})}(x,y)
= \left[\prod_{i=1}^n x_i\right]^{u}
\sum_{\mu \subseteq u^{l}}
\schur^{(n)}_{(\mu, 0^{n-l})} (x^{-1}) \cdot
\schur^{(m)}_{(\mu, 0^{m-l})} (y) \, ,
\end{equation}
where $l:=\min(n,m)$.
The general case $v\leq u$ can be deduced from the latter by multiplying both sides of~\eqref{eq:s=sumSSgeneral2} by $[\prod_{i=1}^n x_i \prod_{i=1}^m y_i]^v$, applying~\eqref{eq:schurProp+} on the left-hand side, and finally replacing $u$ with $u-v$.

Assume first that $n\leq m$.
A Schur polynomial indexed by $(u^{n} , 0^{m})$ is, by Definition~\ref{def:schur}, a sum over Gelfand-Tsetlin patterns $Z$ of shape $(u^{n}, 0^{m})$.
Thanks to bijection~\eqref{eq:sBijection1}, for $v=0$, we can rewrite this by first summing over Gelfand-Tsetlin patterns $z$ and $z'$ of shape $(\mu, 0^{m-n})$ and $\mu$, respectively, for a fixed $m$-partition $\mu$, and then summing over all $\mu$ bounded above\footnote{There is no lower bound as $v=0$.} by $u$.
It is easy to see that, under the bijection, $\type(Z)_i = \type(z)_i$ for $1\leq i \leq m$ and $\type(Z)_{m+i} = u - \type(z')_{n-i+1}$ for $1\leq i \leq n$.
We thus obtain:
\begin{align*}
&\schur^{(n+m)}_{(u^{n} , 0^{m})}(y_1,\dots,y_m,x_n,\dots,x_1)
=\sum_{Z \in \GT^{(n+m)}_{(u^{n}, 0^{m})}}
\prod_{i=1}^m
y_i^{\type(Z)_{i}}
\prod_{i=1}^n x_{i}^{\type(Z)_{m+n-i+1}} \\
& \qquad \qquad =
\sum_{\mu \subseteq u^{n}}
\sum_{z' \in \GT^{(n)}_{\mu}}
\prod_{i=1}^{n}
x_i^{u-\type(z')_{i}}
\sum_{z \in \GT^{(m)}_{(\mu , 0^{m-n})}}
\prod_{i=1}^{m}
y_i^{\type(z)_{i}} \, \\
& \qquad \qquad = \left[\prod_{i=1}^n x_i\right]^{u}
\sum_{\mu \subseteq u^{n}}
\schur^{(n)}_{\mu} (x_1^{-1}, \dots, x_n^{-1}) \cdot
\schur^{(m)}_{(\mu , 0^{m-n})} (y_1, \dots, y_m) \, .
\end{align*}
Since Schur polynomials are symmetric, \eqref{eq:s=sumSSgeneral2} follows by reordering the variables $x_i$'s and $y_i$'s in the initial Schur polynomial indexed by $(u^{n} , 0^{m})$.

Assume now $n\geq m$.
Under the bijection $Z \longleftrightarrow (z,z')$ given by~\eqref{eq:sBijection2}, it turns out that $\type(Z)_i = \type(z)_i$ for $1\leq i \leq n$ and $\type(Z)_{n+i} = u - \type(z')_{m-i+1}$ for $1\leq i \leq m$.
Proceeding similarly as in the case $n\leq m$, we then obtain the identity
\[
\schur^{(n+m)}_{(u^{n} , 0^{m})}(x,y)
= \left[\prod_{i=1}^m y_i\right]^{u}
\sum_{\mu \subseteq u^{m}}
\schur^{(n)}_{(u^{n-m} , \mu)} (x) \cdot
\schur^{(m)}_{\mu} (y^{-1}) \, .
\]
Applying property~\eqref{eq:schurProp-} to the Schur polynomial in the $y$-variables on the right-hand side, we obtain:
\begin{equation}
\label{eq:s=sumSS_Okada}
\schur^{(n+m)}_{(u^{n} , 0^{m})}(x,y)
= \sum_{\mu \subseteq u^{m}}
\schur^{(n)}_{(u^{n-m} , \mu)} (x) \cdot
\schur^{(m)}_{(u-\mu_m,\dots,u-\mu_1)} (y) \, .
\end{equation}
We remark that~\eqref{eq:s=sumSS_Okada} corresponds to the identity of Okada, as it appears in~\cite[Theorem 2.1]{Oka98}.
We now elaborate it further by setting $\lambda_i := u-\mu_{m-i+1}$ for $1\leq i\leq m$ and summing over the new partition $\lambda$ thus defined:
\begin{align*}
\schur^{(n+m)}_{(u^{n} , 0^{m})}(x,y)
&= \sum_{\lambda \subseteq u^{m}}
\schur^{(n)}_{(u^{n-m} , u-\lambda_m,\dots,u-\lambda_1)} (x) \cdot
\schur^{(m)}_{\lambda} (y) \\
&= \left[\prod_{i=1}^n x_i\right]^{u} \sum_{\lambda \subseteq u^{m}}
\schur^{(n)}_{(\lambda, 0^{n-m})} (x^{-1}) \cdot
\schur^{(m)}_{\lambda} (y) \, ,
\end{align*}
where the latter equality follows from a further application of~\eqref{eq:schurProp-} to the Schur polynomial in the $x$-variables.
We conclude that~\eqref{eq:s=sumSSgeneral2} is true also for $n\geq m$.

\vskip 2mm

\noindent {\bf Alternative algebraic proof.}
As pointed out by an anonymous referee, Theorem~\ref{thm:decompositionA} has an alternative, purely algebraic proof, which we outline here for the reader's convenience.
As noted above, it is enough to prove the $v=0$ case.
Assume also, for the sake of simplicity, that $n\leq m$.
Using~\eqref{eq:schurProp-}, one is reduced to show that
\[
\schur^{(n+m)}_{u^{n}}(x,y)
= \sum_{\mu \subseteq u^{n}}
\schur^{(n)}_{u^n-\mu} (x) \cdot
\schur^{(m)}_{\mu} (y) \, .
\]
On the other hand, recall the classical decomposition formula for skew Schur functions $\schur_{\lambda/\nu}^{(n+m)}(x,y) = \sum_{\mu} \schur^{(n)}_{\lambda/\mu}(x) \cdot \schur^{(m)}_{\mu/\nu}(y)$, where it is meant that $\schur^{(n)}_{\lambda/\mu}=0$ whenever $\mu\nsubseteq \lambda$.
Using the latter with $\lambda=u^n$ and $\nu=0$, we have
\[
\schur^{(n+m)}_{u^n}(x,y) = \sum_{\mu} \schur^{(n)}_{u^n/\mu}(x) \cdot \schur^{(m)}_{\mu}(y) \, .
\]
Therefore, it remains to prove that $\schur^{(n)}_{u^n/\mu}(x) = \schur^{(n)}_{u^n-\mu} (x)$.
This can be easily shown, for example, by expanding the skew Schur function in terms of Littlewood-Richardson coefficients as $\schur^{(n)}_{u^n/\mu}(x)= \sum_{\nu} c^{u^n}_{\mu\nu} \schur^{(n)}_{\nu}(x)$ and noting that $c^{u^n}_{\mu\nu}=\delta_{\nu,u^n-\mu}$ (see e.g.~\cite[Eq.~(1)]{MY05}).
Alternatively, one may use, on both sides of the identity to be proven, the Jacobi-Trudi formula $\schur^{(n)}_{\lambda/\nu}(x)=\det(h_{\lambda_i-\nu_j-i+j}(x))_{1\leq i,j\leq n}$ (here $h_k(x)$ is the complete homogeneous symmetric polynomial of degree $k$ in the variables $x_1,\dots,x_n$).

\section{Identities for characters of (nearly) rectangular shape}
\label{sec:nearlyRectangular}

In this section we prove those identities stated in Section~\ref{sec:results} that express a rectangular shaped (interpolating) Schur polynomial as a bounded Littlewood sum of Schur polynomials of a \emph{different} type.
More specifically, we prove identities $B^{\antisymSq}_\beta = C^{\antisymSq}_\beta$ of Theorem~\ref{thm:antisymLPP}, $B^{\symSq}_{\alpha} = C^{\symSq}_{\alpha}$ of Theorem~\ref{thm:symLPP} and $B^{\doubleSymSq}_{\alpha,0} = C^{\doubleSymSq}_{\alpha,0}$ of Theorem~\ref{thm:doubleSymLPP_beta=0}.
Essentially, we provide a way to generalize known identities for rectangular shaped characters (see~\cite{Ste90a,Mac95,Oka98,Kra98}) to the interpolating Schur polynomials that we have introduced in Section~\ref{sec:transition}.
Our results can be proven using certain identities established by Krattenthaler~\cite{Kra98} for Schur polynomials of various types indexed by a ``nearly rectangular'' partition, i.e.\ a partition with rectangular shape except for the last row or column that might be shorter.

\subsection{Proof of $B^{\antisymSqProtect}_\beta = C^{\antisymSqProtect}_\beta$ of Theorem~\ref{thm:antisymLPP}}
\label{subsec:nearlyRectangularCB}

Let us fix a set of variables $p=(p_1,\dots,p_N)$ as in Theorem~\ref{thm:antisymLPP}.
The identity of Krattenthaler~\cite[Theorem~2]{Kra98} that we need for this proof is:
\begin{equation}
\label{eq:krattenthalerC}
\sum_{\substack{\lambda\subseteq (2u)^{N} \colon \\ \oddrows\lambda =k}} \schur^{(N)}_{ \lambda} (p)
= \left[ \prod_{i=1}^N p_i \right]^u \sp^{(2N)}_{(u^{N-k} , (u-1)^{k})} (p) \, ,
\end{equation}
for any non-negative integers $u$ and $k$ such that $k\leq N$.
In words, the sum on the left-hand side is taken over all $N$-partitions $\lambda$ bounded above by $2u$ and with exactly $k$ odd rows.

Assume first that, in Theorem~\ref{thm:antisymLPP}, $u$ is integer.
From~\eqref{eq:krattenthalerC} it follows that
\begin{align*}
B^{\antisymSq}_\beta
&=\sum_{\mu \subseteq (2u)^{N}} \beta^{\oddrows\mu} \cdot \schur^{(N)}_{\mu} (p)
= \sum_{k=0}^N \beta^k \sum_{\substack{\mu\subseteq (2u)^{N}, \\ \oddrows\mu =k}} \schur^{(N)}_{ \mu} (p) \\
&= \left[ \prod_{i=1}^N p_i \right]^u
\sum_{k=0}^N \beta^k
\sp^{(2N)}_{(u^{N-k} , (u-1)^{k})} (p)
= \left[ \prod_{i=1}^N p_i \right]^u
\sum_{k=0}^N \beta^k
\sum_{\substack{\epsilon \in\{0,1\}^N \colon \\ \abs{\epsilon}=k}} 
\sp^{(2N)}_{u^{N} - \epsilon} (p) \, .
\end{align*}
The latter equality follows from the fact that $(u^{N-k} , (u-1)^{k})$ is the only partition of the form $u^{N} - \epsilon$, where $\epsilon$ is a binary $N$-tuple with exactly $k$ ones and $N-k$ zeroes.
Using~\eqref{eq:CBtrans=sp1}, we conclude:
\[
B^{\antisymSq}_\beta
= \left[ \prod_{i=1}^N p_i \right]^u
\trans{u^{N}}(x;\beta)
= C^{\antisymSq}_\beta \, .
\]

Assume now $u\in \frac{1}{2} + \Z$.
We have
\[
B^{\antisymSq}_\beta
= \sum_{\mu \subseteq (2u)^{N}} \beta^{\oddrows\mu} \cdot \schur^{(N)}_{\mu} (p)
= \sum_{\substack{\nu \subseteq (2u-1)^{N} \colon \\ \oddrows\nu=0}}
\sum_{k=0}^N \beta^k
\sum_{\substack{\epsilon \in \{0,1\}^N \colon \\ \abs{\epsilon}=k}} \schur^{(N)}_{\nu+\epsilon}(p) \, .
\]
In the latter expression we have changed variables by setting $\epsilon_i := \mu_i \bmod 2$ and $\nu_i := \mu_i -\epsilon_i$ for $1\leq i\leq N$, thus obtaining an $N$-tuple $\epsilon\in\{0,1\}^N$ and an $N$-partition $\nu$ bounded above by $2u-1$ with even parts.
The convention for the right-hand side of the latter display is that, as usual, $\schur^{(N)}_{\nu+\epsilon}(p)$ vanishes if $\nu+\epsilon$ is not a partition.
The \emph{dual Pieri rule} (see e.g.~\cite{Mac95}) tells us that
\[
\sum_{\substack{\epsilon \in \{0,1\}^N \colon \\ \abs{\epsilon}=k}} \schur^{(N)}_{\nu+\epsilon}(p)
= \e^{(N)}_k(p) \cdot \schur^{(N)}_{\nu}(p) \, , \qquad\text{where}\qquad
\e^{(N)}_k(p) := \sum_{\substack{\epsilon\in\{0,1\}^N \colon \\ \abs{\epsilon}=k}} \prod_{i=1}^N p_i^{\epsilon_i}
\]
is the elementary symmetric polynomial of degree $k$ in $N$ variables.
Since $\e^{(N)}_k(p)$ is homogeneous of degree $k$, we have
\[
B^{\antisymSq}_\beta
= \sum_{\substack{\nu \subseteq (2u-1)^{N} \colon \\ \oddrows\nu=0}}
\sum_{k=0}^N
\e^{(N)}_k(\beta p) \schur^{(N)}_{\nu}(p)
= \left[\sum_{\epsilon\in\{0,1\}^N} \prod_{i=1}^N (\beta p_i)^{\epsilon_i} \right] \sum_{\substack{\nu \subseteq (2u-1)^{N} \colon \\ \oddrows\nu=0}}
\schur^{(N)}_{\nu}(p) \, .
\]
Since $u$ is a half-integer, we can now use~\eqref{eq:krattenthalerC} replacing $u$ with $u-\frac{1}{2}$ and setting $k=0$.
We thus obtain:
\[
B^{\antisymSq}_\beta
= \left[\prod_{i=1}^N (\beta p_i + 1)\right] \left[\prod_{i=1}^N p_i \right]^{u-\frac{1}{2}} \sp^{(2N)}_{(u-\frac{1}{2})^{N}}(x)
= \left[\prod_{i=1}^N p_i \right]^u
\trans{u^{N}}(p;\beta)
= C^{\antisymSq}_\beta \, ,
\]
thanks to~\eqref{eq:CBtrans=sp2}.

\subsection{Proof of $B^{\symSqProtect}_\alpha = C^{\symSqProtect}_\alpha$ of Theorem~\ref{thm:symLPP}}
\label{subsec:nearlyRectangularDB}

Let us fix a set of variables $p=(p_1,\dots,p_N)$ as in Theorem~\ref{thm:symLPP}.
We will use the following identity of Krattenthaler~\cite[Theorem~2]{Kra98}: for any $u\in\frac{1}{2}\Z_{\geq 0}$ and $k\in\Z_{\geq 0}$ such that $k\leq 2u$, we have
\begin{equation}
\label{eq:krattenthalerD}
\sum_{\substack{\lambda\subseteq (2u)^{N} \colon \\ \oddrows((2u)^N-\lambda)' =k}}
\!\!\!\!
\schur^{(N)}_{\lambda} (p)
= \left[ \prod_{i=1}^N p_i \right]^u \so^{(2N)}_{(u^{N-1} , u-k)} (p) \, .
\end{equation}
In words, the sum on the left-hand side is taken over all $N$-partitions $\lambda \subseteq (2u)^{N}$ such that the complement partition of $\lambda$ with respect to $(2u)^{N}$, i.e.\ $\mu:=(2u)^N-\lambda=(2u-\lambda_N, \dots, 2u-\lambda_1)$, has exactly $k$ odd columns.
It is easy to verify, by using~\eqref{eq:schurProp-} and replacing each $p_i$ with $p_i^{-1}$, that~\eqref{eq:krattenthalerD} is equivalent to:
\[
\sum_{\substack{\mu\subseteq (2u)^{N} \colon \\ \oddrows\mu' = k}} \schur^{(N)}_{\mu} (p)
= \left[ \prod_{i=1}^N p_i \right]^u \so^{(2N)}_{(u^{N-1} , u-k)} (p^{-1}) \, ,
\]
where $p^{-1} := (p_1^{-1},\dots, p_N^{-1})$.

Using the latter identity, along with~\eqref{eq:DBbranchingRectangular}, we obtain:
\begin{align*}
B^{\symSq}_\alpha
&= \sum_{\mu \subseteq (2u)^{N}}
\alpha^{\oddrows\mu'} \cdot
\schur^{(N)}_{\mu}(p)
= \sum_{k=0}^{2u} \alpha^k \sum_{\substack{\mu\subseteq (2u)^{N} \colon \\ \oddrows\mu' =k}} \schur^{(N)}_{ \mu} (p) \\
&= \left[ \prod_{i=1}^N p_i \right]^u
\sum_{k=0}^{2u} \alpha^k \cdot \so^{(2N)}_{(u^{N-1} , u-k )} (p^{-1})
= \left[ \prod_{i=1}^N p_i \right]^u
\transDB{u^{N}}(p^{-1}; \alpha)
= C^{\symSq}_\alpha \, .
\end{align*}

\subsection{Proof of $B^{\doubleSymSqProtect}_{\alpha,0} = C^{\doubleSymSqProtect}_{\alpha,0}$ of Theorem~\ref{thm:doubleSymLPP_beta=0}}
\label{subsec:nearlyRectangularA}

Let us fix a set of variables $p=(p_1,\dots,p_n)$ as in Theorem~\ref{thm:doubleSymLPP_beta=0}.
We will now use the following identity of Krattenthaler~\cite[Theorem~1]{Kra98}: for any non-negative integer $u$ and $k$ such that $k\leq u$, we have
\begin{equation}
\label{eq:krattenthalerA}
\sum_{\substack{\lambda \subseteq u^{n} \colon \\ \oddrows(u^n-\lambda)' =k}}
\!\!\!\!\!\!\!\!
\sp^{(2n)}_{\lambda}(p)
= \schur^{(2n)}_{(u^{n-1} , u-k , 0^{n})} (p,p^{-1}) \, ,
\end{equation}
where $p^{-1} := (p_1^{-1},\dots, p_n^{-1})$.
In words, the sum on the left-hand side is taken over all $n$-partitions $\lambda \subseteq u^{n}$ such that the complement partition $u^n-\lambda$ has exactly $k$ odd columns.

From~\eqref{eq:krattenthalerA} it follows that
\begin{equation}
\label{eq:nearlyRectangularA_1}
\begin{split}
\left[\prod_{i=1}^n p_i\right]^{-u}
B^{\doubleSymSq}_{\alpha,0}
&=\sum_{\lambda \subseteq u^{n}}
\alpha^{\oddrows(u^n-\lambda)'} \cdot
\sp^{(2n)}_{\lambda}(p) \\
&= \sum_{k=0}^u \alpha^k
\sum_{\substack{\lambda \subseteq u^{n} \colon \\ \oddrows(u^n-\lambda)' = k}}
\!\!\!\!\!\!\!\!
\sp^{(2n)}_{\lambda}(p)
= \sum_{k=0}^u \alpha^k \cdot
\schur^{(2n)}_{(u^{n-1} , u-k , 0^{n})} (p,p^{-1}) \, .
\end{split}
\end{equation}
On the other hand, let us recall the well-known branching rule for Schur polynomials (which is an immediate consequence of Definition~\ref{def:schur}):
\[
s^{(m)}_{\mu}(x_1,\dots,x_m) = \sum_{\nu\prec \mu} x_m^{\abs{\mu} - \abs{\nu}} \schur^{(m-1)}_{\nu}(x_1,\dots, x_{m-1}) \, ,
\]
where the sum is taken over all $(m-1)$-partitions $\nu$ that upwards interlace with $\mu$.
If we take $m=2n+1$ and $\mu:= (u^{n}, 0^{n+1})$, then all and only the $(2n)$-partitions that upwards interlace with $\mu$ are of the form $(u^{n-1}, k, 0^{n})$ for any integer $0\leq k\leq u$.
The branching rule and the symmetry of Schur polynomials then tell us that
\begin{equation}
\label{eq:nearlyRectangularA_2}
\begin{split}
\left[\prod_{i=1}^n p_i\right]^{-u}
C^{\doubleSymSq}_{\alpha,0}
&= \schur^{(2n+1)}_{(u^{n} , 0^{n+1})}(p,p^{-1},\alpha)
= \sum_{k=0}^u \alpha^{nu- ((n-1)u+k)} \cdot
\schur^{(2n)}_{(u^{n-1} , k , 0^{n})} (p,p^{-1}) \\
&= \sum_{k=0}^u \alpha^{u-k} \cdot
\schur^{(2n)}_{(u^{n-1} , k , 0^{n})} (p,p^{-1}) \, .
\end{split}
\end{equation}
Comparing~\eqref{eq:nearlyRectangularA_1} and~\eqref{eq:nearlyRectangularA_2}, we obtain $B^{\doubleSymSq}_{\alpha,0} = C^{\doubleSymSq}_{\alpha,0}$.

\section{Duality between Fredholm determinants and Pfaffians in random matrix theory}
\label{sec:duality}

Certain distributions from random matrix theory, which describe the fluctuations of the largest eigenvalue of an $N\times N$ random matrix drawn from a specific ensemble as $N\to\infty$, often possess dual expressions as Fredholm determinants on one hand and Fredholm Pfaffians on the other hand.
In this section we briefly describe how the combinatorial and algebraic structures described in the present work explain, \emph{already at a finite $N$ level}, such a duality for two random matrix distributions: the Tracy-Widom GOE and GSE laws.

\vskip 1mm

Let us start by briefly recalling the notions of a Fredholm determinant and a Fredholm Pfaffian.
Given a measure space $(\mathcal{X},\mu)$, any linear operator $K\colon L^2(\mathcal{X}) \to L^2(\mathcal{X})$ can be given in terms of its integral kernel $K(x,y)$ by
\[
(Kf)(x) := \int_{\mathcal{X}} K(x,y)f(y) \mu(\diff y) \, ,\qquad f\in L^2(\mathcal{X}) \, .
\]
The \emph{Fredholm determinant} of $K$ can then be defined through its series expansion:
\[
\det(I + K)_{L^2(\mathcal{X})}
:= 1+\sum_{n=1}^{\infty} \frac{1}{n!} \int_{\mathcal{X}^n}
\underset{1\leq i,j\leq n}{\det}(K(x_i,x_j)) \,
\mu(\diff x_1)\cdots \mu(\diff x_n) \, ,
\]
assuming the series converges.

The Pfaffian of a skew-symmetric matrix $A=(a_{i,j})_{1\leq i,j\leq 2n}$ is defined via the symmetric group expansion
\begin{align*}
\Pf(A) =\frac{1}{2^n n!} \sum_{\sigma \in S_{2n}} \sign(\sigma) \prod_{k=1}^n a_{\sigma(2k-1),\sigma(2k)}
\end{align*}
and it can be shown to be the square root of $\det(A)$.
Let now
\begin{align*}
J:= \delta(x-y)
\begin{pmatrix}
0 & 1 \\ -1 & 0
\end{pmatrix} \qquad \text{for } x,y\in\mathcal{X} \, ,
\end{align*}
where $\delta(\cdot)$ is the Dirac delta function.
Then the Fredholm Pfaffian of a skew-symmetric\footnote{This means that $K_{j,i}(y,x) = - K_{i,j}(x,y)$ for all $1\leq i,j\leq 2$ and $x,y\in\mathcal{X}$.} matrix-valued kernel 
\begin{align*}
K(x,y)=
\begin{pmatrix}
K_{11}(x,y) & K_{12}(x,y) \\
K_{21}(x,y) & K_{22}(x,y)
\end{pmatrix} \, , \qquad
\text{with } K_{i,j}\colon L^2(\mathcal{X}) \to L^2(\mathcal{X}) \, ,
\end{align*}
is defined as 
\[
\Pf(J + K)_{L^2(\mathcal{X})}
:= 1+\sum_{n=1}^{\infty} \frac{1}{n!} \int_{\mathcal{X}^n}
\underset{1\leq i,j\leq n}{\Pf}(K(x_i,x_j)) \,
\mu(\diff x_1)\cdots \mu(\diff x_n) \, ,
\]
assuming that the series converge.

We also recall two crucial identities that express integrals of determinantal functions as Pfaffians or determinants, and that will be useful in the following of this section.
Andr\'eief's identity (a generalization of the Cauchy-Binet identity, see~\cite{And86}) states that
\begin{equation}
\label{eq:CauchyBinet}
\int\limits_{x_1\leq\dots\leq x_n}
\underset{1\leq i,j\leq n}{\det}\!\left(f_j(x_i)\right) \underset{1\leq i,j\leq n}{\det}\!\left(g_j(x_i)\right) \,
\prod_{i=1}^n \nu(\diff x_i)
= \underset{1\leq i,j\leq n}{\det}\!\left( \int_{\R} f_i(x) g_j(x) \, \nu(\diff x) \right) ,
\end{equation}
where $\nu$ is a Borel measure on $\R$ and $f_1,\dots,f_n, g_1, \dots, g_n$ are integrable functions.
On the other hand, assuming for simplicity that $n$ is even, the de Bruijn identity~\cite{Bru55} states that
\begin{equation}
\label{eq:deBruijn}
\int\limits_{x_1\leq\dots\leq x_n}
\underset{1\leq i,j\leq n}{\det}\!\left(\phi_j(x_i)\right) \, \prod_{i=1}^n \nu(\diff x_i)
= \underset{1\leq i,j\leq n}{\Pf}\!\left( \int_{\R^2} \sign(y-x) \, \phi_i(x) \, \phi_j(y) \, \nu(\diff x) \, \nu(\diff y) \right) ,
\end{equation}
where $\nu$ is a Borel measure on $\R$ and $\phi_1,\dots,\phi_n$ are integrable functions.

\subsection{The Tracy-Widom GOE distribution}
\label{subsec:GOE}

The \emph{Gaussian Orthogonal Ensemble} (GOE) is the space of $N\times N$ real symmetric matrices $H$ endowed with the Gaussian probability density proportional to $\exp\{-\frac{N}{4} \Tr H^2\}$, which turns out to be invariant under conjugation with the orthogonal group.
The law of the largest eigenvalue of an $N\times N$ GOE matrix converges as $N\to\infty$, after suitable rescaling, to the so-called {\bf\emph{Tracy-Widom GOE distribution}}.
This random matrix model has been first studied in~\cite{TW96}.

The cumulative function of the Tracy-Widom GOE distribution admits the Fredholm Pfaffian expression~\cite{Fer04}:
\begin{equation}
\label{eq:GOE_Pfaffian}
F_1(s) = {\rm Pf}(J + K^{{\rm GOE}})_{L^2[s,\infty)} \, ,
\end{equation}
where, denoting by $\Ai(\cdot)$ the Airy function and $\Ai'(\cdot)$ its derivative, $K^{{\rm GOE}}$ is the $2\times 2$ matrix-valued kernel defined by
\begin{align*}
K_{11}^{\rm GOE}(x,y) &= \int_0^\infty \Ai(x+\gl) \Ai'(y+\gl)\diff \gl - \int_0^\infty \Ai(y+\gl) \Ai'(x+\gl) \diff \gl \, , \\
K_{12}^{\rm GOE}(x,y) &= -K_{21}^{\rm GOE}(x,y)= 
\int_0^\infty \Ai(x+\gl) \Ai(y+\gl) \diff\gl  +\frac{1}{2} \Ai(x) \int_0^\infty \Ai(y-\gl)\dd\gl \, ,
 \\
K_{22}^{\rm GOE}(x,y)&=
\frac{1}{4} \int_0^\infty \!\! \diff \gl \int_\gl^\infty \!\! \diff \mu \Ai(y-\mu) \Ai(x-\gl)
- \frac{1}{4}\int_0^\infty \!\! \diff \gl \int_\gl^\infty \!\! \diff \mu \Ai(x-\mu) \Ai(y-\gl) \, .
\end{align*}
Equivalent but slightly different Pfaffian expressions for the Tracy-Widom GOE distribution, as well as formulas in terms of Painlev\'e functions, also exist -- see e.g.~\cite{TW05,DG09}.

Remarkably, the Tracy-Widom GOE distribution also admits the following simpler Fredholm determinant expression:
\begin{align}
\label{eq:GOE_Sasamoto}
F_1(s) =\det(I-B)_{L^2[s,\infty)}
\qquad \text{with} \qquad
B(x,y)=\frac{1}{2} \Ai\Big(\frac{x+y}{2}\Big) \, .
\end{align}
Expression~\eqref{eq:GOE_Sasamoto} was originally discovered by Sasamoto~\cite{Sas05} via analysis of the Totally Asymmetric Simple Exclusion Process (TASEP).
A confirmation that~\eqref{eq:GOE_Sasamoto} agrees with previously known formulas for the Tracy-Widom GOE distribution was provided by
Ferrari and Spohn in~\cite{FS05} via a series of linear operator tricks.

\vskip 1mm

It is possible to recover both the Pfaffian and the determinantal expressions for the Tracy-Widom GOE distribution by rescaling the formulas provided in Corollary~\ref{coro:antisymLPP_0} for the distribution of the antidiagonally symmetric LPP time.
For convenience and in analogy with the asymptotic analysis carried out in~\cite{BZ19b}, we will work with exponentially distributed weights, instead of geometrically.
Namely, we will consider an array $\{\tilde{W}_{i,j}\colon 1\leq i,j\leq 2n\}$ symmetric about the antidiagonal and such that the weights $\tilde{W}_{i,j}$ with $i+j\leq 2n+1$ are independent and distributed as
\begin{equation}
\label{eq:antisymExp}
\P(\tilde{W}_{i,j} \in \diff x) =
(\rho_{2n-i+1} +\rho_j) \e^{-(\rho_{2n-i+1} +\rho_j) x} \, .
\end{equation}

Consider $A^{\antisymSq}_0 = B^{\antisymSq}_0$ in Corollary~\ref{coro:antisymLPP_0}, with $N=2n$.
After replacing the Schur polynomial in $B^{\antisymSq}_0$ with its Weyl character formula~\eqref{eq:schurWeyl}, expressing the denominator's Vandermonde determinant in its closed form and setting $\mu_i := 2 \lambda_i$ for $1\leq i\leq 2n$, the identity reads as
\begin{equation}
\label{eq:antisymLPP_Geom}
 \P\left(L^{\antisymSq}_{0}(2n,2n) \leq 2u\right) 
= \frac{\prod_{1\leq i\leq j\leq 2n} (1-p_i p_j)}
{\prod_{1\leq i< j\leq 2n} (p_i - p_j)}
\sum_{\substack{ \lambda \in \Z^{2n} \colon \\ 0 \leq \lambda_{2n} \leq \dots \leq \lambda_1 \leq u}}
\underset{1\leq i,j\leq 2n}{\det}\big( p_j^{2\lambda_i + 2n-i} \big)
\, .
\end{equation}
The LPP time $L^{\antisymSq}_{0}(2n,2n)$ is taken on geometric weights $W_{i,j}$'s with distribution given by~\eqref{eq:antisymWeights} (for $N=2n$ and $\beta=0$).
Scaling the parameters as $p_i:= \e^{- \epsilon \rho_i}$ for $\epsilon>0$, the variables $\epsilon W_{i,j}$ will converge in law, as $\epsilon\downarrow 0$, to the exponential weights in~\eqref{eq:antisymExp}.
To obtain the analog of~\eqref{eq:antisymLPP_Geom} for the LPP model with exponential weights, it then suffices to set also $v:= u/\epsilon$ and take the limit as $\epsilon \downarrow 0$.
By Riemann sum approximation, the sum on the right-hand side of~\eqref{eq:antisymLPP_Geom} will then converge to a continuous integral, yielding:
\begin{align*}
\P\left(\tilde{L}^{\antisymSq}_{0}(2n,2n) \leq 2v\right) 
&= \frac{\prod_{1\leq i\leq j\leq 2n} (\rho_i + \rho_j)}
{\prod_{1\leq i< j\leq 2n} (\rho_j - \rho_i)}
\int\limits_{0 \leq x_{2n} \leq \dots \leq x_{1} \leq v}
\underset{1\leq i,j\leq 2n}{\det}\big( \e^{-2\rho_j x_i} \big) \prod_{i=1}^{2n} \diff x_i \, ,
\end{align*}
where $\tilde{L}^{\antisymSq}_{0}(2n,2n)$ is the antidiagonally symmetric LPP time from $(1,1)$ to $(2n,2n)$ with weights as in~\eqref{eq:antisymExp}.
Recognizing in the latter expression the \emph{Schur Pfaffian}
\[
\underset{1\leq i,j\leq 2n}{\Pf}\left( \frac{\rho_i-\rho_j}{\rho_i + \rho_j} \right)
= \prod_{1\leq i < j \leq 2n} \frac{\rho_i-\rho_j}{\rho_i + \rho_j}
\]
and applying the de Bruijn identity~\eqref{eq:deBruijn} and a basic property of Pfaffians, we obtain:
\begin{align}
\label{eq:antisymLPP_Pf}
\P\left(\tilde{L}^{\antisymSq}_{0}(2n,2n) \leq 2v\right)
&= \frac{ \displaystyle \underset{1\leq i,j\leq 2n}{\Pf}\left( 4 \rho_i \rho_j \int_0^v \diff x \int_0^v \diff y \sign(y-x) \e^{-2\rho_i x - 2\rho_j y} \right)}
{\displaystyle \underset{1\leq i,j\leq 2n}{\Pf}\left( \frac{\rho_i-\rho_j}{\rho_i + \rho_j} \right)}
\, .
\end{align}
The Fredholm Pfaffian expression given in~\eqref{eq:GOE_Pfaffian} for the Tracy-Widom GOE distribution can be derived as a scaling limit of the latter identity, after taking the weights to be exponential i.i.d.\ variables and setting $v:=v_n:= \sff n + \sigma n^{1/3}$ for suitable constants $\sff$ and $\sigma$.
The asymptotic analysis of a Poissonized version of the antidiagonally symmetric LPP model, recovering~\eqref{eq:GOE_Pfaffian}, was carried out by Ferrari~\cite{Fer04}.
A previous asymptotic analysis via orthogonal polynomials and Riemann-Hilbert problems, recovering the expression of the Tracy-Widom GOE distribution in terms of Painlev\'e functions, was performed by Baik and Rains~\cite{BR01b}.

On the other hand, consider identity $A^{\antisymSq}_0 = D^{\antisymSq}_0$ in Corollary~\ref{coro:antisymLPP_0}, for $N=2n$.
Using the Weyl character formula~\eqref{eq:spWeyl} for symplectic characters and taking -- in the same fashion as before -- the exponential limit, one obtains:
\begin{align*}
&\P\left( \tilde{L}^{\antisymSq}_{0}(2n,2n) \leq 2v \right)
= \frac{\prod_{1\leq i,j\leq n} (\rho_i + \rho_{n+j})}
{\prod_{1\leq i<j\leq n} (\rho_i - \rho_j) (\rho_{n+i} - \rho_{n+j})} 
\prod_{i=1}^{2n} \e^{-v \rho_i } \\*
&\qquad \times \int\limits_{0\leq x_n\leq \cdots \leq x_1\leq v}  
\underset{1\leq i,j\leq n}{\det}\big(e^{\rho_j x_i} - e^{-\rho_j x_i}\big)
\underset{1\leq i,j\leq n}{\det}\big(e^{\rho_{n+j} x_i} - e^{-\rho_{n+j} x_i}\big)
\prod_{i=1}^n \diff x_i \, .
\end{align*}
Recognizing in the latter expression the \emph{Cauchy determinant}
\begin{equation}
\label{eq:CauchyDet}
\underset{1\leq i,j\leq n}{\det}\left( \frac{1}{\rho_i + \rho_{n+j}} \right)
= \frac{\prod_{1\leq i<j\leq n} (\rho_i - \rho_j) (\rho_{n+i} - \rho_{n+j})}
{\prod_{1\leq i,j\leq n} (\rho_i + \rho_{n+j})}
\end{equation}
and applying Andr\'eief's identity~\eqref{eq:CauchyBinet} and the multilinearity of determinants, we then obtain:
\begin{align}
\label{eq:antisymLPP_det}
\P\left( \tilde{L}^{\antisymSq}_{0}(2n,2n) \leq 2v \right)
= \frac{ \displaystyle \underset{1\leq i,j\leq n}{\det} \left( e^{-v (\rho_i + \rho_{n+j})}
\int_0^v
\big(e^{\rho_i x} - e^{-\rho_i x}\big)
\big(e^{\rho_{n+j} x} - e^{-\rho_{n+j} x}\big) \diff x \right)}
{\displaystyle \underset{1\leq i,j\leq n}{\det} \left( \frac{1}{\rho_i + \rho_{n+j}} \right)} \, .
\end{align}
The latter identity was shown in~\cite{BZ19b} to directly lead, in the scaling limit, to the Fredholm determinant formula~\eqref{eq:GOE_Sasamoto} for the Tracy-Widom GOE distribution.
This was possible by means of a fairly standard procedure to turn a ratio of determinants like~\eqref{eq:antisymLPP_det} into a Fredholm determinant (see e.g.~\cite[Theorem~2.1]{BZ19b} or~\cite{BG16}) and a suitable asymptotic analysis via steepest descent.

From the discussion above we may conclude that comparing the Pfaffian identity~\eqref{eq:antisymLPP_Pf} and the determinantal identity~\eqref{eq:antisymLPP_det} for the LPP model provides an explanation, at a finite $n$ level, of the duality between the Fredholm Pfaffian and Fredholm determinant expressions of the Tracy-Widom GOE distribution.

\subsection{The Tracy-Widom GSE distribution}
\label{subsec:GSE}

The \emph{Gaussian Symplectic Ensemble} (GSE) is the space of $N\times N$ Hermitian quaternionic matrices $H$ endowed with the Gaussian probability density proportional to $\exp\{-N \Tr H^2\}$, which is invariant under conjugation with the symplectic group.
The law of the largest eigenvalue of an $N\times N$ GSE matrix converges as $N\to\infty$, after suitable rescaling, to the so-called {\bf\emph{Tracy-Widom GSE distribution}}~\cite{TW96}.

The cumulative function of the Tracy-Widom GSE distribution has the Fredholm Pfaffian expression~\cite{BBCS18}\footnote{Other expressions in terms of the square root of a Fredholm determinant with $2\times 2$ matrix-valued kernel~\cite{TW05, DG09} or with scalar kernel~\cite{KD18} also exist.}
\begin{align}
\label{eq:GSE_Pfaffian}
F_4(s) = \Pf(J - K^{{\rm GSE}})_{L^2[s,\infty)} \, ,
\end{align}
where the kernel $K^{{\rm GSE}}$ is given as a $2\times 2$ matrix kernel with entries
\begin{align*}
K_{11}^{\rm GSE}(x,y)
&=-\frac{1}{2}\int_x^\infty \diff \gl \int_0^\infty \diff\mu \Ai(\gl+\mu) \Ai(y+\mu) +\frac{1}{4} \int_x^\infty \Ai(\gl) \dd\gl \int_y^\infty \Ai(\mu)\diff \mu \, ,  \\
K_{12}^{\rm GSE}(x,y)&= -K_{21}^{\rm GSE}(y,x)= \frac{1}{2} \int_0^\infty \Ai(x+\gl) \Ai(y+\gl) \diff \gl -\frac{1}{4} \Ai(y) \int_x^\infty \Ai(\gl)\diff\gl \, , \\
K_{22}^{\rm GSE}(x,y)&=\frac{1}{2} \frac{\partial}{\partial y} \int_0^\infty \Ai(x+\gl) \Ai(y+\gl) \diff \gl +\frac{1}{4} \Ai(x) \Ai(y) \, .
\end{align*}
On the other hand, $F_4$ also admits the simpler Fredholm determinant expression
\begin{align}
\label{eq:GSE_det}
F_4(s) = \frac{1}{2}\Big[ \det(I-B)_{L^2[\sqrt{2}s,\infty)} + \det(I+B)_{L^2[\sqrt{2}s,\infty)} \Big] \, .
\end{align} 
The latter has been established in~\cite{FS05} as a direct consequence of the Tracy-Widom GOE Fredholm determinant formula~\eqref{eq:GOE_Sasamoto} and certain identities linking all three Tracy-Widom distributions for the Gaussian random matrix ensembles (orthogonal, unitary and symplectic).

\vskip 1mm

The Fredholm Pfaffian expression~\eqref{eq:GSE_Pfaffian} of the Tracy-Widom GSE distribution can be derived as a scaling limit of the bounded Littlewood identity for the distribution of the diagonally symmetric LPP model with zero weights on the diagonal, i.e.\ $A^{\symSq}_{0} = B^{\symSq}_{0}$ in Corollary~\ref{coro:symLPP_0}.

On the other hand, we now wish to sketch  how the bounded Cauchy identity $A^{\symSq}_{0} = D^{\symSq}_{0}$, for $N=2n$, provides a direct route to the Fredholm determinant expression~\eqref{eq:GSE_det}.
We first rewrite this identity using the Weyl formula~\eqref{eq:soEvenWeyl} for even orthogonal characters (with the denominator determinant expressed in its closed form of Vandermonde type, see~\cite{FH91}):
\begin{align*}
\P\left(L^{\symSq}_{0}(2n,2n) \leq 2u\right)
= & \prod_{1 \leq i<j\leq 2n} (1-p_i p_j)
\left[ \prod_{i=1}^{2n} p_i \right]^u \\*
\times \sum_{\substack{ \lambda \in \Z^n \colon \\ 0\leq \abs{\lambda_n} \leq \lambda_{n-1} \leq \dots \leq \lambda_1 \leq u}} 
&\frac{\underset{1\leq i,j\leq n}{\det}\big( p_j^{-(\lambda_i + n-i)} + p_j^{\lambda_i + n-i} \big) + \underset{1\leq i,j\leq n}{\det}\big( p_j^{-(\lambda_i + n-i)} - p_j^{\lambda_i + n-i} \big)}
{2\prod_{1\leq i< j\leq n} (p_i + p_i^{-1} - p_j - p_j^{-1})} \\*
&\times \frac{\underset{1\leq i,j\leq n}{\det}\big( p_{n+j}^{-(\lambda_i + n-i)} + p_{n+j}^{\lambda_i + n-i} \big) + \underset{1\leq i,j\leq n}{\det}\big( p_{n+j}^{-(\lambda_i + n-i)} - p_{n+j}^{\lambda_i + n-i} \big)}
{2\prod_{1\leq i< j\leq n} (p_{n+i} + p_{n+i}^{-1} - p_{n+j} - p_{n+j}^{-1})} \, .
\end{align*}
For convenience we again consider, as done in Subsection~\ref{subsec:GSE}, the exponential limit of the latter expression.
Let $\tilde{L}^{\symSq}_{0}(2n,2n)$ be the cumulative function of the LPP time on a symmetric $(2n)\times (2n)$ array with exponential weights distributed as
\[
\P(\tilde{W}_{i,j} \in \diff x) =
(\rho_{i} +\rho_j) e^{-(\rho_{i} +\rho_j) x}
\]
for $1\leq i<j\leq 2n$ and $W_{i,i}=0$ for $1\leq i\leq 2n$.
Then, setting $p_i:=e^{-\epsilon \rho_i}$ and $u:=v/\epsilon$ and then passing to the limit as $\epsilon\downarrow 0$ in the formula above for geometric LPP, via a Riemann sum approximation we obtain:
\begin{align*}
&\P\left(\tilde{L}^{\symSq}_{0}(2n,2n) \leq 2v\right) 
= \frac{\prod_{1\leq i,j\leq n} (\rho_i + \rho_{n+j})}
{4\prod_{1\leq i<j\leq n} (\rho_i - \rho_j) (\rho_{n+i} - \rho_{n+j})}
\prod_{i=1}^{2n} \e^{-v \rho_i}  \\*
&\qquad\qquad\times \int\limits_{\abs{x_n}\leq x_{n-1}\leq \cdots \leq x_1\leq v} 
\left[\underset{1\leq i,j\leq n}{\det}( \e^{\rho_j x_i} + \e^{-\rho_j x_i} ) + \underset{1\leq i,j\leq n}{\det}(\e^{\rho_j x_i} - \e^{-\rho_j x_i}) \right] \\*
&\qquad\qquad\times \left[\underset{1\leq i,j\leq n}{\det}( \e^{\rho_{n+j} x_i} + \e^{-\rho_{n+j} x_i} ) + \underset{1\leq i,j\leq n}{\det}( \e^{\rho_{n+j} x_i} - \e^{-\rho_{n+j} x_i}) \right]
\prod_{i=1}^n \diff x_i \, .
\end{align*}
We again recognize the Cauchy determinant~\eqref{eq:CauchyDet} in the prefactor.
Moreover, by standard observations about the even and the odd part, with respect to $x_n$, of the integrand, the integral above can be reduced to a sum of two integrals over the domain $\{ 0\leq x_n \leq \dots \leq x_1\leq v\}$.
Applying Andr\'eief's identity~\eqref{eq:CauchyBinet} to such two integrals, we finally obtain:
\[
\begin{split}
\P\left(\tilde{L}^{\symSq}_{0}(2n,2n) \leq 2v\right) 
= & \frac{1}{2} \left[ \frac{ \displaystyle \underset{1\leq i,j\leq n}{\det} \left( e^{-v (\rho_i + \rho_{n+j})}
\int_0^v
\big(e^{\rho_i x} - e^{-\rho_i x}\big)
\big(e^{\rho_{n+j} x} - e^{-\rho_{n+j} x}\big) \diff x \right)}
{\displaystyle \underset{1\leq i,j\leq n}{\det} \left( \frac{1}{\rho_i + \rho_{n+j}} \right)} \right. \\
&\left. + \frac{ \displaystyle \underset{1\leq i,j\leq n}{\det} \left( e^{-v (\rho_i + \rho_{n+j})}
\int_0^v
\big(e^{\rho_i x} + e^{-\rho_i x}\big)
\big(e^{\rho_{n+j} x} + e^{-\rho_{n+j} x}\big) \diff x \right)}
{\displaystyle \underset{1\leq i,j\leq n}{\det} \left( \frac{1}{\rho_i + \rho_{n+j}} \right)} \right]
 \, .
 \end{split}
\]
The first summand in the above formula is exactly what appears in the formula
\eqref{eq:antisymLPP_det}, thus giving (in the limit $n\to\infty$ after the appropriate scaling of $v$) the first Fredholm determinant in~\eqref{eq:GSE_det}.
The second summand only differs by a sign from the first one: following exactly the same procedure in the asymptotic analysis as the one carried out in~\cite{BZ19b} leads to the second term in~\eqref{eq:GSE_det}.

\vskip 4mm

{\bf Acknowledgments.} 
We would like to thank D.~Betea, A.~Borodin, V. Gorin, S.~Okada, R.~Proctor, E.~Rains and S.~O.~Warnaar for helpful discussions or comments on this work.
We are very grateful to the anonymous referees for carefully reading our manuscript and making constructive comments.

\vskip 4mm

\end{document}